\documentclass[11pt,a4paper]{amsart}
\usepackage[utf8]{inputenc}
\usepackage[T1]{fontenc}
\usepackage{xcolor}
\usepackage[latin9]{luainputenc}
\usepackage{geometry}
\geometry{verbose,tmargin=1.0in,bmargin=.99in,lmargin=0.72in,rmargin=.96in,headheight=0.25in,headsep=0.25in,footskip=0.5in}

\usepackage[toc,page]{appendix}
\usepackage[active]{srcltx}

\usepackage{babel}
\usepackage{amsthm}
\usepackage{amssymb}

\usepackage{amsmath,bbm,esint}
\usepackage{amssymb}
\usepackage{accents,calc}
\usepackage{subfig}
\usepackage{caption}
\usepackage{dsfont}
\usepackage{mwe}
\usepackage{comment}
\usepackage{tikz}
\usepackage{amsfonts}
\usepackage{dsfont}
\usepackage{enumitem} 
\usepackage{xcolor}
\usepackage{mathrsfs} %% package of Lebesgue es Hausdorff measures
\usepackage[colorlinks=true,urlcolor=blue,
citecolor=red,linkcolor=blue,linktocpage,pdfpagelabels,bookmarksnumbered,bookmarksopen]{hyperref}
\usepackage[normalem]{ulem}

%YH addition:
\usepackage{xcolor}

%%%%
\usepackage{collectbox}

\makeatletter

\makeatother

% Greek

\def\e{\varepsilon}

\def \p{\partial}

\def \0{\mathbf{0}}

  %hat

%tilde

%overline

%cal

%frak

%tilde

%hat

%mathbb

\newcommand{\R}{{\mathbb R}}

\newcommand{\N}{\mathbb{N}}

%weak convergences, arrows

%% RESTRICTION OF A MEASURE
% \newcommand{\mres}{\mathbin{\vrule height 1.6ex depth 0pt width
% 0.13ex\vrule height 0.13ex depth 0pt width 1.3ex}}

%% Lebesgue, Hausdorff measures and the space of probability measures

\usepackage{subfig}

%% Indicator function

%operators, and other texts

\newcommand{\spt}{{\rm{spt}}}

\newcommand{\PP}{\mathcal{P}}

%%vector fields and other bold 

%%crowd motion setting

% begin-end equation, begin-end array
\newcommand{\be}{\begin{equation}}
\newcommand{\ee}{\end{equation}}

\newcommand{\ba}{\begin{array}}
\newcommand{\ea}{\end{array}}

\usepackage{subfiles}
\newtheorem{theorem}{\textbf{Theorem}}[section]
\newtheorem{lemma}[theorem]{\textbf{Lemma}}
\newtheorem{proposition}[theorem]{\textbf{Proposition}}
\newtheorem{corollary}[theorem]{\textbf{Corollary}}
\newtheorem{example}[theorem]{\textbf{Example}}
\newtheorem{definition}[theorem]{\textbf{Definition}}
\newtheorem{remark}[theorem]{\textbf{Remark}}

\providecommand{\customgenericname}{}
\newcommand{\newcustomtheorem}[2]{%
  \newenvironment{#1}[1]
  {%
   \renewcommand\customgenericname{#2}%
   \renewcommand\theinnercustomgeneric{##1}%
   \innercustomgeneric
  }
  {\endinnercustomgeneric}
}

\newcustomtheorem{customthm}{Framework}

\newtheorem{assumption}{\textbf{Assumption}}
\numberwithin{equation}{section}

\setcounter{tocdepth}{1}

\begin{document}
\title[Accelerated Wasserstein Gradient Flows
]{Guaranteeing Higher Order Convergence Rates for Accelerated Wasserstein gradient Flow Schemes}

\author{Raymond Chu}
\address{Department of Mathematical Sciences, Carnegie Mellon University,
Pittsburgh, PA 15213, USA}
\email{raymondchu@cmu.edu}

\author{Matt Jacobs}
\address{Department of Mathematics, University of California, Santa Barbara,
CA 93106, USA}
\email{majaco@ucsb.edu}

\begin{abstract} 
In this paper, we study higher-order-accurate-in-time minimizing movements schemes for Wasserstein gradient flows.  We introduce a novel accelerated 
second-order scheme, leveraging the differential structure of the Wasserstein space in both Eulerian and Lagrangian coordinates. For sufficiently smooth energy functionals, we show that our scheme provably achieves an optimal quadratic convergence rate. Under the weaker assumptions of Wasserstein differentiability and $\lambda$-displacement convexity (for any $\lambda\in \R$), we show that our scheme still achieves a first-order convergence rate and has strong numerical stability. In particular, we show that the energy is nearly monotone in general, while when the energy is $L$-smooth and $\lambda$-displacement convex (with $\lambda>0$), we prove the energy is non-increasing and the norm of the Wasserstein gradient is exponentially decreasing along the iterates.  Taken together, our work provides the first fully rigorous proof of accelerated second-order convergence rates for smooth functionals and shows that the scheme performs no worse than the classical scheme JKO scheme for functionals that are $\lambda$-displacement convex and Wasserstein differentiable.
\end{abstract}

\keywords{Wasserstein gradient flows, minimizing movement schemes, higher–order time discretizations, quantitative convergence analysis.}
\subjclass[2020]{49Q22, 35A15. 65M15}

\maketitle 

%\textcolor{red}{feel free to change the keywords/MSC. it was just something rough I came up with.}

\tableofcontents

\section{Introduction}

Given an energy functional $\phi:\PP_2(\R^d)\to\R\cup\{+\infty\}$, the continuity equation
\begin{equation}
\label{gradient_flow_eqn12}
\begin{cases}
 \partial_t \rho(t,x) - \nabla\cdot \Bigl(\rho(t,x) \nabla\Bigl(\tfrac{\delta \phi}{\delta \mu}\bigl(\rho(t,\cdot),x\bigr)\Bigr)\Bigr) = 0 \quad \text{on } (0,\infty)\times\mathbb{R}^d,\\[6pt]
 \rho(0,\cdot) = \rho_0(\cdot) \quad \text{on } \mathbb{R}^d,
\end{cases}
\end{equation} can be interpreted formally as the gradient flow of $\phi$ on the Wasserstein space $(\PP_2(\R^d), \mathbf{W}_2)$.
This
class of equations models various important physical phenomena such as fluid flow, heat transfer, aggregation-diffusion, and crowd motion \cite{vazquez2007porous, santambrogio2015optimal} to name a few. In general, these equations are both stiff and non-linear making them challenging to solve numerically.  Perhaps the most well-known \emph{stable} numerical method for solving these equations is the celebrated JKO scheme \cite{jordan1998variational}, an unconditionally energy stable variational scheme,  well-known to be first-order-accurate-in-time with respect to the time step $\tau>0$ \cite{ambrosio2005gradient}.   Recently, there has been a great deal of interest in formulating new versions of the JKO scheme, which achieve higher order accuracy in time, while maintaining favorable stability properties \cite{legendre2017second, matthes2019variational, ashworth2020structure,  zaitzeff_hos, carrillo2022primal, han_hos, gallouet2024geodesic, cances2024discretizing}.  However, to the best of our knowledge, none of these methods to date have been able to rigorously prove a second (or higher) order convergence rate in any metric, even for smooth functionals, despite promising numerical evidence.  Indeed, the only rigorous rates that have appeared in the literature for these accelerated methods are $O(\sqrt{\tau})$, worse than the $O(\tau)$ rate of the simpler JKO scheme.  The goal of this manuscript is to remedy this situation.  We provide a new energy stable scheme that we rigorously demonstrate is second-order-in-time for smooth energy functionals and at worst first-order-accurate-in-time for $\lambda$-displacement convex and Wasserstein differentiable functionals (see Section \ref{prelim} for the precise definition of Wasserstein differentiability and \eqref{model_energy_functional_123} for the main model functionals that we consider).

To construct our scheme, we exploit both the Eulerian (Wasserstein) and Lagrangian $L^2$ gradient flow structures of the equation to construct a novel second-order-accurate-in-time numerical scheme for solving \eqref{gradient_flow_eqn12}. To motivate this Lagrangian $L^2$ perspective, let us assume that the velocity field $v=-\nabla \bigl( \frac{\delta \phi}{\delta \mu} \bigr)$ is sufficiently regular, so that the solution $\rho(t,x)$ can be expressed as the push-forward of the initial data $\rho_0\in \PP_2(\R^d)$ by the Lagrangian flow $X(t,x)$, i.e.
\begin{equation}
\rho_t(x) = (X(t,\cdot)_{\#} \rho_0)(x). \label{pushforward_rho_def}
\end{equation}
where
\begin{equation}
\begin{cases} 
\displaystyle \frac{d}{dt}X(t,x)  =-\nabla \left[ \frac{\delta \phi}{\delta \mu}( X(t,\cdot)_{\#} \rho_0,x) \right] \quad \text{on } [0,\infty),  \\
X(0,x) = x, \label{characteristic_eqn}
\end{cases}
\end{equation}

As we will see below, the Lagrangian flow \eqref{characteristic_eqn} is a gradient flow of the lifted energy functional $X\mapsto \phi^{\#}_{\rho_0}(X)$ over the Hilbert space $L^2(\R^d;\rho_0)$ where
\begin{equation} \phi^{\#}_{\rho_0}(X) := \phi(X_{\#} \rho_0). \label{lift_intro_def} \end{equation} That is, if $\nabla \phi^{\#}_{\rho_0}(X)$ denotes the $L^2(\R^d;\rho_0)$ Fréchet derivative of the map $X \mapsto \phi^{\#}_{\rho_0}(X)$, then
\begin{equation}
\begin{cases} 
\displaystyle \frac{d}{dt}X(t,x) = -\nabla \phi^{\#}_{\rho_0}(X(t,x)) \quad \text{on } [0,\infty),  \\
X(0,\cdot) = \textnormal{Id}. \label{L2_Lagrangian_Flow}
\end{cases}.
\end{equation}

 Although the lifted energy $X\mapsto \phi^{\#}_{\rho_0}(X)$ is a good deal more complicated than the original energy $\rho\mapsto \phi(\rho)$, the advantage of this perspective is that we can much more readily generalize high-order-accurate numerical discretizations of Euclidean differential equations to the Hilbert space $L^2(\R^d,\rho_0)$, rather than needing to search for their correct analogue over the Wasserstein space. From a theoretical standpoint, it is often preferable to work with the lifted functional because its higher order derivatives encode higher order geometric information such as geodesic convexity (see \cite[Lemma 3.6]{gangbo2019differentiability} and Theorem \ref{convex_equivalence}). In contrast, naively taking the Wasserstein gradient of an energy functional twice yields only a \emph{partial} Wasserstein Hessian. This partial Hessian does not capture the full geometric information: for instance, the potential energy is linear, so its partial Hessian vanishes identically. Indeed, as we will see in Remark \ref{Hessian_displacement}, the second time derivative of the energy functional along geodesics depend on both the partial Wasserstein Hessian and the spatial gradient of the Wasserstein gradient.

To obtain a second-order-in-time discretization of \eqref{gradient_flow_eqn12}, we apply the second order trapezoidal finite difference scheme to \eqref{L2_Lagrangian_Flow}. In particular, we consider
\begin{equation}
\begin{cases}
X^{\tau}_{n+1}
\in \operatorname*{arg\,min}_{\xi \in L^2(\mathbb{R}^d;\rho_0)}
\left[
\frac{1}{2}\bigl(\phi^{\#}_{\rho_0}(\xi)
+ \langle
\nabla \phi^{\#}_{\rho_0}(X^{\tau}_n),
\xi
\rangle_{L^2(\mathbb{R}^d;\rho_0)}
\bigr)
+ \frac{1}{2 \tau}\|\xi - X^{\tau}_n\|_{L^2(\mathbb{R}^d;\rho_0)}^2
\right], \\
\rho^{\tau}_{n+1} := (X^{\tau}_{n+1})_{\#} \rho_0,
\end{cases}
\label{trapezoid_variational_form}
\end{equation}
where the initial condition $X^{\tau}_0$ is prescribed.
 The optimizers $X^{\tau}_{n+1}$ of \eqref{trapezoid_variational_form} admits the following trapezoidal finite difference scheme:
\begin{equation}
X^{\tau}_{n+1}
= X^{\tau}_n
- \frac{\tau}{2} \bigl(
\nabla \phi^{\#}_{\rho_0}(X^{\tau}_{n+1})
+ \nabla \phi^{\#}_{\rho_0}(X^{\tau}_{n})
\bigr). \label{trapezoid_rule123}
\end{equation}
We note that, unlike the JKO scheme, the trapezoidal update maps
$T^{\tau}_{n+1} := X^{\tau}_{n+1} \circ (X^{\tau}_n)^{-1}$
are not, in general, optimal transport maps between
$\rho_{n}^{\tau}$ and $\rho_{n+1}^{\tau}$, since they need not be gradients of convex functions.

\subsection{Background and previous results}
We mentioned earlier, the Lagrangian flow~\eqref{characteristic_eqn} can also be realized as a gradient flow in the Hilbert space $L^2(\mathbb{R}^d;\rho_0)$ of the \textit{lifted} energy functional \eqref{lift_intro_def}, which was introduced in mean-field games. 
%This lifted energy functional has the following probabilistic interpretation: $\phi^{\#}_{\rho_0}(\xi)$ is the energy functional $\phi$ evaluated along the law of the random variable $\xi$ under the probability measure $\rho_0$. 
As shown in mean-field games \cite{gangbo2019differentiability,gangbo2022global,carmona2018probabilistic,cardaliaguet2019master}, the convexity and differentiability properties of the lift $\phi^{\#}_{\rho_0}$ on $L^2$ are closely related to notions of convexity and differentiability of $\phi$ over $\mathbf{W}_2$. In particular, for a large class of probability measures $\rho_0$, the differentiability of $\phi$ in the Wasserstein sense at the measure $\xi_{\#} \rho_0$ is equivalent to the Fréchet differentiability of $\phi^{\#}_{\rho_0}$ at the function $\xi$ (see \cite[Corollary 3.22]{gangbo2019differentiability}). In this case, we have
\[
\nabla \phi^{\#}_{\rho_0}(\xi) = \nabla_{\mathbf{W}} \phi(\xi_{\#} \rho_0,\xi),
\]
where $\nabla_{\mathbf{W}}$ denotes the Wasserstein gradient, %The above equality can be extended to an equality of the sub-differentials of $\phi$ and $\phi^{\#}_{\rho_0}$ \cite{gangbo2019differentiability}. Here the Wasserstein gradient 
which  when $\phi$ and $\rho$ are sufficiently smooth (see \cite[Lemma 4.12]{ambrosio2007gradient}), can be characterized as
\[
\nabla_{\mathbf{W}} \phi(\rho)(x) = \nabla \frac{\delta \phi}{\delta \mu} (\rho,x).
\]
Thus, formally, the Lagrangian flow \eqref{characteristic_eqn} can be viewed as the gradient flow in the Hilbert space $L^2(\R^d;\rho_0)$ described by \eqref{L2_Lagrangian_Flow}. The implicit Euler discretization of \eqref{L2_Lagrangian_Flow} and its connection to Wasserstein gradient flows \eqref{gradient_flow_eqn12} has been studied in
\cite{junge2017fully,carrillo2021lagrangian,evans2005diffeomorphisms,ambrosio2006stability}. However, to obtain quantatitive error rates, \cite{junge2017fully} had to assume uniform $C^5$ bounds on the iterates.

\begin{comment}
The Lagrangian $L^2$-gradient flow formulation of \eqref{gradient_flow_eqn12} has been primarily studied in the setting where $\phi$ is discontinuous. This perspective was first introduced in \cite{evans2005diffeomorphisms,ambrosio2006stability}. In particular, \cite{ambrosio2006stability} proved convergence of the implicit Euler scheme under strong assumptions on the $\mathbb{R}^d$ ODE \eqref{characteristic_eqn} for a class of discontinuous $\phi$. To the authors’ knowledge, the only reference that obtains convergence rates in dimension $d > 1$ for these Lagrangian $L^2$-gradient flow schemes is \cite{junge2017fully}. For a class of discontinuous functionals $\phi$, they recursively defined the Lagrangian flow using the implicit Euler scheme for \eqref{L2_Lagrangian_Flow}:
\begin{equation}
X^{\tau}_{n+1} = X^{\tau}_n - \tau \,\nabla \phi^{\#}_{\rho_0}\bigl(X^{\tau}_{n+1}\bigr), \label{implicit_Euler_L21}
\end{equation}
starting from initial data $X^{\tau}_0$. Defining the associated measures as
\begin{equation}
\rho^{\tau}_{n+1} := \bigl(X^{\tau}_{n+1}\bigr)_{\#}\,\rho_0, \label{numerical_measures_L21}
\end{equation}
they showed that if the $C^5$ norm of $\rho^{\tau}_{n+1}$ is uniformly bounded in $\tau$ and $n$, then
\[
W_2(\rho^{\tau}_n, \rho_{\tau n}) = O(\tau),
\]
where $\rho_{\tau n}$ is the solution to \eqref{gradient_flow_eqn12} evaluated at time $\tau n$. For further references, see the survey article \cite{carrillo2021lagrangian} and the works cited therein.
\end{comment}

Gradient flows on Hilbert spaces, such as \eqref{L2_Lagrangian_Flow}, are well understood when $\phi^{\#}_{\rho_0}$ is $\lambda$-convex. The general theory of gradient flows for $\lambda$-convex functionals in Hilbert spaces (see \cite[Chapters 11 and 12]{ambrosio2021lectures}) ensures existence and uniqueness of strong Hilbert-Space-valued  solutions to \eqref{L2_Lagrangian_Flow}. In this setting, solutions can be constructed via the implicit Euler scheme,
\begin{equation}
\begin{cases}
X^{\tau}_{n+1} = X^{\tau}_n - \tau \,\nabla \phi^{\#}_{\rho_0}\bigl(X^{\tau}_{n+1}\bigr),  \\
\rho^{\tau}_{n+1} := \bigl(X^{\tau}_{n+1}\bigr)_{\#}\,\rho_0, \label{numerical_measures_L21}
\end{cases}
\end{equation}
which achieves the optimal convergence rate of $O(\tau)$, where $\tau$ is the time step size, when the initial data lies in the subdifferential of $\phi^{\#}_{\rho_0}$ \cite{ambrosio2005gradient}.
The convexity of $\phi^{\#}_{\rho_0}$ over $L^2(\R^d;\rho_0)$ is closely related to the (displacement) convexity of $\phi$ over $\mathbf{W}_2$. In particular, for the class of \textit{continuous} energy functionals $\phi$, \cite[Lemma 3.6]{gangbo2022global} shows that $\lambda$-displacement convexity of $\phi$ in $\mathbf{W}_2$ is equivalent to $\lambda$-convexity of its lift $\phi^{\#}_{\rho_0}$ in $L^2(\R^d;\rho_0)$. When one further assumes that $\phi$ is differentiable in the sense of \cite{gangbo2019differentiability}, 
this $\lambda$-convexity of the lifted functional $\phi^{\#}_{\rho_0}$ is also equivalent to 
$\phi$ being $\lambda$-convex along generalized geodesics in $\mathbf{W}_2(\R^d)$, as shown in \cite[Theorem 1.1]{parker2024some}. In more general settings, where $\phi$ may be discontinuous, one can still obtain poly-convexity of $\phi^{\#}_{\rho_0}$ from the displacement convexity of $\phi$ \cite{carrillo2021lagrangian}.

By the theory described above, one can construct solutions to \eqref{L2_Lagrangian_Flow} for continuous and $\lambda$-displacement convex energy functionals using the implicit Euler scheme. We now examine its connection to the JKO scheme. In \cite{ambrosio2006stability}, under suitable conditions, it was shown that the iterates defined by \eqref{numerical_measures_L21} coincide with those of the JKO scheme. Moreover, under some assumptions, the update map
\[
T^{\tau}_{n+1} := X^{\tau}_{n+1} \circ \bigl(X^{\tau}_n\bigr)^{-1}
\]
is the optimal transport map from $\rho^{\tau}_n$ to $\rho^{\tau}_{n+1}$. In particular, composing both sides of the Lagrangian coordinates of \eqref{numerical_measures_L21} with $\bigl(X^{\tau}_n\bigr)^{-1}$ yields
\[
T^{\tau}_{n+1} = \textnormal{Id} - \tau \nabla \phi^{\#}_{\rho^{\tau}_n}(T^{\tau}_{n+1}),
\]
showing that the optimal transport map corresponds to a single implicit Euler step for the $L^2$-gradient flow
\[
\frac{d}{dt} \tilde{X}(t,x) = -\nabla \phi^{\#}_{\rho^{\tau}_n}(\tilde{X}(t,x)),
\]
with initial condition $\tilde{X}(0,\cdot) = \textnormal{Id}$. 

We now focus on higher-order schemes, i.e., schemes that formally converge faster than $O(\tau)$. A key difficulty in extending higher-order schemes to dimensions $d > 1$ is that in Eulerian coordinates it can be difficult to correctly approximate the Wasserstein space by a Hilbert space to greater than first order accuracy, while in Lagrangian coordinates, the composition of flow map iterates $X_{n+1}^{\tau}\circ (X_{n}^{\tau})^{-1}$ is in general not an optimal transport map between $\rho_n^{\tau}$ and $\rho_{n+1}^{\tau}$, creating new difficulties. Note that the failure of $X_{n+1}^{\tau}\circ (X_{n}^{\tau})^{-1}$ to be an optimal transport map is due to Brenier’s theorem, which states that optimal transport maps must be gradients of convex functions, a structure not naturally preserved by higher-order Lagrangian schemes in dimensions $d>1$.
Nonetheless, many higher-order stable schemes for Wasserstein gradient flows have been proposed for \eqref{gradient_flow_eqn12} (see, e.g., \cite{legendre2017second, matthes2019variational, ashworth2020structure,  zaitzeff_hos, carrillo2022primal, han_hos, gallouet2024geodesic, cances2024discretizing}). However, as we noted earlier, these approaches have so far only established convergence without quantitative rates, or at best, have achieved a suboptimal $O(\sqrt{\tau})$ convergence rate by extending the methods in \cite{ambrosio2005gradient}.

\smallskip

Our scheme \eqref{trapezoid_variational_form} appears to be the first to identify smoothness conditions on the energy functional that guarantee an $O(\tau^2)$ convergence rate for both the Lagrangian coordinates and the probability measure iterates. Moreover, to our knowledge, it is the first to leverage the differential structure of $\mathbf{W}_2$ to derive error estimates for Wasserstein gradient flows. We expect this calculus-based framework to extend naturally to higher-order schemes including explicit schemes. Although we only verify the case $p=2$ below, we expect that an $O(\tau^p)$ accurate (explicit or implicit) scheme for \eqref{gradient_flow_eqn12} can be obtained by applying an $O(\tau^p)$ accurate finite difference scheme to \eqref{L2_Lagrangian_Flow}. In addition, even for less regular functionals, where such calculus based techniques do not apply, we  prove our scheme \eqref{trapezoid_variational_form} still achieves the $O(\tau)$ convergence rate of the JKO scheme.

\smallskip

% By leveraging the Hilbert space structure of $L^2(\mathbb{R}^d;\rho_0)$, we extend the classical finite-dimensional proof of the $O(\tau^2)$ convergence of the trapezoid scheme (originally for ODEs in $\mathbb{R}^d$) to the infinite-dimensional setting. In particular, we show that when $\phi$ is sufficiently smooth, $X^{\tau}_n$ converges to the strong solution $X(\tau n,\cdot)$ of \eqref{L2_Lagrangian_Flow} at a rate of $O(\tau^2)$ in the $L^2(\mathbb{R}^d;\rho_0)$ norm. \smallskip

% By defining
% \[
% \rho_t := (X(t,\cdot))_{\#} \rho_0
% \quad\text{and}\quad
% \rho^{\tau}_n := (X^{\tau}_n)_{\#} \rho_0,
% \]
% the inequality
% \[
% W_2(\rho_{\tau n}, \rho^{\tau}_n)
% \leq  \|X(\tau n,\cdot)-X_{n}^{\tau}\|_{L^2(\mathbb{R}^d;\rho_0)}=O(\tau^2),
% \]
% then shows that these measures also converge with a rate $O(\tau^2)$ in the Wasserstein-2 metric. Then we characterized $\rho_t$ as the weak solution \eqref{gradient_flow_eqn1} with initial data $\rho_0$. \smallskip 

\subsection{Main results}

Our first result establishes an $O(\tau^2)$ error rate for the scheme \eqref{trapezoid_variational_form} under suitable smoothness conditions on the lifted energy functional. Our strategy is to extend the classical finite-dimensional proof of the $O(\tau^2)$ convergence of the trapezoid scheme for ODEs on $\mathbb{R}^d$ to the infinite-dimensional Hilbert space $L^2(\R^d;\rho_0)$. Consequently, our assumptions on convergence mirror those in the finite-dimensional setting.

\begin{assumption}[Smoothness Assumption] \label{smoothness_assumption}
Let $\rho_0 \in \mathcal{P}_2(\mathbb{R}^d)$ be given, and define the Hilbert space $\mathbb{H} := L^2(\mathbb{R}^d; \rho_0)$.  
We assume that the lifted energy functional
\[
    X \mapsto \phi^{\#}_{\rho_0}(X)
\]
belongs to $C^{1,1}(\mathbb{H}; \mathbb{H})$.  
In addition, we assume that the unique strong solution $X : [0, \infty) \to \mathbb{H}$ of \eqref{L2_Lagrangian_Flow} with $X_0 = \mathrm{Id}$ satisfies
\[
    X \in C^{2,1}_{\mathrm{loc}}([0, \infty); \mathbb{H}).
\] 
\end{assumption}

As our numerical Lagrangian flow, \( X^{\tau}_{n+1} \) arises from a finite difference scheme, we are able to extend the classical finite-dimensional analysis of such schemes to our infinite-dimensional setting, thereby obtaining:

\begin{theorem}[$O(\tau^2)$ Convergence: Theorem \ref{higher_order_convergence_theorem}, Theorem \ref{strong_gf_solution}, Theorem \ref{limiting_PDE}] \label{higher_order_convg_intro}
Fix a terminal time $T>0$, $\rho_0 \in \mathcal{P}_2(\mathbb{R}^d)$, and assume that the smoothness Assumption \ref{smoothness_assumption} holds. 
Define the Lipschitz constants
\[
L(\phi) := \sup_{\xi_1 \neq \xi_2 \in \mathbb{H}} \frac{ \| \nabla \phi^{\#}_{\rho_0}(\xi_1) - \nabla \phi^{\#}_{\rho_0}(\xi_2) \|_{\mathbb{H}}}{\|\xi_1 - \xi_2\|_{\mathbb{H}}}, \quad
L(T, \ddot{X}) := \sup_{t \neq s \in [0,T]} \frac{ \| \dot{X}_t - \dot{X}_s - (t - s)\ddot{X}_s \|_{\mathbb{H}} }{|t - s|}.
\]
Then, for any time step $\tau \leq 1/L(\phi)$ and integer $n \in \mathbb{N}$ such that $n\tau \leq T$, one has
\[
W_2(\rho_{n\tau }, \rho_n^{\tau}) \leq \| X(n\tau) - X_n^{\tau} \|_{\mathbb{H}} \leq e^{2 L(\phi) T} \| X_0^{\tau} - \textnormal{Id} \|_{\mathbb{H}} + 2 \frac{L(T, \ddot{X})}{L(\phi)} (e^{2 L(\phi) T} - 1) \cdot \tau^2,
\] where $\rho^{\tau}_n$ and $X^{\tau}_n$ are defined by \eqref{trapezoid_variational_form}. Here, $\rho_t := (X(t))_{\#} \rho_0$ is a weak solution of 
\begin{equation}
\begin{cases}
    \p_t \rho - \nabla \cdot(\rho \nabla_{\mathbf{W}} \phi(\rho,x)) = 0  \text{ on } (0,\infty) \times \R^d  \\
    \rho(0,\cdot)=\rho_0(\cdot) \text{ on } \R^d
\end{cases} \label{gradient_flow_eqn1}.
\end{equation}
If instead one has $X \in C^{2,\alpha}_{\textnormal{loc}}([0,\infty);\mathbb{H})$ the convergence rates becomes $O(\tau^{1+\alpha})$.
\end{theorem}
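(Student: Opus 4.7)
The plan is to adapt the classical finite-dimensional convergence analysis of the trapezoidal method for the ODE $\dot X = -\nabla\phi^{\#}_{\rho_0}(X)$ to the Hilbert space $\mathbb{H} = L^2(\mathbb{R}^d;\rho_0)$. I would begin by verifying that the scheme is well-posed: under the CFL-type bound $\tau \leq 1/L(\phi)$, the objective in \eqref{trapezoid_variational_form} is $(1/\tau - L(\phi)/2)$-strongly convex, because $\nabla\phi^{\#}_{\rho_0}$ being $L(\phi)$-Lipschitz makes $\phi^{\#}_{\rho_0}/2$ at worst $L(\phi)/2$-semiconcave, and this is absorbed by the $1/\tau$-strongly convex quadratic penalty. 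Strong convexity gives a unique minimizer $X_{n+1}^{\tau}$, and its first-order optimality condition is precisely the implicit trapezoidal identity \eqref{trapezoid_rule123}. The Wasserstein inequality $W_2(\rho_{n\tau},\rho_n^{\tau}) \leq \|X(n\tau) - X_n^{\tau}\|_{\mathbb{H}}$ is then immediate from viewing the map $x \mapsto (X(n\tau)(x), X_n^{\tau}(x))$ pushed forward from $\rho_0$ as a transport plan between the two measures.

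The heart of the argument is the local truncation estimate. Setting $t^{*} := t_n + \tau/2$, I would Taylor-expand $X(t_n)$, $X(t_{n+1})$, $\dot X(t_n)$, and $\dot X(t_{n+1})$ around $t^{*}$ in $\mathbb{H}$. The trapezoid rule is symmetric about $t^{*}$, so the first-order-in-$(t - t^{*})$ terms cancel in both the $X$ difference and the $\dot X$ average, while the quadratic $\ddot X(t^{*})$ contributions line up with opposite signs between the two sides once $\dot X$ is multiplied by $\tau/2$. What survives is controlled only by the $C^{2,1}$ modulus of $X$ quantified by $L(T,\ddot X)$, yielding $\|R_n\|_{\mathbb{H}} \leq C\,L(T,\ddot X)\,\tau^{3}$ for a universal constant $C$, where
\[
R_n := X(t_{n+1}) - X(t_n) - \frac{\tau}{2}\bigl(\dot X(t_{n+1}) + \dot X(t_n)\bigr).
\]
Using $\dot X = -\nabla\phi^{\#}_{\rho_0}(X)$, the continuous flow thus satisfies the trapezoidal identity up to the residual $R_n$.

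Subtracting the scheme \eqref{trapezoid_rule123} from this identity and applying the triangle inequality together with the $L(\phi)$-Lipschitz bound on $\nabla\phi^{\#}_{\rho_0}$ yields
\[
\bigl(1 - \tfrac{\tau L(\phi)}{2}\bigr)\|e_{n+1}\|_{\mathbb{H}} \leq \bigl(1 + \tfrac{\tau L(\phi)}{2}\bigr)\|e_n\|_{\mathbb{H}} + \|R_n\|_{\mathbb{H}},
\]
where $e_n := X(n\tau) - X_n^{\tau}$. The CFL bound $\tau \leq 1/L(\phi)$ keeps the left prefactor at least $1/2$ and makes the amplification ratio $(1+\tau L(\phi)/2)/(1-\tau L(\phi)/2) \leq e^{2\tau L(\phi)}$, so a discrete Gronwall iteration, followed by a geometric-series sum and the lower bound $e^{2\tau L(\phi)} - 1 \geq 2\tau L(\phi)$, produces the stated global estimate. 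The $C^{2,\alpha}$ refinement modifies only the local truncation step: Hölder control of $\ddot X$ in place of Lipschitz control yields $\|R_n\|_{\mathbb{H}} = O(\tau^{2+\alpha})$, and the same Gronwall iteration then delivers the $O(\tau^{1+\alpha})$ global rate.

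The principal obstacle will be extracting $\tau^{3}$-order smallness in $R_n$ from only $C^{2,1}$ regularity of $X$, without the luxury of $X \in C^{3}$; the symmetric structure of the trapezoidal quadrature about the midpoint $t^{*}$, which aligns signs to cancel both the first- and second-order Taylor contributions simultaneously on the two sides of the identity, is what makes this possible. Without this cancellation, the standard bound would be only $O(\tau^{2})$ per step, which would in turn deliver only an $O(\tau)$ global rate. Once the sharp local truncation estimate is in hand, the rest of the argument is essentially a direct transcription of the classical Euclidean textbook proof into the Hilbert space $\mathbb{H}$.
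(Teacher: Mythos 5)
Your proposal is correct and follows essentially the same route as the paper: an $O(\tau^{2+\alpha})$ local truncation estimate for the trapezoidal identity applied to the exact flow, the Lipschitz error recursion $(1-\tfrac{\tau L(\phi)}{2})\,e_{n+1}\le(1+\tfrac{\tau L(\phi)}{2})\,e_n+\|R_n\|_{\mathbb{H}}$, and a discrete Gronwall iteration; the paper derives the truncation bound from endpoint Taylor expansions in its quadrature lemma (Lemma \ref{trapezoid_integral_rule}) rather than your midpoint-symmetry expansion, but both use exactly the $C^{1,\alpha}$ regularity of $\dot X$ and yield the same order, and your strong-convexity remark justifying well-posedness under $\tau\le 1/L(\phi)$ correctly supplies what the paper obtains from Lemma \ref{min_exist} with $\lambda=-L(\phi)$. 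The only part of the stated theorem your proposal does not address is the identification of $\rho_t=(X(t))_{\#}\rho_0$ as a weak solution of the continuity equation (Theorems \ref{strong_gf_solution} and \ref{limiting_PDE}), a routine push-forward computation that should nonetheless be recorded.
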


We now provide an example of an energy functional satisfying our assumptions. Consider the functional (see Example~\ref{C2_functionals}, Example~\ref{C11_Lifted_Functional}~, and Example~\ref{C2alpha_velocity})
\begin{equation}
U(\rho)
:= \int_{\mathbb{R}^d} f\bigl((\rho \star \chi)(x)\bigr)\,d\rho (x)
+ \int_{\mathbb{R}^d} V(x)\,d\rho(x)
+ \frac{1}{2} \int_{\mathbb{R}^d} \int_{\mathbb{R}^d} W(x-y)\,d\rho(x)\,d\rho(y),  \label{model_energy_functional_123}
\end{equation}
where $V,W \in C^{2,1}_b(\mathbb{R}^d)$, $f \in C^{2,1}_b(\mathbb{R})$, and $\chi \in C^{2,1}_c(\mathbb{R}^d)$, with $W$ and $\eta$ being even functions. Here, $\star$ denotes convolution. This functional satisfies the assumptions of Theorem \ref{higher_order_convg_intro} when $\rho_0$ has no atoms or when $\rho = \frac{1}{N} \sum_{j=1}^{N} \delta_{X_j}$. The latter case is useful for numerical simulations as the energy and its gradient will be fully discrete.  Numerical simulations of this scheme for this energy functional can be found in Section \ref{sec:experiments}.  These discretizations have proven very useful for blob-type methods for simulating Wasserstein gradient flows c.f. \cite{Carrillo2019, Daneri2022-fr, doi:10.1142/S0219891623500212, BURGER2023113347, Craig2023-hj, CRAIG2025782, Carrillo2024-we}.

Even under weaker regularity assumptions on $\phi$, namely, that $\phi$ is Wasserstein differentiable and $\lambda$-displacement convex, we obtain an intermediate $O(\tau)$ convergence rate by adapting the \textit{Discrete Evolution Variational Inequality} (EVI) method from \cite{ambrosio2005gradient}. For this convergence, we will need the following weaker assumptions:

\begin{assumption} \label{assume_rough}
    We fix a measure $\rho_0 \in \mathcal{P}_2(\mathbb{R}^d)$ and let $\mathbb{H} := L^2(\mathbb{R}^d;\rho_0)$.  \smallskip 

\noindent Given a functional $\phi : \mathcal{P}_2(\mathbb{R}^d) \to \mathbb{R}$, we assume that its lifted functional $\phi_{\rho_0}^{\#} : \mathbb{H} \to \mathbb{R}$, defined by \eqref{lift_intro_def}
is Fréchet differentiable over $\mathbb{H}$ and is $\lambda$-convex over $\mathbb{H}$ for some $\lambda \in \mathbb{R}$ Additionally, we assume that the lifted energy functional is proper, i.e.,
\[
\inf_{\xi \in L^2(\mathbb{R}^d;\rho_0)} \phi^{\#}_{\rho_0}(\xi) > -\infty.
\] 

\noindent We further assume that in \eqref{trapezoid_variational_form} the time step parameter satisfies
\[
    \lambda/2 + 1/\tau > 0.
\]
Finally, we suppose that the initial iterates satisfy
\[
    \lim_{\tau \downarrow 0} \|X_0^{\tau} - \mathrm{Id}\|_{\mathbb{H}} = 0,
    \qquad
    \sup_{\tau > 0} \|\nabla \phi^{\#}_{\rho_0}(X_0^{\tau})\|_{\mathbb{H}} < \infty.
\]
\end{assumption}

\begin{remark}
    We expect that the Fréchet differentiablity assumption can be weakened to just lower semi-continuity of the lift, but we impose this condition to simplify some of our arguments.
\end{remark}

\begin{remark}
    Note that the condition of the lift being $\lambda$-convex is rather strong and different from a functional being $\lambda$-displacement convex in the Wasserstein sense.  In particular, the $\lambda$-convexity of the lift does not hold for the internal energy functional $\rho\mapsto \int f(\rho)$ even when $f$ is convex.  Nonetheless, if the energy is continuous with respect to the Wasserstein metric, then $\lambda$-convexity of the lifted energy is equivalent to $\lambda$-convexity in the Wasserstein sense (see the next remark).
\end{remark}

\begin{remark}[Convexity and Differentiability Assumptions] 
\label{sufficient_conditions_rough_assumptions}
Let $\rho_0 \in \mathcal{P}_{2}(\mathbb{R}^d)$ be atomless and $\phi : \mathcal{P}_2(\mathbb{R}^d) \to \mathbb{R}$ be continuous. Then $\lambda$-displacement convexity (see Definition~\ref{geodesic_convex}) is equivalent to the $\lambda$-convexity of the lifted functional $\phi^{\#}_{\rho_0}$ (see Theorem~\ref{convex_equivalence} and \cite{gangbo2022global}).
\smallskip

\noindent
Furthermore, if $\phi \in C^1(\mathbf{W}_2(\mathbb{R}^d); \mathbb{R})$ (see Section \ref{prelim} for a precise definition) satisfies suitable gradient growth conditions (see Proposition~\ref{differentiability_equivalence}), then the lifted functional $\phi^{\#}_{\rho_0}$ is Fréchet differentiable. In this case, the gradient of the lifted functional is given by
\[
\nabla \phi^{\#}_{\rho_0}(\xi)(x) = \nabla_{\mathbf{W}} \phi\big(\xi_{\#} \rho_0,\, \xi(x)\big).
\]
These results can be useful for verifying the convexity and smoothness assumptions we require.
\end{remark}

We will also frequently use the quantity
\begin{equation} \label{lambda_tau_def}
    \lambda_{\tau} := \frac{1}{2\tau} \log\!\left( \frac{1 + \lambda \tau}{1 - \lambda \tau} \right),
\end{equation}
which satisfies $\lambda_{\tau} \to \lambda$ as $\tau \downarrow 0$.

\begin{theorem}[$O(\tau)$ Convergence: Theorem \ref{linear_convergence_rate}, Theorem \ref{refined_convergence}, Theorem \ref{strong_gf_solution}, Theorem \ref{limiting_PDE}] \label{lower_order_intro}
Under the notation and assumptions of Assumption \ref{assume_rough} one has
\begin{equation}
W_2(\rho^{\tau}_t,\rho_t) \leq \|X(t) - \underline{X}_t^{\tau}\|_{\mathbb{H}} \leq
\begin{cases}
 \sqrt{3} \|X_0^{\tau} - \textnormal{Id}\|_{\mathbb{H}} 
    + \frac{\sqrt{33}}{2}\tau \| \nabla \phi^{\#}_{\rho_0}(X^{\tau}_0)\|_{\mathbb{H}} 
    , & \text{if } \lambda \geq 0,\\[1em]
\sqrt{3} e^{|\lambda_{\tau}| t} \|X_0^{\tau} - \textnormal{Id}\|_{\mathbb{H}}  
+ \sqrt{3} C(\lambda_{\tau}, t, \tau) \cdot \tau e^{|\lambda_{\tau}| t} \|\nabla \phi^{\#}_{\rho_0}(X_0^{\tau})\|_{\mathbb{H}}^2, & \text{if } \lambda < 0.
\end{cases} \label{rough_convg_intro}
\end{equation}
Here, the lower-order term $C(\lambda_{\tau}, t, \tau)$ is defined in \eqref{C_definition}, $\lambda_{\tau}$ is from \eqref{lambda_tau_def}, and $\underline{X}_t^{\tau} := X_n^{\tau}$ for $t \in [n\tau, (n+1)\tau)$ and $\rho_t^{\tau} := (\underline{X}_t^{\tau})_{\#}\rho_0$ and $\rho_t := (X(t))_{\#} \rho_0$, where $X(t)$ is the unique strong solution of \eqref{L2_Lagrangian_Flow}.
\smallskip

Furthermore, if $\nabla \phi^{\#}_{\rho_0}$ is $L$-smooth on $\mathbb{H}$, i.e.,
\[
\sup_{\xi_1 \neq \xi_2 \in \mathbb{H}} \frac{ \| \nabla \phi^{\#}_{\rho_0}(\xi_1) - \nabla \phi^{\#}_{\rho_0}(\xi_2) \|_{\mathbb{H}}}{\|\xi_1 - \xi_2\|_{\mathbb{H}}} \leq L,
\]
and if $\tau \le 1/L$, then the error decays exponentially in time whenever $\lambda > 0$:
\begin{equation}
W_2(\rho^{\tau}_t,\rho_t) \leq \|X(t) - \underline{X}_t^{\tau}\|_{\mathbb{H}} \leq \sqrt{3} e^{-|\lambda_{\tau,L}| t} \left( \|\textnormal{Id} - X_0^{\tau}\|_{\mathbb{H}} + \tilde{C}(\lambda, t, \tau) \cdot \tau \|\nabla \phi^{\#}_{\rho_0}(X_0^{\tau})\|_{\mathbb{H}} \right), \label{refined_decay_rates123}
\end{equation}
where $\tilde{C}(\lambda, t, \tau)$ is defined in Theorem~\ref{refined_convergence} and $\lambda_{\tau,L}$ is given in Lemma~\ref{refined_decay}. \smallskip
\end{theorem}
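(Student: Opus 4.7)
The plan is to adapt the $\lambda$-contractive EVI framework of \cite{ambrosio2005gradient} to the trapezoidal discretization on the Hilbert space $\mathbb{H}$. Writing $\phi := \phi^{\#}_{\rho_0}$ and $g_n := \nabla\phi(X^{\tau}_n)$, so that \eqref{trapezoid_rule123} reads $X^{\tau}_{n+1} - X^{\tau}_n = -\tfrac{\tau}{2}(g_{n+1}+g_n)$, I would start from the symmetric two-point identity
\begin{equation*}
\|X^{\tau}_{n+1} - y\|_{\mathbb{H}}^2 - \|X^{\tau}_n - y\|_{\mathbb{H}}^2 = -\tfrac{\tau}{2}\bigl\langle g_{n+1}+g_n,\; X^{\tau}_{n+1}+X^{\tau}_n - 2y\bigr\rangle_{\mathbb{H}},
\end{equation*}
valid for every $y\in\mathbb{H}$. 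Expanding into four inner products, applying the $\lambda$-convexity inequality $\langle \nabla\phi(u), u-y\rangle \geq \phi(u) - \phi(y) + \tfrac{\lambda}{2}\|u-y\|_{\mathbb{H}}^2$ at $u=X^{\tau}_n$ and $u=X^{\tau}_{n+1}$, and using $X^{\tau}_n - X^{\tau}_{n+1} = \tfrac{\tau}{2}(g_{n+1}+g_n)$ to rewrite the two cross-terms $\langle g_{n+1}, X^{\tau}_n-y\rangle$ and $\langle g_n, X^{\tau}_{n+1}-y\rangle$ (the middle $\langle g_{n+1},g_n\rangle$ products cancel), I would arrive at a discrete EVI of the form
\begin{equation*}
\|X^{\tau}_{n+1}-y\|_{\mathbb{H}}^2 \;\leq\; e^{-2\tau\lambda_\tau}\|X^{\tau}_n-y\|_{\mathbb{H}}^2 + c_\tau\bigl[2\phi(y) - \phi(X^{\tau}_n) - \phi(X^{\tau}_{n+1})\bigr] + d_\tau\bigl(\|g_n\|_{\mathbb{H}}^2 - \|g_{n+1}\|_{\mathbb{H}}^2\bigr),
\end{equation*}
with $c_\tau=O(\tau)$, $d_\tau=O(\tau^2)$, and the Cayley contraction factor $e^{-2\tau\lambda_\tau}$ from \eqref{lambda_tau_def}. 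The critical structural feature is that the residual $\|g_n\|^2 - \|g_{n+1}\|^2$ forms a \emph{perfect telescope} upon summation.

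I would then pair this discrete EVI with the continuous EVI $\tfrac{d}{dt}\tfrac{1}{2}\|X(t)-z\|_{\mathbb{H}}^2 + \tfrac{\lambda}{2}\|X(t)-z\|_{\mathbb{H}}^2 \leq \phi(z) - \phi(X(t))$, valid for any $z\in\mathbb{H}$ since $\phi$ is $\lambda$-convex and $\dot X = -\nabla\phi(X)$. Specifically, integrating the continuous EVI over $[n\tau,(n+1)\tau]$ with $z = X^{\tau}_{n+1}$, invoking the discrete EVI with $y$ a convex combination of $X(n\tau)$ and $X((n+1)\tau)$, and using the continuous chain rule $\tfrac{d}{dt}\phi(X(t)) = -\|\dot X(t)\|_{\mathbb{H}}^2$ to cancel the $\phi$-terms up to a trapezoidal quadrature residual of size $\tau^2\sup_{s\in[n\tau,(n+1)\tau]}\|\dot X(s)\|_{\mathbb{H}}^2$, I obtain a recursion
\begin{equation*}
\|X^{\tau}_{n+1}-X((n+1)\tau)\|_{\mathbb{H}}^2 \;\leq\; e^{-2\tau\lambda_\tau}\|X^{\tau}_n-X(n\tau)\|_{\mathbb{H}}^2 + R_n,
\end{equation*}
whose cumulative residual $\sum_{k=0}^{n-1}R_k$ is dominated by $\tau^2\|g_0\|_{\mathbb{H}}^2$ (from the telescope) plus a quadrature contribution of order $\tau\cdot n\tau\sup_s\|\nabla\phi(X(s))\|_{\mathbb{H}}^2$. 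An a priori estimate gives $\|g_n\|_{\mathbb{H}}, \|\dot X(s)\|_{\mathbb{H}} \lesssim e^{|\lambda_\tau| t}\|g_0\|_{\mathbb{H}}$ (monotone when $\lambda\geq 0$, exponentially growing when $\lambda<0$); this is exactly the mechanism behind the squared $\|g_0\|^2$ inside the $\lambda<0$ branch of \eqref{rough_convg_intro}. A discrete Gr\"onwall and the one-step correction $(\tau/2)\|g_0\|$ coming from the piecewise-constant interpolation $\underline X^{\tau}_t$ then yield \eqref{rough_convg_intro}.

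For the refined exponential decay \eqref{refined_decay_rates123} under $L$-smoothness and $\lambda>0$, the plan is to first establish a sharp quantitative decay lemma for the discrete gradient norm, namely $\|g_{n+1}\|_{\mathbb{H}} \leq e^{-\tau\lambda_{\tau,L}}\|g_n\|_{\mathbb{H}}$ whenever $\tau L \leq 1$. This follows by applying the same symmetric two-point energy identity to the \emph{increment} $X^{\tau}_{n+1} - X^{\tau}_n$ (equivalently, by comparing the trajectory with its one-step shift) and invoking $\lambda$-monotonicity of $\nabla\phi$ together with $L$-Lipschitzness to convert gradient increments into position increments, producing the Cayley factor at the level of $\|g_n\|_{\mathbb{H}}$ up to the CFL correction collected in Lemma~\ref{refined_decay}. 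Substituting this exponentially decaying gradient bound into $R_n$ makes $\sum R_k$ geometrically summable rather than merely telescoping, and retracing the Gr\"onwall argument with $-\lambda_{\tau,L}$ in place of $\lambda_\tau$ in the envelope delivers \eqref{refined_decay_rates123}.

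The main technical obstacle is the derivation of the discrete EVI with the \emph{sharp} Cayley contraction factor $e^{-2\tau\lambda_\tau}$. Unlike the implicit Euler scheme, where the analogous factor $1/(1+\lambda\tau)$ is inherited for free from the $(1+\lambda\tau)$-strong convexity of $\phi$'s Moreau envelope, the trapezoidal scheme is \emph{not} the Euler--Lagrange equation of any convex Moreau envelope of $\phi$; the Cayley factor must instead be produced by a precisely balanced cancellation in the cross-terms $\langle g_{n+1}, X^{\tau}_n-y\rangle + \langle g_n, X^{\tau}_{n+1}-y\rangle$, and this same cancellation is the only mechanism generating the telescoping residual needed to beat the generic $O(\sqrt{\tau})$ JKO rate. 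A secondary difficulty is the $\lambda<0$ regime, where the envelope $e^{|\lambda_\tau|t}$ amplifies every residual and forces the a priori gradient estimate to be propagated with the same envelope, producing the quadratic $\|g_0\|^2$ form in \eqref{rough_convg_intro}. Throughout, the time-step restriction $\lambda/2+1/\tau>0$ imposed in Assumption~\ref{assume_rough} is exactly what is needed for the variational problem \eqref{trapezoid_variational_form} defining $X^{\tau}_{n+1}$ to be strictly convex, and it enters every a priori estimate.
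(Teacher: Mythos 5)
Your derivation of the discrete EVI is correct and is essentially equivalent to the paper's Lemma \ref{discrete_EVI} (your polarization identity plus the cancellation of the $\langle g_{n+1},g_n\rangle$ cross terms reproduces, in a cleaner way, what the paper obtains from the minimizer property together with Lemma \ref{EVI_Helper}; the telescoping residual $\tfrac{\tau^2}{4}(\|g_n\|^2-\|g_{n+1}\|^2)$ is exactly the paper's $\tfrac{\tau}{4}G^\tau_t$ term). Your route then diverges from the paper's: you pair the discrete EVI with the continuous EVI of the exact flow, whereas the paper never touches the continuous solution directly --- it doubles the time step, proves $d_{\tau,\eta}(t;t)=O(\tau+\eta)$ (Theorems \ref{positive_d_convg} and \ref{neg_d_convg}), extracts $X(t)$ as the $\eta\downarrow 0$ limit, and only afterwards identifies it as the EVI/strong solution (Theorems \ref{solves_EVI}, \ref{strong_gf_solution}). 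Your variant is legitimate in principle (existence of the strong solution and its EVI follow from classical Hilbert-space theory under Assumption \ref{assume_rough}), and it would buy a shorter path to the error bound at the cost of not constructing the limit.

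The genuine gap is in the residual bookkeeping. You state that the cumulative residual $\sum_k R_k$ in the \emph{squared} error contains ``a quadrature contribution of order $\tau\cdot n\tau\,\sup_s\|\nabla\phi(X(s))\|_{\mathbb H}^2$.'' Taken at face value this is $O(\tau T)$ in $\|X^\tau_n-X(n\tau)\|^2_{\mathbb H}$, which after the square root yields only $O(\sqrt{\tau})$ --- precisely the suboptimal rate the paper is at pains to beat. To reach $O(\tau)$, \emph{every} per-step residual of size $O(\tau^2)$ must either telescope or carry a favorable sign when summed: the gradient-norm telescope you identify handles one such term, but the mismatch between the weights $\tau[2\phi(y)-\phi(X^\tau_n)-\phi(X^\tau_{n+1})]$ in the discrete EVI and $2\int_{t_n}^{t_{n+1}}[\phi(z)-\phi(X(s))]\,ds$ in the continuous one produces additional $O(\tau^2)$-per-step terms (a discrete energy increment $\tau[\phi(X^\tau_{n+1})-\phi(X^\tau_n)]$ and a one-sided quadrature error for $\int\phi(X(s))\,ds$) whose summability at order $O(\tau^2)$ must be argued separately --- via the monotonicity $\tfrac{d}{dt}\phi(X(t))=-\|\dot X\|^2\le 0$ and the almost-decrease of the discrete energy. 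This is exactly the content of the paper's Lemma \ref{R_bounds}, the computation $\int_{j\tau}^{(j+1)\tau}(\ell_\tau(s)-\tfrac12)\,ds=0$ in Corollary \ref{positive_d_convg}, and the weighted summation-by-parts estimates of Lemma \ref{estimates_lambda_negative}; as written, your bound contradicts the rate you claim to conclude. The same issue propagates to the refined estimate \eqref{refined_decay_rates123}: your gradient-decay lemma $\|g_{n+1}\|\le e^{-\tau\lambda_{\tau,L}}\|g_n\|$ is correct (it matches Lemmas \ref{distance_lower_bound} and \ref{refined_decay}), but geometric summability of the residuals again requires the per-step residual to be $O(\tau^2)e^{-2\lambda_{\tau,L}n\tau}$, not merely $O(\tau^2)$ times a bounded envelope, which must be verified term by term as in Lemma \ref{error_term_bounds}.
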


Observe that both $\lambda_{\tau}$ and $\lambda_{\tau,L}$ converge to the optimal rate $\lambda$ as $\tau \to 0$.
\smallskip

Our approach to Theorem \ref{lower_order_intro} is based on the EVI characterization of $\lambda$-convex gradient flows. For example, in Hilbert spaces \cite{ambrosio2021lectures}, a curve $x(t)$ solves the gradient flow of a $\lambda$-convex functional $f$ if and only if it satisfies the evolution variational inequality
\[
\frac{d}{dt} \left( \frac{1}{2} \|x(t) - y\|^2 \right) + \frac{\lambda}{2} \|x(t) - y\|^2 + f(x(t)) \leq f(y) \quad \text{for a.e. } t \in (0,\infty),
\]
for all $y$ in the domain of $f$. The key idea is to show that suitable interpolations of the numerical Lagrangian flow $X^{\tau}_n$ satisfy an approximate EVI with an error term of order $O(\tau^2)$. Then the $O(\tau)$ convergence rate is obtained by using a version of Grönwall's inequality from \cite{ambrosio2005gradient}.

\smallskip

The approximate EVI we derived for our trapezoid-rule method is in Theorem \ref{differential_inequality_thm} and follows from our Discrete EVI derived in Lemma \ref{discrete_EVI}. Deriving this discrete EVI with the correct $O(\tau^2)$ error term was a key technical challenge in our analysis. Compared to the implicit Euler and JKO schemes in \cite{ambrosio2005gradient}, our scheme's variational problem \eqref{trapezoid_variational_form} included the term
\[ \int_{\R^d} \langle \nabla \phi^{\#}_{\rho^{\tau}_0}(X^{\tau}_n),\xi \rangle d\rho_0(x). \] To control this term we had to carefully use both the convexity properties of $\phi^{\#}_{\rho_0}$ and the trapezoid scheme \eqref{trapezoid_rule123} (see Lemma \ref{EVI_Helper}). \smallskip

Another difficulty in the trapezoidal rule setting, compared to the implicit Euler scheme, is the lower bounds of $\|X^{\tau}_{n+1} - X^{\tau}_n\|^2_{\mathbb{H}}$ in terms of the gradient. In the implicit Euler case, this quantity simplifies to $\tau^2 \|\nabla \phi^{\#}_{\rho_0}(X^{\tau}_{n+1})\|^2_{\mathbb{H}}$. However, in the trapezoidal rule case, it becomes
\[
\frac{\tau^2}{4} \left\| \nabla \phi^{\#}_{\rho_0}(X^{\tau}_{n+1}) + \nabla \phi^{\#}_{\rho_0}(X^{\tau}_n) \right\|^2_{\mathbb{H}},
\]
which prevents us from establishing a lower bound of the form $\|X^{\tau}_{n+1} - X^{\tau}_n\|^2_{\mathbb{H}} \geq C \tau^2 \|\nabla \phi^{\#}_{\rho_0}(X^{\tau}_{n+1})\|^2_{\mathbb{H}}$ without assuming that $\nabla \phi^{\#}_{\rho_0}$ is sufficiently smooth. As a result, we were unable to recover exponentially decaying constants in the error estimate \eqref{rough_convg_intro} when $\lambda > 0$ without a smoothness assumption.

\smallskip

Beyond convergence rates, our scheme exhibits the following numerical stability properties:

\begin{theorem}[Numerical Stability: Lemma \ref{trapezoid_well_defined}, Lemma \ref{energy_almost_decreasing}, Lemma \ref{refined_decay}]
Under the notation and assumptions of Assumption \ref{assume_rough}. Then the energy is \emph{almost decreasing}, in the sense that
\[
\phi^{\#}_{\rho_0}(X^{\tau}_{n+1}) - \phi^{\#}_{\rho_0}(X^{\tau}_n)
\leq \frac{\tau}{4} \left( \|\nabla \phi^{\#}_{\rho_0}(X^{\tau}_n)\|^2_{\mathbb{H}} - \|\nabla \phi^{\#}_{\rho_0}(X^{\tau}_{n+1})\|^2_{\mathbb{H}} \right).
\]
Moreover, the gradient norm satisfies:
\[
\|\nabla \phi^{\#}_{\rho_0}(X^{\tau}_{n+1})\|^2_{\mathbb{H}} \leq \mathcal{C}(\lambda) \cdot \|\nabla \phi^{\#}_{\rho_0}(X^{\tau}_n)\|^2_{\mathbb{H}},
\]
where
\[
\mathcal{C}(\lambda) :=
\begin{cases}
1, & \text{if } \lambda \geq 0, \\
e^{2 |\lambda_{\tau}| \tau}, & \text{if } \lambda < 0,
\end{cases}
\]
and $\lambda_{\tau}$ is defined in \eqref{lambda_tau_def}. In particular, when $\lambda \geq 0$, the gradient norm is non-increasing across iterations.

\smallskip

Furthermore, if $\lambda \geq 0$ and $\nabla \phi^{\#}_{\rho_0}$ is $L$-\textnormal{Lipschitz}, then for $\tau \leq 1/L$, the energy is \emph{non-increasing}:
\[
\phi^{\#}_{\rho_0}(X^{\tau}_{n+1}) + \frac{\lambda}{2} ||X^{\tau}_{n+1}-X^{\tau}_n||_{\mathbb{H}} + \tau \left( 1 - \frac{L \tau}{2} \right) \|\nabla \phi^{\#}_{\rho_0}(X^{\tau}_{n+1})\|^2_{\mathbb{H}} \leq \phi^{\#}_{\rho_0}(X^{\tau}_n),
\]
and the \emph{gradient norm decays exponentially}:
\[
\|\nabla \phi^{\#}_{\rho_0}(X^{\tau}_{n+1})\|^2_{\mathbb{H}} \leq e^{-2 \lambda_{\tau,L} \tau} \|\nabla \phi^{\#}_{\rho_0}(X^{\tau}_n)\|^2_{\mathbb{H}},
\]
where $\lambda_{\tau,L}$ is defined in Lemma~\ref{refined_decay}.
\end{theorem}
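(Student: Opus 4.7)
The plan is to establish all four assertions by combining the $\lambda$-convexity inequalities for $\phi^{\#}_{\rho_0}$ with the algebraic identity provided by the trapezoidal scheme \eqref{trapezoid_rule123}. Throughout, abbreviate $g_k := \nabla\phi^{\#}_{\rho_0}(X^{\tau}_k)$, so the scheme reads $X^{\tau}_{n+1} - X^{\tau}_n = -\frac{\tau}{2}(g_n + g_{n+1})$, which in particular yields the key identity $\|g_n + g_{n+1}\|_{\mathbb{H}}^2 = \frac{4}{\tau^2}\|X^{\tau}_{n+1} - X^{\tau}_n\|_{\mathbb{H}}^2$ and $\langle g_n + g_{n+1}, X^{\tau}_{n+1} - X^{\tau}_n\rangle_{\mathbb{H}} = -\frac{2}{\tau}\|X^{\tau}_{n+1}-X^{\tau}_n\|_{\mathbb{H}}^2$.

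For the almost-monotonicity of the energy, I would apply the $\lambda$-convexity upper bound
\[
\phi^{\#}_{\rho_0}(X^{\tau}_{n+1}) - \phi^{\#}_{\rho_0}(X^{\tau}_n) \leq \langle g_{n+1}, X^{\tau}_{n+1} - X^{\tau}_n\rangle_{\mathbb{H}} - \tfrac{\lambda}{2}\|X^{\tau}_{n+1} - X^{\tau}_n\|_{\mathbb{H}}^2,
\]
then substitute the scheme and expand $\langle g_{n+1}, g_n\rangle$ by polarization. A direct algebraic manipulation shows that the gap between the right-hand side and the target $\frac{\tau}{4}(\|g_n\|_{\mathbb{H}}^2 - \|g_{n+1}\|_{\mathbb{H}}^2)$ equals $(\frac{1}{\tau} + \frac{\lambda}{2})\|X^{\tau}_{n+1} - X^{\tau}_n\|_{\mathbb{H}}^2$, which is non-negative under the step-size condition $\lambda/2 + 1/\tau > 0$ from Assumption \ref{assume_rough}. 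This gives part one.

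For the gradient-norm bound, I would start from the monotonicity consequence $\langle g_{n+1} - g_n, X^{\tau}_{n+1} - X^{\tau}_n\rangle_{\mathbb{H}} \geq \lambda \|X^{\tau}_{n+1} - X^{\tau}_n\|_{\mathbb{H}}^2$ of $\lambda$-convexity and insert the scheme identity on both sides, which produces
\[
\|g_{n+1}\|_{\mathbb{H}}^2 - \|g_n\|_{\mathbb{H}}^2 \leq -\tfrac{\lambda\tau}{2}\|g_n + g_{n+1}\|_{\mathbb{H}}^2.
\]
When $\lambda \geq 0$ this directly gives $\|g_{n+1}\|_{\mathbb{H}} \leq \|g_n\|_{\mathbb{H}}$. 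When $\lambda < 0$, I would bound $\|g_n + g_{n+1}\|_{\mathbb{H}}^2 \leq 2(\|g_n\|_{\mathbb{H}}^2 + \|g_{n+1}\|_{\mathbb{H}}^2)$ and rearrange to get $(1-|\lambda|\tau)\|g_{n+1}\|_{\mathbb{H}}^2 \leq (1+|\lambda|\tau)\|g_n\|_{\mathbb{H}}^2$; recognizing $\tfrac{1+|\lambda|\tau}{1-|\lambda|\tau} = e^{2|\lambda_\tau|\tau}$ from the definition \eqref{lambda_tau_def} of $\lambda_\tau$ concludes this part.

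The refined estimates under $L$-smoothness and $\lambda \geq 0$, $\tau \leq 1/L$, are the most delicate step. For the energy inequality, I would combine the $\lambda$-convexity upper bound at $X^{\tau}_{n+1}$ with the descent lemma $\phi^{\#}_{\rho_0}(X^{\tau}_{n+1}) \leq \phi^{\#}_{\rho_0}(X^{\tau}_n) + \langle g_n, X^{\tau}_{n+1}-X^{\tau}_n\rangle_{\mathbb{H}} + \tfrac{L}{2}\|X^{\tau}_{n+1}-X^{\tau}_n\|_{\mathbb{H}}^2$, rewriting $g_n = -\tfrac{2}{\tau}(X^{\tau}_{n+1}-X^{\tau}_n) - g_{n+1}$ from the scheme to expose the term $\langle g_{n+1}, X^{\tau}_{n+1}-X^{\tau}_n\rangle_{\mathbb{H}}$ and then absorbing the cross term using $\tau \leq 1/L$. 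For the exponential gradient decay, I would upgrade the triangle-inequality step used above by instead invoking the co-coercivity inequality $\langle g_{n+1}-g_n, X^{\tau}_{n+1}-X^{\tau}_n\rangle_{\mathbb{H}} \geq \tfrac{1}{L}\|g_{n+1}-g_n\|_{\mathbb{H}}^2$, available for convex $L$-smooth functionals, which sharpens the factor to $e^{-2\lambda_{\tau,L}\tau}$ with the refined rate $\lambda_{\tau,L}$ of Lemma \ref{refined_decay}. The main obstacle I anticipate is the bookkeeping needed to extract precisely the coefficient $\tau(1-L\tau/2)\|g_{n+1}\|_{\mathbb{H}}^2$: naive application of the scheme identity produces control only in terms of $\|g_n + g_{n+1}\|_{\mathbb{H}}^2$, so one must carefully convert between $\|g_{n+1}\|_{\mathbb{H}}^2$ and $\|g_n+g_{n+1}\|_{\mathbb{H}}^2$ using the $L$-Lipschitz bound to finish without losing the desired coefficient.
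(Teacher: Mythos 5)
Your plan is essentially sound and, for the first two assertions, lands on the same algebra as the paper, just entered through a slightly different door. For the almost-decrease of the energy, the paper tests the variational problem \eqref{trapezoid_var_problem} with the competitor $X_n^{\tau}$ and then uses \eqref{trapezoid_implicit}, which yields $\tfrac{\tau}{4}(\|g_n\|^2_{\mathbb{H}}-\|g_{n+1}\|^2_{\mathbb{H}})$ exactly; your route via the $\lambda$-convexity tangent-line inequality at $X_{n+1}^{\tau}$ plus polarization gives the same bound with the extra nonpositive term $-(\tfrac{1}{\tau}+\tfrac{\lambda}{2})\|X_{n+1}^{\tau}-X_n^{\tau}\|^2_{\mathbb{H}}$, and your sign check against Assumption \ref{assume_rough} is correct. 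Your gradient-norm argument (strong monotonicity of $\nabla\phi^{\#}_{\rho_0}$ plus $\|g_n+g_{n+1}\|^2\le 2(\|g_n\|^2+\|g_{n+1}\|^2)$ for $\lambda<0$) is exactly Lemma \ref{gradient_along_flow}.

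The one place where your proposal stops short of a proof is the pair of refined estimates under $L$-smoothness: you correctly identify that everything hinges on converting $\langle g_n,g_{n+1}\rangle_{\mathbb{H}}$ (equivalently $\|g_n+g_{n+1}\|^2_{\mathbb{H}}$, i.e.\ $\|X_{n+1}^{\tau}-X_n^{\tau}\|^2_{\mathbb{H}}$) into a lower bound by $\|g_{n+1}\|^2_{\mathbb{H}}$, but you leave that conversion as an acknowledged obstacle rather than carrying it out. The paper closes it in Lemma \ref{distance_lower_bound} by the elementary chain
\[
\langle g_{n+1},g_n\rangle_{\mathbb{H}}=\|g_n\|^2_{\mathbb{H}}+\langle g_{n+1}-g_n,g_n\rangle_{\mathbb{H}}\ge\|g_n\|^2_{\mathbb{H}}-L\|X_{n+1}^{\tau}-X_n^{\tau}\|_{\mathbb{H}}\|g_n\|_{\mathbb{H}}\ge(1-L\tau)\|g_n\|^2_{\mathbb{H}}\ge(1-L\tau)\|g_{n+1}\|^2_{\mathbb{H}},
\]
using $\|X_{n+1}^{\tau}-X_n^{\tau}\|_{\mathbb{H}}\le\tau\|g_n\|_{\mathbb{H}}$ and the already-proved monotonicity of the gradient norm; once this is in hand, both the energy decrease (from the tangent-line inequality you already wrote, or from the paper's \eqref{original_EVI}) and the exponential decay (from strong monotonicity plus $\|X_{n+1}^{\tau}-X_n^{\tau}\|^2_{\mathbb{H}}\ge\tau^2(1-\tfrac{L\tau}{2})\|g_{n+1}\|^2_{\mathbb{H}}$) follow in two lines. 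Your alternative of invoking Baillon--Haddad co-coercivity does in fact work here — combined with $\|g_n\|_{\mathbb{H}}\ge\|g_{n+1}\|_{\mathbb{H}}$ and $\tau\le 1/L$ it gives the even stronger bound $\langle g_n,g_{n+1}\rangle_{\mathbb{H}}\ge\|g_{n+1}\|^2_{\mathbb{H}}$, hence $\|g_n\|^2_{\mathbb{H}}\ge(1+2\lambda\tau)\|g_{n+1}\|^2_{\mathbb{H}}\ge e^{2\lambda_{\tau,L}\tau}\|g_{n+1}\|^2_{\mathbb{H}}$ — but note that co-coercivity by itself carries no information about $\lambda$, so it cannot ``sharpen the factor'' on its own; it must still be paired with the $\lambda$-strong monotonicity inequality, which your write-up does not make explicit. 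The descent lemma, by contrast, is not needed anywhere: the energy inequality already falls out of the tangent-line bound once the cross-term estimate is available.
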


We also have obtained a classical stability result of our scheme in Lemma \ref{classical_stability_lemma} and refined it in the $L$-Lipschitz setting in Lemma \ref{refined_stability}. \smallskip

The rest of the paper is structured as follows. In Section \ref{prelim}, we review the differential structure of $\mathbf{W}_2(\R^d)$ and the differential and convexity properties of the lifted energy. In Section \ref{stability_section}, we introduce our second-order scheme and derive its stability properties. Section \ref{higher_order_convg} uses calculus-based arguments to show that the scheme converges at a second-order rate when the energy functional is sufficiently smooth. In Section \ref{Discrete_EVI_Section}, we prove that for less regular energy functionals, the scheme still converges at least linearly using the discrete EVI method introduced in \cite{ambrosio2005gradient}. Then, in Section \ref{further_stability}, we show that when the energy is $\lambda$-displacement convex with $\lambda > 0$ and $L$-smooth, the scheme achieves asymptotically sharp exponential decay rates in time for the gradient norm and the error derived in Section \ref{Discrete_EVI_Section}. Finally in Section \ref{sec:experiments} we present numerical experiments of our scheme \eqref{trapezoid_variational_form} using the energy functional \eqref{model_energy_functional_123}.

\smallskip

\noindent \textbf{Acknowledgements.} R.C. was partially supported by NSF grant DMS-2153254, DMS-2342349, and the Dissertation Year Fellowship from the University of California, Los Angeles during this research project.  M.J. is partially supported supported by NSF grant DMS-2400641.  R.C. would also like to thank Professors Inwon Kim and Wilfrid Gangbo for helpful discussions, and also Professor Dejan Slepčev for insightful conversations that led to Section \ref{sec:experiments}. The authors thank Alp\'{a}r M\'esz\'aros for helpful discussions concerning the Wasserstein Hessian. A preliminary version of some of these results appeared in the Ph.D. thesis of the first author \cite{chu2025optimal}.

\section{Preliminaries on the Differential Structure of \texorpdfstring{$\mathbf{W}_2$}{W2}}
 \label{prelim}

\subsection{Notation} 

Given any measure $\mu$, we define the Hilbert space
\[
L^2(\R^d;\mu) := \left \{ \xi : \R^d \rightarrow \R^d : \int_{\R^d}|\xi(x)|^2 d\mu(x) < \infty \right \},
\]
where the inner product is given by
\[
\langle \xi_1,\xi_2 \rangle_{L^2(\R^d;\mu)} := \int_{\R^d} \langle \xi_1, \xi_2 \rangle d\mu(x),
\]
where $\langle x,y \rangle$ denotes the dot product in $\R^d$, and we let $||\xi||_{L^2(\R^d;\mu)}$ represent the norm on $L^2(\R^d;\mu)$ induced by the $L^2(\R^d;\mu)$ inner product.

\smallskip

% Similarly, for any $T \in [0,+\infty]$, we define the space-time Hilbert space
% \[
% L^2( [0,T] \times \R^d;\mu) := \left \{ \vec{\xi} : [0,T] \times \R^d \rightarrow \R^d : \int_0^T \int_{\R^d} |\vec{\xi}(t,x)|^2 d\mu(x) dt < \infty  \right \},
% \]
% which we equip with the inner product
% \[
% \langle \vec{\xi}_1,\vec{\xi}_2 \rangle_{L^2([0,T] \times \R^d;\mu)} := \int_0^{T} \int_{\R^d} \langle \vec{\xi}_1, \vec{\xi}_2 \rangle d\mu(x) dt,
% \]
% and its associated norm as $||\vec{\xi}||_{L^2([0,T] \times \R^d;\mu)}$.

\smallskip

Let $\mathcal{P}_2(\R^d)$ be the space of probability measures on $\R^d$ with finite second moments.

\smallskip

We frequently consider the \textit{push-forward} of probability measures under (Borel) measurable maps. Given a measurable map $\xi : \R^d \rightarrow \R^d$ and a probability measure $\mu$, the push-forward of $\mu$ by $\xi$ is defined as
\[
\xi_{\#} \mu(A) := \mu(\xi^{-1}(A)) \quad \text{for all Borel sets } A \subset \R^d.
\]
\noindent If $\mu \in \mathcal{P}_2(\R^d)$ and $\xi \in L^2(\R^d;\mu)$, then
\[
\int_{\R^d} |x|^2 d(\vec{\xi}_{\#} \mu) = \int_{\R^d} |\xi(x)|^2 d\mu(x) < \infty.
\]
Thus, we conclude that $\xi_{\#} \mu \in \mathcal{P}_2(\R^d)$ whenever $\xi \in L^2(\R^d;\mu)$ and $\mu \in \mathcal{P}_2(\R^d)$.

\smallskip

We also frequently use the following composition rule for push-forwards: given any Borel measurable maps $\xi_1,\xi_2 : \R^d \rightarrow \R^d$,
\[
(\xi_1 \circ \xi_2)_{\#} \mu = (\xi_1)_{\#} \big((\xi_2)_{\#} \mu\big).
\]

\smallskip

The Wasserstein-2 metric space $\mathbf{W}_2(\R^d)$ is given by $(\mathcal{P}_2(\R^d),W_2)$, where the Wasserstein-2 metric is 
\begin{equation}
W_2(\mu,\nu) := \inf_{\pi \in \Pi[\mu,\nu] } \iint_{\R^d \times \R^d} |x-y|^2 d\pi(x,y),
\label{W2_Def}
\end{equation}
where $\Pi[\mu,\nu]$ denotes the set of probability measures on $\R^d \times \R^d$ with left marginal $\mu$ and right marginal $\nu$ (see \cite{villani2021topics,santambrogio2015optimal}).

\smallskip

We now establish some useful and well-known inequalities for this metric.

\begin{lemma}  \label{lipschitz_control}
Let $\xi : \mathbb{R}^d \rightarrow \mathbb{R}^d$ be $L$-Lipschitz, meaning that
\[
|\xi(x) - \xi(y)| \leq L|x - y|, \quad \forall x, y \in \mathbb{R}^d.
\]
Then, for any $(\mu, \nu) \in (\mathcal{P}_{2}(\mathbb{R}^d))^2$, we have
\[
W_2(\xi_{\#} \mu, \xi_{\#} \nu) \leq L W_2(\mu, \nu).
\]
\end{lemma}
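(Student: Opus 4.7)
The plan is to use the standard coupling argument: produce an explicit admissible coupling between $\xi_{\#}\mu$ and $\xi_{\#}\nu$ from an optimal coupling between $\mu$ and $\nu$, and then use the Lipschitz bound pointwise on its support.

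First, I would invoke the existence of an optimal coupling $\pi \in \Pi[\mu,\nu]$ attaining the infimum in \eqref{W2_Def}; this is standard since the cost $|x-y|^2$ is lower semicontinuous and $\mathcal{P}_2(\mathbb{R}^d)$ is tight under finite second moments. Next, I would define the map $\Xi : \mathbb{R}^d \times \mathbb{R}^d \to \mathbb{R}^d \times \mathbb{R}^d$ by $\Xi(x,y) := (\xi(x), \xi(y))$ and consider $\tilde\pi := \Xi_{\#}\pi$. A direct check on rectangles shows that $\tilde\pi$ has left marginal $\xi_{\#}\mu$ and right marginal $\xi_{\#}\nu$, so $\tilde\pi \in \Pi[\xi_{\#}\mu, \xi_{\#}\nu]$.

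The conclusion then follows by bounding the Wasserstein cost of the candidate $\tilde\pi$: using the definition of push-forward and the pointwise Lipschitz bound,
\[
W_2^2(\xi_{\#}\mu, \xi_{\#}\nu) \;\leq\; \iint |x-y|^2 \, d\tilde\pi(x,y) \;=\; \iint |\xi(x)-\xi(y)|^2 \, d\pi(x,y) \;\leq\; L^2 \iint |x-y|^2 \, d\pi(x,y) \;=\; L^2 W_2^2(\mu,\nu).
\]
Taking square roots yields the claim. The only mild subtlety to mention is measurability of $\Xi$, which follows because $\xi$ is Borel measurable (as a Lipschitz map it is continuous, hence Borel), and that $\xi_{\#}\mu, \xi_{\#}\nu \in \mathcal{P}_2(\mathbb{R}^d)$, which follows from the bound $\int|\xi(x)|^2 d\mu \leq 2L^2\int|x|^2 d\mu + 2|\xi(0)|^2 < \infty$ (and similarly for $\nu$). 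There is no real obstacle here; this is a routine coupling computation, and the main step is simply to recognize that push-forward is compatible with product maps, i.e.\ $(\xi,\xi)_{\#}\pi$ has the correct marginals.
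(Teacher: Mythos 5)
Your proposal is correct and follows exactly the same route as the paper: the paper's proof is the one-line observation that $\pi \in \Pi[\mu,\nu]$ implies $(\xi,\xi)_{\#}\pi \in \Pi[\xi_{\#}\mu,\xi_{\#}\nu]$, and you have simply written out the standard coupling computation and measurability checks that this observation implicitly relies on.
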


\begin{proof}
This follows from the observation that if $\pi \in \Pi[\mu, \nu]$, then $(\xi, \xi)_{\#} \pi \in \Pi[\xi_{\#} \mu, \xi_{\#} \nu]$.
\end{proof}

\begin{lemma} \label{same_measure_bound}
Fix $\mu \in \mathcal{P}_2(\mathbb{R}^d)$ and let $\xi_1, \xi_2 \in L^2(\mathbb{R}^d; \mu)$. Then,
\[
W_2^2((\xi_1)_{\#} \mu, (\xi_2)_{\#} \mu) 
\leq \int_{\mathbb{R}^d} |\xi_1(x) - \xi_2(x)|^2 \, d\mu(x) 
= \|\xi_1 - \xi_2\|^2_{L^2(\mathbb{R}^d; \mu)}.
\]
\end{lemma}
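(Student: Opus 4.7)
The plan is to prove the inequality by exhibiting an explicit transport plan between $(\xi_1)_{\#}\mu$ and $(\xi_2)_{\#}\mu$ whose quadratic cost equals $\|\xi_1 - \xi_2\|_{L^2(\mathbb{R}^d;\mu)}^2$, and then invoking the fact that $W_2^2$ is the infimum of such costs over $\Pi[(\xi_1)_{\#}\mu, (\xi_2)_{\#}\mu]$.

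First, I would define the candidate plan $\pi := (\xi_1, \xi_2)_{\#}\mu$, i.e., the pushforward of $\mu$ under the map $x \mapsto (\xi_1(x), \xi_2(x))$ from $\mathbb{R}^d$ into $\mathbb{R}^d \times \mathbb{R}^d$. The fact that $\xi_1, \xi_2 \in L^2(\mathbb{R}^d;\mu)$ ensures that $\pi$ has finite second moment, so $\pi \in \mathcal{P}_2(\mathbb{R}^d \times \mathbb{R}^d)$. Next, I would verify that $\pi$ is an admissible transport plan: for any Borel set $A \subset \mathbb{R}^d$, the composition rule gives $\pi(A \times \mathbb{R}^d) = \mu((\xi_1, \xi_2)^{-1}(A \times \mathbb{R}^d)) = \mu(\xi_1^{-1}(A)) = (\xi_1)_{\#}\mu(A)$, and similarly for the right marginal. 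Hence $\pi \in \Pi[(\xi_1)_{\#}\mu, (\xi_2)_{\#}\mu]$.

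With admissibility in hand, I would apply the change of variables formula for pushforwards to evaluate the cost of $\pi$:
\[
\iint_{\mathbb{R}^d \times \mathbb{R}^d} |x - y|^2 \, d\pi(x, y)
= \int_{\mathbb{R}^d} |\xi_1(x) - \xi_2(x)|^2 \, d\mu(x)
= \|\xi_1 - \xi_2\|_{L^2(\mathbb{R}^d;\mu)}^2.
\]
The desired inequality then follows immediately from the definition \eqref{W2_Def} of $W_2^2$ as an infimum over all admissible couplings.

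There is no substantive obstacle here; the argument is essentially a one-line consequence of the variational definition of $W_2$ combined with the pushforward composition rule. The only care needed is in verifying the measurability of the map $x \mapsto (\xi_1(x), \xi_2(x))$, which is immediate since both components are Borel measurable, and in ensuring the finite second moment of $\pi$, which follows from the $L^2$ hypothesis on $\xi_1$ and $\xi_2$.
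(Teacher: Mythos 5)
Your proposal is correct and takes essentially the same approach as the paper: both exhibit the coupling $(\xi_1,\xi_2)_{\#}\mu$, verify it lies in $\Pi[(\xi_1)_{\#}\mu,(\xi_2)_{\#}\mu]$, and conclude from the infimum definition of $W_2^2$. You simply spell out the marginal verification and the change-of-variables step that the paper leaves implicit.
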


\begin{proof}
By assumption, both measures $\xi_1{}_{\#} \mu$ and $\xi_2{}_{\#} \mu$ belong to $\mathcal{P}_2(\mathbb{R}^d)$. The result then follows by observing that the coupling $(\xi_1, \xi_2)_{\#} \mu$ belongs to $\Pi[\xi_1{}_{\#} \mu, \xi_2{}_{\#} \mu]$, and applying the definition of the Wasserstein-2 distance via \eqref{W2_Def}.
\end{proof}

\subsection{Derivatives on Wasserstein Space} \label{calculus_Wasserstein}

We present two notions of derivatives for $\phi$, following the definitions in \cite[Section 2]{cardaliaguet2019master}. The concept of the Wasserstein derivative used here coincides with their notion of the intrinsic derivative.

\smallskip

\begin{definition}[First Variation] 
Let $\phi : \mathcal{P}_2(\mathbb{R}^d) \to \mathbb{R}$. We say that $\phi \in C^1(\mathcal{P}_2(\mathbb{R}^d); \mathbb{R})$ if there exists a function 
\[
\frac{\delta \phi}{\delta \mu} : \mathcal{P}_2(\mathbb{R}^d) \times \mathbb{R}^d \to \mathbb{R}
\]
satisfying the following conditions:
\begin{enumerate}
    \item  The function $\frac{\delta \phi}{\delta \mu}$ is jointly continuous on $\mathbf{W}_2(\R^d) \times \mathbb{R}^d$
    \item For any $\nu \in \mathcal{P}_2(\mathbb{R}^d)$, the function $x \mapsto \frac{\delta \phi}{\delta \mu}(\nu,x)$ has at most quadratic growth.
    \item  For any $\rho, \rho' \in \mathcal{P}_2(\mathbb{R}^d)$, we have
    \begin{equation} \label{eq:first_variation}
        \lim_{\varepsilon \to 0} \frac{\phi((1-\varepsilon) \rho + \varepsilon \rho') - \phi(\rho)}{\varepsilon} = \int_{\mathbb{R}^d} \frac{\delta \phi}{\delta \mu}(\rho,x) \, d(\rho' - \rho)(x).
    \end{equation}
\end{enumerate}

\noindent The function $\frac{\delta \phi}{\delta \mu}$ is called the \textit{first variation} of $\phi$.
\end{definition}

\noindent Since the first variation is defined only up to an additive constant, we impose the normalization
\[
\int_{\R^d} \frac{\delta \phi}{\delta \mu}(\rho,y) d\rho(y) = 0, \quad \forall \rho \in \mathcal{P}_2(\R^d).
\]
\noindent The first variation also satisfies the fundamental theorem of calculus property (see \cite{cardaliaguet2019master}):
\begin{equation}
    \phi(\rho') - \phi(\rho) = \int_0^1 \int_{\R^d} \frac{\delta \phi}{\delta \mu}((1-s)\rho + s \rho',x) d(\rho'-\rho)(x)ds.
    \label{first_variation}
\end{equation}

\smallskip

Next, we define the Wasserstein gradient, which extends the classical gradient notion to the metric space $\mathbf{W}_2(\R^d)$.

\smallskip

    \begin{definition}[Wasserstein gradient]  \label{Wasserstein_Grad_Def}
We say that $\phi \in C^1(\mathbf{W}_2(\mathbb{R}^d); \mathbb{R})$ if:
\begin{enumerate}
    \item $\phi \in C^1(\mathcal{P}_2(\mathbb{R}^d); \mathbb{R})$.
    \item The first variation $\frac{\delta \phi}{\delta \mu}(\rho, x)$ is differentiable in $x$ for all $\rho \in \mathcal{P}_2(\mathbb{R}^d)$.
    \item The gradient $\nabla_x \frac{\delta \phi}{\delta \mu}(\rho, x)$ is jointly continuous on $\mathbf{W}_2(\mathbb{R}^d) \times \mathbb{R}^d$.
    \item For any $\rho \in \mathcal{P}_2(\R^d)$, the mapping $y \mapsto \nabla_x \frac{\delta \phi}{\delta \mu}(\rho, y)$ has at most quadratic growth.
\end{enumerate}

\noindent The \textit{Wasserstein gradient} of $\phi$ is defined as
\[
\nabla_{\mathbf{W}} \phi(\rho, x) := \nabla_x \frac{\delta \phi}{\delta \mu}(\rho, x).
\]
\end{definition}

Bounds on the Wasserstein gradient provide Lipschitz control for $\phi$. Indeed, in \cite[Section 2]{cardaliaguet2019master}, the dual formulation of the $W_1$ metric was used to establish the following Lipschitz property.

\smallskip

\begin{lemma}[\cite{cardaliaguet2019master}] \label{Lipschitz}
Let $\phi \in C^1(\mathbf{W}_2(\R^d);\R^d)$ and assume that
\[
L := \sup_{(\rho,x) \in \mathcal{P}_2(\R^d) \times \R^d} |\nabla_{\mathbf{W}} \phi(\rho,x)| < \infty.
\]
\noindent Then, for any $\mu, \nu \in \mathcal{P}_2(\R^d)$ and $p \geq 1$, we have
\begin{equation}
    |\phi(\mu) - \phi(\nu)| \leq L W_p(\mu,\nu).
    \label{lipschitz_w1}
\end{equation}
\end{lemma}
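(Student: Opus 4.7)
The plan is to combine the fundamental theorem of calculus property \eqref{first_variation} for the first variation with the Kantorovich--Rubinstein duality formula for $W_1$, and then use the monotonicity $W_1 \le W_p$ for $p \ge 1$.

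First I would fix $\mu,\nu \in \mathcal{P}_2(\R^d)$ and apply \eqref{first_variation} to obtain
\[
\phi(\mu) - \phi(\nu) = \int_0^1 \int_{\R^d} \frac{\delta \phi}{\delta \mu}(\rho_s, x)\, d(\mu - \nu)(x)\, ds, \qquad \rho_s := (1-s)\nu + s\mu.
\]
Since $\rho_s \in \mathcal{P}_2(\R^d)$, the hypothesis that $\sup_{(\rho,x)} |\nabla_{\mathbf W} \phi(\rho,x)| = L < \infty$ means that $y \mapsto \frac{\delta \phi}{\delta \mu}(\rho_s, y)$ has gradient bounded pointwise by $L$, so by the mean value theorem this map is $L$-Lipschitz on $\R^d$ (the quadratic growth condition guarantees finiteness, so no issues with integrability against $\mu - \nu$ since both have finite second moments).

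Next I would invoke the Kantorovich--Rubinstein duality for $W_1$ on $\mathcal{P}_1(\R^d) \supset \mathcal{P}_2(\R^d)$, which gives
\[
\left| \int_{\R^d} f\, d(\mu - \nu) \right| \le \mathrm{Lip}(f)\, W_1(\mu,\nu)
\]
for any Lipschitz function $f$. Applying this with $f(x) = \frac{\delta \phi}{\delta \mu}(\rho_s, x)$ uniformly in $s \in [0,1]$, and then integrating over $s$, yields
\[
|\phi(\mu) - \phi(\nu)| \le \int_0^1 L\, W_1(\mu,\nu)\, ds = L\, W_1(\mu,\nu).
\]

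Finally I would upgrade from $W_1$ to $W_p$: by Jensen's (or Hölder's) inequality applied to any optimal coupling $\pi \in \Pi[\mu,\nu]$ for $W_p$,
\[
W_1(\mu,\nu) \le \left( \iint |x-y|^p\, d\pi \right)^{1/p} = W_p(\mu,\nu),
\]
which completes the proof of \eqref{lipschitz_w1}. The only mildly delicate point is verifying that Kantorovich--Rubinstein applies (i.e.\ that the test function is integrable against both marginals), but this is automatic since $\mu,\nu$ have finite second moments and $\frac{\delta \phi}{\delta \mu}(\rho_s, \cdot)$ has at most linear growth (being Lipschitz on $\R^d$). Thus the argument is essentially a direct transfer of the classical Lipschitz-by-gradient-bound statement on $\R^d$ to the Wasserstein setting via the first-variation calculus.
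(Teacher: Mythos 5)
Your argument is correct and follows the same route the paper attributes to \cite{cardaliaguet2019master}: the fundamental theorem of calculus for the first variation \eqref{first_variation}, the uniform gradient bound making $x \mapsto \frac{\delta\phi}{\delta\mu}(\rho_s,x)$ $L$-Lipschitz, Kantorovich--Rubinstein duality for $W_1$, and the monotonicity $W_1 \le W_p$. The paper itself only cites this result and notes it is proved via the dual formulation of $W_1$, which is exactly what you have done.
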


The definition of Wasserstein differentiability at a point $\mu \in \mathcal{P}_2(\mathbb{R}^d)$ can be found in \cite[Definition 3.11]{gangbo2019differentiability}. When $\phi \in C^1(\mathbf{W}_2(\mathbb{R}^d); \mathbb{R})$ and its Wasserstein gradient $\nabla_{\mathbf{W}} \phi$ satisfies certain mild growth conditions, then $\phi$ is Wasserstein differentiable on $\mathcal{P}_2(\mathbb{R}^d)$ (see Proposition~\ref{differentiability_equivalence} and \cite[Corollary 3.22]{gangbo2019differentiability}). The growth condition bounds on $\nabla_{\mathbf{W}} \phi$ are used to control the associated error terms arising in the differentiation. \smallskip

Next, we introduce the notion of partial Hessians in Wasserstein space that correspond to taking the Wasserstein gradient twice:

\smallskip

\begin{definition} [Partial Wasserstein Hessian] 
We say that $\phi \in C^2(\mathbf{W}_2; \mathbb{R})$ if $\phi \in C^1(\mathbf{W}_2; \mathbb{R})$ and, for all $x \in \mathbb{R}^d$, the mapping
\[
\rho \mapsto \nabla_{\mathbf{W}} \phi(\rho, x)
\]
belongs to $C^1(\mathbf{W}_2; \mathbb{R}^d)$, componentwise. Moreover, we assume that the partial Wasserstein Hessian
\[
\nabla^2_{\mathbf{W}} \phi : \mathcal{P}_2(\mathbb{R}^d) \times \mathbb{R}^d \times \mathbb{R}^d \to \mathbb{R}^{d \times d},
\]
defined as the Wasserstein gradient of $\nabla_{\mathbf{W}} \phi(\rho, x)$ with respect to $\rho$, is jointly continuous on $\mathcal{P}_2(\mathbb{R}^d) \times \mathbb{R}^d \times \mathbb{R}^d$. \smallskip

In particular, one has that
\[ \nabla^2_{\mathbf{W}} \phi(\rho,x,x') = \left[ \nabla_{x'} \frac{\delta}{\delta \mu} \nabla_x \frac{\delta}{\delta \mu} \left( \phi \right) \right](\rho,x,x')  \]

\end{definition}

\smallskip

By Lemma \ref{Lipschitz}, if $\phi \in C^2(\mathbf{W}_2(\R^d);\R)$ and $\nabla^2_{\mathbf{W}} \phi$ is uniformly bounded in $\mathcal{P}_2(\R^d) \times \R^d \times \R^d$, then the Wasserstein gradient satisfies the Lipschitz property
\begin{equation}
\sup_{\mu,\nu \in \mathcal{P}_2(\R^d)} \frac{|\nabla_{\mathbf{W}} \phi(\mu,x)-\nabla_{\mathbf{W}} \phi(\nu,x)|}{W_p(\mu,\nu)} \leq C, \label{gradient_Lipschitz_Wp}
\end{equation}
\noindent for all $p \geq 1$ where $C > 0$ is independent of $x$ and $p$. We will see later that the partial Wasserstein Hessian and spatial gradient of the Wasserstein gradient will allow us to control the gradient of the lifted energy functional. See in particular Example \ref{C11_Lifted_Functional}.\smallskip

\begin{remark}
A positive semidefinite \emph{partial} Wasserstein Hessian does \emph{not} guarantee displacement 
convexity of an energy functional. The partial Hessian is obtained by applying the Wasserstein 
gradient twice, and therefore does not coincide with the (full) Wasserstein Hessian considered in 
\cite[Definition~3.1]{chow2019partial}. By contrast, a positive semidefinite full Wasserstein Hessian 
does imply displacement convexity (see \cite[Lemma~4.1]{parker2024some}).
More concretely, the second time derivative of the energy along Wasserstein geodesics depends not 
only on the partial Hessian $\nabla^2_{\mathbf{W}}\phi$, but also on
$\nabla_x \nabla_{\mathbf{W}}\phi$, as shown in \eqref{second_deriv_phi}.
\end{remark}

Now we show an energy example functional that is of class $C^2(\mathbf{W}_2(\R^d);\R$). The functional is well suited for numerical approximation, since both the energy and its gradients can be computed in a fully discrete form when the probability measure is a finite sum of Dirac masses.

\begin{example} 
Fix $ f \in C_b^2(\mathbb{R}) $, $ V \in C_b^2(\mathbb{R}^d) $, $ W \in C_b^2(\mathbb{R}^d) $, and $ \chi \in C_c^2(\mathbb{R}^d) $. Further assume that $ W $ and $ \eta $ are even. Define $ U : \mathcal{P}_2(\mathbb{R}^d) \to \mathbb{R} $ by  
\[
U(\rho) := \int_{\mathbb{R}^d} f((\rho \star \chi)(x)) \, d\rho(x) + \int_{\mathbb{R}^d} V(x) \, d\rho(x) + \frac{1}{2} \int_{\mathbb{R}^d} \int_{\mathbb{R}^d} W(x-y) \, d\rho(x) \, d\rho(y),
\]
where $ \star $ denotes convolution. Then, the following properties hold:
\begin{enumerate}
    \item $ U \in C^2(\mathbf{W}_2(\mathbb{R}^d); \mathbb{R}) $.
    
    \item The Wasserstein gradient $ \nabla_{\mathbf{W}} U(\rho, x) $ is uniformly Lipschitz in $ x $, i.e.,
    \[
    \sup_{\rho \in \mathcal{P}_2(\mathbb{R}^d)} \sup_{x \neq y} \frac{|\nabla_{\mathbf{W}} U(\rho, x) - \nabla_{\mathbf{W}} U(\rho, y)|}{|x - y|} < \infty.
    \]

    \item The partial Wasserstein Hessian $ \nabla^2_{\mathbf{W}} U $ is uniformly bounded, i.e., 
    \[
    \sup_{(\rho, x) \in \mathcal{P}_2(\mathbb{R}^d) \times \mathbb{R}^d} |\nabla^2_{\mathbf{W}} U(\rho, x)| < \infty.
    \]

    % \item If $ \mu \in \mathcal{P}_2(\mathbb{R}^d) $ has no atoms, then the functional on $ L^2(\mathbb{R}^d;\mu)$ defined by
    % \[ U_{\mu}^{\#}(\xi) := U(\xi_{\#} \mu) \]
    % is of class $ C^{1,1}( L^2(\mathbb{R}^d;\mu); L^2(\mathbb{R}^d;\mu)) $.
\end{enumerate}

\label{C2_functionals}

\end{example}

\begin{proof}

We first establish that $U \in C^1\left(\mathbf{W}_2(\mathbb{R}^d); \mathbb{R}\right)$ by computing its Wasserstein gradient. First we compute its first variation. For the convolved internal energy, we let
\[ \mathcal{F}(\rho) := \int_{\R^d} f( (\rho \star \chi)(x) ) d\rho(x). \]
Fix $\rho, \rho' \in \mathcal{P}_2(\mathbb{R}^d)$, and define $\sigma := \rho' - \rho$. For $\varepsilon \neq 0$, set $\rho_\varepsilon := \rho + \varepsilon \sigma$. Then
\[
\lim_{\varepsilon \to 0} \frac{\mathcal{F}(\rho_{\e}) - \mathcal{F}(\rho)}{\e}
= \int_{\R^d}  f( (\rho \star \chi)(x) )d\sigma(x) + \int_{\R^d} \left( f'(\rho \star \chi)\rho \star \chi \right) d\sigma(x) 
\] Here we are using the notation for any $g \in L^{\infty}(\R^d)$
\[ \left((g \rho) \star \chi\right)(x) := \int_{\R^d} \chi(x-y) g(y) d\rho(y) \]

Hence, by standard identities of the first variations in Wasserstein space for the potential and interaction energy (see, e.g., \cite{santambrogio2017euclidean}), we conclude that
\[
\frac{\delta U}{\delta \rho}(\rho, x)
=
f((\rho \star \chi)(x)) + \left[f'\left(\rho \star \chi\right)\rho \star \chi\right](x)
+ V(x)
+ (W \star \rho)(x).
\]

Now to obtain the Wasserstein gradient, we differentiate in $x$ to see that
\begin{equation}
\nabla_{\mathbf{W}} U(\rho, x) = \int_{\mathbb{R}^d} \!\!\bigl[ f'\bigl((\rho * \chi)(x)\bigr) + f'\bigl((\rho * \chi)(z)\bigr) \bigr] \nabla \chi(x - z)\, d\rho(z) + \nabla V(x) + (\nabla W \star \rho)(x). \label{U_wasserstein_gradient}
\end{equation}

\noindent The joint continuity of $ \nabla_{\mathbf{W}} U $ follows from the smoothness of $ f' $, $ \nabla \chi $, $ \nabla V $, and $ \nabla W $ along with the observation that $\rho \mapsto  (\chi \star \rho)$  and $\rho \mapsto (\nabla \chi \star \rho)$ are Lipschitz continuous on the $\mathbf{W}_1(\mathbb{R}^d)$ metric uniformly in $x$ due the dual problem formulation of the $\mathbf{W}_1$ metric.  \smallskip

Property $(2)$ follows from the assumptions $ \nabla^2 V, \nabla^2 W, \nabla^2 \chi \in L^\infty(\mathbb{R}^d) $, $ f' \in L^\infty(\mathbb{R}) $, and the compact support of $ \chi $. 

\smallskip

\noindent For the partial Wasserstein Hessian, we first compute the first variation of \eqref{U_wasserstein_gradient} with $x$ fixed. For the internal energy term, we set $\rho_{\chi} := \rho \star \chi$. Then
\begin{align*}
\lim_{\varepsilon \to 0} 
\frac{\nabla_{\mathbf{W}} \mathcal{F}(\rho_{\varepsilon},x) 
      - \nabla_{\mathbf{W}} \mathcal{F}(\rho,x)}{\varepsilon}
&= \int_{\mathbb{R}^d} \!\!\bigl[
      f'(\rho_\chi(x)) + f'(\rho_\chi(z))
    \bigr]\nabla \chi(x - z)\, d\sigma(z) \\
&\quad + 
  \iint_{\mathbb{R}^d \times \mathbb{R}^d}
    \!\!\bigl[
      f''(\rho_\chi(x))\,\chi(x - v)
      + f''(\rho_\chi(z))\,\chi(z - v)
    \bigr]\nabla \chi(x - z)\, d\rho(z)\, d\sigma(v).
\end{align*}

\noindent For the interaction energy term,
\[
\lim_{\varepsilon \to 0}\frac{(\nabla W \star \rho_{\varepsilon})(x)
      - (\nabla W \star \rho)(x)}{\varepsilon}
= \int_{\mathbb{R}^d} \nabla W(x - x')\, d\sigma(x').
\]
Hence, the first variation of the Wasserstein gradient is
\[
\begin{aligned}
\frac{\delta\, \nabla_{\mathbf{W}} U}{\delta \mu}(\rho; x, x')
&= \nabla W(x - x') 
 + \bigl[f'(\rho_\chi(x)) + f'(\rho_\chi(x'))\bigr]\, \nabla \chi(x - x') \\
&\quad + \int_{\mathbb{R}^d} \!\!\bigl[
      f''(\rho_\chi(x))\, \chi(x - x')
      + f''(\rho_\chi(z))\, \chi(z-x')
    \bigr] \nabla \chi(x - z)\, d\rho(z).
\end{aligned}
\]

Differentiating in $x'$ gives the $(i,j)$-th entry of the partial Wasserstein Hessian:
\[
\begin{aligned}
-[\nabla^2_{\mathbf{W}} U]_{i,j}(\rho, x, x')
&= [\nabla^2 W(x - x')]_{i,j} \\[0.3em]
&\quad + \int_{\mathbb{R}^d}
      \!\!\bigl[
        f''(\rho_\chi(z))\, \partial_j \chi(z - x')
        + f''(\rho_\chi(x))\, \partial_j \chi(x - x')
      \bigr] \partial_i \chi(x - z)\, d\rho(z) \\[0.3em]
&\quad + \bigl[
      f'(\rho_\chi(x)) + f'(\rho_\chi(x'))
    \bigr] \partial^2_{i,j}\chi(x - x')
  - f''(\rho_\chi(x'))\, \partial_i \chi(x - x')\, (\rho * \partial_j \chi)(x').
\end{aligned}
\]

% \int_{\mathbb{R}^d} \partial_i \chi(z - x') \partial_j \chi(x-z) f''(\rho \star \chi(z)) \, dz + \partial^2_{i,j} W(x - x').

\noindent So to check Property (2), it suffices to check joint continuity of $ \nabla^2_{\mathbf{W}} U$. This follows from the smoothness of $ \chi $, $ W $, and $ f'' $ Property (3) follows from the boundedness of all the first and second derivative terms.
\end{proof}

% For the fourth property, we first apply Theorem~\ref{differentiability_equivalence} to deduce that when $\mu$ has no atoms
% \[
% \nabla U_{\mu}^{\#}(\xi) = \nabla_{\mathbf{W}} U((\xi)_{\#}\mu,\xi).
% \] Then $ \nabla U_{\mu}^{\#}(\xi)$ being Lipschitz on $L^2(\R^d;\mu)$ follows from
%  the triangle inequality, the second property of 
% Example \ref{C2_functionals}, and \eqref{gradient_Lipschitz_Wp} 
% along with the inequality
% \[
% W_{2}((\xi_1)_{\#}\mu,(\xi_2)_{\#}\mu) \leq 
% \|\xi_1 \;-\;\xi_2\|_{L^2(\mathbb{R}^d;\,\mu)}.\] \end

\subsection{Convexity and Differentiability of the Lifted Functional}

In this section, we study the differential and convexity properties of the lifted functionals $\phi_{\mu}^{\#} : L^2(\mathbb{R}^d; \mu) \to \mathbb{R}$ arising in Mean Field Games. The results of \cite{gangbo2022global,gangbo2019differentiability,carmona2018probabilistic} establish that the convexity and differentiability of $ \phi $ in the Wasserstein space $ \mathbf{W}_2 $ are closely linked to the convexity and differentiability of $ \phi_{\mu}^{\#} $ on $ L^2(\mathbb{R}^d; \mu) $.

\smallskip

\begin{definition}[Lifted Energy Functional]  \label{lift_def}
Let $\phi : \mathcal{P}_2(\mathbb{R}^d) \to \mathbb{R}$ and $\mu \in \mathcal{P}_2(\mathbb{R}^d)$. The \textnormal{lift} of $\phi$ \textnormal{by} $\mu$ is the functional $\phi^{\#}_{\mu} : L^2(\mathbb{R}^d; \mu) \to \mathbb{R}$ defined by
\[
\phi^{\#}_{\mu}(\xi) := \phi(\xi_{\#} \mu).
\]
\noindent Furthermore, if $\phi^{\#}_{\mu}$ is Fréchet differentiable at $\xi \in L^2(\mathbb{R}^d;\mu)$, we denote its Fréchet derivative in $L^2(\mathbb{R}^d; \mu)$ by $\nabla \phi^{\#}_{\mu}$.
\end{definition}

We begin by stating the connection between convexity $\phi^{\#}_{\mu}$ and $\phi$. We first recall the standard definition of $\lambda$-convexity for functions over Hilbert spaces. 

\smallskip
\begin{definition}[Flat Convexity]
Given any $\mu \in \mathcal{P}_2(\mathbb{R}^d)$, we say that $\phi^{\#}_{\mu} : L^2(\mathbb{R}^d; \mu) \to \mathbb{R}$, as defined in Definition~\ref{lift_def}, is $\lambda$-convex if
\[
\phi^{\#}_{\mu}((1 - t)\xi_1 + t \xi_2)
\leq (1 - t) \phi^{\#}_{\mu}(\xi_1) + t \phi^{\#}_{\mu}(\xi_2)
- \frac{\lambda}{2} t(1 - t) \|\xi_1 - \xi_2\|^2_{L^2(\mathbb{R}^d; \mu)}
\]
for all $\xi_1, \xi_2 \in L^2(\mathbb{R}^d; \mu)$ and $t \in [0,1]$.
\end{definition}

Let us also recall the notion of displacement convexity.

\begin{definition}[Displacement Convexity] \label{geodesic_convex}
A functional $\phi : \mathcal{P}_2(\mathbb{R}^d) \to \mathbb{R}$ is said to be $\lambda$-\textnormal{displacement convex} if, for any $\mu, \nu \in \mathcal{P}_2(\mathbb{R}^d)$ and any \textnormal{constant-speed geodesic} $(\gamma_t)_{t \in [0,1]}$ with $\gamma_0 = \mu$ and $\gamma_1 = \nu$, we have
\[
\phi(\gamma_t) \leq (1 - t) \phi(\mu) + t \phi(\nu) - \frac{\lambda}{2} t(1 - t) W_2^2(\mu, \nu), \quad \forall t \in [0,1].
\]
\end{definition}

Now we formally derive the connection between the partial Wasserstein Hessian and displacement convexity:

\begin{remark}[Displacement Convexity and Wasserstein Derivatives] \label{Hessian_displacement}
Let $\mu \in \mathcal{P}_2(\R^d)$ be absolutely continuous with respect to the Lebesgue measure.  
Fix $\nu \in \mathcal{P}_2(\R^d)$, and let $T$ denote the optimal transport map from $\mu$ to $\nu$.  Then the constant-speed geodesic $(\gamma_t)_{t \in [0,1]}$ from $\mu$ to $\nu$ is given by
\[
\gamma_t := \big((1-t)\mathrm{Id} + tT\big)_{\#} \mu.
\]

Assume the energy functional $\phi : \mathcal{P}_2(\R^d) \to \R$ is sufficiently smooth.  Then, denoting the velocity field by $v(x) := T(x) - x$ and $T_t(x) = (1-t)x+ tT(x)$, we have the following expressions for the first and second derivatives of $\phi$ along the geodesic:
\begin{equation}
\frac{d}{dt} \phi(\gamma_t) 
= \int_{\R^d} \left\langle \nabla_{\mathbf{W}} \phi(\gamma_t, T_t(x)),\, v(x) \right\rangle  d\mu(x), \label{first_deriv_phi}
\end{equation}
\begin{align}
\frac{d^2}{dt^2} \phi(\gamma_t)
&= \int_{\R^d}
\Big\langle
\nabla_x \nabla_{\mathbf{W}} \phi(\gamma_t, T_t(x))\, v(x),
\, v(x)
\Big\rangle
\, d\mu(x) \notag \\
&\quad
+ \iint_{\R^d \times \R^d}
\Big\langle
\nabla_{\mathbf{W}}^2 \phi(\gamma_t, T_t(x), T_t(z))\, v(z),
\, v(x)
\Big\rangle
\, d\mu(z)\, d\mu(x).
\label{second_deriv_phi}
\end{align}

Observe from \eqref{second_deriv_phi} that if the operator norm of $\nabla_x \nabla_{\mathbf{W}} \phi(\rho,x,z)$ and $\nabla^2_{\mathbf{W}} \phi(\rho,x,z)$ are uniformly bounded by $C$, then $\phi$ is $-2C$ displacement convex and $2C$ displacement concave when the initial measure is absolutely continuous. Indeed, by Cauchy-Schwarz
\[ \left| \frac{d^2}{dt^2} \phi(\gamma_t) \right| \leq  C \int_{\R^d} |v(x)|^2 d\mu(x) + C \iint_{\R^d \times \R^d} |v(z)| \cdot |v(x)| d\mu(z)d\mu(x).  \] So as $W_2(\mu,\nu) = ||v(x)||_{L^2(\R^d;\mu)}$, we conclude from Jensen's inequality that
\[  \left| \frac{d^2}{dt^2} \phi(\gamma_t) \right| \leq  C W_2^2(\mu,\nu) + C ||v(x)||_{L^1(\R^d;\mu)}^2 \leq 2C W_2^2(\mu,\nu).  \] 
We now focus on formally computing these derivatives. To compute the first time derivative, we use \eqref{first_variation} to see that
\[ \phi(\gamma_{t+h}) - \phi(\gamma_t) \approx \int_{\R^d} \frac{\delta \phi}{\delta \mu}( \gamma_t,x) d(\gamma_{t+h}-\gamma_t)(x) = \int_{\R^d} \left( \frac{\delta \phi}{\delta \mu}(\gamma_t, T_{t+h}) - \frac{\delta \phi}{\delta \mu}(\gamma_t,T_t) \right) d\mu.  \] Then by Taylor expanding in the spatial coordinate, we obtain that
\[ \approx h\int_{\R^d} \langle \nabla_{\mathbf{W}}  \phi(\gamma_t,T_t(x)), v(x) \rangle d\mu(x),   \] which shows the equality of the first time derivative. \smallskip

To see the second time derivative equality, let $F(\gamma_t) := \frac{d}{dt} \phi(\gamma_t),$ then by \eqref{first_deriv_phi}
\[ F(\gamma_{t+h})-F(\gamma_t) = (I) + (II) \] 
\small{ \[ = \int_{\R^d} \langle \nabla_{\mathbf{W}} \phi(\gamma_{t+h},T_{t+h}(x)) - \nabla_{\mathbf{W}} \phi(\gamma_{t+h},T_t(x)),v(x)\rangle d\mu(x)   + \int_{\R^d} \langle \nabla_{\mathbf{W}} \phi(\gamma_{t+h},T_{t}(x)) - \nabla_{\mathbf{W}} \phi(\gamma_{t},T_t(x)),v(x)\rangle d\mu(x).  \] } Then observe that by Taylor expanding in the spatial coordinate
\[ (I) \approx h\int_{\R^d} \langle  \nabla_x \nabla_{\mathbf{W}} \phi(\gamma_{t}, T_t(x)) v(x),v(x) \rangle d\mu(x)  \] and by using \eqref{first_variation}
\[ (II) \approx \iint_{\R^d \times \R^d} \langle \frac{\delta \nabla_{\mathbf{W}} \phi}{\delta \mu}(\gamma_t,T_t(x),z), v(x)      \rangle d(\gamma_{t+h}-\gamma_t)(z) d\mu(x)  \]
\[ = \iint_{\R^d \times \R^d} \langle \frac{\delta \nabla_{\mathbf{W}} \phi}{\delta \mu}(\gamma_t,T_t(x),T_{t+h}(z)) - \frac{\delta \nabla_{\mathbf{W}} \phi}{\delta \mu}(\gamma_t,T_t(x),T_{t}(z)) , v(x)      \rangle d\mu(z) d\mu(x), \]  so by Taylor expanding in the last spatial argument, we see that
\[ \approx h\iint_{\R^d \times \R^d} \langle \nabla^2_{\mathbf{W}} \phi(\gamma_t,T_t(x),T_t(z)) v(z),v(x) \rangle d\mu(z)d\mu(x). \] This implies the formula for the second time derivative.
\end{remark}

% \begin{remark}[Second Time Derivative of $U$] \label{d2U_dt2}
% \textcolor{red}{We now apply Remark~\ref{Hessian_displacement} to compute the formal second time derivative of the functional $U$ from Example~\ref{C2_functionals} along a constant-speed geodesic $(\gamma_t)$.  With the same notation from Remark~\ref{Hessian_displacement}, we formally have that}

% \begin{align*}
% \frac{d^2}{dt^2} U(\gamma_t) 
% &= \int_{\R^d} \left\langle \nabla^2 V(T_t(x))\, v(x),\, v(x) \right\rangle \, d\mu(x) \\
% &\quad + \frac{1}{2} \iint_{\R^d \times \R^d} 
% \left\langle \nabla^2 W(T_t(x) - T_t(z))\, (v(x) - v(z)),\, (v(x) - v(z)) \right\rangle 
% \, d\mu(x)\, d\mu(z) \\
% &\quad + \int_{\R^d} 
% \left\langle \left(f'(\gamma_t \star \chi) \star \nabla^2 \chi\right)(T_t(x))\, v(x),\, v(x) \right\rangle 
% \, d\mu(x) \\
% &\quad - \sum_{i,j=1}^{d} \iiint_{\R^d \times \R^d \times \R^d} 
% \p_i \chi(u - T_t(z))\, \p_j \chi(T_t(x) - u)\, f''(\gamma_t \star \chi)(u)\, v_j(z)\, v_i(x)\, 
% du\, d\mu(x)\, d\mu(z).
% \end{align*}
% \end{remark}

\smallskip

The notion of geodesic convexity of $\phi$ and convexity of the lift $\phi^{\#}_{\mu}$ are equivalent when $ \phi $ is continuous, as established in \cite{gangbo2022global}.

\begin{theorem} \label{convex_equivalence} \cite[Lemma 3.6]{gangbo2022global}
Let $ \phi : \mathcal{P}_2(\mathbb{R}^d) \to \mathbb{R} $ be \textnormal{continuous} and assume that $\mu \in \mathcal{P}_2(\R^d)$ has no atoms. Then the following statements are equivalent:
\begin{center}
\begin{enumerate}
    \item $\phi$ is $\lambda$-displacement convex on $\mathcal{P}_2(\mathbb{R}^d)$.
    \item $\phi_{\mu}^{\#}$ is $\lambda$-convex on $L^2(\mathbb{R}^d; \mu)$.
\end{enumerate}
\end{center}

\end{theorem}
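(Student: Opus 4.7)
The plan is to exploit the atomlessness of $\mu$ to set up a correspondence between elements of $L^2(\mathbb{R}^d;\mu)$ and probability measures or couplings in $\mathcal{P}_2(\mathbb{R}^d)$. I would first establish the following preliminary lifting property: for an atomless $\mu \in \mathcal{P}_2(\mathbb{R}^d)$, every $\nu \in \mathcal{P}_2(\mathbb{R}^d)$ is of the form $\xi_{\#}\mu$ for some $\xi \in L^2(\mathbb{R}^d;\mu)$, and moreover every probability measure $\pi$ on $\mathbb{R}^d \times \mathbb{R}^d$ with finite second-moment marginals can be written as $(\xi_0,\xi_1)_{\#}\mu$ for some pair $(\xi_0,\xi_1)$. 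Both follow from the fact that an atomless Polish probability space is measure-isomorphic to $([0,1], \mathrm{Leb})$, combined with the inverse-CDF parametrization of distributions on $\mathbb{R}^d$.

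For direction $(2) \Rightarrow (1)$, I would take any $\nu_0, \nu_1 \in \mathcal{P}_2(\mathbb{R}^d)$ and a constant-speed geodesic $(\gamma_t)_{t \in [0,1]}$. Every such geodesic arises from an optimal plan $\pi \in \Pi_o(\nu_0, \nu_1)$ via $\gamma_t = ((1-t)x + ty)_{\#}\pi$. Using the preliminary fact, lift $\pi = (\xi_0,\xi_1)_{\#}\mu$, so that $(\xi_i)_{\#}\mu = \nu_i$, $\gamma_t = ((1-t)\xi_0 + t\xi_1)_{\#}\mu$, and by optimality of $\pi$ one has $\|\xi_0-\xi_1\|^2_{L^2(\mathbb{R}^d;\mu)} = \int|x-y|^2\,d\pi = W_2^2(\nu_0,\nu_1)$. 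Applying the $\lambda$-convexity of $\phi^{\#}_{\mu}$ at $(\xi_0,\xi_1)$ then yields the $\lambda$-displacement convexity inequality directly.

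The direction $(1) \Rightarrow (2)$ is the delicate one. Given $\xi_0, \xi_1 \in L^2(\mathbb{R}^d;\mu)$, set $\nu_i := (\xi_i)_{\#}\mu$ and $\beta := (\xi_0,\xi_1)_{\#}\mu \in \Pi(\nu_0,\nu_1)$. The curve $\rho_t := ((1-t)\xi_0 + t\xi_1)_{\#}\mu$ coincides with the Wasserstein geodesic from $\nu_0$ to $\nu_1$ exactly when $\beta$ is an optimal plan; for a general $\beta$ it is not a geodesic, so displacement convexity of $\phi$ cannot be invoked directly on $\rho_t$. My plan is to approximate $\xi_0, \xi_1$ by step functions $\xi_0^n, \xi_1^n$ that are constant on a fine equal-mass partition $\{A_k^n\}_{k=1}^{N_n}$ of $\mathbb{R}^d$ with $\mu(A_k^n) = 1/N_n$, which exists precisely because $\mu$ is atomless. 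This reduces the problem to empirical measures supported on $N_n$ atoms, where the curve $\rho^n_t$ can be analyzed by combinatorially re-pairing atoms to produce genuine discrete Wasserstein geodesics, and the identity $\|\xi_0^n - \xi_1^n\|^2_{L^2(\mathbb{R}^d;\mu)} = \frac{1}{N_n} \sum_k |\xi_0^n(A_k^n) - \xi_1^n(A_k^n)|^2$ exposes exactly how the extra cost of the non-optimal pairing compensates for the departure of $\rho_t$ from the geodesic. Finally, continuity of $\phi$ on $\mathbf{W}_2(\mathbb{R}^d)$, together with the convergences $\rho_t^n \to \rho_t$ in $\mathbf{W}_2$ and $\|\xi_0^n - \xi_1^n\|^2 \to \|\xi_0 - \xi_1\|^2$, passes the discrete inequality to the limit.

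The main obstacle is exactly this mismatch between linear interpolation in $L^2$ and geodesic interpolation in $\mathbf{W}_2$: the lift-convexity inequality concerns curves that are generically not geodesics, whereas displacement convexity is a hypothesis only along true geodesics. Atomlessness of $\mu$ (used both to construct equal-mass partitions and to realize arbitrary plans as joint push-forwards) and continuity of $\phi$ (used to pass to the limit) are the two ingredients that together bridge this gap.
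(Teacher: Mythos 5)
Your direction $(2)\Rightarrow(1)$ is correct and complete in outline: lifting an optimal plan as $(\xi_0,\xi_1)_{\#}\mu$ (possible since an atomless $\mu$ on a Polish space is measure-isomorphic to $([0,1],\mathrm{Leb})$) makes the displacement-convexity inequality an immediate consequence of the $\lambda$-convexity of the lift, because $\|\xi_0-\xi_1\|_{L^2(\mathbb{R}^d;\mu)}^2=W_2^2(\nu_0,\nu_1)$ for that particular lift. The problem is the direction $(1)\Rightarrow(2)$, where your plan names the central difficulty but does not resolve it. Saying that one can ``combinatorially re-pair atoms to produce genuine discrete Wasserstein geodesics'' and that ``the extra cost of the non-optimal pairing compensates for the departure of $\rho_t$ from the geodesic'' is a restatement of the claim to be proved, not an argument. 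Concretely, after discretizing to $\nu_0=\frac1N\sum\delta_{a_i}$, $\nu_1=\frac1N\sum\delta_{b_i}$ with the (generally non-optimal) pairing $a_i\leftrightarrow b_i$, the curve $\rho_t=\frac1N\sum\delta_{(1-t)a_i+tb_i}$ is only \emph{piecewise} geodesic: at times where two particles with different velocities cross, the induced pairing ceases to be optimal on any interval containing that time in its interior (already in $d=1$ with $a=(0,1)$, $b=(1,0)$). Displacement convexity then yields convexity of $t\mapsto\phi(\rho_t)-\tfrac{\lambda}{2}t^2\|\xi_0-\xi_1\|^2$ only on each subinterval between crossings, and a continuous function that is convex on each piece of a finite partition need not be convex globally unless one also controls the one-sided derivatives at the junctions. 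Your sketch supplies no mechanism for this gluing, nor for why the surplus $\|\xi_0-\xi_1\|^2_{L^2}-W_2^2(\nu_0,\nu_1)\ge 0$ enters with the right sign for both $\lambda>0$ and $\lambda<0$. This implication is exactly the nontrivial content of \cite[Lemma 3.6]{gangbo2022global}, and it cannot be dispatched in a sentence.

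It is also worth noting that the paper does not reprove this equivalence at all: it invokes the cited lemma for the uniform reference measure $m$ on $[0,1]^d$ and only proves the \emph{transfer} to a general atomless $\mu$, by producing a Borel map $T$ with $T_{\#}\mu=m$ and observing that $\xi\mapsto\xi\circ T$ is an isometric, push-forward-preserving embedding of $L^2(\mathbb{R}^d;m)$ test data into $L^2(\mathbb{R}^d;\mu)$, so $\lambda$-convexity of $\phi^{\#}_\mu$ and of $\phi^{\#}_m$ are equivalent. If you intend a self-contained proof, you must either carry out the discrete re-pairing argument in full (handling crossing times and the derivative matching) or adopt the paper's reduction and cite the known result for $m$.
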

\begin{proof}
Let $m$ be the uniform probability measure on $[0,1]^d$. The result was proven in \cite[Lemma 3.6]{gangbo2022global} for $m$. To extend this to a general $\mu \in \mathcal{P}_2(\mathbb{R}^d)$ with no atoms, we claim that $\phi^{\#}_m$ is $\lambda$-convex on $L^2(\mathbb{R}^d; m)$ if and only if $\phi^{\#}_\mu$ is $\lambda$-convex on $L^2(\mathbb{R}^d; \mu)$.
\smallskip

We first show that $\lambda$-convexity of $\phi^{\#}_\mu$ implies $\lambda$-convexity of $\phi^{\#}_m$. Since $\mu \in \mathcal{P}_2(\mathbb{R}^d)$ is atomless and $\mathbb{R}^d$ is a Polish space, there exists a measurable map $T$ such that $T_{\#} \mu = m$ (see \cite[Page 379]{carmona2018probabilistic}). Because $m \in \mathcal{P}_2(\R^d)$, we have that $T \in L^2(\mathbb{R}^d; \mu)$. For $i \in \{1,2\}$, fix $\xi_i \in L^2(\mathbb{R}^d; m)$ and note that $\xi_i \circ T \in L^2(\mathbb{R}^d; \mu)$. Since $\phi^{\#}_\mu$ is $\lambda$-convex, we obtain
\[
\phi^{\#}_\mu\big((1-t) \xi_1 \circ T + t \xi_2 \circ T\big) \leq (1-t) \phi^{\#}_\mu(\xi_1 \circ T) + t \phi^{\#}_\mu(\xi_2 \circ T) - \frac{\lambda}{2} t(1-t) \|\xi_1 \circ T - \xi_2 \circ T\|^2_{L^2(\mathbb{R}^d; \mu)}.
\]
Since $T_{\#} \mu = m$, this simplifies to
\[
\phi^{\#}_m\big((1-t) \xi_1 + t \xi_2\big) \leq (1-t) \phi^{\#}_m(\xi_1) + t \phi^{\#}_m(\xi_2) - \frac{\lambda}{2} t(1-t) \|\xi_1 - \xi_2\|^2_{L^2(\mathbb{R}^d; m)},
\]
which is precisely the definition of $\lambda$-convexity for $\phi^{\#}_m$.

\smallskip

The reverse implication follows by the same argument with the roles of $\mu$ and $m$ interchanged.
\end{proof}

Now we focus on derivatives of $\phi^{\#}_{\mu}$. Under mild growth conditions on $\nabla_{\mathbf{W}} \phi$, the lift is differentiable on $L^2(\R^d;\mu)$:

\begin{proposition} \cite[Proposition 5.48]{carmona2018probabilistic}  \label{differentiability_equivalence} 
Assume that $\mu \in \mathcal{P}_2(\mathbb{R}^d)$ have no atoms. Suppose $\phi \in C^1(\mathbf{W}_2(\mathbb{R}^d);\mathbb{R})$ is such that for any bounded $\mathcal{K} \subset \mathcal{P}_2(\R^d)$
\begin{enumerate}
    \item $x \mapsto \nabla_{\mathbf{W}} \phi(\rho, x)$ has at most linear growth, uniformly for $\rho \in \mathcal{K}$.
    \item $x \mapsto \frac{\delta \phi}{\delta \mu}(\rho, x)$ has at most quadratic growth, uniformly for $\rho \in \mathcal{K}$.
\end{enumerate}
Then the lift $\phi^{\#}_{\mu}$ is Fréchet differentiable on $L^2(\mathbb{R}^d; \mu)$, with
\[
\nabla \phi^{\#}_{\mu}(\xi)(x) = \nabla_{\mathbf{W}} \phi(\xi_{\#} \mu, \xi(x)) \quad \mu \textnormal{ almost everywhere}.
\]
\end{proposition}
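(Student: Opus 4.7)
The plan is to reduce Fréchet differentiability of the lift at $\xi \in L^2(\R^d;\mu)$ to a dominated convergence argument in $L^2(\R^d;\mu)$, by combining the fundamental theorem of calculus \eqref{first_variation} with a spatial Taylor expansion of $\frac{\delta\phi}{\delta\mu}$.

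First, I would fix an increment $\eta \in L^2(\R^d;\mu)$ and set $\rho := \xi_{\#}\mu$, $\tilde\rho := (\xi+\eta)_{\#}\mu$, and $\mu_s := (1-s)\rho + s\tilde\rho$. Applying \eqref{first_variation} and rewriting the integral against $\tilde\rho - \rho$ as an integral against $\mu$ via push-forward yields
\begin{equation*}
\phi^{\#}_{\mu}(\xi+\eta) - \phi^{\#}_{\mu}(\xi) = \int_0^1 \int_{\R^d} \Bigl[\frac{\delta\phi}{\delta\mu}(\mu_s,\xi(x)+\eta(x)) - \frac{\delta\phi}{\delta\mu}(\mu_s,\xi(x))\Bigr] d\mu(x)\, ds.
\end{equation*}
Since $\phi \in C^1(\mathbf{W}_2;\R)$, the first variation is $C^1$ in $x$ with spatial gradient $\nabla_{\mathbf{W}}\phi(\mu_s,\cdot)$, so a one-dimensional fundamental theorem of calculus in $x$ recasts the bracket as $\int_0^1 \langle \nabla_{\mathbf{W}}\phi(\mu_s,\xi+r\eta),\eta\rangle dr$. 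Here the quadratic-growth hypothesis (2) is exactly what ensures that the first-variation integrals above are absolutely integrable for $\xi,\eta \in L^2(\R^d;\mu)$.

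Next, I would subtract the candidate derivative $\int\langle\nabla_{\mathbf{W}}\phi(\rho,\xi),\eta\rangle d\mu$ and apply Cauchy-Schwarz in $x$ to bound the remainder by $\|\eta\|_{L^2(\R^d;\mu)}\, R(\eta)$, where
\begin{equation*}
R(\eta)^2 := \int_0^1\!\!\int_0^1\!\!\int_{\R^d} \bigl|\nabla_{\mathbf{W}}\phi(\mu_s,\xi+r\eta) - \nabla_{\mathbf{W}}\phi(\rho,\xi)\bigr|^2 d\mu \, dr \, ds.
\end{equation*}
Fréchet differentiability at $\xi$ with the claimed derivative then follows once we show $R(\eta_n)\to 0$ along every sequence $\eta_n \to 0$ in $L^2(\R^d;\mu)$.

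The main technical step, and the place where both growth hypotheses are simultaneously deployed, is this dominated convergence claim. Lemma~\ref{same_measure_bound} supplies $W_2(\mu_{s,n},\rho) \le \|\eta_n\|_{L^2(\R^d;\mu)}$ uniformly in $s$, so the measures $\mu_{s,n}$ remain in a $\mathbf{W}_2$-bounded set $\mathcal{K}$. Passing to a subsequence along which $\eta_n \to 0$ pointwise $\mu$-a.e., the joint continuity of $\nabla_{\mathbf{W}}\phi$ on $\mathbf{W}_2(\R^d) \times \R^d$ forces the integrand of $R(\eta_n)^2$ to vanish a.e.\ for each $r,s$. The linear-growth hypothesis (1) then yields
\begin{equation*}
|\nabla_{\mathbf{W}}\phi(\mu_{s,n},\xi+r\eta_n)|^2 \le C_{\mathcal{K}}\bigl(1+|\xi|^2+|\eta_n|^2\bigr)
\end{equation*}
with $C_{\mathcal{K}}$ independent of $n,r,s$. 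The delicate point, which I expect to be the main obstacle, is that this dominant depends on $n$; however, since $|\eta_n|^2 \to 0$ in $L^1(\mu)$, the generalized dominated convergence theorem (which permits variable dominants converging in $L^1$) closes the argument. A standard subsequence-subsequence argument upgrades the a.e.-subsequence conclusion to the full sequence, and uniqueness of the Fréchet derivative in $L^2(\R^d;\mu)$ delivers the $\mu$-a.e.\ identification $\nabla\phi^{\#}_{\mu}(\xi)(x) = \nabla_{\mathbf{W}}\phi(\xi_{\#}\mu,\xi(x))$.
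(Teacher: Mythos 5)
Your proof is correct, but it takes a genuinely different route from the paper: the paper's entire proof is a one-line citation of \cite[Proposition 5.48]{carmona2018probabilistic}, applied to the atomless probability space $(\mathbb{R}^d,\mathcal{B}(\mathbb{R}^d),\mu)$ so that the law of a random variable is exactly a push-forward of $\mu$. You instead reconstruct the result from the paper's own toolkit: the fundamental theorem of calculus for the first variation \eqref{first_variation}, a spatial FTC to pass from $\frac{\delta\phi}{\delta\mu}$ to $\nabla_{\mathbf{W}}\phi$, Cauchy--Schwarz, and a generalized dominated convergence argument driven by the joint continuity and the two growth hypotheses. What your approach buys is transparency (each hypothesis is deployed exactly once, at an identifiable step) and, notably, it never uses the atomlessness of $\mu$ --- that assumption is an artifact of the L-derivative framework in the cited reference, so your argument in fact covers atomic $\mu$ as well, which is relevant to the paper's particle discretizations. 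Two small points you should tighten: Lemma~\ref{same_measure_bound} controls $W_2(\tilde\rho_n,\rho)$ for the push-forward measures, whereas you also need $W_2(\mu_{s,n},\rho)\to 0$ for the convex combinations $\mu_{s,n}=(1-s)\rho+s\tilde\rho_n$; this follows from the standard coupling $(1-s)(\mathrm{id},\mathrm{id})_{\#}\rho+s\pi_n$ giving $W_2^2(\mu_{s,n},\rho)\le s\,W_2^2(\tilde\rho_n,\rho)$, and it also shows the $\mu_{s,n}$ have uniformly bounded second moments so that they indeed lie in a single bounded set $\mathcal{K}$. Neither gap is serious.
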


\begin{proof}
We apply \cite[Proposition 5.48]{carmona2018probabilistic} to the atom-less probability space $(\mathbb{R}^d, \mathcal{B}(\mathbb{R}^d), \mu)$, using the fact that for this probability space, the law of a random variable $X$ is given by $X_{\#} \mu$. \end{proof}

In \cite[Corollary 3.22]{gangbo2019differentiability} the Fréchet differentiability of the lift at $\xi$ is shown to be equivalent to the Wasserstein differentiability of $\phi$ at $\xi_{\#} \mu$. In this case, we have that
\[ \nabla \phi^{\#}_{\mu}(\xi)(x) = \nabla_{\mathbf{W}} \phi(\xi_{\#} \mu,\xi(x)) \quad \mu \text{ almost everywhere.} \] We refer to \cite{gangbo2019differentiability} for the general definition of the Wasserstein gradient in terms of the sub and super differential of $\phi$. \smallskip

Now we link notions of $C^{1,1}$ of $\phi$ with its lift.

 \begin{definition}[$C^{1,\alpha}(X;X)$ Functions]  
 Let $X$ be a Hilbert space and $\alpha \in (0,1]$. A functional $u: X \to \mathbb{R}$ is said to belong to the class $C^{1,\alpha}(X;X)$ if it is Fréchet differentiable everywhere, and there exists a constant $C > 0$ such that for all $x, y \in X$,
 \[
 \big\| \nabla u(x) - \nabla u(y) \big\|_X 
 \leq C \big\| x - y \big\|_X^{\alpha}.
 \]

\noindent We will say $u \in C^{1,\alpha}_b(X;X)$ if $u \in C^{1,\alpha}(X;X)$ and
\[ \sup_{x \in X} |u(x)|+|\nabla u(x)|  < \infty. \]

 \end{definition}

\begin{example} \label{C11_Lifted_Functional}
Under the notation and assumptions of Example~\ref{C2_functionals}, if $\mu \in \mathcal{P}_2(\mathbb{R}^d)$ is atomless, then the lifted functional $U^{\#}_{\mu}$ belongs to $C^{1,1}(\mathbb{H}_{\mu}; \mathbb{H}_{\mu})$ for $\mathbb{H}_{\mu} := L^2(\R^d;\mu)$.
\end{example}

\begin{proof} By Proposition \ref{differentiability_equivalence}, we have that
\[ \nabla U^{\#}_{\mu}(\xi) = \nabla_{\mathbf{W}} U(\xi_{\#} \mu,\xi). \] Hence, for $\xi_i \in \mathbb{H}_{\mu}$, the triangle inequality implies
    \[ ||\nabla U^{\#}_{\mu}(\xi_1) - \nabla U^{\#}_{\mu}(\xi_2)||_{\mathbb{H}_{\mu}} \leq \] \[ || \nabla_{\mathbf{W}} U( (\xi_1)_{\#} \mu,\xi_1) - \nabla_{\mathbf{W}} U( (\xi_1)_{\#} \mu,\xi_2)||_{\mathbb{H}_{\mu}} +  || \nabla_{\mathbf{W}} U( (\xi_1)_{\#} \mu,\xi_2) - \nabla_{\mathbf{W}} U( (\xi_2)_{\#} \mu,\xi_2) ||_{\mathbb{H}_{\mu}}. \] Now by properties (2) and (3) of Example \ref{C2_functionals}, there exists a $C>0$ such that
\[ \leq C (||\xi_1-\xi_2||_{\mathbb{H}_{\mu}} + W_2( (\xi_1)_{\#} \mu, (\xi_2)_{\#} \mu)) \leq 2C ||\xi_1-\xi_2||_{\mathbb{H}_{\mu}}. \]
\end{proof}

\section{The Lagrangian Trapezoidal Scheme: Definition and Numerical Stability} \label{stability_section}

In this section, we define a higher-order implicit method for the gradient flow of $\phi$ and establish stability properties of the energy functional and its Wasserstein gradient. Our scheme corresponds to the implicit trapezoidal rule applied to the Lagrangian Flow \eqref{L2_Lagrangian_Flow}. We begin by defining the variational problem associated with the scheme and then prove its well-posedness.

\begin{definition}[Lagrangian Trapezoidal 
Scheme] \label{trapezoid_def} Fix a reference measure $\rho_0 \in \mathcal{P}_{2}(\mathbb{R}^d)$ and assume that $\phi^{\#}_{\rho_0}$ is Fr\'echet differentiable on $L^2(\R^d;\rho_0)$. Fix a time step $\tau > 0$ and an initial condition $X_0^{\tau} \in L^2(\mathbb{R}^d; \rho_0)$. Then, for $n \geq 0$, define the discrete Lagrangian flows $X_{n+1}^\tau$ via:
\begin{equation}
X_{n+1}^\tau \in \operatorname*{arg\,min}_{\xi \in L^2(\mathbb{R}^d; \rho_0)}
\left\{
\frac{1}{2} \phi^{\#}_{\rho_0}(\xi)
+ \frac{1}{2} \int_{\mathbb{R}^d} \langle \nabla \phi^{\#}_{\rho_0}(X^{\tau}_n)(x), \xi(x) \rangle \, d\rho_0(x)
+ \frac{1}{2\tau} \|\xi - X_n^\tau\|_{L^2(\mathbb{R}^d; \rho_0)}^2 
\right\}.
\label{trapezoid_var_problem}
\end{equation}

\noindent Finally, define the discrete trapezoidal rule solutions as the measures
\[
\rho_n^\tau := (X_n^\tau)_{\#} \rho_0.
\]
\end{definition}

Now, under appropriate assumptions on $\phi$, we show that the scheme is well-defined for sufficiently small $\tau$.

\begin{lemma}[Existence and Uniqueness of the Minimizer] \label{min_exist} Given a $\mu \in \mathcal{P}_{2}(\R^d)$, we assume that $\phi^{\#}_{\mu}$ is Fr\'echet differentiable on $L^2(\R^d;\mu)$. We further assume that $\phi^{\#}_{\mu}$ is $\lambda$-convex on $L^2(\R^d;\mu)$ for some $\lambda \in \R$. Further, let $\tau > 0$ satisfy $(\lambda / 2 + 1 / \tau) > 0$ and fix an $v \in L^2(\mathbb{R}^d; \mu)$. Define the operator $\Phi_{\tau, \mu, v}: L^2(\mathbb{R}^d; \mu) \to \mathbb{R}$ as
\[
\Phi_{\tau, \mu, v}(\xi) := \frac{1}{2} \left(\phi^{\#}_{\mu}(\xi) + \int_{\mathbb{R}^d} \langle \nabla \phi^{\#}_{\mu}( v(x)), \xi(x) \rangle \, d\mu(x) \right) + \frac{1}{2 \tau} \|\xi - v\|^2_{L^2(\mathbb{R}^d; \mu)}.
\]
\noindent Additionally, assume that
\begin{equation}
\| \nabla \phi^{\#}_{\mu}(v)(x) \|^2_{L^2(\mathbb{R}^d; \mu)} < \infty, \label{L2_gradient_bound_assume}
\end{equation}
and that the energy functional $\phi$ is proper, i.e., $\inf_{\mu \in \mathcal{P}_2(\mathbb{R}^d)} \phi(\mu) > -\infty$. Then, there exists a unique minimizer of $\Phi_{\tau, \mu, \eta}$ in $L^2(\mathbb{R}^d; \mu)$.
\end{lemma}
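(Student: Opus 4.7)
The plan is to apply the direct method of the calculus of variations in the Hilbert space $\mathbb{H} := L^2(\R^d;\mu)$. A key preliminary observation is that, summing convexity moduli, $\Phi_{\tau,\mu,v}$ is $(\lambda/2 + 1/\tau)$-convex: the term $\tfrac{1}{2}\phi^{\#}_\mu$ is $(\lambda/2)$-convex by hypothesis, the inner product term is linear (so $0$-convex), and the quadratic penalty contributes $1/\tau$. The assumption $\lambda/2 + 1/\tau > 0$ therefore makes $\Phi_{\tau,\mu,v}$ strictly convex, which will deliver uniqueness at the end of the argument.

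For existence, I would first prove coercivity. The $\lambda$-convexity and Fréchet differentiability of $\phi^{\#}_\mu$ give the standard subdifferential inequality
\[
\phi^{\#}_\mu(\xi) \;\geq\; \phi^{\#}_\mu(v) + \langle \nabla \phi^{\#}_\mu(v),\, \xi - v\rangle_{\mathbb{H}} + \tfrac{\lambda}{2}\|\xi - v\|^2_{\mathbb{H}}.
\]
Substituting this into $\Phi_{\tau,\mu,v}(\xi)$, using properness of $\phi$ to bound $\phi^{\#}_\mu(v) \geq \inf \phi > -\infty$, and using the assumption $\|\nabla \phi^{\#}_\mu(v)\|_{\mathbb{H}} < \infty$, I obtain a lower bound of the form $C_0 + \langle A, \xi\rangle_{\mathbb{H}} + \tfrac{1}{2}(\lambda/2 + 1/\tau)\|\xi - v\|^2_{\mathbb{H}}$ with $C_0 \in \R$ and an explicit $A \in \mathbb{H}$. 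Cauchy--Schwarz and Young's inequality then force $\Phi_{\tau,\mu,v}(\xi) \to +\infty$ as $\|\xi\|_{\mathbb{H}} \to \infty$, yielding both $\inf \Phi_{\tau,\mu,v} > -\infty$ and boundedness in $\mathbb{H}$ of any minimizing sequence.

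Next I would establish weak lower semi-continuity of $\Phi_{\tau,\mu,v}$ on $\mathbb{H}$. Since $\phi^{\#}_\mu$ is convex and Fréchet differentiable, it is norm continuous on $\mathbb{H}$, and a norm continuous convex functional on a Hilbert space is weakly lower semi-continuous (its epigraph is closed and convex, hence weakly closed). The linear term $\xi \mapsto \tfrac{1}{2}\langle \nabla \phi^{\#}_\mu(v), \xi\rangle_{\mathbb{H}}$ is weakly continuous since $\nabla \phi^{\#}_\mu(v) \in \mathbb{H}$, and $\xi \mapsto \tfrac{1}{2\tau}\|\xi - v\|^2_{\mathbb{H}}$ is weakly lower semi-continuous by convexity and continuity. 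Combining these three properties yields weak lower semi-continuity of $\Phi_{\tau,\mu,v}$. Any minimizing sequence is now bounded by coercivity, hence admits a weakly convergent subsequence in $\mathbb{H}$; its weak limit attains the infimum by weak lower semi-continuity, and is unique by the strict convexity noted above.

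The only point that requires more than routine care is the passage from convexity plus Fréchet differentiability of $\phi^{\#}_\mu$ to its weak lower semi-continuity, and verifying that the three separate moduli of convexity indeed add to give strict convexity under the imposed timestep constraint; everything else is a textbook application of the direct method.
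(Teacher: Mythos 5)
Your proposal is correct, and the uniqueness part (strict convexity from the sum of the three convexity moduli) is exactly the paper's argument. For existence, however, you take a genuinely different route. The paper never invokes weak topologies: it takes a minimizing sequence $\{\xi_n\}$, applies the $(\lambda/2+1/\tau)$-convexity inequality at the midpoint $\tfrac12(\xi_n+\xi_m)$ to deduce
\[
\beta \;\le\; \tfrac12\Phi_{\tau,\mu,v}(\xi_n)+\tfrac12\Phi_{\tau,\mu,v}(\xi_m)-\tfrac18\bigl(\lambda/2+1/\tau\bigr)\|\xi_n-\xi_m\|^2_{L^2(\R^d;\mu)},
\]
concludes that the sequence is Cauchy in norm, and passes to the strong limit using the norm continuity of $\Phi_{\tau,\mu,v}$. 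You instead run the classical direct method: coercivity via the $\lambda$-convexity tangent inequality at $v$, boundedness of minimizing sequences, weak sequential compactness of bounded sets in $\mathbb{H}$, and weak lower semi-continuity of the convex continuous functional via Mazur. Both are sound. The paper's argument is slightly more economical here because it exploits the strong convexity that is available anyway (and which you already use for uniqueness), producing a strongly convergent minimizing sequence for free; your argument is the one that would survive if the functional were merely convex and coercive rather than strongly convex. One small redundancy on your side: since $\phi$ is real-valued, $\phi^{\#}_\mu(v)$ is automatically a finite number, so you do not need properness to bound it; properness enters (as in the paper) to bound $\phi^{\#}_\mu(\xi)$ from below \emph{uniformly in} $\xi$, which is the cleaner way to see that $\inf\Phi_{\tau,\mu,v}>-\infty$ once the linear term is absorbed into the quadratic penalty by Young's inequality.
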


\begin{proof} 
From our assumptions it follows that $\Phi_{\tau, \mu,v}(\xi)$ is $(\lambda/2 + 1/\tau)$-convex in $L^2(\mathbb{R}^d; \mu)$, ensuring at most one minimizer. 

The existence of a minimizer follows from the continuity and convexity of $\Phi_{\tau, \mu,v}(\xi)$. Define the constant 
\[
\beta := \inf_{\xi \in L^2(\mathbb{R}^d; \mu)} \Phi_{\tau, \mu,v}(\xi) > -\infty,
\]
where the lower bound holds due to \eqref{L2_gradient_bound_assume} and the properness of $\phi$.

Now, consider a minimizing sequence $\{\xi_n\}$ for $\Phi_{\tau,\mu,v}$. We claim that the $(\lambda/2 + 1/\tau)$-convexity of $\Phi_{\tau,\mu,v}$ implies that $\{\xi_n\}$ is a Cauchy sequence in $L^2(\mathbb{R}^d; \mu)$. Defining the geodesic $\gamma(t) := (1-t) \xi_n + t \xi_m$ implies from convexity that
\[
\beta \leq \Phi_{\tau, \mu, v}\left(\gamma\left(\frac{1}{2}\right)\right) 
\leq \frac{1}{2} \Phi_{\tau, \mu, v}(\xi_n) + \frac{1}{2} \Phi_{\tau, \mu, v}(\xi_m) - \frac{1}{8} \underbrace{(\lambda/2 + 1/\tau)}_{> 0} \|\xi_n - \xi_m\|^2_{L^2(\mathbb{R}^d; \mu)}.
\]
Since $\{\xi_n\}$ is a minimizing sequence, we have $\Phi_{\tau, \mu, v}(\xi_n) \to \beta$. This implies $\{\xi_n\}$ is a Cauchy sequence, as $\|\xi_n - \xi_m\|^2_{L^2(\mathbb{R}^d; \mu)}$ must tend to 0 as $n, m \to \infty$. The continuity of $\Phi_{\tau, \mu, v}$ ensures that the limit of $\xi_n$ in $L^2(\mathbb{R}^d; \mu)$ is the unique minimizer of $\Phi_{\tau, \mu,v}$.
\end{proof}

Now, we characterize the minimizer of $\Phi_{\tau,\mu, v}$.

\begin{lemma}  \label{critical_points}
Under the same notation and assumptions as Lemma \ref{min_exist}, the unique minimizer $\xi^*$ of $\Phi_{\tau,\mu,v}$ is the unique solution over $L^2(\R^d;\mu)$ to the implicit equation:
\[
\xi^*(x) = \eta(x) - \frac{\tau}{2} \left( \nabla \phi^{\#}_{\mu}(\xi^*)(x) + \nabla \phi^{\#}_{\mu}(v)(x)  \right).
\] 
\end{lemma}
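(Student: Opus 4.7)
The plan is to derive the implicit identity as the first–order (Euler--Lagrange) optimality condition associated with the variational problem, and then to use the strict convexity established in Lemma \ref{min_exist} to upgrade this identity to a full characterization.

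First I would show that $\Phi_{\tau,\mu,v}$ is Fréchet differentiable on all of $L^2(\R^d;\mu)$, with derivative
\[
D\Phi_{\tau,\mu,v}(\xi) \;=\; \tfrac{1}{2}\,\nabla \phi^{\#}_{\mu}(\xi) \;+\; \tfrac{1}{2}\,\nabla \phi^{\#}_{\mu}(v) \;+\; \tfrac{1}{\tau}\,(\xi - v).
\]
The middle term arises from the linear functional $\xi \mapsto \tfrac{1}{2}\int_{\R^d} \langle \nabla\phi^{\#}_\mu(v),\xi\rangle\,d\mu$, which is bounded on $L^2(\R^d;\mu)$ by the Cauchy--Schwarz inequality combined with assumption \eqref{L2_gradient_bound_assume}; hence its Fréchet derivative is just $\tfrac{1}{2}\nabla\phi^{\#}_\mu(v)$. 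The quadratic term $\tfrac{1}{2\tau}\|\xi-v\|^2$ contributes $\tfrac{1}{\tau}(\xi-v)$ by the standard identity for the Fréchet derivative of the squared Hilbert norm, and the remaining contribution $\tfrac{1}{2}\nabla\phi^{\#}_\mu(\xi)$ is exactly the Fréchet derivative of $\tfrac{1}{2}\phi^{\#}_\mu$, which exists by hypothesis.

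Next I would invoke the fact that the unique minimizer $\xi^*$ from Lemma \ref{min_exist} is an interior point of $L^2(\R^d;\mu)$ (the whole Hilbert space), so $D\Phi_{\tau,\mu,v}(\xi^*) = 0$ in $L^2(\R^d;\mu)$. Setting the expression above equal to zero and rearranging yields
\[
\xi^*(x) \;=\; v(x) \;-\; \tfrac{\tau}{2}\bigl(\nabla \phi^{\#}_{\mu}(\xi^*)(x) + \nabla \phi^{\#}_{\mu}(v)(x)\bigr) \quad \mu\text{-a.e.},
\]
which is precisely the desired implicit identity.

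Finally, for uniqueness among all solutions of the implicit equation, I would observe that $\Phi_{\tau,\mu,v}$ is $(\lambda/2 + 1/\tau)$–convex with $\lambda/2 + 1/\tau > 0$ by assumption, hence strictly convex. Any zero of $D\Phi_{\tau,\mu,v}$ is therefore a global minimizer, and Lemma \ref{min_exist} guarantees this minimizer is unique. Consequently, any solution of the implicit fixed–point equation over $L^2(\R^d;\mu)$ must coincide with $\xi^*$. I do not expect any real obstacle in this argument; the only minor subtlety is ensuring that the linear term in $\Phi_{\tau,\mu,v}$ is in fact a bounded linear functional, which is exactly the role of the auxiliary hypothesis \eqref{L2_gradient_bound_assume}.
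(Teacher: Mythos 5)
Your proposal is correct and follows essentially the same route as the paper: compute the Fréchet derivative of $\Phi_{\tau,\mu,v}$, observe that the minimizer is the unique critical point by strict convexity, and read off the implicit equation from $\nabla\Phi_{\tau,\mu,v}(\xi^*)=0$. Your write-up is slightly more explicit about why each term of the derivative has the stated form (including the role of \eqref{L2_gradient_bound_assume} in making the linear term bounded), but the argument is identical in substance.
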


\begin{proof} 
Since $\Phi_{\tau,\mu,v}$ is strictly convex and Fréchet differentiable, its unique minimizer is also its unique critical point. The implicit equation relationship then follows from
\[
\nabla \Phi_{\tau,\mu,v}(\xi) = \frac{1}{2} \left( \nabla \phi^{\#}_{\mu}(\xi)  + \nabla \phi^{\#}_{\mu}(v)  \right) + \frac{1}{\tau} (\xi - v).
\]
\end{proof}

Next, we will derive a useful estimate on the gradient of the minimizer of $\Phi_{\tau,\mu,\eta}$ that ensures the gradient norm is bounded along our trapezoid iterates.

\begin{lemma}[Gradient Estimate]  \label{gradient_along_flow}
Under the assumptions and notation of Lemma \ref{min_exist}, define
\[
C(\lambda,\tau) :=
\begin{cases}
1, & \lambda \geq 0, \\
e^{2|\lambda_{\tau}| \tau}, & \lambda < 0,
\end{cases}
\quad \textnormal{where} \quad 
\lambda_{\tau} := \frac{1}{2\tau} \log \left( \frac{1+\lambda \tau}{1-\lambda \tau} \right).
\]

\noindent Then the unique minimizer $\xi^* \in L^2(\mathbb{R}^d;\mu)$ of $\Phi_{\tau,\mu,v}$ satisfies
\[
\|\nabla \phi^{\#}_{\mu}(\xi^*) \|^2_{L^2(\mathbb{R}^d;\mu)} \leq C(\lambda,\tau) \|\nabla \phi^{\#}_{\mu}(v) \|^2_{L^2(\mathbb{R}^d;\mu)}.
\]
\end{lemma}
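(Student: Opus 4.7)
The plan is to combine the first-order optimality characterization from Lemma \ref{critical_points} with the monotonicity inequality that $\lambda$-convexity plus Fréchet differentiability impose on $\nabla \phi^{\#}_\mu$. Write $g^{*} := \nabla \phi^{\#}_\mu(\xi^{*})$, $g_{v} := \nabla \phi^{\#}_\mu(v)$, and $\|\cdot\| := \|\cdot\|_{L^{2}(\R^{d};\mu)}$. Lemma \ref{critical_points} gives the identity $\xi^{*} - v = -\tfrac{\tau}{2}(g^{*} + g_{v})$. The $\lambda$-convexity of $\phi^{\#}_\mu$ and its Fréchet differentiability yield the standard monotonicity estimate $\langle g^{*} - g_{v},\,\xi^{*} - v\rangle \geq \lambda\,\|\xi^{*} - v\|^{2}$. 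Substituting the optimality identity into the left-hand side and using the polarization $\langle g^{*} - g_{v},\,g^{*} + g_{v}\rangle = \|g^{*}\|^{2} - \|g_{v}\|^{2}$ produces the central inequality
\[
\|g^{*}\|^{2} - \|g_{v}\|^{2} \;\leq\; -\tfrac{2\lambda}{\tau}\,\|\xi^{*} - v\|^{2}.
\]

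For $\lambda \geq 0$ this directly yields $\|g^{*}\|^{2} \leq \|g_{v}\|^{2}$, matching $C(\lambda,\tau) = 1$. For $\lambda < 0$, I would substitute back $\|\xi^{*} - v\| = \tfrac{\tau}{2}\|g^{*} + g_{v}\|$ into the central inequality, apply the triangle inequality $\|g^{*} + g_{v}\| \leq \|g^{*}\| + \|g_{v}\|$, and factor the left-hand side as $(\|g^{*}\| - \|g_{v}\|)(\|g^{*}\| + \|g_{v}\|)$. Assuming $\|g^{*}\| + \|g_{v}\| > 0$ (the degenerate case is handled separately), cancellation reduces the problem to the linear inequality
\[
\|g^{*}\|\,(1 - |\lambda|\tau/2) \;\leq\; \|g_{v}\|\,(1 + |\lambda|\tau/2),
\]
which is well-posed precisely because the hypothesis $\lambda/2 + 1/\tau > 0$ forces $|\lambda|\tau/2 < 1$. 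Squaring and invoking the elementary comparison $\left(\tfrac{1+y}{1-y}\right)^{2} \leq \tfrac{1+2y}{1-2y}$ on $y \in (0,1/2)$ with $y = |\lambda|\tau/2$ then delivers $\|g^{*}\|^{2} \leq \tfrac{1+|\lambda|\tau}{1-|\lambda|\tau}\,\|g_{v}\|^{2} = e^{2|\lambda_\tau|\tau}\,\|g_{v}\|^{2}$, matching the stated $C(\lambda,\tau)$.

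The only delicate choice is to apply the triangle inequality to $\|g^{*} + g_{v}\|$ \emph{before} squaring, rather than using a Young-type bound like $\|g^{*} + g_{v}\|^{2} \leq 2(\|g^{*}\|^{2} + \|g_{v}\|^{2})$. The linear route keeps the reduction solvable under the stated step-size condition and reproduces the sharp constant $e^{2|\lambda_\tau|\tau}$; the cruder squared bound would only close under the strictly stronger assumption $|\lambda|\tau < 1$ at that intermediate stage and would not match the stated constant. Everything else in the argument is a mechanical unwinding of Lemma \ref{critical_points} together with the $\lambda$-monotonicity of the gradient coming from $\lambda$-convexity.
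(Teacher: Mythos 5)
Your proof is correct and follows essentially the same route as the paper: both start from the gradient monotonicity inequality $\langle \nabla\phi^{\#}_{\mu}(\xi^*)-\nabla\phi^{\#}_{\mu}(v),\xi^*-v\rangle \geq \lambda\|\xi^*-v\|^2$ implied by $\lambda$-convexity, substitute the trapezoidal identity $\xi^*-v=-\tfrac{\tau}{2}(\nabla\phi^{\#}_{\mu}(\xi^*)+\nabla\phi^{\#}_{\mu}(v))$ to turn the left side into $\tfrac{\tau}{2}(\|\nabla\phi^{\#}_{\mu}(v)\|^2-\|\nabla\phi^{\#}_{\mu}(\xi^*)\|^2)$, and then, for $\lambda<0$, bound the cross term in $\|\nabla\phi^{\#}_{\mu}(\xi^*)+\nabla\phi^{\#}_{\mu}(v)\|^2$. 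The only divergence is the last algebraic step, and your aside there is mistaken: the ``cruder'' bound $\|g^*+g_v\|^2\le 2(\|g^*\|^2+\|g_v\|^2)$ is precisely what the paper uses, and it yields $\|g_v\|^2(1-\lambda\tau)\ge(1+\lambda\tau)\|g^*\|^2$, i.e.\ exactly the stated constant $\tfrac{1+|\lambda|\tau}{1-|\lambda|\tau}=e^{2|\lambda_\tau|\tau}$, under the same condition $|\lambda|\tau<1$ that your comparison $\bigl(\tfrac{1+y}{1-y}\bigr)^2\le\tfrac{1+2y}{1-2y}$ requires (and which is needed in any case for $\lambda_\tau$ to be defined); your triangle-inequality route in fact produces the slightly sharper intermediate constant $\bigl(\tfrac{1+|\lambda|\tau/2}{1-|\lambda|\tau/2}\bigr)^2$ before relaxing it to match the statement.
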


\begin{proof}
By $\lambda$-convexity, we have that for any $\eta \in L^2(\mathbb{R}^d;\mu)$
\[
\langle \nabla \phi^{\#}_{\mu}( \xi^*) - \nabla \phi^{\#}_{\mu}( v), \xi^* - v \rangle_{L^2(\mathbb{R}^d;\mu)} \geq \lambda \|\xi^* - v\|_{L^2(\mathbb{R}^d;\mu)}^2.
\]
Using Lemma \ref{critical_points}, we have that the above expression is
\begin{equation}
\frac{\tau}{2} \left( \|\nabla  \phi^{\#}_{\mu} (v)\|^2_{L^2(\mathbb{R}^d;\mu)} - \|\nabla \phi^{\#}_{\mu}(\xi^*) \|^2_{L^2(\mathbb{R}^d;\mu)} \right) \geq \lambda \|\xi^*-v\|^2_{L^2(\mathbb{R}^d;\mu)}.
\label{monotone_gradient}
\end{equation} \noindent This implies the claim for $\lambda \geq 0$. \smallskip

\noindent Now, consider the case when $\lambda < 0$. Applying Lemma \ref{critical_points} to \eqref{monotone_gradient}, we express $\lambda \|\xi^*-v\|^2_{L^2(\mathbb{R}^d;\mu)}$ as
\begin{equation}
\lambda \frac{\tau^2}{4} \left( \|\nabla \phi^{\#}_{\mu}(v)\|^2_{L^2(\mathbb{R}^d;\mu)} 
+ \|\nabla \phi^{\#}_{\mu}(\xi^*)\|^2_{L^2(\mathbb{R}^d;\mu)}
+ 2\langle \nabla \phi^{\#}_{\mu}(v), \nabla \phi^{\#}_{\mu}(\xi^*)  \rangle_{L^2(\mathbb{R}^d;\mu)}  
\right).
\label{squared_distance_equality}
\end{equation}
Since $\lambda < 0$, we apply Cauchy-Schwarz and the inequality $ab \leq \frac{a^2}{2}  + \frac{b^2}{2}$ to obtain
\begin{equation}
\lambda \|\xi^* - v\|^2_{L^2(\mathbb{R}^d;\mu)} \geq  \lambda \frac{\tau^2}{2} \left( \|\nabla \phi^{\#}_{\mu}(v)\|^2_{L^2(\mathbb{R}^d;\mu)}
+ \|\nabla \phi^{\#}_{\mu}(\xi)\|^2_{L^2(\mathbb{R}^d;\mu)} \right).
\end{equation}
Thus, from \eqref{monotone_gradient}, we obtain the desired inequality of
\[
\|\nabla\phi^{\#}_{\mu}(v)\|^2_{L^2(\mathbb{R}^d;\mu)} \geq  \frac{(1+\lambda \tau)}{(1-\lambda \tau)} \|\nabla  \phi^{\#}_{\mu}(\xi^*)\|^2_{L^2(\mathbb{R}^d;\mu)}.\] \end{proof}

Now we focus on properties of our scheme (Definition \ref{trapezoid_var_problem}). We recall the following quantity, which will be frequently used
\begin{equation}
\mathcal{C}(\lambda,\tau) :=
\begin{cases}
1, & \lambda \geq 0, \\
e^{2|\lambda_{\tau}| \tau}, & \lambda < 0,
\end{cases} = \max\{1,e^{2 \lambda_{\tau} \tau} \}
\quad \textnormal{where} \quad 
\lambda_{\tau} := \frac{1}{2\tau} \log \left( \frac{1+\lambda \tau}{1-\lambda \tau} \right).
\end{equation}

\noindent Now Assumption \ref{assume_rough}, Lemmas   \ref{min_exist}, \ref{critical_points}, and \ref{gradient_along_flow} implies the following for our trapezoid scheme:

\begin{theorem}[Trapezoid Rule and gradient Estimate] \label{trapezoid_well_defined} 
Under Assumption \ref{assume_rough} and using its notation, there exists a unique minimizer in $\mathbb{H}$ of \eqref{trapezoid_var_problem} satisfying, for $\rho_0$-a.e. $x$,
\begin{equation}
    X^{\tau}_{n+1}(x) = X^{\tau}_{n}(x) - \frac{\tau}{2} \left( \nabla \phi^{\#}_{\rho_0}(X^{\tau}_{n+1})(x) + \nabla \phi^{\#}_{\rho_0}(X^{\tau}_n)(x) \right).
    \label{trapezoid_implicit}
\end{equation}
Additionally, the lifted gradient along the iterates satisfies the estimate:
\begin{equation}
    \|\nabla \phi^{\#}_{\rho_0}(X_{n+1}^{\tau})\|^2_{\mathbb{H}} 
    \leq \mathcal{C}(\lambda,\tau) \cdot \|\nabla \phi^{\#}_{\rho_0}(X^{\tau}_n)\|^2_{\mathbb{H}}.
    \label{gradient_bounds}
\end{equation}
\end{theorem}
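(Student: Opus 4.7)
The plan is to prove the theorem by induction on $n \geq 0$, simply assembling the three previous lemmas (Lemma \ref{min_exist}, Lemma \ref{critical_points}, and Lemma \ref{gradient_along_flow}) at each step. The hypotheses of those lemmas align perfectly with Assumption \ref{assume_rough}: the Fréchet differentiability and $\lambda$-convexity of $\phi^{\#}_{\rho_0}$ on $\mathbb{H} = L^2(\mathbb{R}^d;\rho_0)$, the properness condition $\inf \phi^{\#}_{\rho_0} > -\infty$, and the step-size condition $\lambda/2 + 1/\tau > 0$ are all assumed directly. The only remaining input needed at step $n$ is finiteness of the gradient norm $\|\nabla \phi^{\#}_{\rho_0}(X_n^{\tau})\|_{\mathbb{H}}$, which plays the role of hypothesis \eqref{L2_gradient_bound_assume} in Lemma \ref{min_exist}.

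For the base case $n=0$, Assumption \ref{assume_rough} directly provides $\sup_{\tau>0} \|\nabla \phi^{\#}_{\rho_0}(X_0^{\tau})\|_{\mathbb{H}} < \infty$, so hypothesis \eqref{L2_gradient_bound_assume} is satisfied with $v = X_0^{\tau}$ and $\mu = \rho_0$. Thus Lemma \ref{min_exist} yields a unique minimizer $X_1^{\tau} \in \mathbb{H}$ of $\Phi_{\tau,\rho_0,X_0^{\tau}}$, which is precisely the variational problem \eqref{trapezoid_var_problem} at $n=0$. Lemma \ref{critical_points} then immediately identifies $X_1^{\tau}$ as the unique solution of the implicit equation \eqref{trapezoid_implicit}, and Lemma \ref{gradient_along_flow} yields the gradient bound
\[
\|\nabla \phi^{\#}_{\rho_0}(X_1^{\tau})\|_{\mathbb{H}}^2 \leq \mathcal{C}(\lambda,\tau) \|\nabla \phi^{\#}_{\rho_0}(X_0^{\tau})\|_{\mathbb{H}}^2,
\]
which is \eqref{gradient_bounds} in the case $n=0$.

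For the inductive step, suppose $X_n^{\tau} \in \mathbb{H}$ has been constructed and satisfies $\|\nabla \phi^{\#}_{\rho_0}(X_n^{\tau})\|_{\mathbb{H}} < \infty$ (a consequence of \eqref{gradient_bounds} at the previous step). Applying Lemma \ref{min_exist} with $v = X_n^{\tau}$ produces the unique minimizer $X_{n+1}^{\tau}$, Lemma \ref{critical_points} gives \eqref{trapezoid_implicit}, and Lemma \ref{gradient_along_flow} delivers \eqref{gradient_bounds}. In particular $\|\nabla \phi^{\#}_{\rho_0}(X_{n+1}^{\tau})\|_{\mathbb{H}} < \infty$, which closes the induction and lets the scheme propagate indefinitely.

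There is no substantive obstacle here, since this is essentially a bookkeeping argument that packages the three preceding lemmas into a statement specifically about the iterates of Definition \ref{trapezoid_def}. The only point requiring any care is verifying that the finite-gradient-norm hypothesis \eqref{L2_gradient_bound_assume} is preserved along the iterates, which is exactly what the quantitative estimate \eqref{gradient_bounds} from Lemma \ref{gradient_along_flow} guarantees; without this propagation, the induction would stall even though Assumption \ref{assume_rough} only provides gradient control at the initial step.
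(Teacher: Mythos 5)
Your proposal is correct and is exactly the argument the paper intends: the theorem is stated as an immediate consequence of Lemmas \ref{min_exist}, \ref{critical_points}, and \ref{gradient_along_flow} together with Assumption \ref{assume_rough}, and your induction merely makes explicit the propagation of the finite-gradient hypothesis \eqref{L2_gradient_bound_assume} via \eqref{gradient_bounds}, which the paper leaves implicit.
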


% \begin{proof} 
% We proceed by induction. For $n=0$, the claim follows from Assumption \ref{assume_rough}, which allows us to apply Lemmas \ref{min_exist}, \ref{critical_points}, and \ref{gradient_along_flow}. \smallskip

% \noindent Now, assuming the statement holds for some $n \in \mathbb{N}$, we have by iteratinon of \ref{gradient_bounds}:
% \[
%     \|\nabla \phi^{\#}_{\rho_0}(X_n^{\tau})\|^2_{\mathbb{H}} 
%     \leq (\mathcal{C}(\lambda))^n \cdot \|\nabla \phi^{\#}_{\rho_0}(X_0^{\tau})\|^2_{\mathbb{H}} < \infty,
% \]
% where the last inequality follows from Assumption \ref{assume_rough}. This allows us to again apply Lemmas \ref{min_exist}, \ref{critical_points}, and \ref{gradient_along_flow}, concluding the theorem for $n+1$.
% \end{proof}

Beyond the gradient estimate \eqref{gradient_bounds}, we establish additional stability properties.

\begin{lemma}[Energy is Almost Decreasing] \label{energy_almost_decreasing}
Under Assumption \ref{assume_rough} and using its notation, with $X_n^\tau$ and $\rho_n^\tau$ defined as in Definition~\ref{trapezoid_def}, we have the following estimate:
\begin{equation}
    \phi^{\#}_{\rho_0}(X^{\tau}_{n+1}) - \phi^{\#}_{\rho_0}(X^{\tau}_{n})
    \leq \frac{\tau}{4} \left( \|\nabla \phi^{\#}_{\rho_0}(X_n^\tau)\|^2_{\mathbb{H}} 
    - \|\nabla \phi^{\#}_{\rho_0}(X_{n+1}^\tau)\|^2_{\mathbb{H}} \right).
    \label{one_step_almost_decrease}
\end{equation}
% In particular, for any $n > m$,
% \[
%    \phi^{\#}_{\rho_0}(X^{\tau}_{n}) - \phi^{\#}_{\rho_0}(X^{\tau}_{m}) 
%     \leq \frac{\tau}{4} \cdot (\mathcal{C}(\lambda))^{n-m} \cdot \|\nabla \phi^{\#}_{\rho_0}(X_0^{\tau})\|_{\mathbb{H}}.
% \]
\end{lemma}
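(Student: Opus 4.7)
The plan is to combine the $\lambda$-convexity of $\phi^{\#}_{\rho_0}$ on $\mathbb{H}$ from Assumption \ref{assume_rough} with the trapezoidal identity \eqref{trapezoid_implicit} established in Theorem \ref{trapezoid_well_defined}. Writing $g_n := \nabla \phi^{\#}_{\rho_0}(X^{\tau}_n)$ and $g_{n+1} := \nabla \phi^{\#}_{\rho_0}(X^{\tau}_{n+1})$ for brevity, the update law reads $X^{\tau}_{n+1} - X^{\tau}_n = -\tfrac{\tau}{2}(g_{n+1}+g_n)$, and consequently $\|X^{\tau}_{n+1}-X^{\tau}_n\|_{\mathbb{H}}^2 = \tfrac{\tau^2}{4}\|g_{n+1}+g_n\|_{\mathbb{H}}^2$. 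These are the only two ``Hilbert-space facts'' I will need beyond convexity.

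First, I would apply the upper first-order condition for $\lambda$-convex functions at the newer iterate to get
\[
\phi^{\#}_{\rho_0}(X^{\tau}_{n+1}) - \phi^{\#}_{\rho_0}(X^{\tau}_n) \leq \langle g_{n+1},\, X^{\tau}_{n+1}-X^{\tau}_n\rangle_{\mathbb{H}} - \frac{\lambda}{2}\|X^{\tau}_{n+1}-X^{\tau}_n\|_{\mathbb{H}}^2,
\]
and substitute the trapezoidal identity into the inner product term, which becomes $-\tfrac{\tau}{2}\|g_{n+1}\|_{\mathbb{H}}^2 - \tfrac{\tau}{2}\langle g_{n+1}, g_n\rangle_{\mathbb{H}}$. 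I would then reorganize this via the polarization identity $-\tfrac{\tau}{2}\|g_{n+1}\|_{\mathbb{H}}^2 - \tfrac{\tau}{2}\langle g_{n+1}, g_n\rangle_{\mathbb{H}} = \tfrac{\tau}{4}(\|g_n\|_{\mathbb{H}}^2 - \|g_{n+1}\|_{\mathbb{H}}^2) - \tfrac{\tau}{4}\|g_{n+1}+g_n\|_{\mathbb{H}}^2$. Combining with $\|X^{\tau}_{n+1}-X^{\tau}_n\|_{\mathbb{H}}^2 = \tfrac{\tau^2}{4}\|g_{n+1}+g_n\|_{\mathbb{H}}^2$ consolidates the upper bound into
\[
\phi^{\#}_{\rho_0}(X^{\tau}_{n+1}) - \phi^{\#}_{\rho_0}(X^{\tau}_n) \leq \frac{\tau}{4}\bigl(\|g_n\|_{\mathbb{H}}^2 - \|g_{n+1}\|_{\mathbb{H}}^2\bigr) - \frac{\tau}{4}\Bigl(1 + \frac{\lambda \tau}{2}\Bigr)\|g_{n+1}+g_n\|_{\mathbb{H}}^2.
\]
To close the argument I would simply discard the last term: the standing assumption $\lambda/2 + 1/\tau > 0$ is precisely the condition $1 + \lambda \tau / 2 > 0$, so that term is non-positive, giving \eqref{one_step_almost_decrease}.

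I do not anticipate any real obstacle here; the single ``idea'' is that the symmetric combination $g_n + g_{n+1}$ produced by the trapezoidal rule, after polarization, creates the telescoping quantity $\|g_n\|_{\mathbb{H}}^2 - \|g_{n+1}\|_{\mathbb{H}}^2$ together with a residual that is exactly absorbed by the smallness condition on $\tau$. Notably, the sign of $\lambda$ never has to be split into cases, since the convexity defect $\tfrac{\lambda}{2}\|X^{\tau}_{n+1}-X^{\tau}_n\|_{\mathbb{H}}^2$ merges uniformly into the same $\|g_{n+1}+g_n\|_{\mathbb{H}}^2$ term via the trapezoidal identity.
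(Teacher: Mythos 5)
Your proof is correct, but it starts from a different inequality than the paper does, so a brief comparison is in order. The paper obtains the initial bound by using $X_n^\tau$ as a competitor in the variational problem \eqref{trapezoid_var_problem}, which yields
\[
\phi^{\#}_{\rho_0}(X^{\tau}_{n+1}) - \phi^{\#}_{\rho_0}(X^{\tau}_{n})
\leq \langle \nabla \phi^{\#}_{\rho_0}(X_n^\tau),\, X_n^\tau - X_{n+1}^\tau \rangle_{\mathbb{H}}
- \tfrac{1}{\tau}\|X_{n+1}^\tau - X_n^\tau\|^2_{\mathbb{H}};
\]
substituting the trapezoid identity then makes the right-hand side collapse \emph{exactly} to $\tfrac{\tau}{4}(\|g_n\|^2_{\mathbb{H}} - \|g_{n+1}\|^2_{\mathbb{H}})$, with no residual and no use of convexity or of the condition $\lambda/2 + 1/\tau > 0$ at this stage. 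You instead invoke the first-order condition of $\lambda$-convexity at the \emph{new} iterate, with gradient $g_{n+1}$ and defect $-\tfrac{\lambda}{2}\|X_{n+1}^\tau - X_n^\tau\|^2_{\mathbb{H}}$, and after polarization you are left with the extra term $-\tfrac{\tau}{4}(1+\tfrac{\lambda\tau}{2})\|g_{n+1}+g_n\|^2_{\mathbb{H}}$, which you correctly discard because $\lambda/2 + 1/\tau > 0$ is exactly $1 + \lambda\tau/2 > 0$. Both arguments are valid; yours trades the minimality of $X_{n+1}^\tau$ for the tangent-line inequality and the standing smallness condition on $\tau$, and as a small bonus it actually proves a slightly stronger estimate with an explicit non-positive dissipation term $-\tfrac{\tau}{4}(1+\tfrac{\lambda\tau}{2})\|g_{n+1}+g_n\|^2_{\mathbb{H}}$ (in the spirit of the refined decay results of Section \ref{further_stability}), whereas the paper's version is assumption-lighter for this particular step.
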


\begin{proof}
Using the competitor $X_n^\tau$ in \eqref{trapezoid_var_problem}, we obtain:
\begin{equation}
   \phi^{\#}_{\rho_0}(X^{\tau}_{n+1}) - \phi^{\#}_{\rho_0}(X^{\tau}_{n})
    \leq \langle \nabla \phi^{\#}_{\rho_0}(X_n^\tau), X_n^\tau - X_{n+1}^\tau \rangle_{\mathbb{H}} 
    - \frac{1}{\tau} \|X_{n+1}^\tau - X_n^\tau\|^2_{\mathbb{H}}. \label{competitor_bound}
\end{equation}
Applying equation \eqref{trapezoid_implicit}, the right-hand side simplifies to:
\small\[
    \langle \nabla \phi^{\#}_{\rho_0}(X_n^\tau), X_n^\tau - X_{n+1}^\tau \rangle_{\mathbb{H}} 
  - \frac{1}{\tau} \|X_{n+1}^\tau - X_n^\tau\|^2_{\mathbb{H}} =\frac{\tau}{2} \left( \|\nabla \phi^{\#}_{\rho_0}(X_n^\tau)\|^2_{\mathbb{H}}  
    + \langle \nabla \phi^{\#}_{\rho_0}(X_n^\tau), \nabla \phi^{\#}_{\rho_0}(X_{n+1}^\tau) \rangle_{\mathbb{H}} \right) - \frac{1}{\tau} \|X_{n+1}^\tau - X_n^\tau\|^2_{\mathbb{H}}.
\]

\normalsize
\noindent Using Theorem~\ref{trapezoid_well_defined} on the last term, we find that the above expression further simplifies to:
\[
     \langle \nabla \phi^{\#}_{\rho_0}(X_n^\tau), X_n^\tau - X_{n+1}^\tau \rangle_{\mathbb{H}} 
  - \frac{1}{\tau} \|X_{n+1}^\tau - X_n^\tau\|^2_{\mathbb{H}} =\frac{\tau}{4} \left( \|\nabla \phi^{\#}_{\rho_0}(X_n^\tau)\|^2_{\mathbb{H}} 
    - \|\nabla \phi^{\#}_{\rho_0}(X_{n+1}^\tau)\|^2_{\mathbb{H}} \right).
\] Using the above display in \eqref{competitor_bound} implies  \eqref{one_step_almost_decrease}. 
\end{proof}

% In Section \ref{further_stability}, we will show that when $\lambda \geq 0$, $\nabla \phi^{\#}_{\rho_0}$ is $L$-Lipschitz, and $\tau \leq L^{-1}$, the above estimates can be refined to demonstrate that the energy decreases and the gradient decays exponentially fast. These refined estimates are based on the Discrete Evolution Variational Inequality developed in Section \ref{Discrete_EVI_Section}. \smallskip

To conclude this section, we establish a stability estimate similar to the classical one for the JKO scheme, along with a uniform Lipschitz time estimate for the numerical solutions.

\begin{lemma}[Stability Estimate] \label{classical_stability_lemma}  
Under Assumption \ref{assume_rough} and using its notation, with $X_n^\tau$ and $\rho_n^\tau$ defined as in Definition~\ref{trapezoid_def}, we have the following estimate for any $N$ with $N\tau \leq T$:
\begin{equation} 
    \sum_{j=0}^{N} \frac{W_2^2(\rho^{\tau}_j,\rho^{\tau}_{j+1}) }{\tau} 
    \leq \sum_{j=0}^{N} \frac{\|X_{j+1}^{\tau}-X_j^{\tau}\|^2_{\mathbb{H}} }{\tau} 
    \leq T \cdot \tilde{\mathcal{C}}(\lambda,\tau,T) \cdot \| \nabla \phi^{\#}_{\rho_0}(X_0^{\tau})\|^2_{\mathbb{H}}.
    \label{classical_stability}
\end{equation}
Also for any $n,m \in \mathbb{N}$ with $n,m \leq N$, we have the Lipschitz in time estimate:
\begin{equation}
    W_2(\rho_n^{\tau},\rho_m^{\tau}) 
    \leq \|X^{\tau}_{n}-X^{\tau}_m\|_{\mathbb{H}} 
    \leq    \tau \cdot |n-m| \cdot \sqrt{\tilde{\mathcal{C}}(\lambda,\tau,T)}  \cdot \|\nabla \phi^{\#}_{\rho_0}(X^{\tau}_0)\|_{\mathbb{H}}.
    \label{lipschitz_bound}
\end{equation}
where 
\[
    \tilde{\mathcal{C}}(\lambda,\tau,T) = \max \{ 1,e^{- 2\lambda_{\tau} T} \}.
\]
\end{lemma}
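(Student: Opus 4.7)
My plan is as follows. The Wasserstein side of both displays reduces immediately to the $\mathbb{H}$-side: since each $\rho_j^{\tau}$ is the push-forward $(X_j^{\tau})_{\#}\rho_0$ of the common reference measure, Lemma~\ref{same_measure_bound} gives $W_2(\rho_j^{\tau},\rho_k^{\tau}) \leq \|X_j^{\tau} - X_k^{\tau}\|_{\mathbb{H}}$ for all indices $j,k$. Thus the entire lemma is a statement about the Lagrangian jumps $\|X_{j+1}^{\tau} - X_j^{\tau}\|_{\mathbb{H}}$, which in turn are controlled by the trapezoidal identity \eqref{trapezoid_implicit} and the one-step gradient estimate \eqref{gradient_bounds}.

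For the sum bound \eqref{classical_stability}, I would square the trapezoidal identity to obtain
\[
    \|X_{j+1}^{\tau} - X_j^{\tau}\|^2_{\mathbb{H}}
    = \frac{\tau^2}{4}\bigl\|\nabla \phi^{\#}_{\rho_0}(X_{j+1}^{\tau}) + \nabla \phi^{\#}_{\rho_0}(X_j^{\tau})\bigr\|^2_{\mathbb{H}},
\]
and then apply the elementary inequality $\|a+b\|^2 \leq 2\|a\|^2 + 2\|b\|^2$ to bound the right-hand side by $\tfrac{\tau^2}{2}\bigl(\|\nabla \phi^{\#}_{\rho_0}(X_{j+1}^{\tau})\|^2_{\mathbb{H}} + \|\nabla \phi^{\#}_{\rho_0}(X_j^{\tau})\|^2_{\mathbb{H}}\bigr)$. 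Iterating \eqref{gradient_bounds} from Theorem~\ref{trapezoid_well_defined} gives $\|\nabla \phi^{\#}_{\rho_0}(X_j^{\tau})\|^2_{\mathbb{H}} \leq \mathcal{C}(\lambda,\tau)^j \|\nabla \phi^{\#}_{\rho_0}(X_0^{\tau})\|^2_{\mathbb{H}}$, and for $j\tau \leq T$ one checks that $\mathcal{C}(\lambda,\tau)^j \leq \tilde{\mathcal{C}}(\lambda,\tau,T)$ by splitting on the sign of $\lambda$: when $\lambda \geq 0$ both sides equal one, and when $\lambda<0$ one has $\mathcal{C}(\lambda,\tau)^j = e^{2j|\lambda_{\tau}|\tau} \leq e^{2T|\lambda_{\tau}|} = e^{-2\lambda_{\tau}T}$. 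Combining these ingredients, dividing through by $\tau$, and summing over $j$ produces a bound of the form $N\tau \cdot \tilde{\mathcal{C}}(\lambda,\tau,T)\,\|\nabla \phi^{\#}_{\rho_0}(X_0^{\tau})\|^2_{\mathbb{H}} \leq T\cdot \tilde{\mathcal{C}}(\lambda,\tau,T)\,\|\nabla \phi^{\#}_{\rho_0}(X_0^{\tau})\|^2_{\mathbb{H}}$, which is exactly \eqref{classical_stability}.

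For the Lipschitz-in-time estimate \eqref{lipschitz_bound}, assume without loss of generality that $n>m$. The triangle inequality gives
\[
    \|X_n^{\tau} - X_m^{\tau}\|_{\mathbb{H}}
    \leq \sum_{j=m}^{n-1} \|X_{j+1}^{\tau} - X_j^{\tau}\|_{\mathbb{H}},
\]
and from \eqref{trapezoid_implicit} and another triangle inequality each summand satisfies
\[
    \|X_{j+1}^{\tau} - X_j^{\tau}\|_{\mathbb{H}}
    \leq \frac{\tau}{2}\bigl(\|\nabla \phi^{\#}_{\rho_0}(X_{j+1}^{\tau})\|_{\mathbb{H}} + \|\nabla \phi^{\#}_{\rho_0}(X_j^{\tau})\|_{\mathbb{H}}\bigr)
    \leq \tau\,\sqrt{\tilde{\mathcal{C}}(\lambda,\tau,T)}\,\|\nabla \phi^{\#}_{\rho_0}(X_0^{\tau})\|_{\mathbb{H}},
\]
where the last step uses the square-rooted version of the iterated gradient estimate from the previous paragraph. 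Summing this uniform bound over the $n-m$ indices yields \eqref{lipschitz_bound}.

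I do not expect a serious obstacle here: the argument is essentially bookkeeping driven by the trapezoidal identity and a geometric-series type iteration of \eqref{gradient_bounds}. The only point needing a moment of care is verifying that the iterated constant $\mathcal{C}(\lambda,\tau)^N$ is dominated by $\tilde{\mathcal{C}}(\lambda,\tau,T)$ uniformly for $N\tau \leq T$, which is immediate once one recognizes that the definitions of $\mathcal{C}(\lambda,\tau)$ and $\tilde{\mathcal{C}}(\lambda,\tau,T)$ are built from the same quantity $\lambda_{\tau}$ and the inequality reduces to monotonicity of the exponential.
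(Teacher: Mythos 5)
Your proposal is correct and follows essentially the same route as the paper's own proof: reduce the Wasserstein estimates to $\mathbb{H}$-estimates via Lemma~\ref{same_measure_bound}, square the trapezoidal identity \eqref{trapezoid_implicit} and use $\|a+b\|^2\le 2\|a\|^2+2\|b\|^2$ together with the iterated gradient bound \eqref{gradient_bounds} for \eqref{classical_stability}, and telescope the trapezoidal identity with the same gradient bound for \eqref{lipschitz_bound}. The only cosmetic difference is that you insert an explicit triangle inequality before applying \eqref{trapezoid_implicit} in the Lipschitz estimate, whereas the paper writes the telescoping sum directly in terms of the gradients; these are the same computation.
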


\begin{proof}
First, observe that for any $n,m \in \mathbb{N}$:
\[
    W_2^2(\rho_n^{\tau},\rho_{m}^{\tau}) = W_2^2( (X_n^{\tau})_{\#} \rho_0, (X_{m}^{\tau})_{\#} \rho_0) 
    \leq \|X^{\tau}_n-X^{\tau}_{m}\|^2_{\mathbb{H}},
\]
where the final inequality follows from Lemma \ref{same_measure_bound}. Thus, it suffices to show the bound on $X^{\tau}_n$. \smallskip

For \eqref{classical_stability}, we apply Theorem \ref{trapezoid_well_defined} to obtain:
\[
    \|X_{j+1}^{\tau}- X_j^{\tau}\|^2_{\mathbb{H}} 
    = \frac{\tau^2}{4} \|\nabla \phi^{\#}_{\rho_0}(X^{\tau}_{j+1}) +  \nabla \phi^{\#}_{\rho_0}(X^{\tau}_{j})\|^2_{\mathbb{H}}.
\]
By expanding out the square and applying Cauchy-Schwarz, we see that
\[
    \|X_{j+1}^{\tau}- X_j^{\tau}\|^2_{\mathbb{H}} 
    \leq \frac{\tau^2}{2} \left( \|\nabla \phi^{\#}_{\rho_0}(X^{\tau}_{j+1})\|^2_{\mathbb{H}} 
    + \|\nabla \phi^{\#}_{\rho_0}(X^{\tau}_{j})\|^2_{\mathbb{H}} \right).
\]
Then, applying the gradient estimate in Theorem \ref{trapezoid_well_defined}, we obtain:
\[
    \|X_{j+1}^{\tau}- X_j^{\tau}\|^2_{\mathbb{H}} 
    \leq \tau^2  \tilde{\mathcal{C}}(\lambda,\tau,T)  \|\nabla \phi^{\#}_{\rho_0}(X_0^{\tau})\|^2_{\mathbb{H}}.
\]
By summing, we obtain the bound \eqref{classical_stability}. \smallskip

To obtain \eqref{lipschitz_bound}, observe that for $n \geq m$ the implicit equation in Theorem \ref{trapezoid_well_defined} implies:
\[
    X^{\tau}_n - X^{\tau}_m = \sum_{j=m}^{n-1} (X^{\tau}_{j+1} - X^{\tau}_j) 
    = -\frac{\tau}{2} \sum_{j=m}^{n-1}  \left( \nabla \phi^{\#}_{\rho_0}(X^{\tau}_{j+1}) +  \nabla \phi^{\#}_{\rho_0}(X^{\tau}_{j}) \right).
\]
Using Theorem \ref{trapezoid_well_defined}, we conclude:
\[
    \|X^{\tau}_n - X^{\tau}_m\|_{\mathbb{H}} 
    \leq |n-m| \cdot \tau \cdot \sqrt{\tilde{\mathcal{C}}(\lambda,\tau,T)} \cdot \|\nabla \phi^{\#}_{\rho_0}(X^{\tau}_0)\|_{\mathbb{H}}.
\]
\end{proof}

\section{\texorpdfstring{$O(\tau^2)$}{O(tau2)} Convergence via Differential Methods}
\label{higher_order_convg}

 We now establish the higher-order convergence of the Lagrangian Trapezoidal Scheme under the assumption that both $\phi$ and the limiting velocity field are sufficiently smooth by extending the finite dimensional proofs to $\mathbb{H}$. First we begin by establishing existence of \eqref{L2_Lagrangian_Flow}.

\begin{lemma}
Assume that $\phi^{\#}_{\rho_0} \in C_b^{1,1}(\mathbb{H};\mathbb{H})$. Then for any $\eta \in \mathbb{H}$, there exists a unique map $X \in L^2_{\textnormal{loc}}([0,\infty) \times \R^d;\rho_0)$ such that
\[
\begin{cases} 
\dot{X}(t) = -\nabla \phi^{\#}_{\rho_0}(X(t)) \quad \text{on } [0,\infty), \\ 
X(0) = \eta.
\end{cases}
\]
\end{lemma}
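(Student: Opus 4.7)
The plan is to prove this by the classical Picard--Lindelöf argument, which extends verbatim from $\mathbb{R}^d$ to any Hilbert space because the only structure used is completeness and the existence of a Bochner integral. Since $\phi^{\#}_{\rho_0}\in C^{1,1}_b(\mathbb{H};\mathbb{H})$, the vector field $F:=-\nabla\phi^{\#}_{\rho_0}$ is globally Lipschitz on $\mathbb{H}$ with some constant $L(\phi)<\infty$; in particular it is \emph{uniformly} Lipschitz, which is what makes the argument give a global-in-time solution rather than merely a local one.

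First I would reformulate the ODE as the integral equation
\[
X(t)=\eta+\int_0^t F(X(s))\,ds,
\]
where the integral is a Bochner integral in $\mathbb{H}$. Fix $T>0$ with $LT<1$, say $T=1/(2L)$, and define the map $\Psi:C([0,T];\mathbb{H})\to C([0,T];\mathbb{H})$ by $\Psi(X)(t):=\eta+\int_0^t F(X(s))\,ds$. Boundedness of $F$ together with strong measurability of $s\mapsto F(X(s))$ (which follows from continuity of $X$ and Lipschitzness of $F$) ensures that $\Psi(X)$ is well-defined and indeed continuous; moreover, for any $X_1,X_2\in C([0,T];\mathbb{H})$,
\[
\sup_{t\in[0,T]}\|\Psi(X_1)(t)-\Psi(X_2)(t)\|_{\mathbb{H}}\le LT\,\sup_{t\in[0,T]}\|X_1(t)-X_2(t)\|_{\mathbb{H}},
\]
so $\Psi$ is a contraction on the Banach space $C([0,T];\mathbb{H})$ with the sup norm. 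The Banach fixed-point theorem then yields a unique $X\in C([0,T];\mathbb{H})$ solving the integral equation, and by the fundamental theorem of calculus for Bochner integrals we obtain $X\in C^1([0,T];\mathbb{H})$ with $\dot X(t)=F(X(t))$ pointwise, matching the initial condition $X(0)=\eta$.

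To globalize, I would iterate: since $L$ does not depend on the size or location of the solution, the same fixed-point argument starting from $X(T)$ produces an extension to $[T,2T]$, and so on, yielding a unique solution $X\in C^1([0,\infty);\mathbb{H})$; uniqueness on the whole half-line follows from uniqueness on each subinterval, or directly from Grönwall applied to $\|X_1(t)-X_2(t)\|_{\mathbb{H}}^2$ using the $L$-Lipschitz bound on $F$. In particular $X\in L^2_{\mathrm{loc}}([0,\infty);\mathbb{H})$ as required in the statement.

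The only thing that could be called an obstacle is purely bookkeeping: one must be careful that Bochner measurability, the fundamental theorem of calculus, and the chain rule all apply in the Hilbert-space setting, but these are standard facts (see, e.g., the vector-valued integration chapter of any text on evolution equations). Given the global Lipschitz and boundedness of $F$ inherited from $\phi^{\#}_{\rho_0}\in C^{1,1}_b(\mathbb{H};\mathbb{H})$, there is no need to invoke any of the heavier machinery for $\lambda$-convex gradient flows — the pure Picard iteration suffices and already yields the slightly stronger regularity $X\in C^1([0,\infty);\mathbb{H})$.
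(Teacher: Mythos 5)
Your proposal is correct and is exactly what the paper means by its one-line proof, which simply invokes "a standard Banach's Fixed Point theorem argument" using $\phi^{\#}_{\rho_0}\in C^{1,1}_b(\mathbb{H};\mathbb{H})$. You have merely spelled out the Picard--Lindelöf contraction on $C([0,T];\mathbb{H})$ and its globalization in full detail.
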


\begin{proof} This follows from a Banach's Fixed Point theorem argument because $\phi^{\#}_{\rho_0} \in C_b^{1,1}(\mathbb{H};\mathbb{H})$.
\end{proof}

To ensure the initial measure is $\rho_0$, we will often take as initial data $\eta = \text{Id} \in \mathbb{H}$. In addition, to ensure higher order convergence of our trapezoid scheme, we will also need to make a $C^{2,\alpha}([0,T];\mathbb{H})$ assumption on the Lagrangian flow.

\begin{definition} \label{C11_X}
Let $T > 0$, $\alpha \in (0,1]$, and $X$ be a Hilbert space. A function $u: [0,T] \to X$ belongs to $C^{1,\alpha}([0,T];X)$ if it is Fréchet differentiable and there exists a constant $C > 0$ such that its Fréchet derivative $\dot{u}: [0,T] \to X$ satisfies  
\begin{equation}
\|u(t+h) - u(t) - h \dot{u}(t)\|_{X} \leq C |h|^{1+\alpha}, \quad \forall \, 0 \leq t, t+h \leq T. \label{time_taylor_expand}
\end{equation}
We say $u \in C^{1,\alpha}_{\textnormal{loc}}([0,\infty);X)$ if $u \in C^{1,\alpha}([0,T];X)$ for every $T > 0$. 
\end{definition}

\begin{assumption}[Higher-Order Convergence]  \label{higher_order_assumption}
Fix $\rho_0 \in \mathcal{P}_2(\R^d)$ and define $\mathbb{H} := L^2(\R^d;\rho_0)$. Assume $\phi^{\#}_{\rho_0} \in C_b^{1,1}(\mathbb{H};\mathbb{H})$, and define the Lipschitz constant of its gradient by
\[ 
L(\phi) := \sup_{\xi_1 \neq \xi_2 \in \mathbb{H}} \frac{\|\nabla \phi^{\#}_{\rho_0}(\xi_1) - \nabla \phi^{\#}_{\rho_0}(\xi_2)\|_{\mathbb{H}}}{\|\xi_1-\xi_2\|_{\mathbb{H}}} < \infty. 
\]

Let $X \in L^2_{\textnormal{loc}}([0,\infty) \times \mathbb{R}^d; \rho_0)$ be the unique solution to \eqref{L2_Lagrangian_Flow}. We further assume there exists an $\alpha \in (0,1]$ such that $\dot{X}(t,\cdot) \in C_{\textnormal{loc}}^{1,\alpha}([0,\infty); \mathbb{H})$. For each $T>0$, define
\begin{equation} 
L_{\alpha}(T,\ddot{X}) := \sup_{t\neq s \in [0,T]} \frac{\|\dot{X}_t - \dot{X}_s - (t-s)\ddot{X}_s\|_{\mathbb{H}}}{|t-s|^{\alpha}} < \infty. 
\label{characteristic_Lipschitz}  
\end{equation}
\end{assumption}

We begin by extending the trapezoidal quadrature rule to the Hilbert space $\mathbb{H}$.

\begin{lemma} \label{trapezoid_integral_rule}
Fix $T > 0$, $\rho_0 \in \mathcal{P}_{2}(\R^d)$, and suppose $u \in C^{1,\alpha}([0,T];\mathbb{H})$ for some $\alpha \in (0,1]$. Let $L$ be the smallest constant for which $u$ satisfies \eqref{time_taylor_expand}. Then, for any $0 \leq a \leq b \leq T$, we have
\begin{equation}
\left\| \frac{b-a}{2} \left( u(b,x) + u(a,x) \right) - \int_a^b u(s,x) \, ds \right\|_{\mathbb{H}} 
\leq 2 L |b-a|^{2+\alpha}.   
\label{trapezoid_rule_bound_integral}
\end{equation} 
\end{lemma}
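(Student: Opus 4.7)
The plan is to prove the estimate by Taylor-expanding $u$ around the midpoint $m := (a+b)/2$, and then exploiting two symmetries: the two endpoints $a,b$ are equidistant from $m$, and the integration domain $[a,b]$ is symmetric about $m$. These symmetries make the $O(1)$ and $O(h)$ contributions to the trapezoidal-rule error vanish exactly, leaving only the $C^{1,\alpha}$ Taylor remainder to estimate. Since the target constant is a rather loose $2L$, I will not need to track sharp constants.

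First I would apply the Taylor bound of Definition \ref{C11_X} at the base point $t = m$: for every $s \in [a,b]$, with $h = s - m$ (which keeps $t+h = s$ inside $[0,T]$),
\begin{equation*}
u(s) = u(m) + (s - m)\,\dot u(m) + R(s), \qquad \|R(s)\|_{\mathbb{H}} \leq L|s - m|^{1+\alpha} \leq L \bigl(\tfrac{b-a}{2}\bigr)^{1+\alpha}.
\end{equation*}
Specializing to $s = a,b$ and averaging, the identity $(a-m) + (b-m) = 0$ gives
\begin{equation*}
\tfrac{b-a}{2}\bigl(u(a) + u(b)\bigr) = (b-a)\,u(m) + \tfrac{b-a}{2}\bigl(R(a) + R(b)\bigr).
\end{equation*}
Similarly, integrating the Taylor formula over $[a,b]$ and using $\int_a^b (s-m)\,ds = 0$ yields
\begin{equation*}
\int_a^b u(s)\,ds = (b-a)\,u(m) + \int_a^b R(s)\,ds.
\end{equation*}
The $u(m)$ and $\dot u(m)$ contributions cancel upon subtraction, reducing the quantity to be estimated to $\tfrac{b-a}{2}\bigl(R(a)+R(b)\bigr) - \int_a^b R(s)\,ds$.

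Next I would bound this reduced quantity via the Bochner triangle inequality $\|\int_a^b R(s)\,ds\|_{\mathbb{H}} \leq \int_a^b \|R(s)\|_{\mathbb{H}}\,ds$, together with the pointwise estimate $\|R(s)\|_{\mathbb{H}} \leq L|s-m|^{1+\alpha}$, and explicitly integrate the scalar $|s-m|^{1+\alpha}$ over $[a,b]$. A short arithmetic check shows both the endpoint and the integral contributions scale like $L(b-a)^{2+\alpha}/2^{1+\alpha}$, whose total comes out as $\tfrac{(3+\alpha)\,L\,(b-a)^{2+\alpha}}{(2+\alpha)\,2^{1+\alpha}}$, comfortably below the claimed bound $2L|b-a|^{2+\alpha}$.

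There is no substantive obstacle: the trapezoidal rule is exact on affine Hilbert-space-valued functions, so the error is entirely governed by the $C^{1,\alpha}$ remainder, and the passage from scalar to Hilbert-space-valued integration costs nothing beyond the Bochner triangle inequality. The only thing I would double-check is that $R : [a,b] \to \mathbb{H}$ is Bochner integrable, which follows because $u$ and $\dot u$ are continuous into $\mathbb{H}$ on the compact interval $[a,b]$.
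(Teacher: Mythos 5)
Your proof is correct, but it takes a genuinely different route from the paper's. The paper expands $u$ at the two endpoints $a$ and $b$: it rewrites the error as $\int_a^b \tfrac12[u(s)-u(b)]+\tfrac12[u(s)-u(a)]\,ds$, splits off the Taylor remainders (bounded via Jensen's inequality, giving $\tfrac{L}{\sqrt{2\alpha+3}}(b-a)^{2+\alpha}$ each), and is then left with a nonvanishing leading term $\tfrac{(b-a)^2}{2}\bigl(\dot u(a)-\dot u(b)\bigr)$, which it controls by first deducing that $\dot u$ is $\alpha$-H\"older with constant $2L$ from Definition \ref{C11_X}; summing the pieces yields the stated $2L$. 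Your midpoint expansion avoids that leading term entirely: by the symmetry of both the endpoint pair and the integration domain about $m=(a+b)/2$, the $u(m)$ and $\dot u(m)$ contributions cancel exactly, so only the $C^{1,\alpha}$ remainders survive, and the Bochner triangle inequality gives the sharper constant $\tfrac{3+\alpha}{(2+\alpha)2^{1+\alpha}}L < L$. Your approach is more elementary (it never needs the H\"older continuity of $\dot u$) and yields a better constant; the paper's is the standard two-endpoint error analysis and generalizes more directly to nonuniform quadrature nodes. One point worth stating explicitly in a final write-up: the Taylor bound from the base point $m$ with negative increment $h=s-m<0$ is indeed covered by Definition \ref{C11_X}, since that definition allows $h$ of either sign so long as $t,t+h\in[0,T]$ --- you implicitly use this when expanding toward $a$.
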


\begin{proof}
We first observe that Jensen's inequality and Fubini's theorem imply for any $f$
\begin{equation} 
\left\| \int_a^b f(t,x)\,dt \right\|_{\mathbb{H}}^2 \leq (b-a)\int_a^b \|f(t,x)\|_{\mathbb{H}}^2\,dt. 
\label{integral_inequality_Jensen}
\end{equation}

We begin by decomposing the integrand in \eqref{trapezoid_rule_bound_integral}. Observe that the integrand decomposes as
\begin{align} \label{decompose_trapezoid}
\int_a^b \frac{1}{2}\left[u(s,x)-u(b,x)\right] + \frac{1}{2}\left[u(s,x)-u(a,x)\right]\,ds &= \frac{1}{2}\int_a^b (s-b)\dot{u}(b,x)\,ds + \frac{1}{2}\int_a^b (s-a)\dot{u}(a,x)\,ds + \mathcal{R}, 
\end{align}
where the remainder term is
\[
\mathcal{R} = \frac{1}{2}\underbrace{\int_a^b [u(s,x)-u(b,x)-(s-b)\dot{u}(b,x)]\,ds}_{\mathcal{R}_1} 
+ \frac{1}{2}\underbrace{\int_a^b [u(s,x)-u(a,x)-(s-a)\dot{u}(a,x)]\,ds}_{\mathcal{R}_2}.
\]

We first bound the $\mathbb{H}$ norm of the remainder term. Specifically, we will show that
\[
\|\mathcal{R}_i\|_{\mathbb{H}} \leq \frac{L}{\sqrt{2\alpha + 3}} (b - a)^{2 + \alpha}, \quad \text{for } i \in \{1, 2\}.
\]
Since the argument is analogous for $\mathcal{R}_2$, we only provide the proof for $\mathcal{R}_1$.
 Using \eqref{integral_inequality_Jensen} and Definition \ref{C11_X}, we have
\[
\|\mathcal{R}_1\|_{\mathbb{H}}^2 
\leq (b-a)\int_a^b \|u(s,x)-u(b,x)-(s-b)\dot{u}(b,x)\|_{\mathbb{H}}^2\,ds 
\leq L^2(b-a)\int_a^b|s-b|^{2(1+\alpha)}\,ds 
= \frac{L^2}{2\alpha+3}(b-a)^{2\alpha+4}.
\]
This gives the desired bound on $\mathcal{R}_1$. \smallskip

Next, we control the remaining terms in \eqref{decompose_trapezoid}. Notice that
\[
\int_a^b (s-b)\dot{u}(b,x)+(s-a)\dot{u}(a,x)\,ds = \frac{(a-b)^2}{2}(\dot{u}(a,x)-\dot{u}(b,x)).
\] 
Thus, 
\[
\left\|\int_a^b (s-b)\dot{u}(b,x)+(s-a)\dot{u}(a,x)\,ds\right\|_{\mathbb{H}} 
= \frac{(b-a)^2}{2}\|\dot{u}(a,x)-\dot{u}(b,x)\|_{\mathbb{H}}.
\]
Since $\dot{u}(t,x)$ is $\alpha$-Hölder continuous on $\mathbb{H}$ with Hölder constant $2L$, we have
\[
\left\|\int_a^b (s-b)\dot{u}(b,x)+(s-a)\dot{u}(a,x)\,ds\right\|_{\mathbb{H}} 
\leq L(b-a)^{2+\alpha}.
\]

Combining this with our bound on $\mathcal{R}$, we conclude from \eqref{decompose_trapezoid}:
\[
\left\|\int_a^b \frac{1}{2}\left[u(s,x)-u(b,x)\right] + \frac{1}{2}\left[u(s,x)-u(a,x)\right]\,ds\right\|_{\mathbb{H}} \leq 2L|b-a|^{2+\alpha}.
\]
\end{proof}

We now have the tools needed to establish the higher-order convergence of our scheme.

\begin{theorem}[Higher Order Convergence] \label{higher_order_convergence_theorem}
Suppose Assumption \ref{higher_order_assumption} holds and the time step satisfies $0 < \tau \leq \frac{1}{L(\phi)}$. Given a terminal time $T > 0$, let $n \in \mathbb{N}$ be such that $(n+1)\tau \leq T$. Then, the iterates $X_n^{\tau}$ of the Lagrangian trapezoid scheme (Definition \ref{trapezoid_def}) satisfy
\[
W_2(\rho_{n\tau},\rho_n^{\tau}) 
\leq \|X_n^{\tau} - X(n\tau)\|_{\mathbb{H}} 
\leq e^{2 L(\phi) T}\|X_0^{\tau} - \textnormal{Id}\|_{\mathbb{H}} 
+ 2\frac{L(T,\ddot{X})}{L(\phi)}(e^{2 L(\phi) T}-1)\,\tau^{1+\alpha},
\]where $X(t)$ is the unique solution to \eqref{L2_Lagrangian_Flow} in $L^2_{\textnormal{loc}}([0,\infty)\times\R^d;\rho_0)$, and we define $\rho_t := (X_t)_{\#}\rho_0$ and $\rho_j^{\tau} := (X_j^{\tau})_{\#}\rho_0$.

\end{theorem}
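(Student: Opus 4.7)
The plan is to adapt the classical finite-dimensional proof of second-order convergence for the implicit trapezoidal rule to the Hilbert space $\mathbb{H}$, using Lemma \ref{trapezoid_integral_rule} to control the local truncation error and a discrete Grönwall-type iteration for the global error.

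\textbf{Step 1 (Error recursion).} Set $e_n := X_n^\tau - X(n\tau)$. Integrating \eqref{L2_Lagrangian_Flow} on $[n\tau,(n+1)\tau]$ gives $X((n+1)\tau) = X(n\tau) - \int_{n\tau}^{(n+1)\tau} \nabla\phi^{\#}_{\rho_0}(X(s))\,ds$. Subtracting this from the trapezoidal identity \eqref{trapezoid_implicit} and adding and subtracting $\tfrac{\tau}{2}(\nabla\phi^{\#}_{\rho_0}(X((n+1)\tau)) + \nabla\phi^{\#}_{\rho_0}(X(n\tau)))$ yields
\[
e_{n+1} = e_n - \tfrac{\tau}{2}\bigl[\nabla\phi^{\#}_{\rho_0}(X_{n+1}^\tau) - \nabla\phi^{\#}_{\rho_0}(X((n+1)\tau))\bigr] - \tfrac{\tau}{2}\bigl[\nabla\phi^{\#}_{\rho_0}(X_n^\tau) - \nabla\phi^{\#}_{\rho_0}(X(n\tau))\bigr] + R_n,
\]
where the local truncation error is
\[
R_n := \int_{n\tau}^{(n+1)\tau} \nabla\phi^{\#}_{\rho_0}(X(s))\,ds - \tfrac{\tau}{2}\bigl[\nabla\phi^{\#}_{\rho_0}(X((n+1)\tau)) + \nabla\phi^{\#}_{\rho_0}(X(n\tau))\bigr].
\]

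\textbf{Step 2 (Local truncation error).} Since $\dot X(s) = -\nabla\phi^{\#}_{\rho_0}(X(s))$ lies in $C^{1,\alpha}_{\mathrm{loc}}([0,\infty);\mathbb{H})$ by Assumption \ref{higher_order_assumption}, we apply Lemma \ref{trapezoid_integral_rule} to the function $u(s) := \nabla\phi^{\#}_{\rho_0}(X(s))$ with Hölder constant $L_\alpha(T,\ddot X)$ to conclude $\|R_n\|_{\mathbb{H}} \leq 2\, L_\alpha(T,\ddot X)\, \tau^{2+\alpha}$.

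\textbf{Step 3 (Absorbing the implicit term).} Taking $\mathbb{H}$-norms and using the Lipschitz bound $L(\phi)$ on $\nabla\phi^{\#}_{\rho_0}$, we obtain
\[
\|e_{n+1}\|_{\mathbb{H}} \leq \|e_n\|_{\mathbb{H}} + \tfrac{\tau L(\phi)}{2}\bigl(\|e_{n+1}\|_{\mathbb{H}} + \|e_n\|_{\mathbb{H}}\bigr) + \|R_n\|_{\mathbb{H}}.
\]
The hypothesis $\tau L(\phi) \leq 1 < 2$ ensures $1 - \tau L(\phi)/2 > 0$, so we can absorb the $\|e_{n+1}\|_{\mathbb{H}}$ term on the right into the left, giving
\[
\|e_{n+1}\|_{\mathbb{H}} \leq C\,\|e_n\|_{\mathbb{H}} + \frac{2}{2-\tau L(\phi)}\|R_n\|_{\mathbb{H}}, \qquad C := \frac{1+\tau L(\phi)/2}{1-\tau L(\phi)/2}.
\]

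\textbf{Step 4 (Discrete Grönwall and conclusion).} Iterating this recursion over $n$ steps produces
\[
\|e_n\|_{\mathbb{H}} \leq C^n \|e_0\|_{\mathbb{H}} + \frac{2}{2-\tau L(\phi)}\cdot\frac{C^n-1}{C-1}\cdot \max_j \|R_j\|_{\mathbb{H}}.
\]
The key algebraic observation is that $C-1 = \tau L(\phi)/(1-\tau L(\phi)/2)$, so the geometric-sum factor simplifies to $(C^n - 1)/L(\phi)$ after cancellation with the $2/(2-\tau L(\phi))$ prefactor. A Taylor expansion $\log C = 2\sum_{k\geq 0}\frac{1}{2k+1}(\tau L(\phi)/2)^{2k+1} \leq 2\tau L(\phi)$ (valid for $\tau L(\phi)\leq 1$) then yields $C^n \leq e^{2L(\phi)n\tau} \leq e^{2L(\phi)T}$. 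Combining with the local truncation bound from Step 2 produces the stated estimate. The Wasserstein inequality $W_2(\rho_{n\tau},\rho_n^\tau) \leq \|e_n\|_{\mathbb{H}}$ then follows immediately from Lemma \ref{same_measure_bound}, since both measures are push-forwards of $\rho_0$.

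The main obstacle is the implicit nature of the scheme, which produces $\|e_{n+1}\|_{\mathbb{H}}$ on the right-hand side of the error recursion; the restriction $\tau \leq 1/L(\phi)$ is precisely what allows this term to be absorbed while keeping the amplification factor $C$ comparable to $e^{2\tau L(\phi)}$. The other delicate point is keeping track of constants carefully so that the factor $1/L(\phi)$ appears in front of $(e^{2L(\phi)T}-1)\tau^{1+\alpha}$, which requires exploiting the exact identity for $C-1$ rather than loosely bounding $n\tau \leq T$.
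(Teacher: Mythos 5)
Your proposal is correct and follows essentially the same route as the paper's proof: the same decomposition into a local truncation error (controlled by Lemma \ref{trapezoid_integral_rule} applied to $u=-\dot X$) plus a propagated error, the same absorption of the implicit $\|e_{n+1}\|_{\mathbb{H}}$ term using $\tau L(\phi)\le 1$, and the same discrete Grönwall iteration; the paper merely bounds the amplification factor via $\tfrac{1+x}{1-x}\le 1+4x$ rather than your exact geometric-sum cancellation, and both yield the identical final constants. (Minor slip: the simplified geometric-sum factor is $(C^n-1)/(\tau L(\phi))$, not $(C^n-1)/L(\phi)$, but the missing $\tau$ is exactly what converts $\tau^{2+\alpha}$ into $\tau^{1+\alpha}$, so the conclusion is unaffected.)
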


\begin{proof}
For each $n \in \mathbb{N}$, define the error iterates
\[
e_{n+1} := \|X_{n+1}^{\tau}-X((n+1)\tau)\|_{\mathbb{H}},
\]
along with the trapezoid approximation of $X((n+1)\tau)$:
\[
Y_{n+1}^{\tau} := X(n\tau) - \frac{\tau}{2}\left(\nabla \phi^{\#}_{\rho_0}(X(n\tau)) + \nabla \phi^{\#}_{\rho_0}(X((n+1)\tau))\right).
\] 
The triangle inequality yields
\[
e_{n+1} \leq \underbrace{\|Y_{n+1}^{\tau} - X((n+1)\tau)\|_{\mathbb{H}}}_{\mathcal{I}_{n+1}^{(1)}} 
+ \underbrace{\|Y_{n+1}^{\tau} - X^{\tau}_{n+1}\|_{\mathbb{H}}}_{\mathcal{I}_{n+1}^{(2)}}.
\]

To control $\mathcal{I}^{(1)}_{n+1}$, recall from \eqref{L2_Lagrangian_Flow} that for any $s < t$,
\begin{equation} 
X(t) = X(s) - \int_s^t \nabla \phi^{\#}_{\rho_0}(X(a))\,da, \quad \rho_0\text{-a.e.}
\label{integral_identity_of_X}
\end{equation}
Then by choosing $s=n \tau$ and $t=(n+1)\tau$, we see by Lemma \ref{trapezoid_integral_rule}  and Assumption \ref{higher_order_assumption} that
\[
\mathcal{I}^{(1)}_{n+1} \leq 2 L_{\alpha}(T,\ddot{X})\,\tau^{2+\alpha}.
\]

Next, to control $\mathcal{I}^{(2)}_{n+1}$, use \eqref{integral_identity_of_X} to write
\[
Y_{n+1}^{\tau} - X^{\tau}_{n+1} 
= (X(n\tau)-X_n^{\tau}) - \frac{\tau}{2}\left(\nabla \phi^{\#}_{\rho_0}(X(n\tau))-\nabla \phi^{\#}_{\rho_0}(X_n^{\tau})\right) 
- \frac{\tau}{2}\left(\nabla \phi^{\#}_{\rho_0}(X((n+1)\tau))-\nabla \phi^{\#}_{\rho_0}(X_{n+1}^{\tau})\right).
\]
Thus, the triangle inequality and Assumption \ref{higher_order_assumption} yield
\[
\mathcal{I}^{(2)}_{n+1} \leq e_n + \tau\frac{L(\phi)}{2}(e_n+e_{n+1}).
\]
Combining these estimates, we have
\[
e_{n+1} \leq \frac{2L_{\alpha}(T,\ddot{X})\tau^{2+\alpha}}{1-\frac{L(\phi)\tau}{2}} + \frac{ (1+\frac{L(\phi)\tau}{2})e_n}{1-\frac{L(\phi) \tau}{2}}.
\]

Since we assumed $\tau \leq \frac{1}{L(\phi)}$, it follows that $L(\phi)\tau \leq 1$. Hence, we have that $1-\frac{L(\phi)\tau}{2} \geq \frac{1}{2}$. Hence,
\[ e_{n+1} \leq 4L_{\alpha}(T,\ddot{X}) \tau^{2+\alpha} + \frac{ (1+\frac{L(\phi)\tau}{2})e_n}{1-\frac{L(\phi) \tau}{2}}.  \] Now by using the inequality $\frac{1+x}{1-x}\leq 1+4x \text{ for } x\in[0,\tfrac{1}{2}]$ with $x=L(\phi)\tau/2$ we obtain
\begin{equation}
e_{n+1}\leq 4L_{\alpha}(T,\ddot{X})\tau^{2+\alpha}+(1+2L(\phi)\tau)e_n. \label{iteration_bound}
\end{equation}

By iterating \eqref{iteration_bound}, we arrive at
\[
e_{n}\leq(1+2L(\phi)\tau)^n e_0 + 4L_{\alpha}(T,\ddot{X})\tau^{2+\alpha}\sum_{j=0}^{n-1}(1+2L(\phi)\tau)^j.
\]

Thus, for $(n+1)\tau\leq T$, we have
\[
e_n\leq e^{2L(\phi)T}e_0+2\frac{L_{\alpha}(T,\ddot{X})}{L(\phi)}(e^{2L(\phi)T}-1)\tau^{1+\alpha}.
\]
\end{proof}

In particular, if $\dot{X} \in C^{1,1}_{\textnormal{loc}}([0,T];\mathbb{H})$, we obtain an $O(\tau^2)$ convergence rate. \smallskip

\subsection{Examples of Functionals Satisfying Higher-Order Convergence}

Now we focus on deriving examples that satisfy our higher order convergence theorem.

% \begin{lemma} 
% Fix $\rho_0 \in \mathcal{P}_2(\mathbb{R}^d)$ and assume $\phi^{\#}_{\rho_0} \in C^{1,1}(L^2(\mathbb{R}^d;\rho_0);L^2(\mathbb{R}^d;\rho_0))$ with
% \[
% \sup_{\xi \in \mathbb{H}} ||\nabla \phi^{\#}_{\rho_0}(\xi)||_{\mathbb{H}} \leq C.
% \]
% Then the unique solution $X \in L^2_{\textnormal{loc}}([0,\infty) \times \mathbb{R}^d; \rho_0)$ satisfies $X \in C^{1,1}([0,\infty);\mathbb{H})$.
% \label{Lipschitz_Streamlines}
% \end{lemma}

% \begin{proof} 

% We first show that $X(t)$ is Lipschitz. From \eqref{characteristic_ODE}, for $\rho_0$-a.e. $x$,
% \[
% X(t+h) - X(t) = - \int_t^{t+h} \nabla \phi^{\#}_{\rho_0}(X(s)) \, ds.
% \]
% Applying Jensen's inequality,
% \[
% \|X_{t+h}-X_t\|^2_{\mathbb{H}} \leq h \int_t^{t+h} \|\nabla \phi^{\#}_{\rho_0}(X(s))\|^2_{\mathbb{H}} \, ds \leq C^2h^2.
% \]
% Thus, $X$ is Lipschitz. \smallskip

% To establish $C^{1,1}$ regularity, we use \eqref{characteristic_ODE} to see that
% \[
% \|\dot{X}(t+h)-\dot{X}(t)\|_{\mathbb{H}} = \|\nabla \phi^{\#}_{\rho_0}(X(t+h)) - \nabla \phi^{\#}_{\rho_0}(X(t))\|_{\mathbb{H}}.
% \]
% Since $\nabla \phi^{\#}_{\rho_0}$ is Lipschitz with constant $L_{\phi}$,
% \[
% \|\dot{X}(t+h)-\dot{X}(t)\|_{\mathbb{H}} \leq L_{\phi} \|X(t+h)-X(t)\|_{\mathbb{H}} \leq L_{\phi} C |h|.
% \]
% Thus, $X \in C^{1,1}([0,\infty);\mathbb{H})$.
% \end{proof}

\begin{theorem}  \label{C11_Velocity_Sufficient}
 Assume that $\rho_0$ is a non-atomic measure such that $\phi^{\#}_{\rho_0} \in C^{1,1}([0,\infty);\R^d)$ with $\nabla_{\mathbf{W}} \phi$ satisfying the growth conditions in Proposition \ref{differentiability_equivalence}. In addition, suppose there exists a constants $\alpha \in (0,1]$ and $C > 0$ such that for all $\rho \in \mathcal{P}_2(\R^d)$ and $x,y \in \R^d$
\[
|  \nabla_{\mathbf{W}} \phi(\rho,x)|  + |\nabla_x \nabla_{\mathbf{W}} \phi(\rho,x)|  
+ |\nabla^2_{\mathbf{W}} \phi(\rho,x,y)|    
\leq C,
\]
\[ [\nabla_{x} \nabla_{\mathbf{W}} \phi(\rho,\cdot)]_{C^{\alpha}(\R^d)}+[\nabla^2_{\mathbf{W}} \phi(\rho,\cdot,\cdot)] _{C^{\alpha}(\R^d \times \R^d)}  \leq C \]
Moreover, assume that the partial Wasserstein Hessian of $\phi$ satisfies the Hölder-condition:
\begin{equation} 
\sup_{\mu \neq \nu \in \mathcal{P}_2(\mathbb{R}^d)} 
\frac{\|\nabla^2_{\mathbf{W}} \phi(\mu,x,y) - \nabla^2_{\mathbf{W}} \phi(\nu,x,y) \|}{W^{\alpha}_1(\mu,\nu)} 
\leq C.  
\label{Lipschitz_W2_Hessian}  
\end{equation}
Then, $\dot{X} \in C^{1,\alpha}([0,\infty);\mathbb{H})$, and its Fréchet derivative is given by:
\[
\resizebox{1.0\textwidth}{!}{
$
\ddot{X}(t,x) =  
\bigl[\nabla_x \nabla_{\mathbf{W}} \phi\bigl((X(t))_{\#} \rho_0, X(t,x)\bigr) \bigr] \dot{X}(t,x)
+ \int_{\mathbb{R}^d} \bigl( \nabla^2_{\mathbf{W}} \phi\bigl((X(t))_{\#} \rho_0, X(t,x), X(t,z)\bigr) \bigr) \dot{X}(t,z) \, d\rho_0(z).
$
}
\]

\end{theorem}

\begin{proof} Because of our assumptions, we have that $\nabla \phi^{\#}_{\rho_0}(\xi)(x) = \nabla_{\mathbf{W}} \phi(\xi_{\#} \rho_0,\xi(x))$ for $\rho_0$-a.e. $x$. \smallskip

\textbf{Step 1: $X(t) \in C^{1,1}([0,\infty);\mathbb{H})$ and Lipschitz Regularity in Time} 
\smallskip

Because $\nabla_{\mathbf{W}} \phi(\rho,x)$ is uniformly bounded on $\spt(\rho_0)$, we immediately obtain that $\nabla \phi^{\#}_{\rho_0}(\xi)$ is uniformly bounded on $\mathbb{H}$. This implies from \eqref{L2_Lagrangian_Flow} that $X(t)$ is Lipschitz continuous on $\mathbb{H}$. Then as $\nabla \phi^{\#}_{\rho_0}(\xi)$ is Lipschitz on $\mathbb{H}$, \eqref{L2_Lagrangian_Flow}  then implies $X(t) \in C^{1,1}([0,\infty);\mathbb{H})$. \smallskip

Now using \eqref{L2_Lagrangian_Flow}, we have that for a.e. $x \in \text{spt}(\rho_0)$
\[ X(t+h,x) - X(t,x) = -\int_{t}^{t+h} \nabla_{\mathbf{W}} \phi( X(s)_{\#} \rho_0,X(s,x)) ds.   \] From our assumption that the Wasserstein gradient is uniformly bounded, we obtain for a.e. $x \in \text{spt}(\rho_0)$
\begin{equation} |X(t+h,x) -X(t,x)| \leq C|h| \label{Lipschitz_Space} \end{equation}

\textbf{Step 2 : $\dot{X}_t \in C^{1,\alpha}([0,\infty);\mathbb{H})$} \smallskip

For notational simplicity, we will let $F(\rho,x) := \nabla_{\mathbf{W}} \phi(\rho,x)$ and $\rho_t := (X_t)_{\#} \rho_0$.   \smallskip

 Using \eqref{L2_Lagrangian_Flow}, we obtain for $\rho_0$-a.e. $x$:
\[
\dot{X}(t,x)-\dot{X}(t+h,x) = F(\rho_t,X(t,x))- F(\rho_{t+h},X(t+h,x)).
\]
\[ = \underbrace{\left[ F(\rho_t,X(t,x)) - F(\rho_t,X(t+h,x)) \right]}_{(I)} + \underbrace{[ F(\rho_t,X(t+h,x))-F(\rho_{t+h},X(t+h,x)) ]}_{(II)} \]

\textbf{Bounding Term (I).} As $x\mapsto \nabla_x F(\rho,x)$ is Hölder continuous, we obtain from Taylor's Theorem
\[
(I) = -h\nabla_x F(\rho_t,X(t,x)) \cdot(\frac{X(t+h,x)-X(t,x)}{h}) + \tilde{\mathcal{R}}
\] where $|\tilde{\mathcal{R}}(t,x)| \leq C' |X(t+h,x)-X(t,x)|^{1+\alpha}$, which implies from \eqref{Lipschitz_Space} that $||\tilde{\mathcal{R}}(t,\cdot)||_{\mathbb{H}} \leq C'' |h|^{1+\alpha}$. \smallskip

Simplifying further, we have that
\[ (I) =-\left( \nabla_x F(\rho_t,X(t,x)) \right)  \dot{X}(t,x) + \mathcal{R}_1,\] where
\[ \mathcal{R}_1 = \tilde{\mathcal{R}}  -\underbrace{h\left( \nabla_x F(\rho_t,X(t,x) \right) \cdot \left( \frac{X(t+h,x)-X(t,x)}{h}  - \dot{X}(t,x)
 \right)}_{\tilde{\mathcal{R}}_2}.   \] From $X(t) \in C^{1,1}(\mathbb{H};\mathbb{H})$ and $\nabla_x F$ being uniformly bounded, we see that $||\tilde{\mathcal{R}}_2||_{\mathbb{H}} = O(|h|^2)$. In particular, we have that $||\mathcal{R}_1||_{\mathbb{H}} = O(|h|^{1+\alpha})$. \smallskip

\textbf{Bounding Term (II).} Applying \eqref{first_variation}, we obtain:
\[
(II) = -\int_0^1 \int_{\mathbb{R}^d} \frac{\delta F}{\delta \mu}\bigl( (1-s) \rho_t + s\rho_{t+h},X(t+h,x),z \bigr) d(\rho_{t+h} - \rho_t)(z) ds.
\]
Using Taylor's Theorem with $y \mapsto \nabla^2_{\mathbf{W}} \phi(\rho,x,y)$ is uniformly $C^{\alpha}$
\[
= -h\int_0^1 \int_{\mathbb{R}^d} \nabla_{\mathbf{W}} F( (1-s)\rho_t+s\rho_{t+h},X(t+h,x),X(t,z)) \cdot \frac{(X(t,z)-X(t+h,z))}{h} d\rho_0(z)ds + \mathcal{R}_2,
\] where $||\mathcal{R}_2(t,\cdot)||_{\mathbb{H}} = O(h^{1+\alpha})$. \smallskip

Because $X \in C^{1,1}(\mathbb{H};\mathbb{H})$ and the boundedness of $\nabla_{\mathbf{W}} F$, we obtain:
\[
(II) = h\int_0^1 \int_{\mathbb{R}^d} \nabla_{\mathbf{W}} F( (1-s)\rho_t + s\rho_{t+h},X(t+h,x),X(t,z)) \cdot \dot{X}(t,z) d\rho_0(z) ds + \mathcal{R}_3,
\]
where $\mathcal{R}_3 = \mathcal{R}_2 + \tilde{\mathcal{R}}_3$, where  $\|\tilde{\mathcal{R}}_3\|_{\mathbb{H}} = O(h^2)$. \smallskip

Now we use that $x \mapsto \nabla_{\mathbf{W}} F(\rho,x,y)$ is uniformly $C^{\alpha}$ and \eqref{Lipschitz_Space}, to see that
\[
(II) = h\int_0^1 \int_{\mathbb{R}^d} \nabla_{\mathbf{W}} F( (1-s)\rho_t + s\rho_{t+h},X(t,x),X(t,z)) \cdot \dot{X}(t,z) d\rho_0(z) ds + \mathcal{R}_4,
\] where  $\mathcal{R}_4 = \mathcal{R}_3 + \tilde{\mathcal{R}}_4$, where $||\tilde{\mathcal{R}}_4||_{\mathbb{H}}= O(h^{1+\alpha})$ \smallskip

Finally, using the Hölder condition on the measure position of $\nabla_{\mathbf{W}} F$ and the dual formula for the $ W_1 $ metric,
\[ ||\nabla_{\mathbf{W}} \phi( (1-s)\rho_t + s\rho_{t+h}, X(t,x),X(t,z)) - 
 \nabla_{\mathbf{W}} \phi( \rho_t, X(t,x),X(t,z))||_{\mathbb{H}} \] \[ \leq 
CW^{\alpha}_1(\rho_t, (1-s)\rho_t + s\rho_{t+h} ) 
 \leq Cs^{\alpha} W^{\alpha}_1(\rho_t,\rho_{t+h}) \leq \tilde{C}s^{\alpha} h^{\alpha},
\] where in the final inequality we used Lemma \ref{same_measure_bound}.
Thus, we obtain:
\[
(II) = h  \int_{\mathbb{R}^d} \nabla_{\mathbf{W}} F(\rho_t,X(t,x),X(t,z)) \cdot \dot{X}(t,z) d\rho_0(z) + \mathcal{R}_5,
\]
where $\|\mathcal{R}_5\|_{\mathbb{H}} = O(h^{1+\alpha})$, completing the proof.
\end{proof}

\begin{example} 
 Suppose $U(\rho)$ is the functional from Example \ref{C2_functionals}, and additionally assume that $V, W \in C^{2,1}_b(\mathbb{R}^d)$, $\chi \in C^{2,1}_c(\mathbb{R}^d)$, and $f \in C^{2,1}_b(\mathbb{R}^d)$. Then, $U(\rho)$ satisfies the assumptions of Theorem \ref{C11_Velocity_Sufficient} with parameter $\alpha=1$. \label{C2alpha_velocity}
\end{example}

\begin{proof}
By the gradient formulas in Example \ref{C2_functionals}, it suffices to verify \eqref{Lipschitz_W2_Hessian}. We now check this term by term for the partial Wasserstein Hessian formula in Example \ref{C2_functionals}. Throughout the proof, we set $\mu_\chi := \mu \star \chi$ and write $f \lesssim g$ if $f \le Cg$ for some constant $C>0$.  \smallskip

The difference in first term of the partial Wasserstein Hessian formula at the $(i,j)$th component is
\[ \int_{\R^d} f''(\mu_{\chi}(z)) \p_j \chi(z-x') \p_i \chi(x-z) d\mu(z) - \int_{\R^d} f''(\nu_{\chi}(z)) \p_j \chi(z-x') \p_i \chi(x-z) d\nu(z)  \]
\begin{align*}
&= \int_{\R^d}
\bigl[ f''(\mu_{\chi}(z)) - f''(\nu_{\chi}(z)) \bigr]
\, \p_j \chi(z-x') \, \p_i \chi(x-z) \, d\mu(z) \\
&\quad + \int_{\R^d}
f''(\nu_{\chi}(z))
\, \p_j \chi(z-x') \, \p_i \chi(x-z)
\, d(\mu-\nu)(z) 
=: (I) + (II).
\end{align*}

To control $(I)$, we use that $f''$ is Lipschitz, $\nabla \eta \in L^{\infty}$, the dual representation of the $W_1$ metric, and $\mu$ is a probability measure to obtain that
\[ |(I)| \lesssim \int_{\R^d} |\mu_{\chi}(z)-\nu_{\chi}(z)| d\mu(z) = \int_{\R^d} \left| \int_{\R^d}  \chi(z-x) d(\mu-\nu)(x) \right| d\mu(z) \lesssim \int_{\R^d} W_1(\mu,\nu) d\mu(z) = W_1(\mu,\nu). \]
Now we use that the integrand for $(II)$ is uniformly Lipschitz due to our assumptions to obtain from the dual representation of the $W_1$ metric that
\[ |(II)| \lesssim W_1(\mu,\nu). \] 

The difference in the second term of the partial Wasserstein Hessian at the $(i,j)$th component is $(III)$, which is defined as
\[\int_{\R^d} \left[ f''(\mu_{\chi}(x))-f''(\nu_{\chi}(x) \right] \p_j \chi(x-x') \p_i \chi(x-z) d\mu(z) + \int_{\R^d} f''(\nu_{\chi}(x)) \p_j \chi(x-x') \p_i \chi(x-z) d(\mu-\nu)(z) \] can be similarly bounded to show that
\[ |(III)| \lesssim W_1(\mu,\nu). \]

The difference in the third term of the partial Wasserstein Hessian at the $(i,j)$th component is
\[ (IV) :=(f'(\mu_{\chi}(x)) - f'(\nu_{\chi}(x)) + f'(\mu_{\chi}(x')) - f'(\nu_{\chi}(x'))) \p^2_{i,j} \chi(x-x').   \] Observe for any $z \in \R^d$ one has from $f'$ and $\chi$ being Lipschitz
 that \[  |f'(\mu_{\chi}(z)) - f'(\nu_{\chi}(z))| \lesssim \left| \int_{\R^d} \chi(z-x) d(\mu-\nu)(x) \right| \lesssim W_1(\mu,\nu).  \] Hence, one sees that
\[ |(IV)| \lesssim W_1(\mu,\nu).  \]

The difference in the fourth and final term of the partial Wasserstein Hessian at the $(i,j)$th component is
\[ (f''(\nu_{\chi}(x'))-f''(\mu_{\chi}(x'))) \cdot  \p_i \chi(x-x') (\nu \star \p_j \chi)(x') + f''(\mu_{\chi}(x')) \cdot \p_i \chi(x-x') \cdot ( (\nu-\mu) \star \p_j \chi)(x') = (V) + (VI)  \]

Now because $\nabla \chi \in L^{\infty}$ and $f''$ is Lipschitz, we have from the dual formula of the $W_1$ metric
\[ |(V)| \lesssim |\nu_{\chi}(x')-\mu_{\chi}(x')| = \left| \int_{\R^d} \chi(x'-z) d(\nu-\mu)(z) \right| \lesssim W_1(\mu,\nu). \] Finally, one has that from $f'',\nabla \chi \in L^{\infty}$ and $\p_j \chi$ is Lipschitz that from the dual formula of the $W_1$ metric.
\[ |(VI)|  \lesssim | (\nu-\mu) \star \p_j \chi(x')| = \left| \int_{\R^d} \p_j \chi(x'-z) d (\nu-\mu)(z)  \right| \lesssim W_1(\mu,\nu). \]

Therefore, one has that
\[ |\nabla_{\mathbf{W}} U(\mu,x,x')-\nabla_{\mathbf{W}} U(\nu,x,x')| \lesssim W_1(\mu,\nu), \] which is \eqref{Lipschitz_W2_Hessian}.
\end{proof}

\section{\texorpdfstring{$O(\tau)$}{O(tau)} convergence via Discrete EVI} \label{Discrete_EVI_Section}

\subsection{Discrete EVI}

In this section, we show that the maps $X^{\tau}_n$ for less regular functionals converge in $\mathbb{H}$ at a rate of $O(\tau)$ and characterize the limit in terms of an Evolution Variational Inequality (EVI). 
To achieve these goals, we utilize the \textit{discrete evolution variational inequality}. Our approach is inspired by the methods in \cite{ambrosio2007gradient}. Before deriving the discrete EVI, we first present a useful inequality.

\begin{lemma} \label{EVI_Helper} 
Under the same notation and assumptions as in Theorem \ref{trapezoid_well_defined}, we have for any $\xi \in \mathbb{H}$:
\small
\[
\langle \nabla \phi^{\#}_{\rho_0}(X_n^\tau), \xi - X_{n+1}^\tau \rangle_{\mathbb{H}}
\leq  \left( \phi^{\#}_{\rho_0}(\xi) - \phi^{\#}_{\rho_0}(X_{n+1}^\tau) \right) 
    + \frac{\tau}{2} \big( 
        \| \nabla \phi^{\#}_{\rho_0}(X_n^\tau) \|^2_{\mathbb{H}} 
        - \| \nabla \phi^{\#}_{\rho_0}(X_{n+1}^\tau) \|^2_{\mathbb{H}} 
    \big) - \frac{\lambda}{2} \|X^{\tau}_n-X^{\tau}_{n+1}\|^2_{\mathbb{H}} - \frac{\lambda}{2} \|\xi - X^{\tau}_n\|^2_{\mathbb{H}}.
\]
\normalsize
\end{lemma}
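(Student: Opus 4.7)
The plan is to split the inner product on the left-hand side via the decomposition $\xi - X_{n+1}^{\tau} = (\xi - X_n^{\tau}) + (X_n^{\tau} - X_{n+1}^{\tau})$ and then invoke $\lambda$-convexity of $\phi^{\#}_{\rho_0}$ twice — once centered at $X_n^{\tau}$ and once centered at $X_{n+1}^{\tau}$ — to bound each piece. The structure of the trapezoidal update \eqref{trapezoid_implicit} from Theorem \ref{trapezoid_well_defined} will then be used to convert the remaining cross term into the telescoping difference of squared gradient norms on the right-hand side.

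More precisely, first I would apply the $\lambda$-convex gradient inequality at the base point $X_n^{\tau}$ with test point $\xi$ to get
\[
\langle \nabla \phi^{\#}_{\rho_0}(X_n^{\tau}),\, \xi - X_n^{\tau}\rangle_{\mathbb{H}} \leq \phi^{\#}_{\rho_0}(\xi) - \phi^{\#}_{\rho_0}(X_n^{\tau}) - \frac{\lambda}{2}\|\xi - X_n^{\tau}\|_{\mathbb{H}}^2.
\]
To eliminate the unwanted $-\phi^{\#}_{\rho_0}(X_n^{\tau})$ in favor of $-\phi^{\#}_{\rho_0}(X_{n+1}^{\tau})$, I apply $\lambda$-convexity a second time at the base point $X_{n+1}^{\tau}$ with test point $X_n^{\tau}$, which produces
\[
-\phi^{\#}_{\rho_0}(X_n^{\tau}) \leq -\phi^{\#}_{\rho_0}(X_{n+1}^{\tau}) - \langle \nabla \phi^{\#}_{\rho_0}(X_{n+1}^{\tau}),\, X_n^{\tau} - X_{n+1}^{\tau}\rangle_{\mathbb{H}} - \frac{\lambda}{2}\|X_n^{\tau} - X_{n+1}^{\tau}\|_{\mathbb{H}}^2.
\]
Summing these and then adding the missing $\langle \nabla \phi^{\#}_{\rho_0}(X_n^{\tau}),\, X_n^{\tau} - X_{n+1}^{\tau}\rangle_{\mathbb{H}}$ to both sides leaves the residual $\langle \nabla \phi^{\#}_{\rho_0}(X_n^{\tau}) - \nabla \phi^{\#}_{\rho_0}(X_{n+1}^{\tau}),\, X_n^{\tau} - X_{n+1}^{\tau}\rangle_{\mathbb{H}}$ to be accounted for.

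The key step — and the only place the trapezoidal structure enters — is the treatment of this residual using the implicit relation \eqref{trapezoid_implicit}, which gives $X_n^{\tau} - X_{n+1}^{\tau} = \tfrac{\tau}{2}\bigl(\nabla \phi^{\#}_{\rho_0}(X_n^{\tau}) + \nabla \phi^{\#}_{\rho_0}(X_{n+1}^{\tau})\bigr)$. Substituting and using the elementary identity $\langle a-b, a+b\rangle = \|a\|^2 - \|b\|^2$ converts the residual exactly into
\[
\frac{\tau}{2}\bigl(\|\nabla \phi^{\#}_{\rho_0}(X_n^{\tau})\|_{\mathbb{H}}^2 - \|\nabla \phi^{\#}_{\rho_0}(X_{n+1}^{\tau})\|_{\mathbb{H}}^2\bigr),
\]
which is precisely the telescoping term appearing in the claim.

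I do not anticipate a significant obstacle here: the trapezoidal identity produces an equality rather than an inequality, so the only losses come from the two $\lambda$-convexity applications, yielding exactly the two $-\tfrac{\lambda}{2}\|\cdot\|_{\mathbb{H}}^2$ penalties in the statement. The main subtlety lies simply in choosing the two convexity inequalities with the correct base/test pairings so that $\phi^{\#}_{\rho_0}(X_n^{\tau})$ cancels and the trapezoid identity fits cleanly; this is the structural reason the trapezoid scheme (as opposed to a generic implicit discretization) admits a discrete EVI with an $O(\tau)$-size gradient error rather than an $O(1)$ error.
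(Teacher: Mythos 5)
Your proposal is correct and is essentially the paper's own argument: both use the same split $\xi - X_{n+1}^{\tau} = (\xi - X_n^{\tau}) + (X_n^{\tau} - X_{n+1}^{\tau})$, the same two tangent-line ($\lambda$-convexity) inequalities at base points $X_n^{\tau}$ and $X_{n+1}^{\tau}$, and the same use of the implicit trapezoid relation \eqref{trapezoid_implicit} together with $\langle a-b,a+b\rangle = \|a\|^2-\|b\|^2$ to produce the telescoping gradient term. The only difference is cosmetic bookkeeping in the order in which the convexity inequalities and the implicit equation are invoked.
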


\begin{proof} 
We first decompose the inner product as follows:
\begin{equation}
\langle \nabla \phi^{\#}_{\rho_0}(X_n^\tau), \xi - X_{n+1}^\tau \rangle_{\mathbb{H}}
= \langle \nabla \phi^{\#}_{\rho_0}(X_n^\tau), \xi - X_{n}^\tau \rangle_{\mathbb{H}} + \langle \nabla \phi^{\#}_{\rho_0}(X_n^\tau), X_{n}^\tau - X_{n+1}^\tau \rangle_{\mathbb{H}} = (I) + (II). \label{split_inner_product}
\end{equation}

\noindent To bound $(I)$, we use the tangent line inequality:
\[
(I)  \leq \phi^{\#}_{\rho_0}(\xi) - \phi^{\#}_{\rho_0}(X^{\tau}_n) -  \frac{\lambda}{2} \|\xi - X^{\tau}_n\|^2_{\mathbb{H}}.
\] 
To bound $(II)$, we use the implicit equation in Theorem \ref{trapezoid_well_defined} to obtain:

\[
(II) = \frac{\tau}{2} \left( \|\nabla \phi^{\#}_{\rho_0}(X^{\tau}_n)\|^2_{\mathbb{H}} 
+ \langle\nabla \phi^{\#}_{\rho_0}(X^{\tau}_n),\nabla \phi^{\#}_{\rho_0}(X^{\tau}_{n+1}) \rangle_{\mathbb{H}} \right).
\]

\noindent The implicit equation in Theorem \ref{trapezoid_well_defined} also implies that
\[
\langle\nabla \phi^{\#}_{\rho_0}(X^{\tau}_n),\nabla \phi^{\#}_{\rho_0}(X^{\tau}_{n+1}) \rangle_{\mathbb{H}} = 
    - \|\nabla \phi^{\#}_{\rho_0}(X^{\tau}_{n+1})\|^2_{\mathbb{H}} 
 + \frac{2}{\tau}\langle 
    \nabla \phi^{\#}_{\rho_0}(X^{\tau}_{n+1}), X^{\tau}_n - X^{\tau}_{n+1} 
\rangle_{\mathbb{H}}.
\]

\noindent Using the tangent line inequality on the inner product term implies
\[
(II) \leq \frac{\tau}{2} \big( 
    \|\nabla \phi^{\#}_{\rho_0}(X^{\tau}_n)\|^2_{\mathbb{H}} 
    - \|\nabla \phi^{\#}_{\rho_0}(X^{\tau}_{n+1})\|^2_{\mathbb{H}} 
\big) + \phi^{\#}_{\rho_0}(X^{\tau}_n) - \phi^{\#}_{\rho_0}(X^{\tau}_{n+1})  
- \frac{\lambda}{2} ||X^{\tau}_n- X^{\tau}_{n+1}||^2_{\mathbb{H}}.
\]

\noindent Combining the bounds on $(I)$ and $(II)$ allows us to conclude.
\end{proof}

\begin{lemma}[Discrete EVI] \label{discrete_EVI}
Under Assumption \ref{assume_rough}, for any $\xi \in \mathbb{H}$ and $n \in \mathbb{N}$, we have:
\[
\frac{1}{2 \tau} \left( \|\xi - X^{\tau}_{n+1}\|^2_{\mathbb{H}} - \|\xi - X^{\tau}_n\|^2_{\mathbb{H}} \right) 
+ \frac{\lambda}{4} \left(\|\xi - X^{\tau}_n\|^2_{\mathbb{H}} + \|\xi - X^{\tau}_{n+1}\|^2_{\mathbb{H}} \right)
\]
\[
\leq \phi^{\#}_{\rho_0}(\xi) - \phi^{\#}_{\rho_0}(X^{\tau}_{n+1}) 
+ \frac{\tau}{4} \left( \|\nabla \phi^{\#}_{\rho_0}(X_n^{\tau})\|^2_{\mathbb{H}} - \|\nabla \phi^{\#}_{\rho_0}(X^{\tau}_{n+1})\|^2_{\mathbb{H}} \right) 
- \left( \frac{1}{2 \tau} + \frac{\lambda}{4} \right) \|X^{\tau}_n - X^{\tau}_{n+1}\|^2_{\mathbb{H}}.
\]
\end{lemma}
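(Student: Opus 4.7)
The plan is to reproduce, at the discrete level, the standard Hilbert-space computation that turns a gradient-flow time derivative $\frac{d}{dt}\tfrac{1}{2}\|\xi - x(t)\|^2 = \langle \dot x(t), x(t)-\xi\rangle$ into an EVI via the tangent inequality. The novelty is that we have to account for the trapezoidal structure, so the driving velocity is the \emph{average} $-\tfrac{1}{2}(\nabla\phi^{\#}_{\rho_0}(X^{\tau}_{n+1})+\nabla\phi^{\#}_{\rho_0}(X^{\tau}_n))$ rather than a single gradient.

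First, I would apply the polarization identity
\[
\|\xi - X^{\tau}_{n+1}\|^2_{\mathbb{H}} - \|\xi - X^{\tau}_n\|^2_{\mathbb{H}}
= 2\langle X^{\tau}_{n+1}-X^{\tau}_n,\, X^{\tau}_{n+1}-\xi\rangle_{\mathbb{H}}
- \|X^{\tau}_{n+1}-X^{\tau}_n\|^2_{\mathbb{H}},
\]
divide by $2\tau$, and plug in the trapezoidal identity $\frac{1}{\tau}(X^{\tau}_{n+1}-X^{\tau}_n)=-\tfrac{1}{2}(\nabla\phi^{\#}_{\rho_0}(X^{\tau}_{n+1})+\nabla\phi^{\#}_{\rho_0}(X^{\tau}_n))$ from Theorem~\ref{trapezoid_well_defined}. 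This rewrites the left-hand side of the claim as
\[
\tfrac{1}{2}\langle \nabla\phi^{\#}_{\rho_0}(X^{\tau}_{n+1})+\nabla\phi^{\#}_{\rho_0}(X^{\tau}_n),\,\xi-X^{\tau}_{n+1}\rangle_{\mathbb{H}} - \tfrac{1}{2\tau}\|X^{\tau}_{n+1}-X^{\tau}_n\|^2_{\mathbb{H}}.
\]

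Next, I would bound the two inner products separately. For the $\nabla\phi^{\#}_{\rho_0}(X^{\tau}_{n+1})$ piece, apply the $\lambda$-convexity tangent inequality directly at $X^{\tau}_{n+1}$ to obtain
\[
\langle \nabla\phi^{\#}_{\rho_0}(X^{\tau}_{n+1}),\, \xi-X^{\tau}_{n+1}\rangle_{\mathbb{H}} \leq \phi^{\#}_{\rho_0}(\xi)-\phi^{\#}_{\rho_0}(X^{\tau}_{n+1}) - \tfrac{\lambda}{2}\|\xi-X^{\tau}_{n+1}\|^2_{\mathbb{H}}.
\]
For the $\nabla\phi^{\#}_{\rho_0}(X^{\tau}_n)$ piece, invoke Lemma~\ref{EVI_Helper}, which delivers exactly the extra $\tfrac{\tau}{2}(\|\nabla\phi^{\#}_{\rho_0}(X^{\tau}_n)\|^2-\|\nabla\phi^{\#}_{\rho_0}(X^{\tau}_{n+1})\|^2)$ contribution along with $-\tfrac{\lambda}{2}\|X^{\tau}_n-X^{\tau}_{n+1}\|^2-\tfrac{\lambda}{2}\|\xi-X^{\tau}_n\|^2$. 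Averaging these two bounds (i.e., multiplying the sum by $\tfrac{1}{2}$) yields the $\tfrac{\tau}{4}$ gradient-squared telescoping term and the symmetric $\tfrac{\lambda}{4}$ distribution on the three squared distances.

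Finally, I would rearrange: move the $\tfrac{\lambda}{4}\|\xi-X^{\tau}_n\|^2$ and $\tfrac{\lambda}{4}\|\xi-X^{\tau}_{n+1}\|^2$ terms to the left-hand side to form the stated symmetric combination, and combine the $-\tfrac{1}{2\tau}\|X^{\tau}_n-X^{\tau}_{n+1}\|^2$ term with $-\tfrac{\lambda}{4}\|X^{\tau}_n-X^{\tau}_{n+1}\|^2$ to produce the coefficient $-\left(\tfrac{1}{2\tau}+\tfrac{\lambda}{4}\right)$. The main obstacle is the $\nabla\phi^{\#}_{\rho_0}(X^{\tau}_n)$ term, since it is \emph{not} the gradient at $X^{\tau}_{n+1}$ and hence does not admit a direct tangent inequality at $X^{\tau}_{n+1}$; this is precisely what Lemma~\ref{EVI_Helper} resolves by splitting into $\xi - X^{\tau}_n$ (where $\lambda$-convexity applies) and $X^{\tau}_n - X^{\tau}_{n+1}$ (which is handled via the implicit equation and a second tangent inequality at $X^{\tau}_{n+1}$), and absorbing the cross term into the telescoping gradient-norms with the correct $\tau$-weight.
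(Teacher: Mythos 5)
Your proposal is correct and leads to exactly the stated inequality with the right constants; it hinges on the same key ingredient as the paper, namely Lemma \ref{EVI_Helper} for the awkward $\langle \nabla\phi^{\#}_{\rho_0}(X^{\tau}_n),\xi - X^{\tau}_{n+1}\rangle_{\mathbb{H}}$ term. The only (immaterial) difference is the entry point: the paper derives the starting inequality from the $(\lambda/2+1/\tau)$-strong convexity of the proximal objective $\Phi$ evaluated along the segment from its minimizer $X^{\tau}_{n+1}$ to $\xi$, whereas you reach the same expression via polarization plus the Euler--Lagrange identity \eqref{trapezoid_implicit} and an extra tangent inequality at $X^{\tau}_{n+1}$ — two interchangeable ways of exploiting the optimality of $X^{\tau}_{n+1}$.
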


\begin{proof} 
Following the arguments of \cite{ambrosio2005gradient} for the JKO scheme, for $n \in \mathbb{N}$, we define the functional
\[
\Phi(\xi) := \frac{1}{2} \left( \phi^{\#}_{\rho_0}(\xi) + \langle \nabla \phi^{\#}_{\rho_0}(X_n^{\tau}), \xi \rangle_{\mathbb{H}} \right) + \frac{1}{2 \tau} \|\xi - X_n^{\tau}\|^2_{\mathbb{H}}.
\] 
By Assumption \ref{assume_rough}, $\Phi$ is $(\lambda/2 + 1/\tau)$-convex. Since $X^{\tau}_{n+1}$ minimizes $\Phi$ due to Definition \ref{trapezoid_def}, we obtain, for the linear interpolation $\gamma(t) := (1-t) X^{\tau}_{n+1} + t \xi$ that
\[
\Phi(X^{\tau}_{n+1}) \leq \Phi(\gamma(t)) \leq (1-t) \Phi(X^{\tau}_{n+1}) + t \Phi(\xi) - \frac{t(1-t)}{2} \left( \frac{\lambda}{2} + \frac{1}{\tau} \right) \|\xi - X^{\tau}_{n+1}\|^2_{\mathbb{H}} \quad \forall t \in(0,1].
\] 
In particular, this implies by dividing by $t$ and then letting $t \rightarrow 0$ gives
\begin{equation} \label{original_EVI}
0 \leq \Phi(\xi) - \Phi(X^{\tau}_{n+1}) - \frac{1}{2} \left( \frac{\lambda}{2} + \frac{1}{\tau} \right) \|\xi - X^{\tau}_{n+1}\|^2_{\mathbb{H}}. 
\end{equation}
% Letting $t \to 0$, we conclude that
% \begin{equation}
% \scalebox{0.875}{$
% \frac{1}{2 \tau} \left( \|\xi - X^{\tau}_{n+1}\|^2_{\mathbb{H}} - \|\xi - X^{\tau}_n\|^2_{\mathbb{H}} \right)  
% \leq \frac{1}{2} \left( \phi^{\#}_{\rho_0}(\xi) - \phi^{\#}_{\rho_0}(X^{\tau}_{n+1}) + \langle \nabla \phi^{\#}_{\rho_0}(X^{\tau}_n), \xi - X^{\tau}_{n+1} \rangle_{\mathbb{H}} \right) 
% - \frac{\lambda}{4} \|\xi - X^{\tau}_{n+1}\|^2_{\mathbb{H}}
% - \frac{1}{2 \tau} \|X^{\tau}_n - X^{\tau}_{n+1}\|^2_{\mathbb{H}}.
% $}
% \label{original_EVI}
% \end{equation}
Applying Lemma \ref{EVI_Helper} to \eqref{original_EVI} implies the claim.
\end{proof}

\subsection{Interpolations and Differential Inequalities}

We recall that gradient flows of $\lambda$-convex functionals are uniquely characterized by the Evolution Variational Inequality (EVI) (see \cite{ambrosio2005gradient}). Our approach to obtaining an $O(\tau)$ error rate is to consider linear interpolations of the discrete streamline and energy functional, and to show that the discrete EVI (Lemma~\ref{discrete_EVI}) implies that these interpolations satisfy an approximate EVI with a controlled error term. This combined with Grönwall's inequality will allow us to obtain our error rate. Our methods are similar to those in \cite{ambrosio2005gradient} with adaptations to handle our more complicated case.

\begin{definition}[Interpolations] \label{interpolations_usual}
Fix a time step size $\tau > 0$. Then we define
\[
\ell_{\tau}(t) := \frac{t - n\tau}{\tau}, \quad
\overline{X}^{\tau}_t := X^{\tau}_{n+1},  \text{ and }
\underline{X}^{\tau}_t := X^{\tau}_n, \quad \text{for } t \in [n\tau,(n+1)\tau).
\]
The linearly interpolated numerical Lagrangian flow and energy functional are given by
\[
\tilde{X}^{\tau}_t := (1 - \ell_{\tau}(t)) \underline{X}^{\tau}_t + \ell_{\tau}(t) \overline{X}^{\tau}_t, \quad
\phi_{\tau}(t) := (1 - \ell_{\tau}(t)) \phi^{\#}_{\rho_0}(\underline{X}^{\tau}_t) 
+ \ell_{\tau}(t) \phi^{\#}_{\rho_0}(\overline{X}^{\tau}_t).
\]
For $\xi_1,\xi_2 \in \mathbb{H}$, we define the metric
\[
d(\xi_1, \xi_2) := \|\xi_1 - \xi_2\|_{\mathbb{H}},
\]
and for any $\xi \in \mathbb{H}$, the linearly interpolated distance functional is
\[
d^2_{\tau}(t;\xi) := 
(1 - \ell_{\tau}(t)) d^2(\xi, \underline{X}^{\tau}_t) 
+ \ell_{\tau}(t) d^2(\xi, \overline{X}^{\tau}_t).
\] 
Define the constant interpolation gradient error term as
\[
G^{\tau}_t := \|\nabla \phi^{\#}_{\rho_0}(X^{\tau}_n)\|^2_{\mathbb{H}} - \|\nabla \phi^{\#}_{\rho_0}(X^{\tau}_{n+1})\|^2_{\mathbb{H}}, \quad t \in [n\tau,(n+1)\tau).
\] and the distance between iterates as
\[
\mathcal{D}_{t}^{\tau} := d(\underline{X}^{\tau}_t,\overline{X}^{\tau}_t).
\]
\end{definition}

Now with the notation of Definition \ref{interpolations_usual}, we see that the discrete EVI (Lemma \ref{discrete_EVI}) implies the following differential inequality for a.e. $t \geq 0$
\begin{equation} 
\frac{1}{2} \frac{d}{dt} d^2_{\tau}(t;\xi) + \frac{\lambda}{4} \left( d^2(\xi,\underline{X}^{\tau}_t ) + d^2(\xi,\overline{X}^{\tau}_t) ) \right) 
\leq \phi^{\#}_{\rho_0}(\xi) - \phi_{\tau}(t) +  \frac{\tau}{4} G^{\tau}_t - \frac{\lambda}{4} (\mathcal{D}_{t}^{\tau})^2 + \mathcal{R}_t^{\tau}. 
\label{EVI_Interpolated_Inequality}
\end{equation} 

\noindent Here, the remainder term $\mathcal{R}_t^{\tau}$ is given by:
\[
\mathcal{R}_t^{\tau} := \left( \phi_{\tau}(t) - \phi^{\#}_{\rho_0}(\overline{X}^{\tau}_t) - \frac{1}{2 \tau} (\mathcal{D}_{t}^{\tau})^2 \right).
\]
Expanding this term, we obtain:
\begin{equation}
\mathcal{R}_t^{\tau} = (1-\ell_{\tau}(t)) \left( \phi^{\#}_{\rho_0}(\underline{X}^{\tau}_t) - \phi^{\#}_{\rho_0}(\overline{X}^{\tau}_t) - \frac{1}{\tau} (\mathcal{D}_{t}^{\tau})^2 \right) 
- \frac{1}{\tau}\left( \ell_{\tau}(t)- \frac{1}{2} \right) (\mathcal{D}_{t}^{\tau})^2. \label{remainder_term}
\end{equation}

% \noindent The second term satisfies the integral property:
% \begin{equation} 
% \int_{n \tau}^{(n+1) \tau} (\ell_{\tau}(t) - \frac{1}{2}) \cdot (\mathcal{D}_{t}^{\tau})^2) dt 
% = d^2(X_{n}^{\tau},X_{n+1}^{\tau}) \int_{n \tau}^{(n+1) \tau} (\ell_{\tau}(t)-\frac{1}{2}) dt = 0.
% \label{integrates_zero} 
% \end{equation} 

When $\lambda \neq 0$, as in \cite{ambrosio2005gradient}, it is more convenient to modify the differential inequality \eqref{EVI_Interpolated_Inequality} so that the left-hand side contains terms of the form $d^2_{\tau}(t;\xi)$ instead of $d^2(\xi,\overline{X}^{\tau}_t)$ and $d^2(\xi,\underline{X}^{\tau}_t)$. To achieve this, we use the following inequality:

\begin{lemma} \label{replacement_bounds} With the notation of Definition \ref{interpolations_usual}, we have the bounds
\begin{equation} 
    d^2(\xi,\underline{X}_{t}^{\tau}) + d^2(\xi,\overline{X}^{\tau}_t) 
    \leq 2 d^2_{\tau}(t;\xi) + (\mathcal{D}_t^{\tau})^2 + 2(\mathcal{D}_t^{\tau})\cdot d_{\tau}(t;,\xi).  
    \label{upper_bound_on_d} 
\end{equation} 
and
\begin{equation}     \label{lower_bound_on_d}  
    2d^2_{\tau}(t;\xi) -2(\mathcal{D}_t^{\tau})d_{\tau}(t;\xi) \leq d^2(\xi,\underline{X}_t^{\tau}) + d^2(\xi,\overline{X}^{\tau}_t) + 3 (\mathcal{D}_t^{\tau})^2.\end{equation} 
\end{lemma}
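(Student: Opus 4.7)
The plan is to reduce both inequalities to a single algebraic identity combined with the triangle inequality and the convexity of the squared Hilbert norm. Let me fix notation for the proof: write $a := d(\xi,\underline{X}_t^{\tau})$, $b := d(\xi,\overline{X}_t^{\tau})$, $D := \mathcal{D}_t^{\tau}$, and $\ell := \ell_{\tau}(t) \in [0,1]$. The target quantities become $d_{\tau}^2(t;\xi) = (1-\ell)a^2 + \ell b^2$, and both \eqref{upper_bound_on_d} and \eqref{lower_bound_on_d} become bounds comparing $a^2+b^2$ to $2d_{\tau}^2(t;\xi)$ in terms of $D$ and $d_{\tau}(t;\xi)$.

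The first step is the identity
\[
a^2+b^2 - 2d_{\tau}^2(t;\xi) = (2\ell-1)(a^2-b^2),
\]
which is a direct algebraic expansion. Since $\ell\in[0,1]$ gives $|2\ell-1|\leq 1$, and the triangle inequality applied to the points $\xi$, $\underline{X}_t^{\tau}$, $\overline{X}_t^{\tau}$ yields $|a-b|\leq D$, so that $|a^2-b^2| = |a-b|(a+b) \leq D(a+b)$, the identity gives
\[
\bigl|\,a^2+b^2 - 2d_{\tau}^2(t;\xi)\,\bigr| \;\leq\; D(a+b).
\]

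The second step is to control $a+b$ by $d_{\tau}(t;\xi)$. Here I use that $\tilde{X}_t^{\tau}$ is the convex combination $(1-\ell)\underline{X}_t^{\tau}+\ell \overline{X}_t^{\tau}$, so by the triangle inequality $a \leq d(\xi,\tilde{X}_t^{\tau}) + \ell D$ and $b \leq d(\xi,\tilde{X}_t^{\tau}) + (1-\ell)D$, giving $a+b \leq 2d(\xi,\tilde{X}_t^{\tau}) + D$. Since we are in the Hilbert space $\mathbb{H}$, the square of the norm is convex, hence
\[
d^2(\xi,\tilde{X}_t^{\tau}) = \bigl\|(1-\ell)(\xi-\underline{X}_t^{\tau}) + \ell(\xi-\overline{X}_t^{\tau})\bigr\|_{\mathbb{H}}^2 \leq (1-\ell)a^2 + \ell b^2 = d_{\tau}^2(t;\xi),
\]
so $a+b \leq 2d_{\tau}(t;\xi) + D$.

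Combining these two steps yields $|a^2+b^2 - 2d_{\tau}^2(t;\xi)| \leq 2Dd_{\tau}(t;\xi) + D^2$. The upper bound \eqref{upper_bound_on_d} is just the ``$\leq$'' side of this absolute value inequality, and the lower bound \eqref{lower_bound_on_d} follows from the ``$\geq$'' side after adding $2D^2$ to the right-hand side to obtain the factor $3D^2$ stated in the lemma (equivalently, the proof gives the sharper bound with $D^2$, which trivially implies the version with $3D^2$). There is no real obstacle here; the only subtlety worth flagging is that the step $d(\xi,\tilde{X}_t^{\tau}) \leq d_{\tau}(t;\xi)$ uses the Hilbert structure of $\mathbb{H}$ in an essential way, which is why the argument is localized to the $L^2$ lift and would not transfer directly to the Wasserstein geometry.
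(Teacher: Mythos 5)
Your proof is correct, and it takes a cleaner route than the paper's. Writing $a=d(\xi,\underline{X}_t^{\tau})$, $b=d(\xi,\overline{X}_t^{\tau})$, $D=\mathcal{D}_t^{\tau}$, $\ell=\ell_\tau(t)$, your identity $a^2+b^2-2d_\tau^2(t;\xi)=(2\ell-1)(a^2-b^2)$ checks out, and combined with $|a^2-b^2|\le D(a+b)$ and $a+b\le 2d_\tau(t;\xi)+D$ it delivers both inequalities from a single two-sided estimate $|a^2+b^2-2d_\tau^2(t;\xi)|\le 2Dd_\tau(t;\xi)+D^2$; in particular your lower bound is sharper than \eqref{lower_bound_on_d}, carrying $D^2$ where the paper has $3D^2$. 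The paper instead never introduces $\tilde{X}_t^\tau$: it first notes, by concavity of $\sqrt{\cdot}$, that $(1-\ell)a+\ell b\le d_\tau(t;\xi)$, from which the triangle inequality gives $a\le d_\tau(t;\xi)+\ell D$ and $b\le d_\tau(t;\xi)+(1-\ell)D$; squaring and adding yields \eqref{upper_bound_on_d}, while \eqref{lower_bound_on_d} is obtained by a separate, somewhat heavier expansion of $d_\tau^2(t;\xi)$ around $a^2$ and around $b^2$, which is where the larger constant $3$ comes from. One small correction to your closing remark: the step $a+b\le 2d_\tau(t;\xi)+D$ does not genuinely require the Hilbert structure. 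You obtain it via $d(\xi,\tilde{X}_t^\tau)\le d_\tau(t;\xi)$, which uses convexity of the squared norm, but the paper's route — Jensen's inequality for the concave square root applied to the definition of $d_\tau^2(t;\xi)$, plus the triangle inequality — gives the same bound in any metric space. So the lemma is purely metric; the Hilbert structure is a convenience in your argument, not a necessity.
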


\begin{proof} First observe that the concavity of the square root function implies that 
\begin{equation}
    (1-\ell_{\tau}(t))\cdot d(\underline{X}_{t}^{\tau}, \xi) 
    + \ell_{\tau}(t) \cdot d(\overline{X}^{\tau}_t, \xi) 
    \leq d_{\tau}(t;\xi). \label{concave_d_interpolated}
\end{equation} 
Using triangle inequality and \eqref{concave_d_interpolated} we see that
\begin{equation} d(\underline{X}_t^{\tau},\xi) = (1-\ell_{\tau}(t)) d(\underline{X}_t^{\tau},\xi) + \ell_{\tau}(t) d(\underline{X}_t^{\tau},\xi)  \leq d_{\tau}(t;\xi) + \ell_{\tau}(t) (\mathcal{D}_t^{\tau}). \label{d_upper_bound_1_2}  \end{equation} Similarly, we also have that
\begin{equation} d(\overline{X}_t^{\tau},\xi) \leq d_{\tau}(t;\xi) + (1-\ell_{\tau}(t)) (\mathcal{D}_t^{\tau}). \label{d_upper_bound_2_2} \end{equation} Squaring these two inequalities and adding them imply \eqref{upper_bound_on_d}. \smallskip

Now for \eqref{lower_bound_on_d}, observe that from the triangle inequality
\[ d^2_{\tau}(t;\xi) = d^2(\underline{X}_t^{\tau},\xi) + \ell_{\tau}(t) \left( d^2(\overline{X}_t^{\tau},\xi) - d^2(\underline{X}_t^{\tau},\xi)  \right) \leq d^2(\underline{X}_t^{\tau},\xi) + \ell_{\tau}(t)( (\mathcal{D}_t^{\tau})^2 + 2 d(\underline{X}_t^{\tau},\xi)(\mathcal{D}_t^{\tau})).  \] So now using \eqref{d_upper_bound_1_2}, we obtain that
\[d^2_{\tau}(t;\xi) \leq  d^2(\underline{X}_t^{\tau},\xi) + 2\ell_{\tau}(t)(  d_{\tau}(t;\xi)(\mathcal{D}_t^{\tau})) + \ell_{\tau}(t)(1+2\ell_{\tau}(t)) (\mathcal{D}_t^{\tau})^2.   \] Similarly, we have the bound
\[ d^2_{\tau}(t;\xi) \leq d^2(\overline{X}_t^{\tau},\xi) + 2(1-\ell_{\tau}(t))(d_{\tau}(t;\xi) (\mathcal{D}_t^{\tau})) + (1-\ell_{\tau}(t))(1+2(1-\ell_{\tau}(t))) (\mathcal{D}_t^{\tau})^2. \] Adding these two bounds and maximizing over $t$, gives \eqref{lower_bound_on_d}. \end{proof}

By combining \eqref{EVI_Interpolated_Inequality} with Lemma \ref{replacement_bounds}, we obtain the following differential inequality:

% \textcolor{blue}{Why does $|\lambda|$ show up here?  It seems a little strange to me that things get worse as the strong convexity parameter gets larger.  In particular, Lemma 4.2 is a better inequality for larger values of $\lambda$ which is lost here.  Is this the crude version which then gets refined in Section 5?}

% \textcolor{red}{I think this is because Lemma 4.3 (4.6) is probably not the best lower bound. All I did from (4.3) to Theorem 4.4 was use (4.6) to say $\lambda \geq 0$. $\frac{\lambda}{4}(d^2(\xi,\underline{X}_t^{\tau})+d^2(\xi,\overline{X}_t^{\tau})) \geq \lambda/2 d^2_{\tau}(t;\xi) -2\lambda d(\overline{X}_t^{\tau}, \underline{X}_t^{\tau}) - 3\lambda^2(\overline{X}^{\tau}_t,\underline{X}_t^{\tau}). $ This puts the $|\lambda|$ on the RHS.}

\begin{theorem}[Approximate EVI] \label{differential_inequality_thm}
Under Assumption \ref{assume_rough} and the notation introduced therein, as well as the notation from Definition \ref{interpolations_usual}, the following differential inequalities holds: 
\begin{equation}
   \frac{d}{dt} d^2_{\tau}(t;\xi) + \lambda d^2_{\tau}(t;\xi) 
    \leq  2(\phi^{\#}_{\rho_0}(\xi) - \phi_{\tau}(t)) + \frac{\tau}{2} G^{\tau}_t + 2\mathcal{R}_t^{\tau} + |\lambda|  (\mathcal{D}_t^{\tau}) \cdot((\mathcal{D}_t^{\tau}) + d_{\tau}(t;\xi)).
\end{equation}
\end{theorem}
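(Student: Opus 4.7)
The plan is to start from the interpolated discrete EVI already recorded in \eqref{EVI_Interpolated_Inequality}, multiply it through by $2$, and then convert the left-hand side term $\tfrac{\lambda}{2}\bigl(d^2(\xi,\underline{X}^{\tau}_t)+d^2(\xi,\overline{X}^{\tau}_t)\bigr)$ into $\lambda\, d^2_{\tau}(t;\xi)$. The difference of these two quantities is precisely what Lemma~\ref{replacement_bounds} controls, so the whole argument boils down to a sign-aware application of that lemma, followed by absorbing the leftover $-\tfrac{\lambda}{2}(\mathcal{D}^{\tau}_t)^2$ term already on the right-hand side of \eqref{EVI_Interpolated_Inequality}.

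More precisely, writing the target correction as
\[
\lambda\, d^2_{\tau}(t;\xi) - \tfrac{\lambda}{2}\bigl(d^2(\xi,\underline{X}^{\tau}_t)+d^2(\xi,\overline{X}^{\tau}_t)\bigr)
=\tfrac{\lambda}{2}\Bigl(2d^2_{\tau}(t;\xi) - d^2(\xi,\underline{X}^{\tau}_t)-d^2(\xi,\overline{X}^{\tau}_t)\Bigr),
\]
I would split into two cases. When $\lambda\ge 0$, the rearranged lower bound \eqref{lower_bound_on_d} gives $2d^2_{\tau}(t;\xi)-d^2(\xi,\underline{X}^{\tau}_t)-d^2(\xi,\overline{X}^{\tau}_t)\le 2(\mathcal{D}^{\tau}_t)d_{\tau}(t;\xi)+3(\mathcal{D}^{\tau}_t)^2$, producing a correction bounded above by $\lambda(\mathcal{D}^{\tau}_t)d_{\tau}(t;\xi)+\tfrac{3\lambda}{2}(\mathcal{D}^{\tau}_t)^2$. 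When $\lambda<0$, the rearranged upper bound \eqref{upper_bound_on_d} gives $2d^2_{\tau}(t;\xi)-d^2(\xi,\underline{X}^{\tau}_t)-d^2(\xi,\overline{X}^{\tau}_t)\ge -(\mathcal{D}^{\tau}_t)^2-2(\mathcal{D}^{\tau}_t)d_{\tau}(t;\xi)$, and multiplying by the negative factor $\lambda/2$ reverses the inequality to yield an upper bound of $|\lambda|(\mathcal{D}^{\tau}_t)d_{\tau}(t;\xi)+\tfrac{|\lambda|}{2}(\mathcal{D}^{\tau}_t)^2$.

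In both cases I then add this correction to twice \eqref{EVI_Interpolated_Inequality} and combine it with the pre-existing $-\tfrac{\lambda}{2}(\mathcal{D}^{\tau}_t)^2$ term. In the $\lambda\ge 0$ case the two $(\mathcal{D}^{\tau}_t)^2$ terms combine as $-\tfrac{\lambda}{2}(\mathcal{D}^{\tau}_t)^2+\tfrac{3\lambda}{2}(\mathcal{D}^{\tau}_t)^2=\lambda(\mathcal{D}^{\tau}_t)^2=|\lambda|(\mathcal{D}^{\tau}_t)^2$; in the $\lambda<0$ case they combine as $\tfrac{|\lambda|}{2}(\mathcal{D}^{\tau}_t)^2+\tfrac{|\lambda|}{2}(\mathcal{D}^{\tau}_t)^2=|\lambda|(\mathcal{D}^{\tau}_t)^2$. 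Similarly, the cross-term $(\mathcal{D}^{\tau}_t)d_{\tau}(t;\xi)$ acquires coefficient $|\lambda|$ in both cases, so factoring gives exactly $|\lambda|(\mathcal{D}^{\tau}_t)\bigl((\mathcal{D}^{\tau}_t)+d_{\tau}(t;\xi)\bigr)$, completing the claim.

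The only subtlety worth flagging is the bookkeeping on the sign of $\lambda$: one must apply the opposite bound from Lemma~\ref{replacement_bounds} in the two regimes so that the resulting coefficient of $(\mathcal{D}^{\tau}_t)^2$ ends up as $|\lambda|$ after cancellation with the leftover $-\tfrac{\lambda}{2}(\mathcal{D}^{\tau}_t)^2$. Once the case split is organized this way, the proof is purely algebraic and requires no further input beyond the already proved interpolated EVI \eqref{EVI_Interpolated_Inequality} and Lemma~\ref{replacement_bounds}; the terms $2\mathcal{R}^{\tau}_t$, $\tfrac{\tau}{2}G^{\tau}_t$, and $2(\phi^{\#}_{\rho_0}(\xi)-\phi_{\tau}(t))$ pass through unchanged.
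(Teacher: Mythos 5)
Your proof is correct and takes exactly the approach the paper intends: it starts from \eqref{EVI_Interpolated_Inequality}, multiplies by $2$, and uses Lemma~\ref{replacement_bounds} with a sign-aware case split to trade $\tfrac{\lambda}{2}\bigl(d^2(\xi,\underline{X}^{\tau}_t)+d^2(\xi,\overline{X}^{\tau}_t)\bigr)$ for $\lambda\, d^2_\tau(t;\xi)$, after which the leftover $(\mathcal{D}^{\tau}_t)^2$ and cross terms collapse to $|\lambda|\,\mathcal{D}^{\tau}_t\bigl(\mathcal{D}^{\tau}_t+d_\tau(t;\xi)\bigr)$. The paper states the result as an immediate consequence of those two ingredients without spelling out the case split; your writeup simply supplies those omitted details, and the bookkeeping checks out in both regimes.
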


\noindent Now we want to use Theorem \ref{differential_inequality_thm} to obtain a  differential inequality for $d^2(\underline{X}^{\tau}_t,\underline{X}^{\eta}_t)$ with two different time steps $\tau,\eta>0$. To do so, we follow \cite{ambrosio2005gradient} and introduce another interpolation function

\begin{definition} \label{further_interpolated_d}
Using the notation from Definition \ref{interpolations_usual}, let $\tau, \eta > 0$ be two time steps. We define the further interpolated distance functional as
\small{\[
d^2_{\tau,\eta}(t;s) := (1-\ell_{\tau}(t)) [  (1-\ell_{\eta}(s)) \cdot d^2(\underline{X}^{\eta}_s,\underline{X}^{\tau}_t) + \ell_{\eta}(s) \cdot d^2(\overline{X}^{\eta}_s,\underline{X}^{\tau}_t) ]
 +  \ell_{\tau}(t) \left( (1-\ell_{\eta}(s)) d^2(\underline{X}^{\eta}_s,\overline{X}^{\tau}_t) + \ell_{\eta}(s) d^2(\overline{X}^{\eta}_s,\overline{X}^{\tau}_t) \right) \]}
\[ = (1-\ell_{\eta}(s)) \cdot d^2_{\tau}(t; \underline{X}^{\eta}_s) + \ell_{\eta}(s) \cdot d^2_{\tau}(t;\overline{X}^{\eta}_s). \]

\end{definition}

\begin{theorem}[Differential Inequality]  \label{differential_inequality_different_time_steps}
Fix two time steps $\tau, \eta > 0$ satisfying
\[
\frac{\lambda}{2} + \min\left\{\frac{1}{\tau},\frac{1}{\eta}\right\} > 0.
\]
Using the assumptions and notation from Theorem \ref{differential_inequality_thm} and from Definition \ref{further_interpolated_d}, we have
\[
\frac{d}{dt} d^2_{\tau,\eta}(t;t) + 2 \lambda d^2_{\tau,\eta}(t;t) 
\leq \frac{\tau}{2} G^{\tau}_t + \frac{\eta}{2} G^{\eta}_t
+ 2(\mathcal{R}^{\tau}_t + \mathcal{R}^{\eta}_t) 
+ |\lambda| \left( (\mathcal{D}_{t}^{\tau})^2 + (\mathcal{D}_{t}^{\eta})^2 \right) 
+ |\lambda| (\mathcal{D}_t^{\tau} + \mathcal{D}_t^{\eta}) d_{\tau,\eta}(t;t).
\]
\end{theorem}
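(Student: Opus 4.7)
The plan is to treat $d_{\tau,\eta}^2(t;s)$ as a function of two time variables and apply the approximate EVI (Theorem \ref{differential_inequality_thm}) twice: once in the $t$-variable with test points drawn from the $\eta$-discretization, and once in the $s$-variable with test points from the $\tau$-discretization. Then the desired inequality for $d_{\tau,\eta}^2(t;t)$ follows by the chain rule
\[
\frac{d}{dt} d^2_{\tau,\eta}(t;t) = \left.\frac{\partial}{\partial t} d^2_{\tau,\eta}(t;s)\right|_{s=t} + \left.\frac{\partial}{\partial s} d^2_{\tau,\eta}(t;s)\right|_{s=t}.
\]
The crucial observation is that the energy-difference terms generated on each side carry opposite signs and will cancel when the two inequalities are added.

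Concretely, first I would fix $s$ (hence also $\underline{X}^{\eta}_s$ and $\overline{X}^{\eta}_s$) and apply Theorem \ref{differential_inequality_thm} with $\xi = \underline{X}^{\eta}_s$ and then with $\xi = \overline{X}^{\eta}_s$. Taking the convex combination with weights $(1-\ell_{\eta}(s))$ and $\ell_{\eta}(s)$, and using Definition \ref{further_interpolated_d} to identify
\[
(1-\ell_{\eta}(s))\, d^2_{\tau}(t;\underline{X}^{\eta}_s) + \ell_{\eta}(s)\, d^2_{\tau}(t;\overline{X}^{\eta}_s) = d^2_{\tau,\eta}(t;s),
\]
together with the Jensen-type bound
\[
(1-\ell_{\eta}(s))\, d_{\tau}(t;\underline{X}^{\eta}_s) + \ell_{\eta}(s)\, d_{\tau}(t;\overline{X}^{\eta}_s) \le d_{\tau,\eta}(t;s)
\]
(which follows from concavity of $\sqrt{\cdot}$), the right-hand side telescopes into
\[
\frac{\partial}{\partial t} d^2_{\tau,\eta}(t;s) + \lambda d^2_{\tau,\eta}(t;s) \le 2\bigl(\phi_{\eta}(s) - \phi_{\tau}(t)\bigr) + \frac{\tau}{2} G^{\tau}_t + 2\mathcal{R}^{\tau}_t + |\lambda| (\mathcal{D}^{\tau}_t)^2 + |\lambda| (\mathcal{D}^{\tau}_t)\, d_{\tau,\eta}(t;s).
\]
By the symmetry of $d^2_{\tau,\eta}$ under $(\tau,t)\leftrightarrow(\eta,s)$, exchanging roles yields the mirror inequality for $\partial_s d^2_{\tau,\eta}(t;s)$, with the $\phi$-term becoming $2(\phi_{\tau}(t) - \phi_{\eta}(s))$. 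Adding the two inequalities cancels the energy terms, evaluating at $s=t$ via the chain rule above, and recognizing that $\mathcal{D}^{\eta}_s|_{s=t} = \mathcal{D}^{\eta}_t$, $G^{\eta}_s|_{s=t} = G^{\eta}_t$, $\mathcal{R}^{\eta}_s|_{s=t} = \mathcal{R}^{\eta}_t$, produces exactly the claimed bound.

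The main technical obstacle is justifying the chain rule step, since $d^2_{\tau,\eta}(t;s)$ is only piecewise $C^1$ in each variable (the interpolation coefficients $\ell_{\tau}(t)$ and $\ell_{\eta}(s)$ jump at the grid points). I would handle this by working at Lebesgue-a.e. $t$ lying strictly between consecutive pairs of grid points of both $\tau\Z$ and $\eta\Z$, where the map $(t,s)\mapsto d^2_{\tau,\eta}(t;s)$ is locally smooth; the set of times excluded is a countable union of isolated points and therefore of measure zero, which is enough to pass from the differential inequality to its integrated Grönwall form in the subsequent arguments. A small subtlety is to confirm that the standing condition $\lambda/2 + \min\{1/\tau, 1/\eta\} > 0$ is exactly what ensures both applications of Theorem \ref{differential_inequality_thm} are valid (via Assumption \ref{assume_rough} for each discretization), so I would record this at the outset of the proof.
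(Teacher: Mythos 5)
Your proposal is correct and matches the paper's proof in all essential respects: apply Theorem \ref{differential_inequality_thm} with $\xi = \underline{X}^{\eta}_s$ and $\xi = \overline{X}^{\eta}_s$, take the $(1-\ell_\eta(s),\ell_\eta(s))$-weighted combination and use concavity of the square root to obtain the one-sided inequality in $\partial_t$, then invoke the symmetry $d^2_{\tau,\eta}(t;s)=d^2_{\eta,\tau}(s;t)$ for the $\partial_s$ inequality and add at $s=t$, cancelling the $\phi_\eta(s)-\phi_\tau(t)$ terms. Your additional remark on the a.e.\ validity of the chain rule across the piecewise-$C^1$ structure and on the role of the standing condition on $\tau,\eta$ is a sound elaboration of points the paper leaves implicit.
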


\begin{proof} 
First, we note the symmetry property of $d^2_{\tau,\eta}(t;s)$:
\begin{equation}  
    d^2_{\tau,\eta}(t;s) = d^2_{\eta,\tau}(s;t).
    \label{symmetry_interpolation} 
\end{equation}  Next, by Definition \ref{further_interpolated_d}, Theorem \ref{differential_inequality_thm}, and \eqref{concave_d_interpolated}, we obtain
\[
\frac{1}{2} \frac{\partial}{\partial t} d^2_{\tau,\eta}(t;s) + \lambda d^2_{\tau,\eta}(t;s)  
\leq  \phi_{\eta}(s) - \phi_{\tau}(t) + \frac{\tau}{4} G^{\tau}_t  + \mathcal{R}^{\tau}_t + \frac{|\lambda|}{2} \mathcal{D}_t^{\tau} \left( \mathcal{D}_t^{\tau} + d_{\tau,\eta}(t;s) \right).
\] By symmetry \eqref{symmetry_interpolation}, we have $\frac{\partial}{\partial s} d^2_{\tau,\eta}(t;s) = \frac{\partial}{\partial t} d^2_{\eta,\tau}(s;t)$, which gives
\[
\frac{1}{2} \frac{\partial}{\partial s} d^2_{\tau,\eta}(t;s) + \lambda d^2_{\tau,\eta}(t;s) 
\leq \phi_{\tau}(t) - \phi_{\eta}(s) + \frac{\eta}{4} G^{\eta}_s + \mathcal{R}^{\eta}_s + \frac{|\lambda|}{2} \mathcal{D}_s^{\eta}\left( \mathcal{D}_s^{\eta} + d_{\tau,\eta}(t;s) \right).
\] Summing these two inequalities at $s=t$ implies the desired bound.
\end{proof}

\subsection{ $O(\tau + \eta)$ Convergence of $d_{\tau,\eta}(t;t)$ for $\lambda \geq 0$}
The next step for obtaining convergence rates is to obtain bounds for the remainder terms $D^{\tau}_t$ and $\mathcal{R}^{\tau}_t$ in Theorem \ref{differential_inequality_different_time_steps}.

\begin{lemma}[Bounds on $\mathcal{R}^{\tau}_t$] \label{R_bounds}
Under Assumption \ref{assume_rough} and the notation of Theorem \ref{differential_inequality_thm}, the following bound holds:
\[
\mathcal{R}_t^{\tau} \leq \left( 1-\ell_{\tau}(t) \right) \left( \frac{\tau}{4} G^{\tau}_t - \frac{\lambda}{2} (\mathcal{D}_t^{\tau})^2 \right) - \frac{1}{\tau}\left( \ell_{\tau}(t) - \frac{1}{2} \right) (\mathcal{D}_t^{\tau})^2.
\]
\end{lemma}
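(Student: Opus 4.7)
The plan is to bound the only nontrivial term in the expanded expression \eqref{remainder_term} for $\mathcal{R}^{\tau}_t$, namely the bracket
\[
\phi^{\#}_{\rho_0}(\underline{X}^{\tau}_t) - \phi^{\#}_{\rho_0}(\overline{X}^{\tau}_t) - \frac{1}{\tau}(\mathcal{D}^{\tau}_t)^2,
\]
by $\tfrac{\tau}{4}G^{\tau}_t - \tfrac{\lambda}{2}(\mathcal{D}^{\tau}_t)^2$, since the coefficient $1-\ell_{\tau}(t)$ lies in $[0,1]$ and the second summand in \eqref{remainder_term} already appears on the right-hand side of the desired inequality. Once the bracket estimate is established, the lemma follows immediately by multiplying by $1-\ell_{\tau}(t)\ge 0$ and adding the leftover term.

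For the bracket estimate, I would first apply the $\lambda$-convexity tangent line inequality for $\phi^{\#}_{\rho_0}$ (Assumption \ref{assume_rough}) with base point $\underline{X}^{\tau}_t = X^{\tau}_n$ evaluated at $\overline{X}^{\tau}_t = X^{\tau}_{n+1}$, which gives
\[
\phi^{\#}_{\rho_0}(\underline{X}^{\tau}_t) - \phi^{\#}_{\rho_0}(\overline{X}^{\tau}_t) \leq -\langle \nabla \phi^{\#}_{\rho_0}(\underline{X}^{\tau}_t),\, \overline{X}^{\tau}_t - \underline{X}^{\tau}_t \rangle_{\mathbb{H}} - \frac{\lambda}{2}(\mathcal{D}^{\tau}_t)^2.
\]
The second step, which is the key computation, is to rewrite the inner product on the right using the implicit trapezoid identity $\overline{X}^{\tau}_t - \underline{X}^{\tau}_t = -\tfrac{\tau}{2}\bigl(\nabla \phi^{\#}_{\rho_0}(\overline{X}^{\tau}_t) + \nabla \phi^{\#}_{\rho_0}(\underline{X}^{\tau}_t)\bigr)$ from Theorem \ref{trapezoid_well_defined}. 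Setting $a:=\nabla \phi^{\#}_{\rho_0}(\underline{X}^{\tau}_t)$, $b:=\nabla \phi^{\#}_{\rho_0}(\overline{X}^{\tau}_t)$ and using the polarization identity $\langle a, a+b\rangle = \tfrac{1}{2}(\|a\|^2_{\mathbb{H}} - \|b\|^2_{\mathbb{H}} + \|a+b\|^2_{\mathbb{H}})$ yields
\[
-\langle \nabla \phi^{\#}_{\rho_0}(\underline{X}^{\tau}_t),\, \overline{X}^{\tau}_t - \underline{X}^{\tau}_t \rangle_{\mathbb{H}}
= \frac{\tau}{4} G^{\tau}_t + \frac{\tau}{4}\|a+b\|^2_{\mathbb{H}}.
\]
Finally, I would use the trapezoid identity a second time to note that $\|a+b\|^2_{\mathbb{H}} = \tfrac{4}{\tau^2}(\mathcal{D}^{\tau}_t)^2$, so the last term collapses to $\tfrac{1}{\tau}(\mathcal{D}^{\tau}_t)^2$. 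Substituting back gives the bracket estimate, and the lemma follows.

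I do not expect any genuine obstacle here: the only subtle point is recognizing that combining the $\lambda$-convex tangent-line inequality with the trapezoid update produces exactly a $\tfrac{1}{\tau}(\mathcal{D}^{\tau}_t)^2$ contribution, which is precisely what cancels against the corresponding term inside the bracket in \eqref{remainder_term}. The polarization identity is doing all the bookkeeping that lets the $G^{\tau}_t$ term appear with the correct coefficient $\tfrac{\tau}{4}$ matching the statement.
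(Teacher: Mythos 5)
Your proposal is correct and follows essentially the same path as the paper: reduce to the bracket estimate via \eqref{remainder_term} using $1-\ell_{\tau}(t)\ge 0$, apply the $\lambda$-convex tangent-line inequality at $X^{\tau}_n$, and then use the trapezoid identity \eqref{trapezoid_implicit} to identify the remaining terms with $\tfrac{\tau}{4}G^{\tau}_t$. The only cosmetic difference is that you insert a polarization identity and then re-substitute $\|a+b\|^2 = \tfrac{4}{\tau^2}(\mathcal{D}^{\tau}_t)^2$, whereas the paper expands $\langle a, X^{\tau}_n - X^{\tau}_{n+1}\rangle - \tfrac{1}{\tau}\|X^{\tau}_{n+1}-X^{\tau}_n\|^2$ directly in one step; these are the same algebra organized differently.
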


\begin{proof}
By \eqref{remainder_term}, it suffices to show that for any $n \in \mathbb{N}$,
\[
\phi^{\#}_{\rho_0}(X^{\tau}_n) - \phi^{\#}_{\rho_0}(X^{\tau}_{n+1}) - \frac{1}{\tau} \|X^{\tau}_{n+1} - X^{\tau}_n\|^2_{\mathbb{H}} 
\leq \frac{\tau}{4} G^{\tau}_{n \tau} - \frac{\lambda}{2} \|X^{\tau}_n - X^{\tau}_{n+1}\|^2_{\mathbb{H}}.
\]

\noindent Because $\phi^{\#}_{\rho_0}$ is $\lambda$-convex on $\mathbb{H}$, the tangent line inequality implies
\[
\phi^{\#}_{\rho_0}(X^{\tau}_n) - \phi^{\#}_{\rho_0}(X^{\tau}_{n+1}) - \frac{1}{\tau} \|X^{\tau}_{n+1} - X^{\tau}_n\|^2_{\mathbb{H}} 
\leq \langle \nabla \phi^{\#}_{\rho_0}(X^{\tau}_n), X^{\tau}_n - X^{\tau}_{n+1} \rangle_{\mathbb{H}} 
- \frac{1}{\tau} \|X^{\tau}_{n+1} - X^{\tau}_n\|^2_{\mathbb{H}} - \frac{\lambda}{2} \|X^{\tau}_n - X^{\tau}_{n+1}\|^2_{\mathbb{H}}.
\]

\noindent Using \eqref{trapezoid_implicit}, we obtain
\[
\langle \nabla \phi^{\#}_{\rho_0}(X^{\tau}_n), X^{\tau}_n - X^{\tau}_{n+1} \rangle_{\mathbb{H}} - \frac{1}{\tau} \|X^{\tau}_{n+1} - X^{\tau}_n\|^2_{\mathbb{H}}  
= \frac{\tau}{4} G^{\tau}_{n \tau}.
\]
This completes the proof.
\end{proof}

Now we have enough to obtain an error estimate on $d_{\tau,\eta}(t;t)$ when $\lambda \geq 0$.
\begin{corollary}[Error Bound on $d_{\tau,\eta}(t;t)$ for $\lambda \geq 0$] \label{positive_d_convg}
Under the assumptions and notation of Theorem \ref{differential_inequality_different_time_steps}, the following bound holds for $\lambda \geq 0$:
\[
d^2_{\tau,\eta}(t;t) \leq \|X_0^{\tau} - X_0^{\eta}\|^2_{\mathbb{H}} + \frac{7}{4} \left( \tau^2 \|\nabla \phi^{\#}_{\rho_0}(X_0^{\tau})\|^2_{\mathbb{H}}  + \eta^2 \|\nabla \phi^{\#}_{\rho_0}(X_0^{\eta})\|^2_{\mathbb{H}} \right).
\]
\end{corollary}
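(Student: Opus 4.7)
The plan is to integrate the differential inequality from Theorem \ref{differential_inequality_different_time_steps} over $[0,t]$ and bound each resulting contribution using the structural estimates from Theorem \ref{trapezoid_well_defined} and Lemma \ref{R_bounds}. Throughout I will write $A(t):=d_{\tau,\eta}^2(t;t)$. The hypothesis $\lambda\geq 0$ enters in three places: by Theorem \ref{trapezoid_well_defined} the gradient norm is non-increasing along the iterates (so $G_t^{\tau},G_t^{\eta}\geq 0$ and the step sizes $(\mathcal{D}_n^{\tau})^2$ are controlled uniformly in $n$); the dissipation term $2\lambda A(t)$ on the left-hand side is non-negative and can be discarded after the cross term is absorbed; and the negative piece $-\tfrac{\lambda(1-\ell_\tau(s))}{2}(\mathcal{D}_s^{\tau})^2$ in Lemma \ref{R_bounds} may be dropped to give a clean one-sided bound on $\mathcal{R}_t^{\tau}$.

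The first move is to dispatch the cross term $\lambda(\mathcal{D}_t^{\tau}+\mathcal{D}_t^{\eta})d_{\tau,\eta}(t;t)$. Using $2ab\leq a^2+b^2$ together with $(\mathcal{D}^\tau+\mathcal{D}^\eta)^2\leq 2((\mathcal{D}^\tau)^2+(\mathcal{D}^\eta)^2)$, one gets
\[
\lambda(\mathcal{D}_t^{\tau}+\mathcal{D}_t^{\eta})d_{\tau,\eta}(t;t)\leq \lambda\bigl((\mathcal{D}_t^{\tau})^2+(\mathcal{D}_t^{\eta})^2\bigr)+\tfrac{\lambda}{2}A(t).
\]
Substituting into the inequality from Theorem \ref{differential_inequality_different_time_steps} and then dropping the resulting non-negative $\tfrac{3\lambda}{2}A(t)$ from the left-hand side produces
\[
A'(t)\leq \tfrac{\tau}{2}G_t^{\tau}+\tfrac{\eta}{2}G_t^{\eta}+2\mathcal{R}_t^{\tau}+2\mathcal{R}_t^{\eta}+2\lambda\bigl((\mathcal{D}_t^{\tau})^2+(\mathcal{D}_t^{\eta})^2\bigr).
\]

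The second step is to integrate this over $[0,t]$ and estimate each piece. For the $\tau$-contributions (the $\eta$-contributions are identical): (i) the telescoping identity together with gradient monotonicity yields $\int_0^t \tfrac{\tau}{2}G_s^{\tau}\,ds \leq \tfrac{\tau^2}{2}\|\nabla\phi^{\#}_{\rho_0}(X_0^{\tau})\|_{\mathbb{H}}^2$; (ii) Lemma \ref{R_bounds} combined with the elementary identity $\int_{n\tau}^{(n+1)\tau}(\ell_\tau(s)-1/2)\,ds=0$ (which cancels the non-telescoping piece) produces $\int_0^t 2\mathcal{R}_s^{\tau}\,ds \leq \tfrac{\tau^2}{4}\|\nabla\phi^{\#}_{\rho_0}(X_0^{\tau})\|_{\mathbb{H}}^2$; (iii) the crucial ``hidden telescoping'' estimate $\lambda(\mathcal{D}_n^{\tau})^2\leq \tfrac{\tau}{2}G_{n\tau}^{\tau}$, obtained by testing the trapezoid rule \eqref{trapezoid_implicit} against the gradient increment $\nabla\phi^{\#}_{\rho_0}(X_{n+1}^{\tau})-\nabla\phi^{\#}_{\rho_0}(X_n^{\tau})$ and invoking $\lambda$-convexity, sums and telescopes to $\int_0^t 2\lambda(\mathcal{D}_s^{\tau})^2\,ds \leq \tau^2\|\nabla\phi^{\#}_{\rho_0}(X_0^{\tau})\|_{\mathbb{H}}^2$. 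Adding the three coefficients $\tfrac{1}{2}+\tfrac{1}{4}+1=\tfrac{7}{4}$ and invoking $A(0)=\|X_0^{\tau}-X_0^{\eta}\|_{\mathbb{H}}^2$ yields the claimed bound.

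The only non-routine step is estimate (iii). The naive bound $(\mathcal{D}_n^{\tau})^2\leq \tau^2\|\nabla\phi^{\#}_{\rho_0}(X_0^{\tau})\|_{\mathbb{H}}^2$ would yield a linear-in-$t$ sum that blows up in time and ruins the uniform estimate. The resolution uses the algebraic identity that follows from substituting \eqref{trapezoid_implicit} into the inner product,
\[
\bigl\langle \nabla\phi^{\#}_{\rho_0}(X_{n+1}^{\tau})-\nabla\phi^{\#}_{\rho_0}(X_n^{\tau}),\,X_{n+1}^{\tau}-X_n^{\tau}\bigr\rangle_{\mathbb{H}}=\tfrac{\tau}{2}G_{n\tau}^{\tau},
\]
which by $\lambda$-convexity dominates $\lambda(\mathcal{D}_n^{\tau})^2$, producing the desired telescoping bound. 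The remaining work is bookkeeping that assembles into the $\tfrac{7}{4}$ constant.
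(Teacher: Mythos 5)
Your overall strategy is sound but it diverges from the paper's in a way that matters for the constant, and one of your three integral estimates has a gap. The paper's proof is much shorter: since a $\lambda$-convex functional with $\lambda\geq 0$ is also $0$-convex, it simply applies Theorem \ref{differential_inequality_different_time_steps} with $\lambda=0$, so the dissipation term, the $|\lambda|\bigl((\mathcal{D}_t^{\tau})^2+(\mathcal{D}_t^{\eta})^2\bigr)$ term, and the cross term all vanish identically; there is nothing to absorb. Your Young's-inequality absorption and your telescoping estimate $\lambda(\mathcal{D}_n^{\tau})^2\leq\tfrac{\tau}{2}G_{n\tau}^{\tau}$ (which is correct — it is exactly \eqref{monotone_gradient}) are both valid, but they leave you with an extra $2\lambda\bigl((\mathcal{D}_t^{\tau})^2+(\mathcal{D}_t^{\eta})^2\bigr)$ on the right-hand side whose time integral contributes a full additional $\tau^2\|\nabla\phi^{\#}_{\rho_0}(X_0^{\tau})\|^2_{\mathbb{H}}$ (coefficient $1$ in your bookkeeping). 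The paper pays nothing here, which is where its room for the constant $7/4$ comes from.

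The gap is in step (ii). The identity $\int_{n\tau}^{(n+1)\tau}(\ell_\tau(s)-\tfrac12)\,ds=0$ only cancels the non-telescoping piece of $\mathcal{R}^\tau_s$ over \emph{complete} intervals. For general $t$, the last interval $[(N-1)\tau,t]$ is partial, and $-\tfrac{1}{\tau}\int_{(N-1)\tau}^{t}(\ell_\tau(s)-\tfrac12)\,ds$ is strictly positive for $t<(N-\tfrac12)\tau$ (its maximum is $\tfrac18$), contributing up to $\tfrac18(\mathcal{D}_{N-1}^{\tau})^2\leq\tfrac{\tau^2}{8}\|\nabla\phi^{\#}_{\rho_0}(X_0^{\tau})\|^2_{\mathbb{H}}$ to $\int_0^t\mathcal{R}^\tau_s\,ds$; the paper bounds this boundary term crudely by $\tfrac12(\mathcal{D}^{\tau})^2$ via Lemma \ref{classical_stability_lemma}, arriving at $\int_0^t\mathcal{R}^\tau_s\,ds\leq\tfrac{5\tau^2}{8}\|\nabla\phi^{\#}_{\rho_0}(X_0^{\tau})\|^2_{\mathbb{H}}$ rather than your $\tfrac{\tau^2}{8}$. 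Once this boundary term is restored, your coefficients become at best $\tfrac12+\tfrac12+1=2$, so your route does not recover the stated constant $\tfrac74$ (the fact that your uncorrected totals happen to sum to $\tfrac74$ is a coincidence). The qualitative $O(\tau^2+\eta^2)$ bound survives, but to prove the corollary as stated you should drop the absorption machinery and invoke the theorem with $\lambda=0$ as the paper does.
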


\begin{proof} Because $\phi$ is $\lambda$-convex with $\lambda \geq 0$, we can use Theorem \ref{differential_inequality_different_time_steps} with $\lambda = 0$ to obtain:
\begin{equation} 
\frac{1}{2} \frac{d}{dt} d^2_{\tau,\eta}(t;\xi) \leq \frac{\tau}{4} (G^{\tau}_t+G^{\eta}_t) + \mathcal{R}^{\tau}_t + \mathcal{R}^{\eta}_t.   
\label{0_bound} 
\end{equation}

For any $t \geq 0$, let $N \in \mathbb{N}$ be the smallest integer such that $t \leq N \tau$.  
Since $\lambda \geq 0$, it follows from \eqref{gradient_bounds} that $G^{\tau}_t \geq 0$.  Hence, by integrating in time, we obtain:
\begin{equation} 
\int_0^t G^{\tau}_s ds \leq \int_0^{N \tau} G^{\tau}_s ds = \tau \sum_{j=0}^{N} \left( \|\nabla \phi^{\#}_{\rho_0}(X^{\tau}_j)\|_{\mathbb{H}}^2 - \|\nabla \phi^{\#}_{\rho_0}(X^{\tau}_{j+1})\|_{\mathbb{H}}^2 \right) \leq \tau \|\nabla \phi^{\#}_{\rho_0}(X_0^{\tau})\|^2_{\mathbb{H}}.  
\label{integral_D_s_error} 
\end{equation} 
Similarly, 
\begin{equation} 
\int_0^t G^{\eta}_s ds \leq \eta \|\nabla \phi^{\#}_{\rho_0}(X_0^{\eta})\|^2_{\mathbb{H}}.
\end{equation}

Next, we handle the term $\mathcal{R}^{\tau}_t$. Observe that for any $j \in \mathbb{N}$:
\[
\int_{j \tau}^{(j+1)\tau} \left(\ell_{\tau}(s)-\frac{1}{2}\right) ds = 0, 
\quad \int_{j \tau}^{(j+1) \tau} (1-\ell_{\tau}(s)) ds = \frac{\tau}{2}.
\]
Applying Lemma \ref{R_bounds} with $\lambda \geq 0$ gives:
\[
\int_0^t \mathcal{R}^{\tau}_s ds \leq \frac{\tau^2}{8} \sum_{j=0}^{N} \left( \|\nabla \phi^{\#}_{\rho_0}(X_j^{\tau})\|^2_{\mathbb{H}} - \|\nabla \phi^{\#}_{\rho_0}(X_{j+1}^{\tau})\|^2_{\mathbb{H}} \right) -  \frac{\|\overline{X}^{\tau}_t-\underline{X}^{\tau}_t\|_{\mathbb{H}}^2}{\tau}  \int_{(N-1)\tau}^t \left(\ell_{\tau}(s)-\frac{1}{2}\right) ds.
\]
\[
 \leq \frac{\tau^2}{8} \|\nabla \phi^{\#}_{\rho_0}(X^{\tau}_{0})\|^2_{\mathbb{H}} + \frac{1}{2} \|\overline{X}^{\tau}_t-\underline{X}^{\tau}_t\|^2_{\mathbb{H}}.
\]
By applying Lemma \ref{classical_stability_lemma} along with the above display, we see that 
\begin{equation} 
\int_0^t \mathcal{R}^{\tau}_s ds \leq \frac{5\tau^2}{8} \|\nabla \phi^{\#}_{\rho_0}(X^{\tau}_{0})\|^2_{\mathbb{H}}.
\label{integral_R_bounds}   
\end{equation} 
Applying the same argument for $\eta$, we obtain:
\[
\int_0^t \mathcal{R}^{\eta}_s ds \leq \frac{5\eta^2}{8} \|\nabla \phi^{\#}_{\rho_0}(X^{\eta}_{0})\|^2_{\mathbb{H}}.
\]

Finally, integrating \eqref{0_bound} in time and applying the estimates \eqref{integral_D_s_error} and \eqref{integral_R_bounds}, we conclude.
\end{proof}

\subsection{ $O(\tau + \eta)$ Convergence of $d_{\tau,\eta}(t;t)$ for $\lambda < 0$ }

We now derive convergence rates for $\lambda < 0$. Since $\lambda_{\tau} \leq \lambda$ (see Assumption \ref{assume_rough}), any $\lambda$-convex function $\phi$ is also $\lambda_{\tau}$-convex. Applying Theorem \ref{differential_inequality_different_time_steps} with $\lambda_{\tau}$ and using Gronwall’s Inequality \cite[Lemma 4.1.8]{ambrosio2005gradient}, we obtain the bound  
for any $T > 0$:  
\begin{equation}
    d_{\tau,\eta}(T;T) \leq e^{- \lambda_{\tau} T} \left( d^2_{\tau,\eta}(0;0) + \sup_{t \in [0,T]} \int_0^T e^{2 \lambda_{\tau} s} a(s) ds \right)^{1/2} 
    + e^{- \lambda_{\tau} T}\int_0^T e^{\lambda_{\tau} s} b(s) ds.
    \label{gronwall_bound}
\end{equation}
Here, the functions $a(t)$ and $b(t)$ are given by  
\[
a(t) :=  \frac{\tau}{2}  G^{\tau}_t + \frac{\eta}{2} G^{\eta}_t  + 2( \mathcal{R}_t^{\tau} + \mathcal{R}_t^{\eta}) + |\lambda_{\tau}| \big( (\mathcal{D}_t^{\tau})^2 + (\mathcal{D}_t^{\eta})^2 \big),
\quad
b(t) := |\lambda_{\tau}| \cdot(\mathcal{D}_t^{\tau} + \mathcal{D}_t^{\eta}).
\]

To estimate the integrals in \eqref{gronwall_bound}, we will estimate each individual term in $a(t)$.

\begin{lemma} \label{estimates_lambda_negative}
Under the notation and assumptions of Theorem \ref{differential_inequality_different_time_steps}, and with the additional assumptions that $\lambda < 0$, we have the following estimates for any $T \geq 0$:
\begin{equation}
\begin{aligned}
    \int_0^T e^{2 \lambda_{\tau} t} (\mathcal{D}_t^{\tau})^2 \,dt 
    &\leq \tau^2 (T+\tau) e^{-2 \lambda_{\tau} \tau} \|\nabla \phi^{\#}_{\rho_0}(X_0^{\tau})\|^2_{\mathbb{H}},  
    \label{distance_squared_bound} \\
    \int_0^T e^{2 \lambda_{\tau} t} G_t^{\tau} \,dt 
    &\leq \tau \|\nabla \phi^{\#}_{\rho_0}(X_0^{\tau})\|^2_{\mathbb{H}}, \\
    \int_0^T e^{2 \lambda_{\tau} t} \mathcal{R}^{\tau}_t \,dt 
    &\leq \tau^2\left( \frac{1}{8} + e^{-2 \lambda_{\tau} \tau} \left( \frac{1}{2} - \frac{2}{3} \lambda_{\tau}(T+\tau) \right) \right) 
    \|\nabla \phi^{\#}_{\rho_0}(X^{\tau}_0)\|^2_{\mathbb{H}}, \\ 
    \int_0^T e^{\lambda_{\tau} t} (\mathcal{D}_t^{\tau}) \,dt 
    &\leq \tau (T+\tau) e^{-\lambda_{\tau} \tau} \|\nabla \phi^{\#}_{\rho_0}(X_0^{\tau})\|_{\mathbb{H}}.
\end{aligned}
\end{equation}

\end{lemma}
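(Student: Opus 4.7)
The plan is to handle all four estimates via a common toolkit. Write $a_n := \|\nabla \phi^{\#}_{\rho_0}(X^{\tau}_n)\|^2_{\mathbb{H}}$. Since $\lambda<0$, the gradient estimate \eqref{gradient_bounds} gives $a_{n+1}\leq e^{-2\lambda_{\tau}\tau}a_n$, which iterates to $a_n\leq e^{-2\lambda_{\tau}\tau n}a_0$. Inserting this into the identity of Theorem \ref{trapezoid_well_defined} and expanding the square yields the sharper pointwise bound $(\mathcal{D}_t^{\tau})^2 \leq \tau^2 e^{-2\lambda_{\tau}\tau} a_n$ for $t\in[n\tau,(n+1)\tau)$, and similarly $\mathcal{D}_t^{\tau}\leq \tau e^{-\lambda_{\tau}\tau}\sqrt{a_n}$ via the triangle inequality. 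Two observations will recur: (i) $a_n e^{2\lambda_{\tau} n\tau}\leq a_0$ uniformly in $n$, and (ii) the number of sub-intervals intersecting $[0,T]$ is at most $(T+\tau)/\tau$.

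With these tools in hand, the first and fourth estimates follow immediately by integrating the pointwise bounds on each $[n\tau,(n+1)\tau]$ against the exponential weight (using $\int_{n\tau}^{(n+1)\tau}e^{2\lambda_{\tau} t}\,dt\leq \tau e^{2\lambda_{\tau} n\tau}$, and analogously for $e^{\lambda_{\tau} t}$) and then summing via (i) and counting via (ii). The second estimate, the one on $\int e^{2\lambda_{\tau} t}G_t^{\tau}\,dt$, is subtler because $G^{\tau}_t$ is a signed piecewise constant equal to $a_n-a_{n+1}$ on $[n\tau,(n+1)\tau)$; I would write the integral as $\sum_n c_n(a_n-a_{n+1})$ with $c_n := \int_{n\tau}^{(n+1)\tau\wedge T}e^{2\lambda_{\tau} t}\,dt$, observe that $c_n$ is positive and strictly \emph{decreasing} in $n$ (since $\lambda_{\tau}<0$), and apply summation by parts. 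The interior telescoping terms and the final boundary term are then both non-positive, leaving only $c_0 a_0\leq \tau a_0$.

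The third estimate is the main obstacle. Applying Lemma \ref{R_bounds}, I would decompose the integrand into the three pieces $(1-\ell_{\tau}(t))\tfrac{\tau}{4}G^{\tau}_t$, $-(1-\ell_{\tau}(t))\tfrac{\lambda}{2}(\mathcal{D}^{\tau}_t)^2$, and $-\tfrac{1}{\tau}(\ell_{\tau}(t)-\tfrac{1}{2})(\mathcal{D}^{\tau}_t)^2$. The first piece is handled by the same Abel-summation argument as above, but with the coefficient $c_n' := \int_{n\tau}^{(n+1)\tau}e^{2\lambda_{\tau} t}(1-\ell_{\tau}(t))\,dt\leq \tau/2$ replacing $c_n$, yielding the $\tfrac{\tau^2}{8}a_0$ contribution. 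The second piece is dominated using $-\lambda\leq |\lambda_{\tau}|$ together with estimate \eqref{distance_squared_bound}. The genuinely hard piece is the third: since $(\ell_{\tau}(t)-\tfrac{1}{2})$ has mean zero on each sub-interval, any non-trivial bound must extract a factor of $|\lambda_{\tau}|$ from the oscillation of the exponential weight. The key computation is the mean-zero identity
\[
\int_0^{\tau}e^{2\lambda_{\tau} u}\!\left(\tfrac{u}{\tau}-\tfrac{1}{2}\right)du = \int_0^{\tau}\bigl(e^{2\lambda_{\tau} u}-1\bigr)\!\left(\tfrac{u}{\tau}-\tfrac{1}{2}\right)du,
\]
which, combined with the Taylor estimate $|e^{2\lambda_{\tau} u}-1|\leq 2|\lambda_{\tau}|u$, produces a bound of order $|\lambda_{\tau}|\tau^2$ on this integral. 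Multiplying by the piecewise-constant $(\mathcal{D}^{\tau}_t)^2$, summing, and applying (i) and (ii) yields a contribution of size $|\lambda_{\tau}|\tau^2(T+\tau)e^{-2\lambda_{\tau}\tau}a_0$. Totalling the three contributions and then redistributing constants via the trivial inequality $\tfrac{|\lambda_{\tau}|(T+\tau)}{2}\leq \tfrac{1}{2}+\tfrac{2|\lambda_{\tau}|(T+\tau)}{3}$ produces the precise form in the statement.
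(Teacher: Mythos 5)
Your Steps 1, 2, and 4 are essentially the paper's argument (the paper also uses \eqref{lipschitz_bound}, the per-interval inequality $\int_{j\tau}^{(j+1)\tau}e^{2\lambda_\tau t}\,dt\le\tau e^{2\lambda_\tau j\tau}$, and the same Abel-summation telescope for $G^\tau_t$; your treatment of the partial last interval through $c_n:=\int_{n\tau}^{(n+1)\tau\wedge T}e^{2\lambda_\tau t}\,dt$ is a slightly cleaner bookkeeping of what the paper does with $(G^\tau_{N\tau})_+$). The same is true of the $(1-\ell_\tau)G^\tau_t$ sub-piece of the third estimate, where both you and the paper use the same Abel argument with coefficient $c_n'\le\tau/2$.

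The gap is in the $-\tfrac{1}{\tau}(\ell_\tau(t)-\tfrac12)(\mathcal{D}^\tau_t)^2$ sub-piece. Your mean-zero trick plus the linear Taylor bound $\lvert e^{2\lambda_\tau u}-1\rvert\le 2\lvert\lambda_\tau\rvert u$ gives, per full interval,
\[
\Bigl\lvert\int_0^\tau e^{2\lambda_\tau u}\bigl(\tfrac{u}{\tau}-\tfrac12\bigr)du\Bigr\rvert
\le 2\lvert\lambda_\tau\rvert\int_0^\tau u\,\bigl\lvert\tfrac{u}{\tau}-\tfrac12\bigr\rvert\,du
=\tfrac{\lvert\lambda_\tau\rvert\tau^2}{4},
\]
so after multiplying by $1/\tau$, summing over $\le(T+\tau)/\tau$ intervals, and applying (i), this sub-piece contributes at least $\tfrac{\lvert\lambda_\tau\rvert(T+\tau)}{4}\,\tau^2 e^{-2\lambda_\tau\tau}\|\nabla\phi^{\#}_{\rho_0}(X^\tau_0)\|^2_{\mathbb H}$ (your sloppier ``of order'' estimate gives $\tfrac{1}{2}$ instead of $\tfrac{1}{4}$). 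Added to the $(1-\ell_\tau)$-convexity sub-piece, which contributes $\tfrac{\lvert\lambda_\tau\rvert(T+\tau)}{2}$, the linear-in-$T$ coefficient is at best $\tfrac34$, while the statement requires $\tfrac23$. Your proposed ``redistribution'' $\tfrac{\lvert\lambda_\tau\rvert(T+\tau)}{2}\le\tfrac12+\tfrac{2\lvert\lambda_\tau\rvert(T+\tau)}{3}$ does not close this: you would need $\tfrac34\lvert\lambda_\tau\rvert(T+\tau)\le\tfrac12+\tfrac23\lvert\lambda_\tau\rvert(T+\tau)$, which fails as soon as $\lvert\lambda_\tau\rvert(T+\tau)>6$. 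The paper avoids this by evaluating $\int_{j\tau}^{(j+1)\tau}\tfrac{e^{2\lambda_\tau t}}{\tau}(\tfrac12-\ell_\tau(t))\,dt$ exactly as $-\tfrac{\lambda_\tau\tau+e^{2\lambda_\tau\tau}(\lambda_\tau\tau-1)+1}{4\lambda_\tau^2\tau^2}e^{2\lambda_\tau j\tau}$ and invoking the third-order inequality $-(x+e^{2x}(x-1)+1)\le-\tfrac{2}{3}x^3$ for $x\le 0$, which yields the sharper per-interval coefficient $\tfrac{\lvert\lambda_\tau\rvert\tau}{6}$; it also handles the last partial interval separately (crudely by $\tfrac12$) rather than folding it into the sum, since the mean-zero cancellation is unavailable there. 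Those two refinements produce exactly the $\tfrac12-\tfrac23\lambda_\tau(T+\tau)$ in the statement, and your linear Taylor bound is simply not tight enough to recover it.
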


\begin{proof}

Choose the smallest integer $N$ such that $T \leq N \tau$. \smallskip 

\noindent \textbf{Step 1: Bounding } $\displaystyle \int_0^T e^{2 \lambda_{\tau} t} (\mathcal{D}_t^{\tau})^2\,dt$.

We first observe that
\begin{equation}
\int_0^T e^{2\lambda_{\tau} t} d^2(\underline{X}_t^{\tau},\overline{X}^{\tau}_t)\,dt 
\leq \int_0^{N \tau} e^{2\lambda_{\tau} t} d^2(\underline{X}_t^{\tau},\overline{X}^{\tau}_t)\,dt 
= \underbrace{\frac{e^{2\lambda_{\tau} \tau} - 1}{2 \lambda_{\tau}}}_{\geq 0} \sum_{j=0}^{N-1} e^{2 \lambda_{\tau} j \tau} d^2(X^{\tau}_j,X^{\tau}_{j+1}).
\label{first_integral_boundss}
\end{equation} 

\noindent By applying \eqref{lipschitz_bound}, we obtain
\begin{equation}
d^2(X^{\tau}_j,X^{\tau}_{j+1}) \leq e^{-2\lambda_{\tau} (j+1) \tau} \tau^2 \|\nabla \phi^{\#}_{\rho_0}(X_0^{\tau})\|^2_{\mathbb{H}}. \label{distance_bounds123}
\end{equation}
Substituting this into \eqref{first_integral_boundss} and using the inequalities $(e^{2 \lambda_{\tau} \tau} - 1) \geq 2\lambda_{\tau} \tau$ and $\lambda_{\tau} \leq 0$, we obtain the desired bound. \smallskip

\noindent \textbf{Step 2: Bounding $\int_0^T e^{2 \lambda_{\tau}t} G^{\tau}_t dt$}  \smallskip

Because $G^{\tau}_t$ is piecewise constant on $[n \tau,(n+1)\tau)$, we obtain:
\[
\int_0^T e^{2 \lambda_{\tau}t}  G^{\tau}_s ds \leq \max \left\{ \int_0^{N \tau}  e^{2 \lambda_{\tau}t}  G^{\tau}_s ds , \int_0^{(N-1)\tau}  e^{2 \lambda_{\tau}t}  G^{\tau}_s ds  \right\} 
\]  
\[
= \max \left\{ \sum_{j=0}^{N} G^{\tau}_{j \tau} \int_{j \tau}^{(j+1)\tau} e^{2 \lambda_{\tau}t}  ds, \sum_{j=0}^{N-1} G^{\tau}_{j \tau} \int_{j \tau}^{(j+1)\tau} e^{2 \lambda_{\tau}t}  ds  \right\} =  \frac{e^{2 \lambda_{\tau} \tau} - 1}{2 \lambda_{\tau}}  \left( \sum_{j=0}^{N-1} G_{j \tau}^{\tau}  e^{2 \lambda_{\tau} j \tau} + e^{2 \lambda_{\tau} N\tau}(G^{\tau}_{N\tau})_{+} \right).  
\] Where $(x)_+ := \max\{x,0\}$. To simplify our notations for this proof, we define for $n \in \N$
\begin{equation}
\alpha_n := ||\nabla \phi^{\#}_{\rho_0}(X_n^{\tau})||^2_{\mathbb{H}}, \label{alpha_def}
\end{equation}
so that from Definition \ref{interpolations_usual} $G^{\tau}_{j \tau} = \alpha_j - \alpha_{j+1}$. Since $\lambda_{\tau} < 0$, we obtain for any $m \in \N$
\[
\sum_{j=0}^{m} G^{\tau}_{j \tau} e^{2 j \lambda_{\tau} \tau} = \sum_{j=0}^{m} (\alpha_j - \alpha_{j+1}) e^{2 j \lambda_{\tau} \tau} = \alpha_0 + \sum_{j=1}^{m} \underbrace{(e^{2 j \lambda_{\tau} \tau} - e^{2 (j-1) \lambda_{\tau} \tau})}_{\leq 0} \alpha_j - e^{2m \lambda_{\tau} \tau} \alpha_{m+1} \leq \alpha_0.
\]  
Thus, we conclude from $(e^{2\lambda_{\tau} \tau} - 1)/(2 \lambda_{\tau}) >0$ that we have the bound
\[ \frac{e^{2 \lambda_{\tau} \tau} - 1}{2 \lambda_{\tau}}  \left( \sum_{j=0}^{N-1} G_{j \tau}^{\tau}  e^{2 \lambda_{\tau} j \tau} + e^{2 \lambda_{\tau} N\tau}(G^{\tau}_{N\tau})_{+} \right) \leq \frac{e^{2 \lambda_{\tau} \tau} - 1}{2 \lambda_{\tau}} ||\nabla \phi^{\#}_{\rho_0}(X_0^{\tau})||^2_{\mathbb{H}} \leq \tau ||\nabla \phi^{\#}_{\rho_0}(X_0^{\tau})||^2_{\mathbb{H}} .
\]

\noindent \textbf{Step 3: Bounding $\int_0^t e^{2 \lambda_{\tau} t} \mathcal{R}_t^{\tau} dt$}

By Lemma \ref{R_bounds}, we have that
\[ \int_0^t e^{2 \lambda_{\tau} t} \mathcal{R}^{\tau}_t dt \leq \frac{\tau}{4}\int_0^t e^{2 \lambda_{\tau} t}  (1-\ell_{\tau}(t)) \cdot G^{\tau}_t dt + \int_0^t e^{2 \lambda_{\tau} t}\left( -\frac{\lambda_{\tau}}{2}  (1-\ell_{\tau}(t)) - \frac{1}{\tau}(\ell_{\tau}(t)-\frac{1}{2}) \right)  (\mathcal{D}_{t}^{\tau})^2) dt.  \]
\[ = (I) + (II). \] We first focus on bounding $(II)$. First observe that because $\lambda_{\tau} < 0$ and $j \in \N$,
\[ \int_{j \tau}^{(j+1)\tau}  \frac{e^{2 \lambda_{\tau} t}}{\tau}(\frac{1}{2}-\ell_{\tau}(t)) dt = - \frac{ (\lambda_{\tau} \tau + e^{2 \lambda_{\tau} \tau}(\lambda_{\tau} \tau - 1) + 1)e^{2 \lambda_{\tau} j \tau} }{4\lambda_{\tau}^2 \tau^2} \leq -\frac{\lambda_{\tau} \tau}{6} e^{2 \lambda_{\tau} j \tau}.  \]  In the above inequality, we used $-(x+e^{2x}(x-1)+1) \leq -2x^3/3$. Hence, we have from \eqref{distance_bounds123}
\[ \int_0^{(N-1)\tau}  \frac{e^{2 \lambda_{\tau} t}}{\tau} (\frac{1}{2} - \ell_{\tau}(t)) \cdot d^2(\overline{X}^{\tau}_t,\underline{X}_t^{\tau}) dt \leq -\frac{\lambda_{\tau} \tau}{6} \sum_{j=0}^{N-1} e^{2 \lambda_{\tau} j \tau} \cdot d^2(X^{\tau}_j,X^{\tau}_{j+1}) \leq -\frac{\lambda_{\tau} \tau^2 }{6}   e^{-2 \lambda_{\tau} \tau} \cdot T \cdot ||\nabla \phi^{\#}_{\rho_0}(X^{\tau}_0)||^2_{\mathbb{H}}. \] We also have from \eqref{lipschitz_bound}
\[ \int_{(N-1) \tau}^t \frac{e^{2 \lambda_{\tau} t}}{\tau} (\frac{1}{2} - \ell_{\tau}(t)) \cdot (\mathcal{D}_t^{\tau})^2 dt \leq \frac{1}{2}  e^{2 \lambda_{\tau}(N-1) \tau}d^2(X^{\tau}_{N-1},X^{\tau}_{N}) \leq  \frac{\tau^2}{2} e^{-2 \lambda_{\tau} \tau} \cdot ||\nabla \phi^{\#}_{\rho_0}(X_0^{\tau})||^2_{\mathbb{H}} .     \]

Now for the first integral term in $(II)$, we have from Step 1
\[ \int_0^t e^{2\lambda_{\tau} t} \left( -\frac{\lambda_{\tau}}{2}(1-\ell_{\tau}(t)) \right) d^2(\overline{X}_{t}^{\tau},\underline{X}_t^{\tau}) \leq -\frac{\lambda_{\tau}}{2} \int_0^t e^{2\lambda_{\tau} t} d^2(\underline{X}_t^{\tau},\overline{X}^{\tau}_t) dt \leq -\frac{\lambda_{\tau}}{2} \tau^2(T+\tau)e^{-2 \lambda_{\tau} \tau} ||\nabla \phi^{\#}_{\rho_0}(X_0^{\tau})||^2_{\mathbb{H}}, \] where the final inequality is due to our Step 1 bound. \smallskip

Combining these bounds gives us
\[ (II) \leq  \tau^2e^{-2\lambda_{\tau} \tau} \left( \frac{1}{2} -  \frac{2}{3} \lambda_{\tau}(T+\tau)   \right) \cdot||\nabla \phi^{\#}_{\rho_0}(X^{\tau}_0)||^2_{\mathbb{H}}. \] 

Now we focus on $(I)$. First recall the notation $\alpha_n$ from \eqref{alpha_def}. By using that $G^{\tau}_{t}$ is piece-wise constant on $[n\tau,(n+1)\tau)$ and $(1-\ell_{\tau}(t)) \geq 0$ the arguments of Step 1 imply
\begin{equation}
(I) \leq \frac{\tau}{4} \left( \sum_{j=0}^{N-1} (\alpha_n-\alpha_{n-1}) \cdot \int_{j \tau}^{(j+1)\tau}  e^{2 \lambda_{\tau} s} (1-\ell_{\tau}(s)) ds + (G^{\tau}_{N \tau})_{+} \cdot \int_{(N-1)\tau}^{N \tau} e^{2 \lambda_{\tau} s} (1-\ell_{\tau}(s)) ds    \right). \label{bound_on_I_1}
\end{equation} By integrating the term $e^{2 \lambda_{\tau} s} (1-\ell_{\tau}(s))$, we see that for any $m \in \N$
\[ \sum_{j=1}^{m} (\alpha_n-\alpha_{n-1}) \cdot \int_{j \tau}^{(j+1)\tau}  e^{2 \lambda_{\tau} s} (1-\ell_{\tau}(s)) ds = 
\underbrace{\frac{ e^{2 \lambda_{\tau} \tau} - (1+2\lambda_{\tau} \tau)  }{4 \lambda_{\tau}^2 \tau}}_{\geq 0} \sum_{j=1}^{m} (\alpha_j-\alpha_{j-1}) e^{2 \lambda_{\tau} j \tau}.  \] By a similar argument as in summation by parts step of Step 2, we have that
\[ \frac{ e^{2 \lambda_{\tau} \tau} - (1+2\lambda_{\tau} \tau)  }{4 \lambda_{\tau}^2 \tau} \sum_{j=1}^{m} (\alpha_j-\alpha_{j-1}) e^{2 \lambda_{\tau} j \tau} \leq \frac{ e^{2 \lambda_{\tau} \tau} - (1+2\lambda_{\tau} \tau)  }{4 \lambda_{\tau}^2 \tau}  \cdot ||\nabla \phi^{\#}_{\rho_0}(X_0^{\tau})||^2_{\mathbb{H}}.  \]
Because $\lambda_{\tau} \tau < 0$, we have that $e^{2\lambda_{\tau} \tau} - (1+2\lambda_{\tau}  \tau) \leq 2\lambda_{\tau}^2 \tau^2$, so we have for any $m \in \N$
\[
\sum_{j=0}^{m} (\alpha_j-\alpha_{j+1}) \int_{j \tau}^{(j+1)\tau} e^{2 \lambda s} (1-\ell_{\tau}(s)) ds \leq \frac{\tau}{2} \cdot ||\nabla_{\mathbf{W}} \phi(\rho^{\tau}_0,X^{\tau}_0)||^2_{\mathbb{H}}.
\]  Therefore, we obtain from \eqref{bound_on_I_1} that
\[ (I) \leq \frac{\tau^2}{8} \cdot ||\nabla_{\mathbf{W}} \phi(\rho^{\tau}_0,X^{\tau}_0)||^2_{\mathbb{H}} .  \] Therefore, by combining our bounds on $(I)$ and $(II)$, we conclude the third step. \smallskip

\noindent \textbf{Step 4: Bounding $\int_0^t e^{\lambda_{\tau} t} (\mathcal{D}_t^{\tau}) dt$}
The desired bound follows from Step 1 and Holder's inequality.\end{proof}
 Now by Lemma \ref{estimates_lambda_negative} and Theorem \ref{differential_inequality_different_time_steps} along with \eqref{gronwall_bound}, we obtain the following Theorem.

\begin{theorem}[Bounds on $d_{\tau,\eta}(t;t)$ for $\lambda < 0$]  
Under the assumptions and notation of Theorem \ref{differential_inequality_different_time_steps}, if $\lambda < 0$, then we have for any $T \geq 0$
\[
d_{\tau,\eta}(T;T) \leq e^{-\lambda_{\tau} T} d(X_0^{\tau},X_0^{\eta}) 
+ e^{-\lambda_{\tau} T} \Big( \tau K(\lambda_{\tau},T,\tau) \|\nabla \phi^{\#}_{\rho_0}(X^{\tau}_0)\|_{\mathbb{H}} 
+ \eta K(\lambda,T,\eta) \|\nabla \phi^{\#}_{\rho_0}(X^{\eta}_0)\|_{\mathbb{H}} \Big),
\]
where for any $\tau' > 0$,
\begin{equation}
K(\lambda_{\tau},T,\tau') = \sqrt{ \frac{3}{4} + (1-\frac{7}{3}\lambda_{\tau'}(T+\tau')) e^{-2\lambda_{\tau} \tau'} } + |\lambda_{\tau}| (T+\tau')e^{-\lambda_{\tau} \tau'}. \label{K_constant}
\end{equation}
\label{neg_d_convg}
\end{theorem}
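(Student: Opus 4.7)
The strategy is a direct application of the Gr\"onwall bound \eqref{gronwall_bound} combined with the four integral estimates in Lemma \ref{estimates_lambda_negative}. First I would observe that since $\lambda < 0$, the elementary inequality $\log((1+x)/(1-x)) \leq 2x$ on $(-1,0]$ with $x=\lambda\tau$ gives $\lambda_{\tau} \leq \lambda$; hence $\phi^{\#}_{\rho_0}$ is also $\lambda_{\tau}$-convex and Theorem \ref{differential_inequality_different_time_steps} applies with $\lambda$ replaced by $\lambda_{\tau}$. Invoking \eqref{gronwall_bound} then reduces the theorem to estimating $\int_0^T e^{2\lambda_{\tau} s}\,a(s)\,ds$ and $\int_0^T e^{\lambda_{\tau} s}\,b(s)\,ds$ from above.

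Next I would plug in the bounds from Lemma \ref{estimates_lambda_negative} term by term. Writing $A_{\tau} := \|\nabla\phi^{\#}_{\rho_0}(X_0^{\tau})\|_{\mathbb{H}}$ for brevity, the $\tau$-contribution to $\int_0^T e^{2\lambda_{\tau} s}a(s)\,ds$ is
\[
\tfrac{\tau}{2}\int_0^T e^{2\lambda_{\tau} s} G^{\tau}_s\,ds \;+\; 2\int_0^T e^{2\lambda_{\tau} s}\mathcal{R}^{\tau}_s\,ds \;+\; |\lambda_{\tau}|\int_0^T e^{2\lambda_{\tau} s}(\mathcal{D}^{\tau}_s)^2\,ds,
\]
and inserting the three bounds of Lemma \ref{estimates_lambda_negative}, together with $|\lambda_{\tau}| = -\lambda_{\tau}$, controls this by
\[
\tau^2\Bigl[\tfrac{1}{2} + \tfrac{1}{4} + e^{-2\lambda_{\tau}\tau}\bigl(1-\tfrac{4}{3}\lambda_{\tau}(T+\tau)\bigr) - \lambda_{\tau}(T+\tau)e^{-2\lambda_{\tau}\tau}\Bigr]A_{\tau}^2 = \tau^2\Bigl[\tfrac{3}{4} + e^{-2\lambda_{\tau}\tau}\bigl(1-\tfrac{7}{3}\lambda_{\tau}(T+\tau)\bigr)\Bigr]A_{\tau}^2.
\]
The square root of this expression is exactly $\tau A_{\tau}$ times the first summand of $K(\lambda_{\tau},T,\tau)$ in \eqref{K_constant}. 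The analogous computation gives an identical bound with $\tau$ replaced by $\eta$, and the inequality $\sqrt{x+y}\leq\sqrt{x}+\sqrt{y}$ separates the two pieces inside the $(\cdot)^{1/2}$ in \eqref{gronwall_bound}.

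Finally, for $\int_0^T e^{\lambda_{\tau} s}b(s)\,ds$, the fourth estimate of Lemma \ref{estimates_lambda_negative} yields $|\lambda_{\tau}|\int_0^T e^{\lambda_{\tau} s}\mathcal{D}^{\tau}_s\,ds \leq |\lambda_{\tau}|\tau(T+\tau)e^{-\lambda_{\tau}\tau}A_{\tau}$, which is precisely $\tau A_{\tau}$ times the second summand of $K(\lambda_{\tau},T,\tau)$, and likewise for the $\eta$-piece. Assembling the two $\tau$-parts into $\tau K(\lambda_{\tau},T,\tau)A_{\tau}$, doing the same for $\eta$, and pulling out the common factor $e^{-\lambda_{\tau} T}$ from \eqref{gronwall_bound} delivers the claim. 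No step poses a real obstacle, as Lemma \ref{estimates_lambda_negative} has already absorbed all the difficult integral analysis; the only care needed is the bookkeeping $\tfrac{1}{2}+\tfrac{1}{4}=\tfrac{3}{4}$ and $\tfrac{4}{3}+1=\tfrac{7}{3}$ that reproduces the explicit constants inside $K$.
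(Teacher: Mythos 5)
Your proposal is correct and follows exactly the route the paper takes: the theorem is obtained by inserting the four integral estimates of Lemma \ref{estimates_lambda_negative} into the Gr\"onwall bound \eqref{gronwall_bound} (after noting $\lambda_{\tau}\leq\lambda$ so that Theorem \ref{differential_inequality_different_time_steps} applies with $\lambda_{\tau}$), and your bookkeeping $\tfrac12+\tfrac14=\tfrac34$ and $\tfrac43+1=\tfrac73$ reproduces the constants in $K$ precisely as intended.
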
  
 
    \subsection{$O(\tau)$ Convergence of $\underline{X}_t^{\tau}$ and Characterization of the Limit}

We now establish convergence:

\begin{theorem}[$O(\tau)$ convergence rate]  \label{linear_convergence_rate}
Under the assumptions and notation of Theorem \ref{differential_inequality_different_time_steps}, for any $T > 0$, the sequence $\underline{X}^{\tau}_t$ converges uniformly on $[0,T]$ in $\mathbb{H}$ to a limiting locally Lipschitz curve $X(t) : [0,\infty) \to \mathbb{H}$.  

\smallskip

\noindent Furthermore, defining $\rho_t := (X(t))_{\#} \rho_0$ and $\rho_t^{\tau} := (\underline{X}_t^{\tau})_{\#} \rho_0$, we have the bound
\[
W_2(\rho_t,\rho_t^{\tau}) \leq \|X(t) - \underline{X}_t^{\tau}\|_{\mathbb{H}},
\]
and the following estimate holds:
\[
\|X(t) - \underline{X}_t^{\tau}\|_{\mathbb{H}} \leq
\begin{cases}
    \sqrt{3} \|X_0^{\tau} - \textnormal{Id}\|_{\mathbb{H}} 
    + \frac{\sqrt{33}}{2}\tau \| \nabla \phi^{\#}_{\rho_0}(X^{\tau}_0)\|_{\mathbb{H}} 
    , & \text{if } \lambda \geq 0, \\[1em]
    \sqrt{3} e^{- \lambda_{\tau} t} \|X_0^{\tau} - \textnormal{Id}\|_{\mathbb{H}}  
    + \sqrt{3}   C(\lambda,t,\tau)  \cdot \tau e^{-\lambda_{\tau} t} \cdot
    \|\nabla \phi^{\#}_{\rho_0}(X_0^{\tau})\|_{\mathbb{H}}, & \text{if } \lambda < 0.
\end{cases}
\]
where the lower order term
\begin{equation}
C(\lambda_{\tau},t,\tau) :=   (1+|\lambda_{\tau}|(t+\tau))e^{-\lambda_{\tau} \tau}+ \sqrt{\frac{3}{4} + (1+\frac{7}{3}|\lambda_{\tau}|(t+\tau)e^{-2\lambda_{\tau} \tau}}. \label{C_definition}
\end{equation}

\label{streamline_convergence}
\end{theorem}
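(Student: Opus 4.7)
The plan is to show that the family $\{\underline{X}^{\tau}_{\cdot}\}_{\tau > 0}$ is uniformly Cauchy in $C([0,T];\mathbb{H})$ as $\tau \downarrow 0$ by transferring the interpolated-distance bounds of Corollary \ref{positive_d_convg} and Theorem \ref{neg_d_convg} to bounds on the honest difference $d(\underline{X}^\tau_t,\underline{X}^\eta_t)$, to define $X(t)$ as the uniform limit, and then to recover the stated rates by sending $\eta \downarrow 0$ in the Cauchy estimates. The main technical step is the bridging: those earlier results control the \emph{interpolated} distance $d_{\tau,\eta}(t;t)$ of Definition \ref{further_interpolated_d} rather than $d(\underline{X}^\tau_t,\underline{X}^\eta_t)$, and the gap must be closed using the piecewise-linear interpolant $\tilde{X}^{\tau}_t$ together with the Lipschitz-in-time estimate \eqref{lipschitz_bound}.

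First I would prove the pointwise bridging inequality
\[
  d(\underline{X}^\tau_t,\underline{X}^\eta_t) \leq d(\tilde{X}^{\tau}_t,\tilde{X}^{\eta}_t) + \mathcal{D}^\tau_t + \mathcal{D}^\eta_t \leq d_{\tau,\eta}(t;t) + \mathcal{D}^\tau_t + \mathcal{D}^\eta_t.
\]
The first inequality is the triangle inequality together with $\|\tilde{X}^{\tau}_t - \underline{X}^{\tau}_t\|_{\mathbb{H}} = \ell_{\tau}(t)\,\mathcal{D}^\tau_t \leq \mathcal{D}^\tau_t$. For the second, I would write $\tilde{X}^{\tau}_t - \tilde{X}^{\eta}_t$ as the convex combination of the four difference vectors $X^{\tau}_{t,*}-X^{\eta}_{t,*}$ (with $*\in\{\text{underline},\text{overline}\}$) with weights $(1-\ell_{\tau})(1-\ell_{\eta})$, $\ell_{\tau}(1-\ell_{\eta})$, $(1-\ell_{\tau})\ell_{\eta}$, $\ell_{\tau}\ell_{\eta}$ summing to one, and apply Jensen's inequality to the convex function $\|\cdot\|_{\mathbb{H}}^2$; the resulting expression is exactly $d_{\tau,\eta}^2(t;t)$. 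Squaring the displayed bound and applying $(a+b+c)^2 \leq 3(a^2+b^2+c^2)$ then yields
\[
  d^2(\underline{X}^\tau_t,\underline{X}^\eta_t) \leq 3\,d_{\tau,\eta}^2(t;t) + 3(\mathcal{D}^\tau_t)^2 + 3(\mathcal{D}^\eta_t)^2,
\]
whose right-hand side is controlled by Corollary \ref{positive_d_convg} (for $\lambda \geq 0$) or Theorem \ref{neg_d_convg} (for $\lambda < 0$) together with \eqref{lipschitz_bound}, purely in terms of $\tau,\eta$ and the initial data. Under Assumption \ref{assume_rough} this right-hand side vanishes as $\tau,\eta \downarrow 0$ uniformly on $[0,T]$, so the family is uniformly Cauchy and converges to a limit $X(\cdot) \in C([0,T];\mathbb{H})$; the Wasserstein bound follows at once from Lemma \ref{same_measure_bound}, and the local Lipschitzness of $X$ by passing to the limit in \eqref{lipschitz_bound}.

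Finally, with $\tau$ fixed I would send $\eta \downarrow 0$ to extract the quantitative rate. For $\lambda \geq 0$, Corollary \ref{positive_d_convg} contributes $\tfrac{7}{4}\tau^2\|\nabla\phi^{\#}_{\rho_0}(X_0^{\tau})\|_{\mathbb{H}}^2$ (the $\eta$-dependent terms vanish because $\|X_0^\eta - \textnormal{Id}\|_{\mathbb{H}} \to 0$ and the initial gradient is uniformly bounded), and $(\mathcal{D}^\tau_t)^2 \leq \tau^2\|\nabla\phi^{\#}_{\rho_0}(X_0^{\tau})\|_{\mathbb{H}}^2$; summing gives $\tfrac{11}{4}\tau^2\|\nabla\phi^{\#}_{\rho_0}(X_0^{\tau})\|_{\mathbb{H}}^2$, and after multiplication by $3$ and $\sqrt{a+b} \leq \sqrt{a}+\sqrt{b}$ one obtains exactly $\sqrt{3}\|X_0^\tau - \textnormal{Id}\|_{\mathbb{H}} + \tfrac{\sqrt{33}}{2}\tau\|\nabla\phi^{\#}_{\rho_0}(X_0^{\tau})\|_{\mathbb{H}}$. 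For $\lambda < 0$ the computation is analogous, with Theorem \ref{neg_d_convg} producing the factor $e^{-\lambda_{\tau} t}$ and the Gronwall constant $K(\lambda_{\tau},t,\tau)$ from \eqref{K_constant}; the refined stability bound $\mathcal{D}^\tau_t \leq \tau e^{-\lambda_\tau(t+\tau)}\|\nabla\phi^{\#}_{\rho_0}(X_0^{\tau})\|_{\mathbb{H}}$ (obtained by iterating the gradient estimate in Theorem \ref{trapezoid_well_defined} up to time $t+\tau$) then makes the $(\mathcal{D}^\tau_t)^2$ contribution add an extra $e^{-\lambda_\tau \tau}$ to $K$ via $\sqrt{A^2+B^2}\leq A+B$, recovering exactly $C(\lambda_\tau,t,\tau) = K(\lambda_\tau,t,\tau) + e^{-\lambda_\tau \tau}$ from \eqref{C_definition}. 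The principal obstacle is purely bookkeeping in this $\lambda < 0$ case: the several exponential factors $e^{-\lambda_\tau t}$, $e^{-\lambda_\tau \tau}$, and $e^{-\lambda_\tau(t+\tau)}$ must be tracked with care so that the Gronwall output $d_{\tau,\eta}$ and the interpolation error $\mathcal{D}^\tau_t$ combine cleanly to reproduce the precise form of $C$.
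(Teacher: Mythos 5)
Your proposal is correct and follows essentially the same route as the paper: the paper's proof rests on exactly the bridging inequality $\|\underline{X}^{\tau}_t - \underline{X}^{\eta}_t\|^2_{\mathbb{H}} \leq 3\bigl(d^2_{\tau,\eta}(t;t) + (\mathcal{D}^{\tau}_t)^2 + (\mathcal{D}^{\eta}_t)^2\bigr)$ (which it cites from \cite[Theorem 4.2.2]{ambrosio2005gradient} and you rederive correctly via the interpolants and Jensen), followed by Corollary \ref{positive_d_convg}, Theorem \ref{neg_d_convg}, and Lemma \ref{classical_stability_lemma} with $\eta \downarrow 0$. Your constant-tracking, including $\tfrac{7}{4}+1=\tfrac{11}{4}$ giving $\tfrac{\sqrt{33}}{2}$ and $C(\lambda_{\tau},t,\tau) = K(\lambda_{\tau},t,\tau) + e^{-\lambda_{\tau}\tau}$, matches the stated bounds.
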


\begin{proof}
By the arguments of \cite[Theorem 4.2.2]{ambrosio2005gradient}, we have the bound
\begin{equation}
\|\underline{X}^{\tau}_t - \underline{X}^{\eta}_t\|^2_{\mathbb{H}} 
\leq 3 \left( d^2_{\tau,\eta}(t;t) 
+ \|\underline{X}^{\tau}_t - \overline{X}^{\tau}_t\|^2_{\mathbb{H}} 
+ \|\underline{X}^{\eta}_t - \overline{X}^{\eta}_t\|^2_{\mathbb{H}} \right). \label{interpolate_to_d_bound}
\end{equation}
Since Assumption \ref{assume_rough} ensures that $X_0^{\eta} \to \text{Id}$ in $\mathbb{H}$ as $\eta \to 0$, applying Theorems \ref{neg_d_convg} and \ref{positive_d_convg} along with Lemma \ref{classical_stability_lemma} implies that $\underline{X}^{\tau}_t$ converges locally uniformly to a locally Lipschitz curve $X_t \in \mathbb{H}$. 

\smallskip

\noindent The $O(\tau)$ error estimates follow from \eqref{interpolate_to_d_bound}, Theorems \ref{neg_d_convg} and \ref{positive_d_convg}, Lemma \ref{classical_stability_lemma}, and letting $\eta \downarrow 0$.
\end{proof}

Now we will use this convergence and the discrete EVI to characterize the limits $X_t$ and $\rho_t$.

\begin{theorem}[Evolution Variational Inequality Characterization]  \label{solves_EVI}
Under the assumptions and notations of Theorem \ref{streamline_convergence}, if $X(t)$ denotes the $\mathbb{H}$-limit of $\underline{X}_t^{\tau}$, then $X(t)$ satisfies the Evolution Variational Inequality (EVI):
\begin{equation} 
\frac{1}{2} \left( \|X(t) - \xi\|^2_{\mathbb{H}} - \|X(s) - \xi\|^2_{\mathbb{H}} \right) 
+ \frac{\lambda}{2} \int_s^t \|X(a) - \xi\|^2_{\mathbb{H}} \, da 
+ \int_s^t \phi^{\#}_{\rho_0}(X(a)) \, da 
\leq  (t-s)\phi^{\#}_{\rho_0}(\xi),
\label{continuous_EVI}
\end{equation} 
for all $0 \leq s \leq t$ and $\xi \in \mathbb{H}$.  
\end{theorem}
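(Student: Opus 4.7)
The plan is to derive the continuous EVI \eqref{continuous_EVI} by integrating the discrete EVI from Lemma \ref{discrete_EVI} in time and then passing to the limit $\tau \downarrow 0$, using Theorem \ref{streamline_convergence} to identify the limits. I prefer to start directly from the summed discrete EVI rather than from the interpolated approximate EVI of Theorem \ref{differential_inequality_thm}, since the discrete version's displacement remainder has a definite sign that can be discarded at no cost.

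Concretely, fix $0 \leq s < t$ and $\xi \in \mathbb{H}$, set $n_1 := \lfloor s/\tau\rfloor$ and $n_2 := \lfloor t/\tau\rfloor$, multiply Lemma \ref{discrete_EVI} by $\tau$, and sum over $n = n_1, \dots, n_2 - 1$. The left-hand side telescopes to
\[
\tfrac{1}{2}\bigl(\|\xi - X^{\tau}_{n_2}\|_{\mathbb{H}}^2 - \|\xi - X^{\tau}_{n_1}\|_{\mathbb{H}}^2\bigr) + \tfrac{\lambda\tau}{4}\sum_{n=n_1}^{n_2-1}\bigl(\|\xi - X^{\tau}_n\|_{\mathbb{H}}^2 + \|\xi - X^{\tau}_{n+1}\|_{\mathbb{H}}^2\bigr),
\]
while the right-hand side equals $(n_2-n_1)\tau\,\phi^{\#}_{\rho_0}(\xi) - \tau\sum_{n=n_1}^{n_2-1}\phi^{\#}_{\rho_0}(X^{\tau}_{n+1})$ plus the telescoping gradient error $\tfrac{\tau^2}{4}\bigl(\|\nabla\phi^{\#}_{\rho_0}(X^{\tau}_{n_1})\|_{\mathbb{H}}^2 - \|\nabla\phi^{\#}_{\rho_0}(X^{\tau}_{n_2})\|_{\mathbb{H}}^2\bigr)$ and a nonpositive displacement term $-\bigl(\tfrac{1}{2}+\tfrac{\lambda\tau}{4}\bigr)\sum_{n}\|X^{\tau}_n - X^{\tau}_{n+1}\|_{\mathbb{H}}^2$ which I discard (its sign is correct by the time-step restriction in Assumption \ref{assume_rough}).

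Next, I pass to the limit term by term. The boundary terms converge to $\tfrac{1}{2}(\|\xi - X(t)\|_{\mathbb{H}}^2 - \|\xi - X(s)\|_{\mathbb{H}}^2)$ by the uniform convergence in Theorem \ref{streamline_convergence}, which also upgrades to dominated convergence in the Riemann sum, giving $\tfrac{\lambda}{2}\int_s^t \|\xi - X(a)\|_{\mathbb{H}}^2\,da$ after noting that the extra boundary contributions $\tfrac{\lambda\tau}{4}(\|\xi-X^{\tau}_{n_1}\|_{\mathbb{H}}^2 + \|\xi-X^{\tau}_{n_2}\|_{\mathbb{H}}^2)$ vanish as $\tau \downarrow 0$ (using the Lipschitz in time estimate \eqref{lipschitz_bound}). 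The gradient error vanishes at rate $O(\tau^2)$ because iterating Lemma \ref{gradient_along_flow} together with Assumption \ref{assume_rough} provides the uniform bound $\sup_{n,\tau}\|\nabla\phi^{\#}_{\rho_0}(X^{\tau}_n)\|_{\mathbb{H}} < \infty$.

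The main obstacle is the energy Riemann sum $\tau\sum_{n=n_1}^{n_2-1}\phi^{\#}_{\rho_0}(X^{\tau}_{n+1})$, which in the notation of Definition \ref{interpolations_usual} equals $\int_{n_1\tau}^{n_2\tau}\phi^{\#}_{\rho_0}(\overline{X}^{\tau}_a)\,da$. I would argue via Fatou's lemma as follows. First, Fréchet differentiability and $\lambda$-convexity of $\phi^{\#}_{\rho_0}$ imply continuity on bounded subsets of $\mathbb{H}$; combined with the uniform convergence $\overline{X}^{\tau}_a \to X(a)$ in $\mathbb{H}$, this gives pointwise convergence $\phi^{\#}_{\rho_0}(\overline{X}^{\tau}_a) \to \phi^{\#}_{\rho_0}(X(a))$. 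Second, properness of $\phi^{\#}_{\rho_0}$ supplies the uniform lower bound required for Fatou. Since this term appears on the right-hand side with a negative sign, Fatou yields
\[
\liminf_{\tau\downarrow 0}\,\tau\!\!\sum_{n=n_1}^{n_2-1}\!\!\phi^{\#}_{\rho_0}(X^{\tau}_{n+1}) \;\geq\; \int_s^t \phi^{\#}_{\rho_0}(X(a))\,da,
\]
which is exactly the direction compatible with our inequality. Assembling all limit assertions gives \eqref{continuous_EVI}. A minor technical point to handle carefully is that the endpoints $n_1\tau, n_2\tau$ only approximate $s,t$; this introduces $O(\tau)$ errors in the integration domain which are absorbed using the aforementioned uniform bounds on $\|\xi - X^{\tau}_n\|_{\mathbb{H}}$ and on $\phi^{\#}_{\rho_0}(X^{\tau}_n)$ (the latter obtained by telescoping the almost-decreasing inequality of Lemma \ref{energy_almost_decreasing}).
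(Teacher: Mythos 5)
Your proposal is correct and follows essentially the same route as the paper: sum the discrete EVI of Lemma \ref{discrete_EVI} multiplied by $\tau$, control the telescoping gradient term and the displacement term (the paper sends the whole remainder to zero via Lemma \ref{classical_stability_lemma}, while you simply discard the nonpositive part, which is fine), and pass to the limit using the uniform convergence of Theorem \ref{streamline_convergence} together with continuity of $\phi^{\#}_{\rho_0}$ and Fatou's lemma for the energy Riemann sum. No substantive differences.
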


\begin{proof}  
Given $0 \leq s < t$, choose integers $n, m \in \mathbb{N}$ such that $t \in [(n+1)\tau, (n+2)\tau)$ and $s \in [m \tau, (m+1)\tau)$. Summing the discrete EVI from Lemma \ref{discrete_EVI} and multiplying by $\tau$ gives
\begin{equation} 
\frac{1}{2} \left( \|\xi - \underline{X}_t^{\tau}\|^2_{\mathbb{H}} - \|\xi - \underline{X}_s^{\tau}\|^2_{\mathbb{H}} \right) 
+ \frac{\lambda \tau }{4}  \sum_{j=m}^{n} \left( \|\xi - X^{\tau}_j\|^2_{\mathbb{H}} + \|\xi - X^{\tau}_{j+1}\|^2_{\mathbb{H}} \right) \leq \tau \sum_{j=m}^{n} \left( \phi^{\#}_{\rho_0}(\xi) - \phi^{\#}_{\rho_0}(X^{\tau}_{j+1}) \right) + \tilde{\mathcal{R}}(\tau). 
\label{integrated_EVI}
\end{equation}

Here, the remainder term is given by
\[
\tilde{\mathcal{R}}(\tau) = \frac{\tau^2}{4} \left( \|\nabla \phi^{\#}_{\rho_0}(X^{\tau}_m)\|^2_{\mathbb{H}} - \|\nabla \phi^{\#}_{\rho_0}(X^{\tau}_{n+1})\|^2_{\mathbb{H}} \right) 
- \left( \frac{1}{2} + \frac{\lambda \tau}{4} \right) \sum_{j=m}^{n} \|X^{\tau}_j - X^{\tau}_{j+1}\|^2_{\mathbb{H}}.
\]  
Observe that $\tilde{\mathcal{R}}(\tau) \to 0$ as $\tau \to 0$ by \eqref{gradient_bounds} and Lemma \ref{classical_stability}. \smallskip

By Theorem \ref{streamline_convergence} and Assumption \ref{assume_rough}, we also have for any $T > 0$:
\begin{equation} 
\lim_{\tau \to 0} \sup_{t \in [0,T]} \|X_t - \underline{X}_t^{\tau}\|_{\mathbb{H}} = 0. 
\label{strong_X_convergence} 
\end{equation}
Hence, using the continuity of $\phi^{\#}_{\rho_0}$ from Assumption \ref{assume_rough}, along with Fatou's Lemma and \eqref{strong_X_convergence}, taking the limit in \eqref{integrated_EVI} yields the claim.
\end{proof}

Because $\mathbb{H}$ is a Hilbert Space, we have an equivalent formulation of the EVI.

\begin{theorem}[Strong gradient Flow Solution]  \label{strong_gf_solution}
Under the assumptions of Theorem \ref{solves_EVI}, the limit curve $X(t)$ satisfies the solves gradient flow equation in the strong sense:
\begin{equation}
\begin{cases} \dot{X}(t) = -\nabla \phi^{\#}_{\rho_0}(X(t)), \quad \text{for a.e. } t \in (0,\infty),  \\
X(0) = \textnormal{Id}
\end{cases}
\label{strong_gf_eqn}
\end{equation}
where $\dot{X}(t)$ is the Fréchet derivative of $X: [0,\infty) \to \mathbb{H}$.  
\end{theorem}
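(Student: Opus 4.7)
The plan is to extract a pointwise (differential) form of the integral EVI established in Theorem \ref{solves_EVI}, and then test that differential EVI against first-order perturbations $\xi = X(s) + \varepsilon v$ to peel off $\dot X(s)$ and identify it with $-\nabla \phi^{\#}_{\rho_0}(X(s))$. The initial condition $X(0) = \textnormal{Id}$ will be read off from the uniform convergence $\underline{X}^{\tau}_t \to X(t)$ proven in Theorem \ref{streamline_convergence} and the hypothesis on $X_0^{\tau}$ in Assumption \ref{assume_rough}.

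First I would record regularity of the limit curve. By Theorem \ref{streamline_convergence}, $X : [0,\infty) \to \mathbb{H}$ is locally Lipschitz. Because $\mathbb{H}$ is a separable Hilbert space (hence reflexive, with the Radon--Nikod\'ym property), such a curve is Fr\'echet differentiable at a.e. $t \in (0,\infty)$, with $\dot X(t) \in \mathbb{H}$. Together with the Fr\'echet differentiability of $\phi^{\#}_{\rho_0}$ from Assumption \ref{assume_rough}, this makes $t \mapsto \phi^{\#}_{\rho_0}(X(t))$ continuous, so that every such $t$ is a Lebesgue point of this map and of $a \mapsto \|X(a) - \xi\|_{\mathbb{H}}^2$ for each fixed $\xi$. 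Dividing the EVI \eqref{continuous_EVI} by $t - s > 0$ and letting $t \downarrow s$ at any such $s$, and using $\frac{d}{dt} \tfrac{1}{2}\|X(t) - \xi\|_{\mathbb{H}}^2 = \langle \dot X(t), X(t) - \xi \rangle_{\mathbb{H}}$ at points of differentiability, yields the differential EVI
\[
\langle \dot X(s), X(s) - \xi \rangle_{\mathbb{H}} + \frac{\lambda}{2}\|X(s)-\xi\|_{\mathbb{H}}^2 + \phi^{\#}_{\rho_0}(X(s)) \leq \phi^{\#}_{\rho_0}(\xi).
\]

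To upgrade this from holding \emph{for each $\xi$, for a.e.\ $s$} to \emph{for a.e.\ $s$, for all $\xi$}, I would pick a countable dense set $\{\xi_n\} \subset \mathbb{H}$, remove the countable union of the corresponding null sets, and extend to all $\xi \in \mathbb{H}$ by continuity of both sides in $\xi$ (continuity of the right-hand side uses Fr\'echet differentiability of $\phi^{\#}_{\rho_0}$ from Assumption \ref{assume_rough}). Then at any such $s$, fix $v \in \mathbb{H}$ and $\varepsilon > 0$ and test the differential EVI with $\xi = X(s) + \varepsilon v$. The Fr\'echet expansion
\[
\phi^{\#}_{\rho_0}(X(s) + \varepsilon v) - \phi^{\#}_{\rho_0}(X(s)) = \varepsilon \langle \nabla \phi^{\#}_{\rho_0}(X(s)), v \rangle_{\mathbb{H}} + o(\varepsilon)
\]
rearranges the differential EVI to
\[
-\langle \dot X(s) + \nabla \phi^{\#}_{\rho_0}(X(s)), v \rangle_{\mathbb{H}} + \frac{\lambda}{2}\varepsilon \|v\|_{\mathbb{H}}^2 \leq o(1).
\]
Sending $\varepsilon \downarrow 0$ and repeating with $-v$ in place of $v$ forces $\langle \dot X(s) + \nabla \phi^{\#}_{\rho_0}(X(s)), v \rangle_{\mathbb{H}} = 0$ for every $v \in \mathbb{H}$, i.e.\ $\dot X(s) = -\nabla \phi^{\#}_{\rho_0}(X(s))$, which is \eqref{strong_gf_eqn}. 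The initial condition is immediate from $\underline{X}^{\tau}_0 = X_0^{\tau} \to \textnormal{Id}$ in $\mathbb{H}$ together with the uniform convergence of $\underline{X}^{\tau}_t$ to $X(t)$ on $[0,T]$.

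The main technical point I expect to have to argue carefully is the quantifier interchange in the differential EVI, which I handle by separability of $\mathbb{H} = L^2(\mathbb{R}^d; \rho_0)$ and continuity in $\xi$; once that is in hand, the rest is the standard Hilbert-space computation showing that an EVI solution of a $\lambda$-convex, Fr\'echet-differentiable functional automatically satisfies the strong gradient flow equation.
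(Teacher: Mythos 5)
Your proof is correct, but it takes a genuinely different route from the paper's. The paper's own proof is a direct appeal to the abstract theory of gradient flows in Hilbert spaces: it observes that $X(t)$ satisfies the EVI (Theorem \ref{solves_EVI}), that Fr\'echet differentiability of $\phi^{\#}_{\rho_0}$ (Assumption \ref{assume_rough}) collapses the subdifferential to the singleton $\{\nabla\phi^{\#}_{\rho_0}(X(t))\}$, that $X$ is locally absolutely continuous (Theorem \ref{streamline_convergence}), and then cites \cite[Theorem~11.14]{ambrosio2021lectures} to conclude. You instead unfold the content of that cited theorem: a.e.\ Fr\'echet differentiability of the locally Lipschitz $\mathbb{H}$-valued curve via the Radon--Nikod\'ym property, passage from the integral EVI to the differential EVI by a right difference quotient, and a first-order test $\xi = X(s) \pm \varepsilon v$ to identify $\dot X(s)$ with $-\nabla\phi^{\#}_{\rho_0}(X(s))$; the initial condition is read off from the uniform convergence of $\underline{X}^{\tau}_t$. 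Both arguments reach the same conclusion; the paper's is more concise by delegating to the literature, while yours is self-contained and makes the mechanism explicit. One small remark: the quantifier interchange you flag as the main technical point is actually a non-issue here. The null set where $X$ fails to be differentiable is fixed once and for all, independent of $\xi$, and since both $X$ and $\phi^{\#}_{\rho_0}$ are continuous, the difference-quotient limits from the integral EVI hold for \emph{every} $\xi$ simultaneously at each such differentiability point $s$. The countable-dense-set device is therefore unnecessary, though harmless.
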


\begin{proof}  
By Theorem \ref{solves_EVI}, $X(t)$ satisfies the Evolution Variational Inequality \eqref{continuous_EVI}, which characterizes its gradient flow. Assumption \ref{assume_rough} ensures that $\phi^{\#}_{\rho_0}$ is Fréchet differentiable, so its subdifferential reduces to the singleton $\{\nabla \phi^{\#}_{\rho_0}(X(t))\}$. Moreover, Theorem \ref{streamline_convergence} guarantees that $X(t)$ is locally absolutely continuous in $\mathbb{H}$. The conclusion follows by applying \cite[Theorem 11.14]{ambrosio2021lectures}.
\end{proof}

\begin{theorem}[Weak Solution to the Continuity Equation] \label{limiting_PDE} 
Under the assumptions and notation of Theorem \ref{strong_gf_solution}, the limiting measure  
\[
\rho_t := (X(t))_{\#} \rho_0
\]
is a weak solution to the continuity equation \eqref{gradient_flow_eqn1} with initial data $\rho_0$.
\end{theorem}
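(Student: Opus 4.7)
The plan is to verify the weak formulation of the continuity equation directly from the push-forward representation $\rho_t = (X(t))_{\#}\rho_0$ together with the strong $L^2$-gradient flow equation from Theorem \ref{strong_gf_solution}. Fix a test function $\varphi \in C_c^{\infty}([0,\infty) \times \mathbb{R}^d)$ and consider the scalar function
\[
G(t) := \int_{\mathbb{R}^d} \varphi(t,x)\, d\rho_t(x) = \int_{\mathbb{R}^d} \varphi(t, X(t,x))\, d\rho_0(x),
\]
where the second equality is the definition of push-forward. The key claim is that $G$ is absolutely continuous on $[0,T]$ and
\[
\dot{G}(t) = \int_{\mathbb{R}^d} \partial_t \varphi(t,X(t,x))\, d\rho_0(x) + \int_{\mathbb{R}^d} \langle \nabla \varphi(t,X(t,x)), \dot{X}(t,x)\rangle\, d\rho_0(x).
\]

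First, I would justify differentiation under the integral. Since Theorem \ref{strong_gf_solution} gives $X \in AC_{\mathrm{loc}}([0,\infty);\mathbb{H})$ with $\dot{X}(t) = -\nabla \phi^{\#}_{\rho_0}(X(t))$ for a.e.\ $t$, and since $\varphi$ is compactly supported in $x$ with bounded derivatives, the finite-increment estimate
\[
\left|\varphi(t+h,X(t+h,x)) - \varphi(t,X(t,x))\right| \le \|\partial_t \varphi\|_\infty |h| + \|\nabla \varphi\|_\infty |X(t+h,x) - X(t,x)|,
\]
together with the $\mathbb{H}$-continuity of $t\mapsto X(t)$, yields absolute continuity of $G$ by the chain rule in Hilbert spaces. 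For the pointwise derivative formula, I would use a density argument (or the $L^2$ chain rule) combined with the fact that $\dot X \in L^2_{\mathrm{loc}}(0,\infty;\mathbb{H})$.

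Next, I would substitute $\dot{X}(t,x) = -\nabla\phi^{\#}_{\rho_0}(X(t))(x)$ from Theorem \ref{strong_gf_solution} and invoke the identification from Proposition \ref{differentiability_equivalence} / Remark \ref{sufficient_conditions_rough_assumptions} to write
\[
\nabla\phi^{\#}_{\rho_0}(X(t))(x) = \nabla_{\mathbf{W}}\phi(\rho_t, X(t,x)) \quad \text{for } \rho_0\text{-a.e. } x.
\]
(The statement of the PDE implicitly requires this identification, since $\nabla_{\mathbf W}\phi$ appears in \eqref{gradient_flow_eqn1}; under the Wasserstein-differentiability assumptions of Remark \ref{sufficient_conditions_rough_assumptions} this holds.) Changing variables via $\rho_t = (X(t))_{\#}\rho_0$ then transforms the integral over $\rho_0$ into an integral over $\rho_t$:
\[
\dot G(t) = \int_{\mathbb{R}^d} \partial_t\varphi(t,y)\, d\rho_t(y) - \int_{\mathbb{R}^d} \langle \nabla\varphi(t,y), \nabla_{\mathbf{W}}\phi(\rho_t,y)\rangle\, d\rho_t(y).
\]

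Finally, for $\varphi \in C_c^\infty((0,\infty) \times \mathbb{R}^d)$, integrating $\dot G$ over $[0,\infty)$ gives $\int_0^\infty \dot G(t)\,dt = 0$, which is precisely the weak formulation of $\partial_t\rho - \nabla\cdot(\rho\, \nabla_{\mathbf W}\phi(\rho,\cdot)) = 0$ on $(0,\infty)\times \mathbb{R}^d$. The initial condition $\rho_0$ follows from $X(0) = \mathrm{Id}$, which gives $\rho_{t}\to \rho_0$ narrowly as $t\downarrow 0$ via $W_2(\rho_t,\rho_0) \le \|X(t)-\mathrm{Id}\|_{\mathbb{H}} \to 0$ by continuity of $X$. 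The main obstacle I anticipate is rigorously justifying the chain rule for the composition $t\mapsto \varphi(t,X(t,\cdot))$ in $L^2$ and the measurable identification of $\nabla\phi^{\#}_{\rho_0}(X(t))$ with $\nabla_{\mathbf W}\phi(\rho_t,X(t,\cdot))$ simultaneously for a.e.\ $t$; the former I would handle via an approximation of $X$ by smooth curves (using the $AC_{\mathrm{loc}}$ regularity and Lipschitz test functions), while the latter relies on invoking the growth conditions of Proposition \ref{differentiability_equivalence} (which are built into the hypotheses of the stated Wasserstein differentiable setting).
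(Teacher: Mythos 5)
Your proposal follows essentially the same route as the paper: differentiate $\int \varphi\, d\rho_t = \int \varphi(X(t,\cdot))\, d\rho_0$ via the chain rule, substitute $\dot X = -\nabla\phi^{\#}_{\rho_0}(X(t))$ from Theorem \ref{strong_gf_solution}, identify the lifted gradient with the Wasserstein gradient, change variables back to $\rho_t$, and handle the initial condition via $W_2(\rho_t,\rho_0)\le\|X(t)-\mathrm{Id}\|_{\mathbb H}\to 0$. One small remark on the citation: the paper obtains the identity $\nabla\phi^{\#}_{\rho_0}(\xi)=\nabla_{\mathbf W}\phi(\xi_{\#}\rho_0,\xi)$ from \cite[Corollary 3.22]{gangbo2019differentiability}, which follows directly from the Fréchet differentiability of the lift assumed in Assumption \ref{assume_rough}, whereas your appeal to Proposition \ref{differentiability_equivalence} runs in the opposite direction and would require the additional growth hypotheses on $\nabla_{\mathbf W}\phi$ that are not part of Assumption \ref{assume_rough}.
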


\begin{proof}  
By \cite[Corollary 3.22]{gangbo2019differentiability}, we have that $\phi$ is Wasserstein differentiable and
\[
\nabla \phi^{\#}_{\rho_0}(\xi) = \nabla_{\mathbf{W}} \phi(\xi_{\#} \rho_0, \xi).
\] Let $v_t(x) := \nabla_{\mathbf{W}} \phi(\rho_t,x)$. Then applying Theorem \ref{strong_gf_solution}, for any $\varphi \in C^{1}_c(\R^d)$, we obtain:
\[
\frac{d}{dt} \int_{\R^d} \varphi(x) d\rho_t(x)  
= \frac{d}{dt} \int_{\R^d} \varphi(X_t) d\rho_0(x)  
= -\int_{\R^d} \langle \nabla \varphi(X_t), v_t(X_t) \rangle d\rho_0(x)
= -\int_{\R^d} \langle \nabla \varphi(x), v_t(x) \rangle d\rho_t(x).
\]
Since \(X_t \to \textnormal{Id}\) in \(\mathbb{H}\) as \(t \to 0\), we also have
\[
\limsup_{t \to 0} W_2(\rho_0,\rho_t) \leq \lim_{t \to 0} \|X_t - \textnormal{Id}\|_{\mathbb{H}} = 0.
\]
Thus, $\rho_t$ is a weak solution to \eqref{gradient_flow_eqn1} with initial data $\rho_0$ (see for instance \cite[Proposition 4.2]{santambrogio2015optimal}).
\end{proof}

\section{\texorpdfstring{Monotonicity and Exponential Decay under $L$-Smoothness and $\lambda>0$}{Monotonicity and Exponential Decay under L-smoothness and lambda>0}}
\label{further_stability}

In this section, we show that when $\lambda \geq 0$ and under an additional smoothness assumption on $\phi^{\#}_{\rho_0}$, the decay properties of $\phi$ and its gradient established in Section \ref{stability_section} can be further refined. Moreover, we improve the error estimate in Theorem \ref{streamline_convergence} by proving that the error vanishes exponentially in time when $\lambda>0$. \smallskip

The key to upgrading our previous result is that when $\lambda \geq 0$ and $\nabla \phi^{\#}_{\rho_0}$ is $L$-Lipschitz, we can obtain a lower bound for $||X_n^{\tau}-X_{n+1}^{\tau}||^2_{\mathbb{H}}$ in terms of $||\nabla \phi^{\#}_{\rho_0}(X^{\tau}_{n+1})||^2_{\mathbb{H}}.$

\begin{lemma}
Under Assumption \ref{assume_rough} and its notation, suppose $\lambda \geq 0$ and there exists a constant $L \geq 0$ such that
\[
\sup_{\xi_1 \neq \xi_2 \in \mathbb{H}} \frac{\|\nabla \phi^{\#}_{\rho_0}(\xi_1) - \nabla \phi^{\#}_{\rho_0}(\xi_2)\|_{\mathbb{H}}}{\|\xi_1 - \xi_2\|_{\mathbb{H}}} \leq L.
\]
Then if $\tau \leq \frac{1}{L}$, we have the following estimate for any $n \in \mathbb{N}$:
\begin{equation} \label{distance_lower_bound} \langle \nabla \phi^{\#}_{\rho_0}(X^{\tau}_n), \nabla \phi^{\#}_{\rho_0}(X^{\tau}_{n+1}) \rangle_{\mathbb{H}} \geq  (1-L \tau) ||\nabla \phi^{\#}_{\rho_0}(X^{\tau}_{n+1})||_{\mathbb{H}}^2  \textnormal{ and }
\|X_n^{\tau} - X_{n+1}^{\tau}\|_{\mathbb{H}}^2 \geq \tau^2(1-\frac{L \tau}{2} ) \|\nabla \phi^{\#}_{\rho_0}(X_{n+1}^{\tau})\|^2_{\mathbb{H}}. \end{equation} 
\end{lemma}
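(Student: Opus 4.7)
The plan is to combine the trapezoidal identity from Theorem \ref{trapezoid_well_defined} with the $L$-Lipschitz bound on the gradient, and then invoke the gradient monotonicity of Lemma \ref{gradient_along_flow} (which is applicable since $\lambda \geq 0$) to collapse everything onto $\|\nabla \phi^{\#}_{\rho_0}(X^{\tau}_{n+1})\|^2_{\mathbb{H}}$. To lighten notation throughout, I would write $g_k := \nabla \phi^{\#}_{\rho_0}(X^{\tau}_k)$.

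First, I would start from the trapezoid identity $X^{\tau}_n - X^{\tau}_{n+1} = \tfrac{\tau}{2}(g_n + g_{n+1})$ and apply $L$-Lipschitzness to get $\|g_n - g_{n+1}\|_{\mathbb{H}}^2 \leq \tfrac{L^2\tau^2}{4}\|g_n + g_{n+1}\|_{\mathbb{H}}^2$. Expanding both squares in terms of $\|g_n\|^2$, $\|g_{n+1}\|^2$, and $\langle g_n, g_{n+1}\rangle_{\mathbb{H}}$ and collecting terms yields
\[
\bigl(1 - \tfrac{L^2\tau^2}{4}\bigr)\bigl(\|g_n\|_{\mathbb{H}}^2 + \|g_{n+1}\|_{\mathbb{H}}^2\bigr) \leq 2\bigl(1 + \tfrac{L^2\tau^2}{4}\bigr) \langle g_n, g_{n+1}\rangle_{\mathbb{H}}.
\]
Under the hypothesis $\tau \leq 1/L$ both coefficients are strictly positive, so dividing through and invoking $\|g_n\|_{\mathbb{H}}^2 \geq \|g_{n+1}\|_{\mathbb{H}}^2$ from Lemma \ref{gradient_along_flow} bounds the right-hand factor from below by $2\|g_{n+1}\|_{\mathbb{H}}^2$, giving
\[
\langle g_n, g_{n+1}\rangle_{\mathbb{H}} \geq \frac{4 - L^2\tau^2}{4 + L^2\tau^2}\,\|g_{n+1}\|_{\mathbb{H}}^2.
\]
The first claim then reduces to the elementary scalar inequality $\tfrac{4 - y^2}{4 + y^2} \geq 1 - y$ for $y \in [0,1]$, which I would verify by noting $(4 - y^2) - (1-y)(4+y^2) = y((y-1)^2 + 3) \geq 0$.

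For the second inequality, I would simply square the trapezoid identity to get $\|X^{\tau}_n - X^{\tau}_{n+1}\|_{\mathbb{H}}^2 = \tfrac{\tau^2}{4}(\|g_n\|_{\mathbb{H}}^2 + 2\langle g_n, g_{n+1}\rangle_{\mathbb{H}} + \|g_{n+1}\|_{\mathbb{H}}^2)$, then apply Lemma \ref{gradient_along_flow} to the first term and the inequality just established to the middle term, which produces $\tfrac{\tau^2}{4}(1 + 2(1-L\tau) + 1)\|g_{n+1}\|_{\mathbb{H}}^2 = \tau^2(1 - \tfrac{L\tau}{2})\|g_{n+1}\|_{\mathbb{H}}^2$. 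The whole argument is essentially algebraic, and the only mildly delicate point is the elementary scalar inequality comparing $\tfrac{4 - L^2\tau^2}{4+L^2\tau^2}$ to $1 - L\tau$; once that is dispatched, both bounds follow immediately.
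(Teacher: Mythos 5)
Your proof is correct, but the route to the inner-product inequality is genuinely different from the paper's. The paper linearizes: it writes $\langle g_{n+1}, g_n\rangle_{\mathbb H} = \|g_n\|_{\mathbb H}^2 + \langle g_{n+1}-g_n, g_n\rangle_{\mathbb H}$ (with $g_k := \nabla\phi^{\#}_{\rho_0}(X^\tau_k)$), controls the error term by Cauchy--Schwarz together with the Lipschitz bound and the a priori estimate $\|X^\tau_{n+1}-X^\tau_n\|_{\mathbb H}\le \tau\|g_n\|_{\mathbb H}$ (itself a consequence of the trapezoid identity, the triangle inequality, and gradient monotonicity), arriving directly at $\langle g_{n+1},g_n\rangle_{\mathbb H}\ge(1-L\tau)\|g_n\|_{\mathbb H}^2\ge(1-L\tau)\|g_{n+1}\|_{\mathbb H}^2$. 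You instead square the Lipschitz inequality applied to the trapezoid identity, $\|g_n-g_{n+1}\|_{\mathbb H}^2\le \tfrac{L^2\tau^2}{4}\|g_n+g_{n+1}\|_{\mathbb H}^2$, and solve the resulting quadratic relation for the inner product; this is a polarization-style argument that yields the sharper constant $\tfrac{4-L^2\tau^2}{4+L^2\tau^2}$, which you then relax to $1-L\tau$ via the elementary identity $(4-y^2)-(1-y)(4+y^2)=y\bigl((y-1)^2+3\bigr)\ge 0$. Both arguments rely on the same two external inputs (the implicit trapezoid equation of Theorem \ref{trapezoid_well_defined} and the monotonicity $\|g_{n+1}\|_{\mathbb H}\le\|g_n\|_{\mathbb H}$ from \eqref{gradient_bounds} when $\lambda\ge 0$), and your division step is legitimate because $\tau\le 1/L$ guarantees $1-\tfrac{L^2\tau^2}{4}>0$. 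The second (distance) inequality is obtained identically in both proofs by expanding the squared trapezoid identity. What your approach buys is a marginally sharper intermediate constant at the cost of one extra scalar verification; what the paper's buys is a shorter, first-order computation.
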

\begin{proof} We observe that the inner product inequality implies the other inequality because it along with using Lemma \ref{trapezoid_well_defined} with $\lambda \geq 0$ implies
\small
\[ ||X^{\tau}_n - X^{\tau}_{n+1}||^2_{\mathbb{H}} = \frac{\tau^2}{4}  \left( || \nabla \phi^{\#}_{\rho_0}(X^{\tau}_n)||_{\mathbb{H}} + || \nabla \phi^{\#}_{\rho_0}(X^{\tau}_{n+1})||_{\mathbb{H}} + 2 \langle \nabla \phi^{\#}_{\rho_0}(X^{\tau}_n) , \nabla \phi^{\#}_{\rho_0}(X^{\tau}_{n+1})\rangle_{\mathbb{H}}  \right) \geq \tau^2(1 - \frac{L \tau}{2} ) || \nabla \phi^{\#}_{\rho_0}(X^{\tau}_{n+1})||_{\mathbb{H}}, \]
\normalsize
Therefore, it suffices to show $\langle \nabla \phi^{\#}_{\rho_0}(X^{\tau}_n), \nabla \phi^{\#}_{\rho_0}(X^{\tau}_{n+1}) \rangle_{\mathbb{H}} \geq (1-L\tau) \cdot ||\nabla \phi^{\#}_{\rho_0}(X^{\tau}_{n+1})||_{\mathbb{H}}$. We now prove this bound. First observe that 
\[
\langle \nabla \phi^{\#}_{\rho_0}(X^{\tau}_{n+1}), \nabla \phi^{\#}_{\rho_0}(X^{\tau}_{n}) \rangle_{\mathbb{H}} 
= \|\nabla \phi^{\#}_{\rho_0}(X^{\tau}_n)\|^2_{\mathbb{H}} 
+ \langle \nabla \phi^{\#}_{\rho_0}(X^{\tau}_{n+1}) - \nabla \phi^{\#}_{\rho_0}(X^{\tau}_n), \nabla \phi^{\#}_{\rho_0}(X^{\tau}_n) \rangle_{\mathbb{H}}.
\]
Applying Cauchy-Schwarz, we obtain
\begin{equation}
\geq \|\nabla \phi^{\#}_{\rho_0}(X^{\tau}_n)\|^2_{\mathbb{H}} 
- L \|X^{\tau}_{n+1} - X^{\tau}_n\|_{\mathbb{H}} \cdot \|\nabla \phi^{\#}_{\rho_0}(X^{\tau}_n)\|_{\mathbb{H}} \geq (1-L\tau) \|\nabla \phi^{\#}_{\rho_0}(X^{\tau}_n)\|_{\mathbb{H}}^2.   \label{almost_desired_boundss}
\end{equation} In the second last inequality, we used $\lambda \geq 0$ along with \eqref{gradient_bounds} to obtain that
\[
\|X^{\tau}_{n+1} - X^{\tau}_n\|^2_{\mathbb{H}} \leq \tau^2 \|\nabla \phi^{\#}_{\rho_0}(X^{\tau}_n)\|^2_{\mathbb{H}}.\] Then the inner product inequality follows from \eqref{almost_desired_boundss}, $(1-L\tau) \geq 0$, and using the gradient norms are non-increasing in $n$ (see \eqref{gradient_bounds}) because $\lambda \geq 0$.
\end{proof}

Now we will use the Discrete EVI along with our above bounds to refine our stability properties:

\begin{lemma}[Refined Decay] \label{refined_decay}
Under the notation and assumptions of Lemma \ref{distance_lower_bound}, if $\tau \leq \frac{1}{L}$, then the following hold: \smallskip

\noindent 1. The energy is non-increasing:
\[
\phi^{\#}_{\rho_0}(X^{\tau}_{n+1}) +  \frac{\lambda}{2} ||X^{\tau}_n-X^{\tau}_{n+1}||^2_{\mathbb{H}} +  \tau\left(1- \frac{L \tau}{2} \right) \cdot ||\nabla \phi^{\#}_{\rho_0}(X^{\tau}_{n+1})||^2_{\mathbb{H}} \leq \phi^{\#}_{\rho_0}(X^{\tau}_n).
\]
2. The gradient norm decays exponentially:
\[
\|\nabla \phi^{\#}_{\rho_0}(X^{\tau}_n)\|^2_{\mathbb{H}} \geq e^{2\lambda_{\tau,L} \tau} \|\nabla \phi^{\#}_{\rho_0}(X^{\tau}_{n+1})\|^2_{\mathbb{H}}.
\] where
\begin{equation} \lambda_{\tau,L} := \frac{\log(1+\lambda\tau(2 - L \tau ))}{2 \tau} \geq 0 \label{lambda_tau_L}  \end{equation}
\end{lemma}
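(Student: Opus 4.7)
The plan is to derive both inequalities directly from the $\lambda$-convexity of $\phi^{\#}_{\rho_0}$ combined with the trapezoid identity in Theorem \ref{trapezoid_well_defined} and the sharpened bounds of Lemma \ref{distance_lower_bound}. The key observation is that Lemma \ref{distance_lower_bound} does all the heavy lifting: it converts the trapezoidal-rule asymmetry (the factor $\tfrac12(\nabla\phi^{\#}_{\rho_0}(X^{\tau}_n) + \nabla\phi^{\#}_{\rho_0}(X^{\tau}_{n+1}))$ in the update) into the single-sided bound in $\|\nabla\phi^{\#}_{\rho_0}(X^{\tau}_{n+1})\|_{\mathbb{H}}$ required to mimic the implicit-Euler monotonicity arguments of \cite{ambrosio2005gradient}.

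For the energy monotonicity, I would start with the tangent-line inequality for the $\lambda$-convex functional $\phi^{\#}_{\rho_0}$ at the base point $X^{\tau}_{n+1}$ with competitor $X^{\tau}_n$:
\[
\phi^{\#}_{\rho_0}(X^{\tau}_n) \;\geq\; \phi^{\#}_{\rho_0}(X^{\tau}_{n+1}) + \langle \nabla\phi^{\#}_{\rho_0}(X^{\tau}_{n+1}),\, X^{\tau}_n - X^{\tau}_{n+1}\rangle_{\mathbb{H}} + \tfrac{\lambda}{2}\|X^{\tau}_n - X^{\tau}_{n+1}\|^2_{\mathbb{H}}.
\]
Next, I would insert the trapezoid identity $X^{\tau}_n - X^{\tau}_{n+1} = \tfrac{\tau}{2}\bigl(\nabla\phi^{\#}_{\rho_0}(X^{\tau}_n) + \nabla\phi^{\#}_{\rho_0}(X^{\tau}_{n+1})\bigr)$ into the inner product, producing
\[
\tfrac{\tau}{2}\|\nabla\phi^{\#}_{\rho_0}(X^{\tau}_{n+1})\|^2_{\mathbb{H}} + \tfrac{\tau}{2}\langle \nabla\phi^{\#}_{\rho_0}(X^{\tau}_{n+1}),\, \nabla\phi^{\#}_{\rho_0}(X^{\tau}_n)\rangle_{\mathbb{H}}.
\]
Applying the cross-product lower bound $\langle \nabla\phi^{\#}_{\rho_0}(X^{\tau}_n),\nabla\phi^{\#}_{\rho_0}(X^{\tau}_{n+1})\rangle_{\mathbb{H}} \geq (1-L\tau)\|\nabla\phi^{\#}_{\rho_0}(X^{\tau}_{n+1})\|^2_{\mathbb{H}}$ from Lemma \ref{distance_lower_bound} collapses this to $\tau(1-\tfrac{L\tau}{2})\|\nabla\phi^{\#}_{\rho_0}(X^{\tau}_{n+1})\|^2_{\mathbb{H}}$, which is precisely the claimed inequality after rearranging.

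For the exponential gradient decay, I would invoke the $\lambda$-monotonicity of $\nabla\phi^{\#}_{\rho_0}$:
\[
\langle \nabla\phi^{\#}_{\rho_0}(X^{\tau}_n) - \nabla\phi^{\#}_{\rho_0}(X^{\tau}_{n+1}),\, X^{\tau}_n - X^{\tau}_{n+1}\rangle_{\mathbb{H}} \;\geq\; \lambda\,\|X^{\tau}_n - X^{\tau}_{n+1}\|^2_{\mathbb{H}}.
\]
Substituting the trapezoid identity into the left-hand side telescopes it into $\tfrac{\tau}{2}\bigl(\|\nabla\phi^{\#}_{\rho_0}(X^{\tau}_n)\|^2_{\mathbb{H}} - \|\nabla\phi^{\#}_{\rho_0}(X^{\tau}_{n+1})\|^2_{\mathbb{H}}\bigr)$. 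For the right-hand side I would apply the squared-distance lower bound $\|X^{\tau}_n - X^{\tau}_{n+1}\|^2_{\mathbb{H}} \geq \tau^2(1-\tfrac{L\tau}{2})\|\nabla\phi^{\#}_{\rho_0}(X^{\tau}_{n+1})\|^2_{\mathbb{H}}$ from Lemma \ref{distance_lower_bound}. Multiplying through by $2/\tau$ and rearranging gives
\[
\|\nabla\phi^{\#}_{\rho_0}(X^{\tau}_n)\|^2_{\mathbb{H}} \;\geq\; \bigl(1 + \lambda\tau(2-L\tau)\bigr)\|\nabla\phi^{\#}_{\rho_0}(X^{\tau}_{n+1})\|^2_{\mathbb{H}},
\]
and the multiplicative factor equals $e^{2\lambda_{\tau,L}\tau}$ by the very definition \eqref{lambda_tau_L}. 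Note also that the hypotheses $\lambda \geq 0$ and $\tau \leq 1/L$ ensure $\lambda\tau(2-L\tau) \geq 0$, so that $\lambda_{\tau,L} \geq 0$ as claimed.

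There is no serious obstacle once Lemma \ref{distance_lower_bound} is in hand; both assertions reduce to one line of algebra plus $\lambda$-convexity. The genuine technical content, already dispatched, is the $L$-smoothness dependent lower bound on $\|X^{\tau}_n - X^{\tau}_{n+1}\|^2_{\mathbb{H}}$ purely in terms of $\|\nabla\phi^{\#}_{\rho_0}(X^{\tau}_{n+1})\|^2_{\mathbb{H}}$. This is exactly the trapezoidal-rule asymmetry issue flagged in the introduction, and it is what forces the timestep restriction $\tau \leq 1/L$ and the appearance of the factor $(2-L\tau)$ in the exponent $\lambda_{\tau,L}$.
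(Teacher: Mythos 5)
Your proof is correct and takes essentially the same route as the paper: both parts hinge on substituting the trapezoid identity \eqref{trapezoid_implicit} and then invoking Lemma \ref{distance_lower_bound}, and for the gradient decay you reproduce the paper's argument verbatim. The only minor difference is in the energy estimate, where you invoke the $\lambda$-convexity tangent-line inequality at $X^{\tau}_{n+1}$ directly, whereas the paper passes through the discrete variational inequality \eqref{original_EVI} with competitor $\xi = X^{\tau}_n$; after inserting the trapezoid identity the two routes collapse to the identical intermediate inequality $\phi^{\#}_{\rho_0}(X^{\tau}_{n+1}) + \frac{\lambda}{2}\|X^{\tau}_n - X^{\tau}_{n+1}\|^2_{\mathbb{H}} + \frac{\tau}{2}\bigl(\|\nabla\phi^{\#}_{\rho_0}(X^{\tau}_{n+1})\|^2_{\mathbb{H}} + \langle\nabla\phi^{\#}_{\rho_0}(X^{\tau}_{n+1}),\nabla\phi^{\#}_{\rho_0}(X^{\tau}_n)\rangle_{\mathbb{H}}\bigr) \le \phi^{\#}_{\rho_0}(X^{\tau}_n)$, so the distinction is cosmetic.
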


\begin{proof} Let us first show that the energy is non-increasing along the iterates. From \eqref{original_EVI} with $\lambda \geq 0$, we obtain
\[
\left( \frac{\lambda}{2}+\frac{2}{\tau} \right) \|X^{\tau}_n - X^{\tau}_{n+1}\|^2_{\mathbb{H}} 
+ \phi^{\#}_{\rho_0}(X^{\tau}_{n+1}) 
\leq \phi^{\#}_{\rho_0}(X^{\tau}_n) 
+ \langle \nabla \phi^{\#}_{\rho_0}(X_n^{\tau}), X^{\tau}_n - X^{\tau}_{n+1} \rangle_{\mathbb{H}}.
\]
Applying \eqref{trapezoid_implicit}, this simplifies to
\[
\phi^{\#}_{\rho_0}(X^{\tau}_{n+1}) 
+ \frac{\lambda}{2} ||X^{\tau}_n-X^{\tau}_{n+1}||^2_{\mathbb{H}} + \frac{\tau}{2} \left( \|\nabla \phi^{\#}_{\rho_0}(X^{\tau}_{n+1})\|^2_{\mathbb{H}} 
+ \langle \nabla \phi^{\#}_{\rho_0}(X^{\tau}_{n+1}), \nabla \phi^{\#}_{\rho_0}(X^{\tau}_{n}) \rangle_{\mathbb{H}} \right) 
\leq \phi^{\#}_{\rho_0}(X^{\tau}_n).
\]
The result then follows from Lemma \ref{distance_lower_bound}. \smallskip

For the exponential decay of the gradient, we use $\lambda$-convexity to see
\[
\langle \nabla \phi^{\#}_{\rho_0}(X^{\tau}_n) - \nabla \phi^{\#}_{\rho_0}(X^{\tau}_{n+1}), X^{\tau}_n - X^{\tau}_{n+1} \rangle_{\mathbb{H}} 
\geq \lambda \|X^{\tau}_n - X^{\tau}_{n+1}\|^2_{\mathbb{H}}.
\]
Applying \eqref{trapezoid_implicit}, it follows that
\[ \|\nabla \phi^{\#}_{\rho_0}(X^{\tau}_n)\|^2_{\mathbb{H}} 
\geq \|\nabla \phi^{\#}_{\rho_0}(X^{\tau}_{n+1})\|^2_{\mathbb{H}} 
+ \frac{2\lambda}{\tau} \|X^{\tau}_n - X^{\tau}_{n+1}\|^2_{\mathbb{H}}.
\]
Then, using Lemma \ref{distance_lower_bound}, along with $\lambda \geq 0$ and the above display, we conclude.
\end{proof}

\begin{lemma}[Refined Stability]  \label{refined_stability} Under the assumptions and notation of Lemma \ref{refined_decay},  we have
\begin{equation}  ||X^{\tau}_{j+1}-X^{\tau}_j||_{\mathbb{H}} \leq  \tau e^{-j  \lambda_{\tau,L} \cdot \tau} ||\nabla \phi^{\#}_{\rho_0}(X_0^{\tau})||_{\mathbb{H}} \label{distance_decay}.
\end{equation} In addition, if $N \tau \leq T$, we have the stability bound
\begin{equation}
\sum_{j=0}^{N} e^{2j \lambda_{\tau,L} \cdot \tau} \frac{W_2^2(\rho^{\tau}_{j+1},\rho^{\tau}_j)}{\tau} \leq \sum_{j=0}^{N} e^{2j \lambda_{\tau,L} \cdot \tau} \frac{\|X^{\tau}_{j+1} - X^{\tau}_j\|^2_{\mathbb{H}}}{\tau} \leq  T \|\nabla \phi^{\#}_{\rho_0}(X^{\tau}_0)\|^2_{\mathbb{H}}   \label{stability_with_exp} \end{equation} 
\end{lemma}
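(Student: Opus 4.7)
The plan is to combine the trapezoidal identity from Theorem~\ref{trapezoid_well_defined} with the one--step exponential decay of the gradient norm proven in Lemma~\ref{refined_decay}, and then feed the resulting pointwise estimate into Lemma~\ref{same_measure_bound} for the Wasserstein bound. The essential observation is that Lemma~\ref{refined_decay} can be iterated: since $\lambda_{\tau,L}\ge 0$ under our hypotheses $\lambda\ge 0$ and $\tau\le 1/L$, a straightforward induction yields
\[
\|\nabla \phi^{\#}_{\rho_0}(X^{\tau}_j)\|_{\mathbb{H}} \le e^{-j\lambda_{\tau,L}\tau}\,\|\nabla \phi^{\#}_{\rho_0}(X^{\tau}_0)\|_{\mathbb{H}}, \qquad j\in\mathbb{N}.
\]

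For \eqref{distance_decay}, I would apply the trapezoidal identity \eqref{trapezoid_implicit} and the triangle inequality to get
\[
\|X^{\tau}_{j+1}-X^{\tau}_j\|_{\mathbb{H}} \le \frac{\tau}{2}\bigl(\|\nabla \phi^{\#}_{\rho_0}(X^{\tau}_{j+1})\|_{\mathbb{H}} + \|\nabla \phi^{\#}_{\rho_0}(X^{\tau}_j)\|_{\mathbb{H}}\bigr),
\]
then substitute the iterated gradient bound above. Using $\lambda_{\tau,L}\ge 0$ so that $e^{-(j+1)\lambda_{\tau,L}\tau}\le e^{-j\lambda_{\tau,L}\tau}$, both terms on the right are dominated by $\tfrac{\tau}{2}e^{-j\lambda_{\tau,L}\tau}\|\nabla \phi^{\#}_{\rho_0}(X_0^{\tau})\|_{\mathbb{H}}$, which yields \eqref{distance_decay} directly.

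For \eqref{stability_with_exp}, the first inequality is immediate from Lemma~\ref{same_measure_bound} applied to each pair of consecutive iterates and the fact that $\rho_j^{\tau}=(X_j^{\tau})_{\#}\rho_0$. For the second inequality, squaring \eqref{distance_decay} gives
\[
e^{2j\lambda_{\tau,L}\tau}\,\|X^{\tau}_{j+1}-X^{\tau}_j\|_{\mathbb{H}}^2 \le \tau^2\,\|\nabla \phi^{\#}_{\rho_0}(X_0^{\tau})\|_{\mathbb{H}}^2,
\]
so that the exponential weights cancel exactly with the decay factors. Dividing by $\tau$ and summing $j=0,\dots,N$ then produces a telescoping-free sum bounded by $(N+1)\tau\,\|\nabla \phi^{\#}_{\rho_0}(X_0^{\tau})\|_{\mathbb{H}}^2$, which is controlled by the $T$ appearing on the right-hand side (up to the at-most-one-step boundary term $\tau$, which is absorbed into the constant under the convention $N\tau\le T$).

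There is no real technical obstacle here, as all the hard work has been carried out in Lemma~\ref{distance_lower_bound} and Lemma~\ref{refined_decay}; the only point requiring care is to verify that the non-negativity of $\lambda_{\tau,L}$ in \eqref{lambda_tau_L} ensures the exponential factors can be iterated monotonically, and that the weighted sum telescopes trivially once the squared decay estimate from \eqref{distance_decay} is inserted.
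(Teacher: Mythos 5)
Your proof is correct and follows essentially the same route as the paper: iterate the gradient decay from Lemma \ref{refined_decay}, insert it into the trapezoidal identity \eqref{trapezoid_implicit} via the triangle inequality to get \eqref{distance_decay}, then square, sum, and invoke Lemma \ref{same_measure_bound} for \eqref{stability_with_exp}. The $(N+1)\tau$ versus $T$ discrepancy you flag is a genuine (harmless) off-by-one in the statement itself, which the paper's own two-line proof also glosses over.
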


\begin{proof} 
Applying \eqref{trapezoid_implicit} along with Lemma \ref{refined_decay}, we obtain \eqref{distance_decay}. To derive \eqref{stability_with_exp}, we square \eqref{distance_decay} and sum over all terms and use Lemma \ref{same_measure_bound}. \end{proof}

To conclude this section, we refine the constants in Theorem \ref{streamline_convergence} for $ \lambda \geq 0 $. Since $ \lambda_{\tau,L} \leq \lambda $, $\phi$ remains $ \lambda_{\tau,L} $-convex. Applying Theorem \ref{differential_inequality_different_time_steps} with $ \lambda_{\tau,L} \geq 0 $, we can estimate $ d_{\tau,\eta}(t;t) $ using Gronwall’s Inequality \cite[Lemma 4.1.8]{ambrosio2005gradient}. Indeed, we have the bound
\begin{equation} \label{positive_d_interpolated_bound} d_{\tau,\eta}(T;T) \leq e^{-\lambda_{\tau} T} \left( d^2_{\tau,\eta}(0;0) + \sup_{t \in [0,T]} \int_0^T e^{2 \lambda_{\tau,L} s} \tilde{a}(s) ds \right)^{1/2} + e^{-\lambda_{\tau} T} \int_0^T e^{\lambda_{\tau} s} \tilde{b}(s) ds, \end{equation} where
\[
\tilde{a}(t) := \frac{\tau}{2} G^{\tau}_t + \frac{\eta}{2} G^{\eta}_t + 2(\mathcal{R}_t^{\tau} + \mathcal{R}_t^{\eta}) + \lambda_{\tau,L}( (\mathcal{D}_t^{\tau})^2 + (\mathcal{D}_t^{\eta})^2), \quad \text{and} \quad \tilde{b}(t):= \lambda_{\tau,L}(\mathcal{D}_t^{\tau}+\mathcal{D}_t^{\eta}).
\]
Now let us estimate the integral terms:

\begin{lemma}
Under the notation and assumptions of Lemma \ref{refined_stability}, then the following estimates hold for any $T \geq 0$:
\[
\begin{aligned}
    \int_0^T e^{2\lambda_{\tau,L} t} G^{\tau}_t \, dt 
    &\leq \tau \cdot \frac{\lambda}{\lambda_{\tau,L}} \cdot(1+2\lambda(T+\tau)) \cdot ||\nabla \phi^{\#}_{\rho_0}(X_0^{\tau})||^2_{\mathbb{H}} , \\
    \int_0^T e^{2\lambda_{\tau,L} t} \mathcal{R}^{\tau}_t \, dt 
    &\leq \tau^2 \left( \frac{\lambda}{\lambda_{\tau,L}} \cdot \left[ \frac{1}{4} + \frac{\lambda}{2} (T+\tau) \right]+ \frac{1}{2} e^{2 \lambda_{\tau, L} \tau} \right) \cdot ||\nabla \phi^{\#}_{\rho_0}(X_0^{\tau})||^2_{\mathbb{H}}.
\end{aligned}
\]
\[
\begin{aligned}
    \int_0^T e^{2\lambda_{\tau,L} t} (\mathcal{D}_t^{\tau})^2 \, dt 
    &\leq \frac{\lambda}{\lambda_{\tau,L}} \tau^2(T+\tau) ||\nabla \phi^{\#}_{\rho_0}(X_0^{\tau})||^2_{\mathbb{H}}, \\
    \int_0^T e^{\lambda_{\tau,L} t} (\mathcal{D}_t^{\tau}) \, dt 
    &\leq (T+\tau) \tau \sqrt{\frac{\lambda}{\lambda_{\tau,L}} } ||\nabla \phi^{\#}_{\rho_0}(X_0^{\tau})||_{\mathbb{H}}.
\end{aligned}
\]

\label{error_term_bounds}
\end{lemma}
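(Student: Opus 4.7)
The plan is to treat each of the four bounds separately, but in each case reduce the integral over $[0,T]$ to a sum over full subintervals $[j\tau,(j+1)\tau)$ plus at most one partial terminal interval, then apply the gradient and trajectory decay from Lemma~\ref{refined_decay} and Lemma~\ref{refined_stability} together with the elementary identities $e^{2\lambda_{\tau,L}\tau}=1+\lambda\tau(2-L\tau)$ and hence $\tfrac{e^{2\lambda_{\tau,L}\tau}-1}{2\lambda_{\tau,L}}\le \tfrac{\lambda\tau}{\lambda_{\tau,L}}$ (using $L\tau\le 1$), and $\tfrac{e^{2\lambda_{\tau,L}\tau}-1}{2\lambda_{\tau,L}}\le \tau\,e^{2\lambda_{\tau,L}\tau}$.

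For the third bound, $(\mathcal{D}_t^{\tau})^2$ is piecewise constant, so the integral splits as $\sum_j (\mathcal{D}_{j\tau}^{\tau})^2 \int_{j\tau}^{(j+1)\tau} e^{2\lambda_{\tau,L}t}\,dt$. Inserting Lemma~\ref{refined_stability}'s estimate $(\mathcal{D}_{j\tau}^{\tau})^2\le \tau^2 e^{-2j\lambda_{\tau,L}\tau}\|\nabla\phi^{\#}_{\rho_0}(X_0^{\tau})\|_{\mathbb{H}}^2$, the $e^{-2j\lambda_{\tau,L}\tau}$ factor exactly cancels $e^{2\lambda_{\tau,L}j\tau}$, leaving $N\cdot\tau^2\cdot\tfrac{e^{2\lambda_{\tau,L}\tau}-1}{2\lambda_{\tau,L}}$ times $\|\nabla\phi^{\#}_{\rho_0}(X_0^{\tau})\|_{\mathbb{H}}^2$, and the first elementary identity plus $N\tau\le T+\tau$ produces the claimed bound. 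The fourth bound follows immediately from Cauchy–Schwarz, $\int_0^T e^{\lambda_{\tau,L} t}\mathcal{D}_t^{\tau}dt\le T^{1/2}\bigl(\int_0^T e^{2\lambda_{\tau,L}t}(\mathcal{D}_t^{\tau})^2dt\bigr)^{1/2}$, combined with $\sqrt{T(T+\tau)}\le T+\tau$.

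For the first bound, I write $G^{\tau}_t=\alpha_j-\alpha_{j+1}$ with $\alpha_j:=\|\nabla\phi^{\#}_{\rho_0}(X_j^{\tau})\|_{\mathbb{H}}^2$ on each subinterval, integrate $e^{2\lambda_{\tau,L}t}$ over $[j\tau,(j+1)\tau)$, and then mimic the Abel summation of Step~2 of Lemma~\ref{estimates_lambda_negative}. Now however the gradient norms are \emph{decreasing} by Lemma~\ref{refined_decay}, i.e.\ $\alpha_j\le e^{-2j\lambda_{\tau,L}\tau}\alpha_0$, which gives $\sum_{j=0}^{N}(\alpha_j-\alpha_{j+1})e^{2\lambda_{\tau,L}j\tau}\le \alpha_0\bigl(1+N(1-e^{-2\lambda_{\tau,L}\tau})\bigr)\le \alpha_0\bigl(1+2\lambda_{\tau,L}(T+\tau)\bigr)\le\alpha_0(1+2\lambda(T+\tau))$, and absorbing the remaining factor $\tfrac{e^{2\lambda_{\tau,L}\tau}-1}{2\lambda_{\tau,L}}\le \tau\lambda/\lambda_{\tau,L}$ yields the stated estimate.

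The main obstacle is the second bound on $\mathcal{R}^{\tau}_t$. Starting from Lemma~\ref{R_bounds} and $\lambda\ge 0$, I decompose
\[
\int_0^T e^{2\lambda_{\tau,L}t}\mathcal{R}^{\tau}_tdt \le \underbrace{\tfrac{\tau}{4}\int_0^T e^{2\lambda_{\tau,L}t}(1-\ell_{\tau}(t))G^{\tau}_t dt}_{(I)} - \underbrace{\tfrac{1}{\tau}\int_0^T e^{2\lambda_{\tau,L}t}\bigl(\ell_{\tau}(t)-\tfrac12\bigr)(\mathcal{D}_t^{\tau})^2 dt}_{(II)}.
\]
For $(I)$, dropping $(1-\ell_\tau(t))\le 1$ and inserting the first bound of this lemma already produces the $\tfrac{\lambda}{\lambda_{\tau,L}}[\tfrac14+\tfrac{\lambda}{2}(T+\tau)]$ contribution. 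The delicate step is $(II)$: a direct computation shows $\int_{j\tau}^{(j+1)\tau} e^{2\lambda_{\tau,L}t}(\ell_\tau(t)-\tfrac12)dt\ge 0$ (vanishing like $\lambda_{\tau,L}\tau^2$ to leading order), so all \emph{interior} subintervals contribute non-positively and can be discarded. On the terminal partial interval $[N\tau,T)$ I use $|\ell_\tau(t)-\tfrac12|\le\tfrac12$ and the trajectory bound $(\mathcal{D}_{N\tau}^{\tau})^2\le\tau^2 e^{-2\lambda_{\tau,L}N\tau}\|\nabla\phi^{\#}_{\rho_0}(X_0^{\tau})\|_{\mathbb{H}}^2$ to bound its absolute value by $\tfrac{\tau}{2}\cdot\tfrac{e^{2\lambda_{\tau,L}\tau}-1}{2\lambda_{\tau,L}}\|\nabla\phi^{\#}_{\rho_0}(X_0^{\tau})\|_{\mathbb{H}}^2$, then invoke the second elementary identity $\tfrac{e^{2\lambda_{\tau,L}\tau}-1}{2\lambda_{\tau,L}}\le \tau e^{2\lambda_{\tau,L}\tau}$ to recover exactly the $\tfrac{\tau^2}{2}e^{2\lambda_{\tau,L}\tau}$ term in the stated bound. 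The extra care needed to identify the correct sign of interior contributions (so that the only surviving remainder is from the last partial cell) is what makes this the most subtle step of the lemma.
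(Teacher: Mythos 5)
Your proposal is correct and follows essentially the same route as the paper's proof: the same reduction of each integral to sums over the subintervals $[j\tau,(j+1)\tau)$, the same Abel summation combined with the exponential gradient decay of Lemma~\ref{refined_decay} for the $G_t^\tau$ term, the same sign observation that the interior contributions of $\int e^{2\lambda_{\tau,L}t}(\tfrac12-\ell_\tau(t))(\mathcal{D}_t^\tau)^2\,dt$ are nonpositive so only the terminal partial cell survives, and the same use of H\"older's inequality for the last bound. The only differences are cosmetic (your convention for $N$ and the slightly sharper factor $1-e^{-2\lambda_{\tau,L}\tau}$ in place of $e^{2\lambda_{\tau,L}\tau}-1$ in the Abel step), and all your elementary inequalities check out against the definition \eqref{lambda_tau_L}.
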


\begin{proof} Choose $N \in \N$ to be the smallest integer such that $T \leq N \tau$.\smallskip

\noindent \textbf{Step 1: } $\int_0^T e^{2\lambda_{\tau,L} t} (\mathcal{D}_t^{\tau})^2dt$. \smallskip

Observe that 
\[ \int_0^T e^{2\lambda_{\tau,L} t} (\mathcal{D}_t^{\tau})^2dt \leq \int_0^{N \tau}  e^{2\lambda_{\tau,L} t} d^2(\overline{X}_t^{\tau} \underline{X}_t^{\tau}) dt =\frac{e^{2\lambda_{\tau,L} \tau}-1}{2\lambda_{\tau,L}} \sum_{j=0}^{N-1} ||X^{\tau}_j - X^{\tau}_{j+1}||^2_{\mathbb{H}} \cdot e^{2j \lambda_{\tau,L} \tau}. \] In addition, the definition $\lambda_{\tau,L}$ implies that $e^{2 \lambda_{\tau,L} \tau}-1 \leq 2 \lambda \tau$, so using the above bound with \eqref{stability_with_exp} implies the desired bound. \smallskip

\noindent \textbf{Step 2:} $\int_0^T e^{\lambda_{\tau,L} t} (\mathcal{D}_t^{\tau}) dt$. \smallskip

The bound follows directly from Holder's inequality and the bound from Step 1.

\smallskip
\noindent \textbf{Step 3:} $\int_0^T e^{2 \lambda_{\tau,L} t} G^{\tau}_t \, dt$ \smallskip

We will use the notation $\alpha_j$ from \eqref{alpha_def}. Then as $G^{\tau}_t \geq 0$ because $\lambda \geq 0$ from \eqref{gradient_bounds}, we see that
\begin{equation} \int_0^T e^{2 \lambda_{\tau,L} t} G^{\tau}_t dt \leq \sum_{j=0}^{N-1} (\alpha_{j} - \alpha_{j+1}) \int_{j \tau}^{(j+1) \tau} e^{2 \lambda_{\tau,L} t} dt =\frac{e^{2\lambda_{\tau,L} \tau} - 1 }{2\lambda_{\tau,L}} \sum_{j=0}^{N-1} (\alpha_j - \alpha_{j+1}) e^{2j \lambda_{\tau,L} \tau}. \label{bounds12312} \end{equation} To control the sum, we observe that
\[  \sum_{j=0}^{N-1} (\alpha_j - \alpha_{j+1}) e^{2j \lambda_{\tau,L} \tau} =  \left( \alpha_0 - \alpha_N e^{2N \lambda_{\tau,L} \tau} \right) + \sum_{j=0}^{N-1} \alpha_{j+1}( e^{2(j+1)\lambda_{\tau,L} \tau} - e^{2j \lambda_{\tau,L} \tau}) \leq \alpha_0  + \alpha_0 (e^{2 \lambda_{\tau,L} \tau}-1) N .     \]  In the last inequality we used Lemma \ref{refined_decay}. Then again the definition of $\lambda_{\tau,L}$ implies that $e^{2 \lambda_{\tau,L} \tau}-1 \leq 2\lambda \tau$, so we see that from \eqref{bounds12312}
\[ \int_0^T e^{2\lambda_{\tau,L} t} G_t^{\tau} dt \leq  \tau \cdot \frac{\lambda}{\lambda_{\tau,L}} \cdot (1+2\lambda(T+\tau)) \cdot ||\nabla \phi^{\#}_{\rho_0}(X_0^{\tau})||^2_{\mathbb{H}}. \]

\noindent \textbf{Step 4:} $\int_0^t e^{\lambda t} \mathcal{R}_t^{\tau} dt$ \smallskip

By using Lemma \ref{R_bounds} and that $\lambda \geq 0$, we have that
\[ \int_0^T e^{2\lambda_{\tau,L} t}\mathcal{R}_t^{\tau} \leq  \frac{\tau}{4} \int_0^T e^{2\lambda_{\tau,L}t } G_t^{\tau} dt -  \frac{1}{\tau}\int_0^T e^{2\lambda_{\tau,L}t } (\ell_{\tau}(t)-\frac{1}{2})(\mathcal{D}_t^{\tau})^2 dt = (I) + (II). \]  We can control $(I)$ by using the error bound derived in Step 3. To control $(II)$, observe that
\[ \int_{j \tau}^{(j+1) \tau} \left( \frac{1}{2} - \ell_{\tau}(t) \right) e^{2 \lambda_{\tau,L} t} dt\leq 0. \] Hence, we see from Lemma \ref{refined_stability}
\[ (II) \leq \frac{1}{2 \tau} \cdot d^2(X_{N-1}^{\tau},X_{N}^{\tau}) \cdot \int_{(N-1)\tau}^T e^{2 \lambda_{\tau,L} t} \leq \frac{\tau^2}{2} ||\nabla \phi(X^{\tau}_N)||^2_{\mathbb{H}} \cdot e^{2 \lambda_{\tau,L} (N+1)\tau}  \]
\[ \leq \frac{\tau^2}{2} e^{2 \lambda_{\tau,L} \tau} \cdot ||\nabla \phi(X_0^{\tau})||^2_{\mathbb{H}}. \] The bounds on $(I)$ and $(II)$ allows us to conclude.

 \end{proof}

By using \eqref{positive_d_interpolated_bound}, Lemma \ref{refined_stability}, and Lemma \ref{error_term_bounds} along with a similar argument as in Theorem \ref{streamline_convergence}, we obtain

\begin{theorem}[Refined Error Estimates] \label{refined_convergence} Under the notation and assumptions of Lemma \ref{refined_decay} and Theorem \ref{streamline_convergence}, if $\lambda \geq 0$ and $\tau \leq 1/L$, we have that
\[ W_2(\rho_t,\rho_t^{\tau}) \leq ||X(t)-\underline{X}_t^{\tau}||_{\mathbb{H}} \leq \sqrt{3}e^{-\lambda_{\tau,L} t} (||\textnormal{Id} - X_0^{\tau}||_{\mathbb{H}} + \tilde{C}(\lambda,t,\tau) \cdot \tau ||\nabla \phi^{\#}_{\rho_0}(X_0^{\tau})||_{\mathbb{H}}),  \] where we have
\[ \tilde{C}(\lambda,t,\tau) := e^{\lambda_{\tau,L} \cdot \tau} + \sqrt{\frac{\lambda}{  \lambda_{\tau,L}}} \cdot (t+\tau) + \sqrt{ \frac{\lambda}{\lambda_{\tau,L}} \left(1+2\lambda(t+\tau) \right) + \lambda (t+\tau)  + e^{2 \lambda_{\tau,L} \tau}   }     \]
\end{theorem}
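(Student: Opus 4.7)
The plan is to mirror the proof strategy of Theorem \ref{streamline_convergence}, but replace the $\lambda_\tau$-Gronwall analysis (which was tailored for the generic $\lambda\in\R$ case) with a $\lambda_{\tau,L}$-Gronwall analysis that exploits the extra smoothness coming from Lemma \ref{distance_lower_bound}. Since $\lambda_{\tau,L}\le\lambda$ by \eqref{lambda_tau_L}, the functional $\phi^{\#}_{\rho_0}$ is in particular $\lambda_{\tau,L}$-convex, so Theorem \ref{differential_inequality_different_time_steps} applies with $\lambda$ replaced by $\lambda_{\tau,L}$. Applying the same Gronwall inequality from \cite[Lemma 4.1.8]{ambrosio2005gradient} that produced \eqref{gronwall_bound} then yields \eqref{positive_d_interpolated_bound}, i.e.
\[
d_{\tau,\eta}(T;T)\;\le\; e^{-\lambda_{\tau,L} T}\bigl(d^2_{\tau,\eta}(0;0)+\textstyle\int_0^T e^{2\lambda_{\tau,L} s}\tilde a(s)\,ds\bigr)^{1/2}+e^{-\lambda_{\tau,L} T}\!\int_0^T\! e^{\lambda_{\tau,L} s}\tilde b(s)\,ds.
\]

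Next, I would insert the four integral estimates of Lemma \ref{error_term_bounds} directly into the above. Specifically, the $G^\tau_t$, $\mathcal{R}^\tau_t$, $(\mathcal{D}_t^\tau)^2$ terms inside $\tilde a(s)$ each contribute a factor of $\tau^2\|\nabla\phi^{\#}_{\rho_0}(X_0^\tau)\|_{\mathbb{H}}^2$ times an explicit $\lambda$-dependent constant, while the $\tilde b$ integral contributes $\tau\|\nabla\phi^{\#}_{\rho_0}(X_0^\tau)\|_{\mathbb{H}}$ times another explicit constant. After taking square roots and collecting the $\eta$-analogue of each term symmetrically, one obtains an inequality of the shape
\[
d_{\tau,\eta}(T;T)\;\le\; e^{-\lambda_{\tau,L} T}\bigl(\|X_0^\tau-X_0^\eta\|_{\mathbb{H}}+\tilde C(\lambda,T,\tau)\tau\|\nabla\phi^{\#}_{\rho_0}(X_0^\tau)\|_{\mathbb{H}}+\tilde C(\lambda,T,\eta)\eta\|\nabla\phi^{\#}_{\rho_0}(X_0^\eta)\|_{\mathbb{H}}\bigr),
\]
with $\tilde C$ as stated in the theorem; the three summands inside $\tilde C$ correspond to the $\tilde b$-integral (giving the $\sqrt{\lambda/\lambda_{\tau,L}}(t+\tau)$ term), the $G^\tau_t$-plus-$(\mathcal{D}^\tau_t)^2$-plus-$\mathcal{R}^\tau_t$ contributions inside the square root (giving the big radical), and the $e^{\lambda_{\tau,L}\tau}$ prefactor which accounts for the $N$-vs-$(N-1)$ ceiling when the time $t$ is not a grid point.

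Finally, to pass from the interpolated distance $d_{\tau,\eta}(t;t)$ to $\|\underline X_t^\tau-\underline X_t^\eta\|_{\mathbb{H}}$, I would invoke the triangle-type estimate \eqref{interpolate_to_d_bound} used in the proof of Theorem \ref{linear_convergence_rate} and bound the two correction terms $\|\underline X_t^\tau-\overline X_t^\tau\|_{\mathbb{H}}$ and $\|\underline X_t^\eta-\overline X_t^\eta\|_{\mathbb{H}}$ via the sharpened one-step estimate \eqref{distance_decay} of Lemma \ref{refined_stability}, which gives exactly an $e^{-\lambda_{\tau,L} t}$ decay times $\tau\|\nabla\phi^{\#}_{\rho_0}(X_0^\tau)\|_{\mathbb{H}}$. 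Sending $\eta\downarrow 0$ using Assumption \ref{assume_rough} (so that $X_0^\eta\to\mathrm{Id}$ and $\underline X_t^\eta\to X(t)$ in $\mathbb{H}$ by Theorem \ref{streamline_convergence}) produces the desired final bound with the $\sqrt{3}$ prefactor coming from \eqref{interpolate_to_d_bound}.

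The main obstacle is not conceptual but rather the bookkeeping of constants: one must verify that the three contributions assembled from Lemma \ref{error_term_bounds} collapse cleanly into the compact form of $\tilde C(\lambda,t,\tau)$ displayed in the statement, in particular that the $\frac{1}{2}e^{2\lambda_{\tau,L}\tau}$ piece inside $\int e^{2\lambda_{\tau,L} t}\mathcal{R}^\tau_t\,dt$ (absent from the corresponding $\lambda<0$ estimate of Lemma \ref{estimates_lambda_negative}) combines with the initial $d^2_{\tau,\eta}(0;0)$ and the $\tilde b$-integral contributions in the stated manner, and that no constant accidentally blows up as $\lambda_{\tau,L}\to 0^+$ when $\lambda\to 0^+$ (this is where the ratio $\lambda/\lambda_{\tau,L}\to 1/(1-L\tau/2)$ is needed).
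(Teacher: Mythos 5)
Your proposal follows essentially the same route as the paper: the paper's own proof consists precisely of noting $\lambda_{\tau,L}\le\lambda$ so that Theorem \ref{differential_inequality_different_time_steps} applies with $\lambda_{\tau,L}$, invoking the Gronwall bound \eqref{positive_d_interpolated_bound}, substituting the integral estimates of Lemma \ref{error_term_bounds}, and then repeating the argument of Theorem \ref{streamline_convergence} (the bound \eqref{interpolate_to_d_bound} together with Lemma \ref{refined_stability}) before sending $\eta\downarrow 0$. Your accounting of where each summand of $\tilde C$ originates and of the behaviour of $\lambda/\lambda_{\tau,L}$ as $\lambda\to 0^+$ matches the intended bookkeeping, so the proposal is correct.
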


\section{Numerical Experiments} \label{sec:experiments}

In this section, we present numerical experiments for our trapezoidal time discretization scheme applied to the the energy functional $U$ from Example~\ref{C2_functionals}. We will take $\chi_{\sigma}$ to be the density of a $\mathcal{N}(0,\sigma^2\textnormal{Id})$ random variable.  When $\rho = \frac{1}{N} \sum_{j=1}^{N} \delta_{X_j}$ is an empirical measure, the computations from Example~\ref{C2_functionals} implies
\begin{equation}
\nabla_{\mathbf{W}} U(\rho, X_i)
= \nabla_{\mathbf{W}} \mathcal{F}(\rho, X_i)
+ \nabla V(X_i)
+ \frac{1}{N} \sum_{j=1}^{N} \nabla W(X_i - X_j).
\label{grad_E_formula_particle}
\end{equation} where
\[
\nabla_{\mathbf{W}} \mathcal{F}(\rho, X_i)
= \frac{1}{N} \sum_{j=1}^{N} 
\Big( f'\big( (\rho \ast \chi_{\sigma})(X_i) \big)
+ f'\big( (\rho \ast \chi_{\sigma})(X_j) \big) \Big)
\, \nabla \chi_{\sigma}(X_i - X_j).
\]

This formulation is particularly convenient for numerical implementation, 
as both the energy $U(\rho)$ and its Wasserstein gradient 
$\nabla_{\mathbf{W}} \mathcal{E}(\rho)$ can be evaluated explicitly from the particle positions 
$\{X_i\}_{i=1}^N$.  In practice, this allows for efficient computation of the energy and its gradients without any need for grid-based interpolation or density estimation.

\smallskip
Moreover, push-forwards are especially simple in this particle representation: 
if $\rho = \frac{1}{N}\sum_{i=1}^{N} \delta_{X_i}$ and $T:\mathbb{R}^d \to \mathbb{R}^d$ is a map, then
\[
T_{\#}\rho = \frac{1}{N}\sum_{i=1}^{N} \delta_{T(X_i)}.
\]
Hence both the evaluation of $\mathcal{E}$ and the application of transport maps 
can be implemented directly at the level of the particle positions. \smallskip

\begin{figure}[!htbp]
  \centering
  \resizebox{0.8\linewidth}{!}{%
  \begin{minipage}{\linewidth}
    \centering

    % --- Row 1 ---
    \begin{minipage}{0.40\linewidth}
      \centering
      \includegraphics[width=0.9\linewidth]{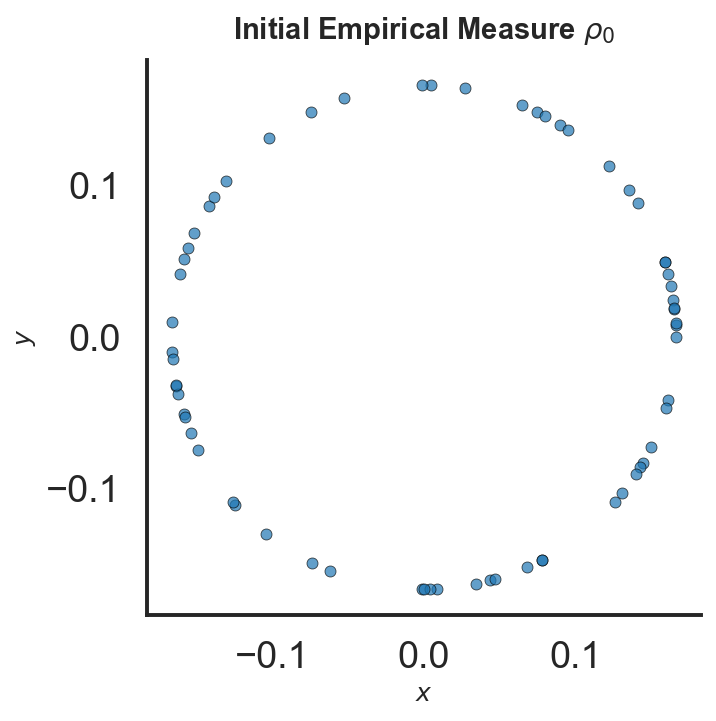}
      \captionof*{figure}{\scriptsize (a) Initial empirical measure $\rho_0$}
    \end{minipage}\hfill
    \begin{minipage}{0.4\linewidth}
      \centering
      \includegraphics[width=\linewidth]{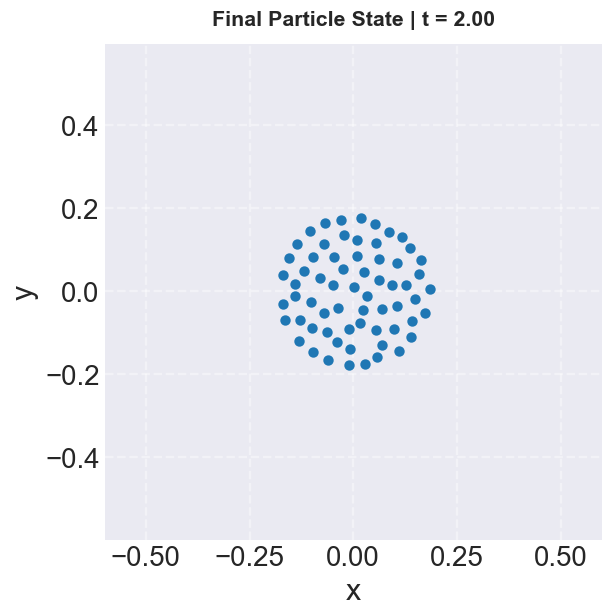}
      \captionof*{figure}{\scriptsize (b) Final particle locations at $t=2$}
    \end{minipage}

    \vspace{0.75em}

    % --- Row 2 + 3 combined ---
    \begin{minipage}{0.62\linewidth}
      \centering
      \includegraphics[width=\linewidth]{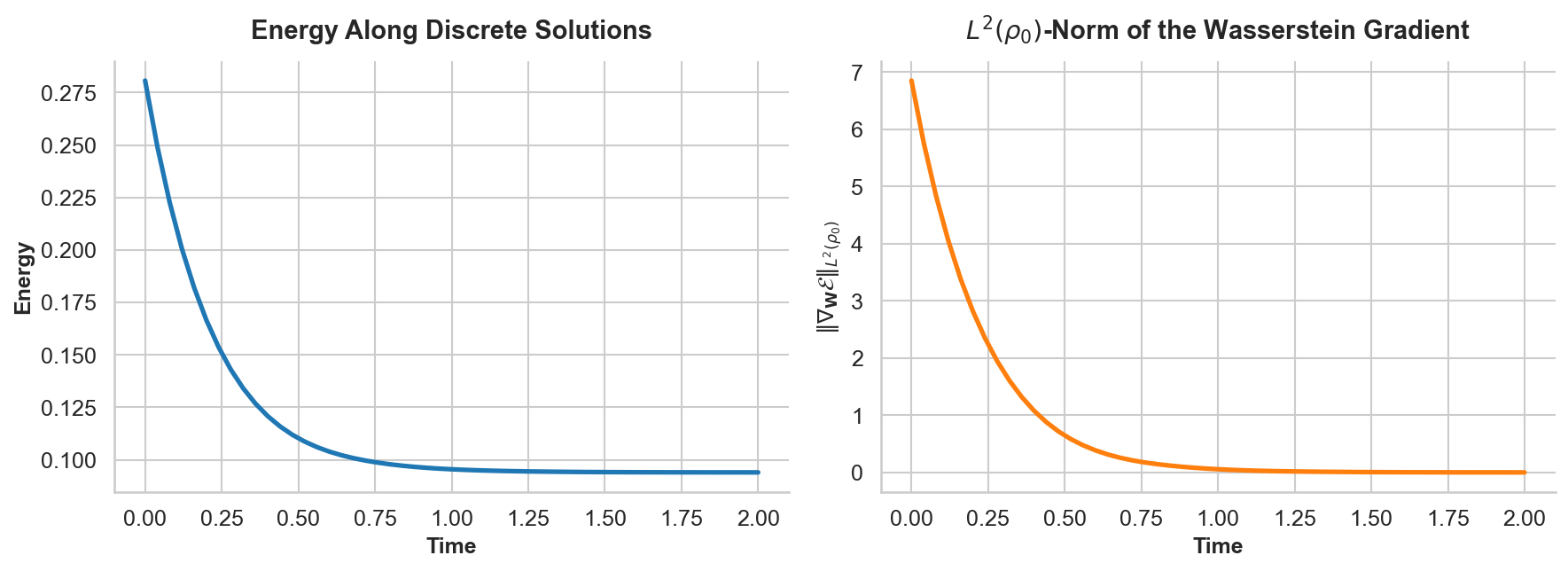}
      \captionof*{figure}{\scriptsize (c) Energy $\mathcal{E}$ and 
      $\|\nabla_{\mathbf{W}}\mathcal{E}\|_{L^2(\rho_0)}$ versus time}
    \end{minipage}\hfill
    \begin{minipage}{0.34\linewidth}
      \centering
      \includegraphics[width=\linewidth]{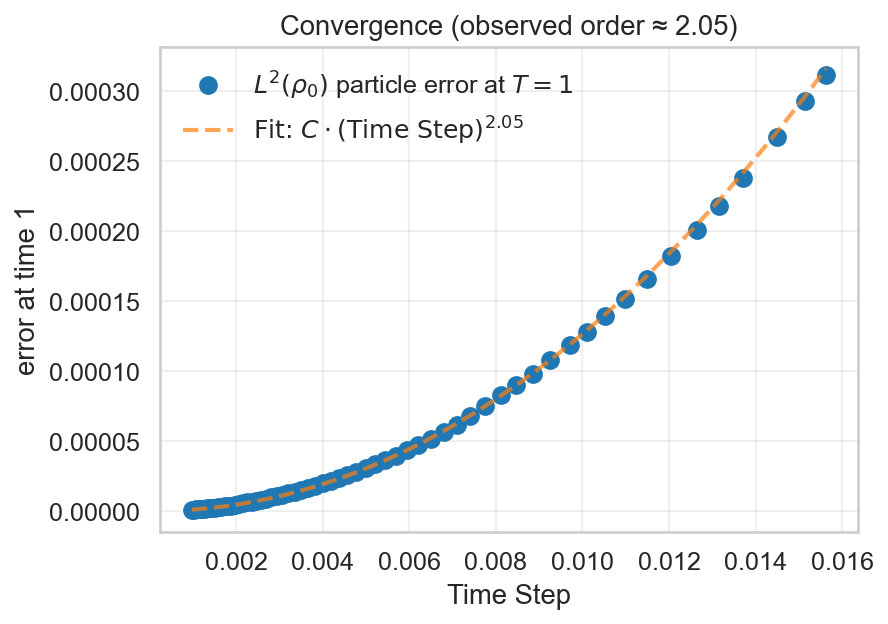}
      \captionof*{figure}{\scriptsize (d) $L^2(\rho_0)$ particle error}
    \end{minipage}

  \end{minipage}%
  } % end resizebox

  \caption{\small Numerical results for the trapezoidal scheme with potentials 
  $(f,V,W) = (0, |x|^2, -\tfrac{1}{4\pi}\log(\e^2 + |x|^2))$,  
  where $\e = 10^{-2}$, $N = 64$, and $\tau = 1/25$.  
  (a)–(b) show the particle configurations at the beginning and end of the simulation;  
  (c) displays the energy and Wasserstein gradient norm over time;  
  (d) reports the estimated time–step convergence rate where 
  $\tau_{\text{Ref}} = 1/4096$, while the other $\tau \in [1/1024,1/64]$.}
  \label{fig:vw_full}
\end{figure}

\begin{figure}[!htbp]
  \centering
  \resizebox{0.8\linewidth}{!}{%
  \begin{minipage}{\linewidth}
    \centering

    % --- Row 1: initial vs final ---
    \begin{minipage}{0.40\linewidth}
      \centering
      \includegraphics[width=.9\linewidth]{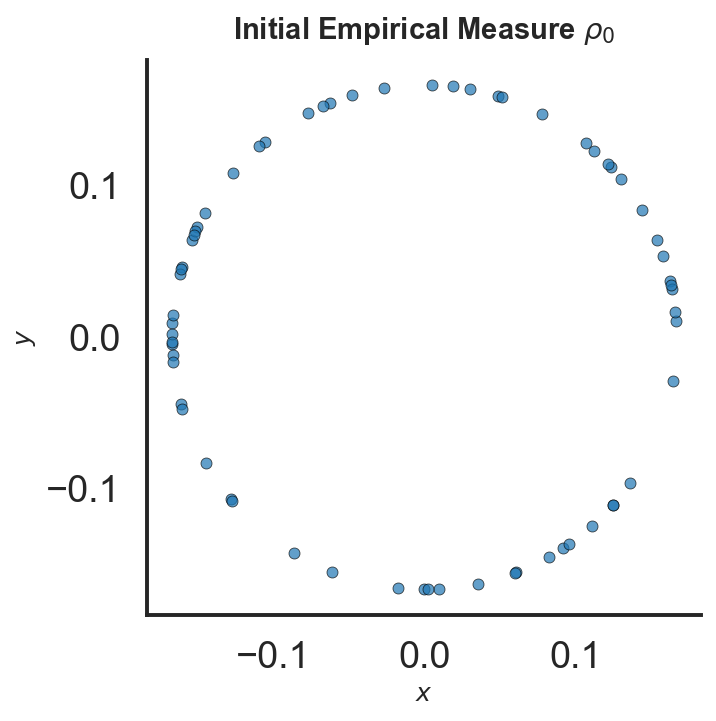}
      \captionof*{figure}{\scriptsize (a) Initial empirical measure $\rho_0$}
    \end{minipage}\hfill
    \begin{minipage}{0.40\linewidth}
      \centering
      \includegraphics[width=\linewidth]{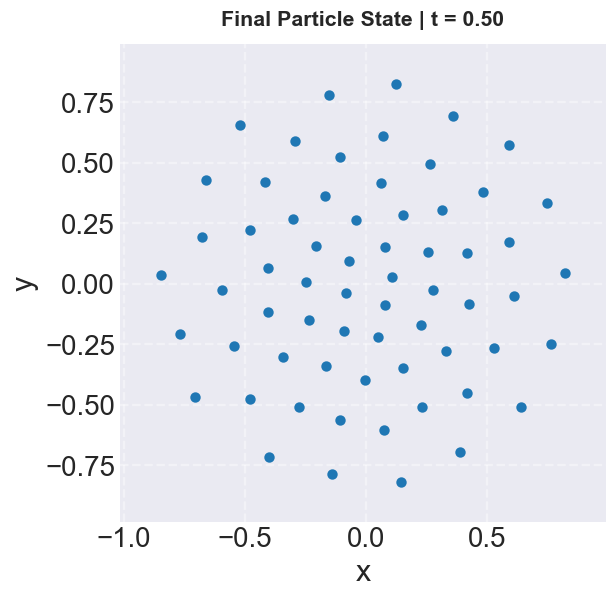}
      \captionof*{figure}{\scriptsize (b) Final particle locations at $t=0.1$}
    \end{minipage}

    \vspace{1.0em}

    % --- Row 2 + 3 combined ---
    \begin{minipage}{0.62\linewidth}
      \centering
      \includegraphics[width=\linewidth]{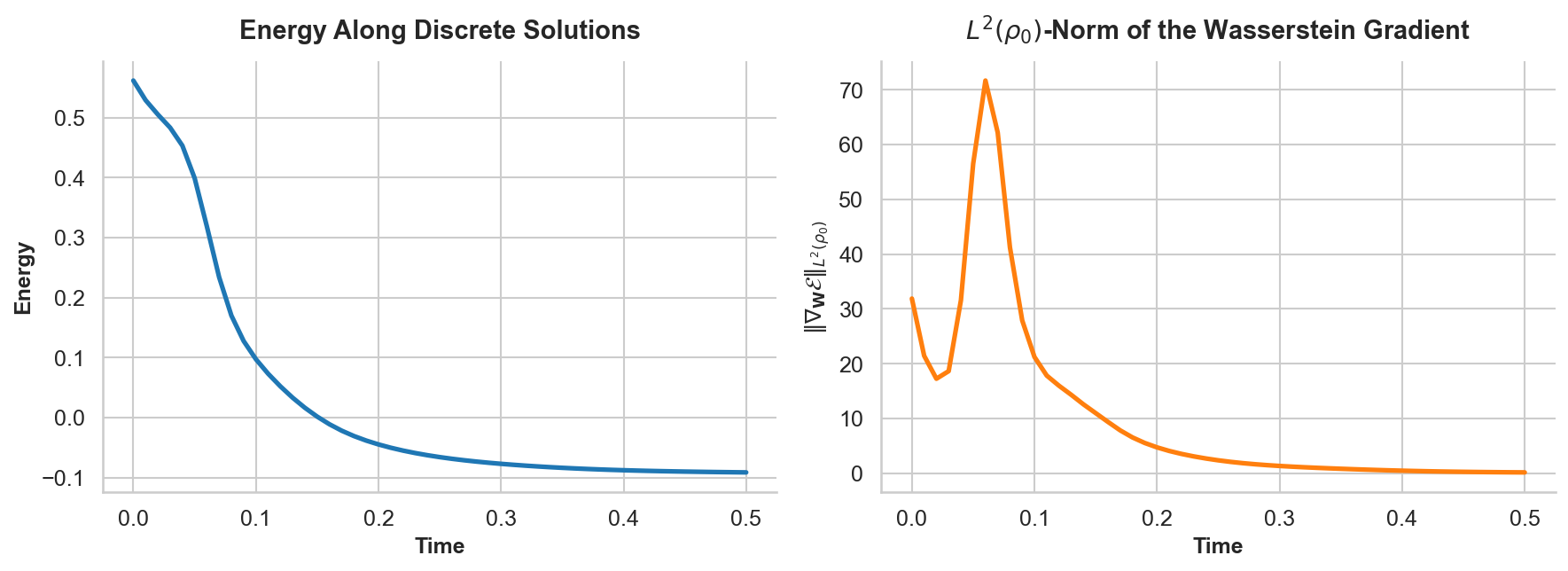}
      \captionof*{figure}{\scriptsize (c) Energy $\mathcal{E}$ and 
      $\|\nabla_{\mathbf{W}}\mathcal{E}\|_{L^2(\rho_0)}$ versus time}
    \end{minipage}\hfill
    \begin{minipage}{0.34\linewidth}
      \centering
      \includegraphics[width=\linewidth]{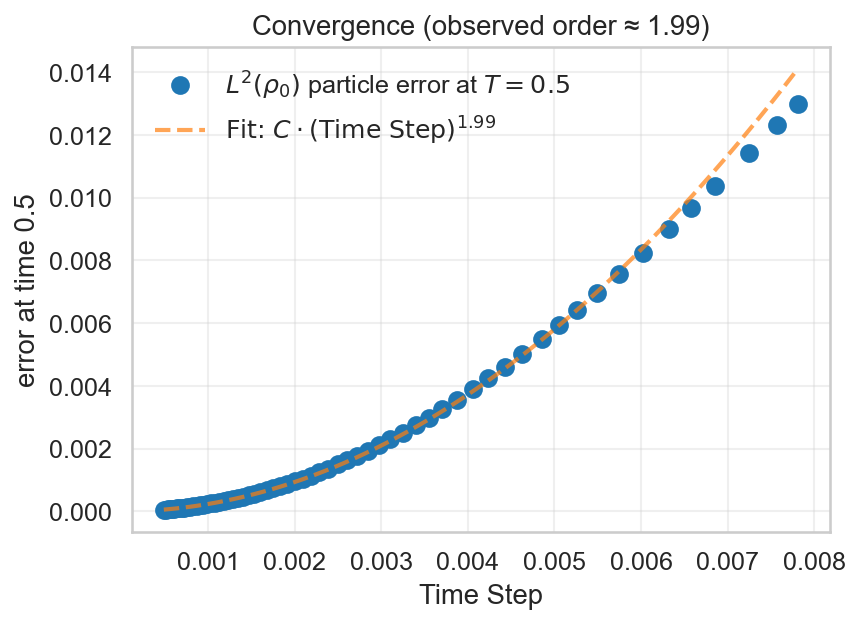}
      \captionof*{figure}{\scriptsize (d) $L^2(\rho_0)$ particle error}
    \end{minipage}

  \end{minipage}%
  } % end resizebox

  \caption{\small Numerical results for the trapezoidal scheme with potentials 
  $(f,V,W) = (\tfrac{1}{2}\log(x+\e), |x|^2, 0)$,  
  where $\e = 10^{-2}$, $\sigma = 1/10$, $N = 64$, and $\tau = 1/100$.  
  (a)–(b) show the particle configurations at the beginning and end of the simulation;  
  (c) displays the energy and Wasserstein gradient norm over time;  
  (d) reports the estimated time–step convergence rate where 
  $\tau_{\text{Ref}} = 0.5/4096$, while the other $\tau \in [0.5/1024,0.5/64]$.}
  \label{fig:fvw_full}
\end{figure}

Numerically, to obtain $X^{\tau}_{n+1}$ we employed the variational formulation 
from~\eqref{trapezoid_var_problem}. 
The minimization of~\eqref{trapezoid_var_problem} was carried out via gradient descent 
in the particle positions, using the explicit expression for the gradient 
given in~\eqref{grad_E_formula_particle}. 
In practice, the descent was initialized by an explicit Euler step 
$X^{\tau}_{n} - \tau \nabla_{\mathbf{W}} \mathcal{E}(\rho^{\tau}_n, X^{\tau}_n)$ 
and the gradient descent was iterated until the $L^2(\R^d;\rho_0)$ norm of successive iterates fell below a prescribed tolerance. \smallskip

To estimate the numerical convergence rate, we compared the discrete particle configurations obtained with time steps $\tau$ and $\tau_{\mathrm{Ref}}$ at the same final time $T>0$, satisfying $N\tau = M\tau_{\mathrm{Ref}} = T$. 
The corresponding error is given by $\| X^{\tau}_{N} - X^{\tau_{\mathrm{ref}}}_{M} \|_{L^2(\mathbb{R}^d;\rho_0)}.$ These experiments also numerically confirm Theorem \ref{higher_order_convergence_theorem}, exhibiting the predicted $O(\tau^2)$ convergence rate.

\bibliographystyle{alpha}
\bibliography{bib}

@book{ambrosio2005gradient,
  title={Gradient flows: in metric spaces and in the space of probability measures},
  author={Ambrosio, Luigi and Gigli, Nicola and Savar{\'e}, Giuseppe},
  year={2005},
  publisher={Springer Science \& Business Media}
}

@article{doi:10.1142/S0219891623500212,
author = {Daneri, Sara and Radici, Emanuela and Runa, Eris},
title = {Deterministic particle approximation of aggregation diffusion equations with nonlinear mobility},
journal = {Journal of Hyperbolic Differential Equations},
volume = {20},
number = {03},
pages = {707-744},
year = {2023},
doi = {10.1142/S0219891623500212},

URL = { 
    
        https://doi.org/10.1142/S0219891623500212
    
    

},
eprint = { 
    
        https://doi.org/10.1142/S0219891623500212
    
    

}
,
    abstract = { We consider a class of aggregation–diffusion equations on unbounded one-dimensional domains with Lipschitz nonincreasing mobility function. We show strong L1-convergence of a suitable deterministic particle approximation to weak solutions of a class aggregation–diffusion PDEs (coinciding with the classical ones in the no vacuum regions) for any bounded initial data of finite energy. In order to prove well-posedness and convergence of the scheme with no BV or no vacuum assumptions and overcome the issues posed in this setting by the presence of a mobility function, we improve and strengthen the techniques introduced in [S. Daneri, E. Radici and E. Runa, Deterministic particle approximation of aggregation–diffusion equations on unbounded domains, J. Differential Equations 312 (2020) 474–517]. }
}

@ARTICLE{Daneri2022-fr,
  title     = "Deterministic particle approximation of aggregation-diffusion
               equations on unbounded domains",
  author    = "Daneri, Sara and Radici, Emanuela and Runa, Eris",
  journal   = "J. Differ. Equ.",
  publisher = "Elsevier BV",
  volume    =  312,
  pages     = "474--517",
  month     =  mar,
  year      =  2022,
  language  = "en"
}

@ARTICLE{Craig2023-hj,
  title     = "A blob method for inhomogeneous diffusion with applications to
               multi-agent control and sampling",
  author    = "Craig, Katy and Elamvazhuthi, Karthik and Haberland, Matt and
               Turanova, Olga",
  abstract  = "As a counterpoint to classical stochastic particle methods for
               linear diffusion equations, such as Langevin dynamics for the
               Fokker-Planck equation, we develop a deterministic particle
               method for the weighted porous medium equation and prove its
               convergence on bounded time intervals. This generalizes related
               work on blob methods for unweighted porous medium equations.
               From a numerical analysis perspective, our method has several
               advantages: it is meshfree, preserves the gradient flow
               structure of the underlying PDE, converges in arbitrary
               dimension, and captures the correct asymptotic behavior in
               simulations. The fact that our method succeeds in capturing the
               long time behavior of the weighted porous medium equation is
               significant from the perspective of related problems in
               quantization. Just as the Fokker-Planck equation provides a way
               to quantize a probability measure $\rho$ \textasciimacron{}
               \textbackslashbar \{\textbackslashrho \} by evolving an
               empirical measure $\rho$ N ( t ) = 1 N $\sum$ i = 1 N $\delta$ X
               i ( t ) \textbackslashrho ^N(t) = \textbackslashfrac \{1\}\{N\}
               \textbackslashsum \_\{i=1\}^N \textbackslashdelta \_\{X^i(t)\}
               according to stochastic Langevin dynamics so that $\rho$ N ( t )
               \textbackslashrho ^N(t) flows toward $\rho$ \textasciimacron{}
               \textbackslashbar \{\textbackslashrho \} , our particle method
               provides a way to quantize $\rho$ \textasciimacron{}
               \textbackslashbar \{\textbackslashrho \} according to
               deterministic particle dynamics approximating the weighted
               porous medium equation. In this way, our method has natural
               applications to multi-agent coverage algorithms and sampling
               probability measures. A specific case of our method corresponds
               to confined mean-field dynamics of training a two-layer neural
               network for a radial basis activation function. From this
               perspective, our convergence result shows that, in the
               overparametrized regime and as the variance of the radial basis
               functions goes to zero, the continuum limit is given by the
               weighted porous medium equation. This generalizes previous
               results, which considered the case of a uniform data
               distribution, to the more general inhomogeneous setting. As a
               consequence of our convergence result, we identify conditions on
               the target function and data distribution for which convexity of
               the energy landscape emerges in the continuum limit.",
  journal   = "Math. Comput.",
  publisher = "American Mathematical Society (AMS)",
  volume    =  92,
  number    =  344,
  pages     = "2575--2654",
  month     =  jun,
  year      =  2023,
  language  = "en"
}

@ARTICLE{Carrillo2024-we,
  title     = "Nonlocal approximation of nonlinear diffusion equations",
  author    = "Carrillo, Jos{\'e} Antonio and Esposito, Antonio and Wu, Jeremy
               Sheung-Him",
  abstract  = "AbstractWe show that degenerate nonlinear diffusion equations
               can be asymptotically obtained as a limit from a class of
               nonlocal partial differential equations. The nonlocal equations
               are obtained as gradient flows of interaction-like energies
               approximating the internal energy. We construct weak solutions
               as the limit of a (sub)sequence of weak measure solutions by
               using the Jordan-Kinderlehrer-Otto scheme from the context of
               2-Wasserstein gradient flows. Our strategy allows to cover the
               porous medium equation, for the general slow diffusion case,
               extending previous results in the literature. As a byproduct of
               our analysis, we provide a qualitative particle approximation.",
  journal   = "Calc. Var. Partial Differ. Equ.",
  publisher = "Springer Science and Business Media LLC",
  volume    =  63,
  number    =  4,
  month     =  may,
  year      =  2024,
  copyright = "https://creativecommons.org/licenses/by/4.0",
  language  = "en"
}

@Article{Carrillo2019,
author={Carrillo, Jos{\'e} Antonio
and Craig, Katy
and Patacchini, Francesco S.},
title={A blob method for diffusion},
journal={Calculus of Variations and Partial Differential Equations},
year={2019},
month={Feb},
day={16},
volume={58},
number={2},
pages={53},
abstract={As a counterpoint to classical stochastic particle methods for diffusion, we develop a deterministic particle method for linear and nonlinear diffusion. At first glance, deterministic particle methods are incompatible with diffusive partial differential equations since initial data given by sums of Dirac masses would be smoothed instantaneously: particles do not remain particles. Inspired by classical vortex blob methods, we introduce a nonlocal regularization of our velocity field that ensures particles do remain particles and apply this to develop a numerical blob method for a range of diffusive partial differential equations of Wasserstein gradient flow type, including the heat equation, the porous medium equation, the Fokker--Planck equation, and the Keller--Segel equation and its variants. Our choice of regularization is guided by the Wasserstein gradient flow structure, and the corresponding energy has a novel form, combining aspects of the well-known interaction and potential energies. In the presence of a confining drift or interaction potential, we prove that minimizers of the regularized energy exist and, as the regularization is removed, converge to the minimizers of the unregularized energy. We then restrict our attention to nonlinear diffusion of porous medium type with at least quadratic exponent. Under sufficient regularity assumptions, we prove that gradient flows of the regularized porous medium energies converge to solutions of the porous medium equation. As a corollary, we obtain convergence of our numerical blob method. We conclude by considering a range of numerical examples to demonstrate our method's rate of convergence to exact solutions and to illustrate key qualitative properties preserved by the method, including asymptotic behavior of the Fokker--Planck equation and critical mass of the two-dimensional Keller--Segel equation.},
issn={1432-0835},
doi={10.1007/s00526-019-1486-3},
url={https://doi.org/10.1007/s00526-019-1486-3}
}

@article{BURGER2023113347,
title = {Porous medium equation and cross-diffusion systems as limit of nonlocal interaction},
journal = {Nonlinear Analysis},
volume = {235},
pages = {113347},
year = {2023},
issn = {0362-546X},
doi = {https://doi.org/10.1016/j.na.2023.113347},
url = {https://www.sciencedirect.com/science/article/pii/S0362546X23001396},
author = {Martin Burger and Antonio Esposito},
keywords = {Porous medium equation, Nonlocal interaction, Cross-diffusion systems, Local limit, Variational time discretisation},
abstract = {This paper studies the derivation of the quadratic porous medium equation and a class of cross-diffusion systems from nonlocal interactions. We prove convergence of solutions of a nonlocal interaction equation, resp. system, to solutions of the quadratic porous medium equation, resp. cross-diffusion system, in the limit of a localising interaction kernel. The analysis is carried out at the level of the (nonlocal) partial differential equations and we use gradient flow techniques to derive bounds on energy, second order moments, and logarithmic entropy. The dissipation of the latter yields sufficient regularity to obtain compactness results and pass to the limit in the localised convolutions. The strategy we propose relies on a discretisation scheme, which can be slightly modified in order to extend our result to PDEs without gradient flow structure. In particular, it does not require convexity of the associated energies. Our analysis allows to treat the case of limiting weak solutions of the non-viscous porous medium equation at relevant low regularity, assuming the initial value to have finite energy and entropy.}
}

@article{CRAIG2025782,
title = {Nonlocal approximation of slow and fast diffusion},
journal = {Journal of Differential Equations},
volume = {426},
pages = {782-852},
year = {2025},
issn = {0022-0396},
doi = {https://doi.org/10.1016/j.jde.2025.01.035},
url = {https://www.sciencedirect.com/science/article/pii/S0022039625000427},
author = {Katy Craig and Matt Jacobs and Olga Turanova},
abstract = {Motivated by recent work on approximation of diffusion equations by deterministic interacting particle systems, we develop a nonlocal approximation for a range of linear and nonlinear diffusion equations and prove convergence of the method in the slow, linear, and fast diffusion regimes. A key ingredient of our approach is a novel technique for using the 2-Wasserstein and dual Sobolev gradient flow structures of the diffusion equations to recover the duality relation characterizing the pressure in the nonlocal-to-local limit. Due to the general class of internal energy densities that our method is able to handle, a byproduct of our result is a novel particle method for sampling a wide range of probability measures, which extends classical approaches based on the Fokker-Planck equation beyond the log-concave setting.}
}

@article{han_hos,
title = {High order schemes for gradient flow with respect to a metric},
journal = {Journal of Computational Physics},
volume = {494},
pages = {112516},
year = {2023},
issn = {0021-9991},
doi = {https://doi.org/10.1016/j.jcp.2023.112516},
url = {https://www.sciencedirect.com/science/article/pii/S0021999123006113},
author = {Saem Han and Selim Esedoḡlu and Krishna Garikipati},
keywords = {High order schemes, Gradient flow, Energy stability, Minimizing movements},
abstract = {New criteria for energy stability of multi-step, multi-stage, and mixed schemes are introduced in the context of evolution equations that arise as gradient flow with respect to a metric. These criteria are used to exhibit second and third order consistent, energy stable schemes, which are then demonstrated on several partial differential equations that arise as gradient flow with respect to the 2-Wasserstein metric.}
}

@article{zaitzeff_hos,
title = {High order, semi-implicit, energy stable schemes for gradient flows},
journal = {Journal of Computational Physics},
volume = {447},
pages = {110688},
year = {2021},
issn = {0021-9991},
doi = {https://doi.org/10.1016/j.jcp.2021.110688},
url = {https://www.sciencedirect.com/science/article/pii/S0021999121005830},
author = {Alexander Zaitzeff and Selim Esedoḡlu and Krishna Garikipati},
keywords = {Gradient flows, IMEX, Runge-Kutta, Minimizing movements, High order schemes, Stability},
abstract = {We introduce a class of high order accurate, semi-implicit Runge-Kutta schemes in the general setting of evolution equations that arise as gradient flow for a cost function, possibly with respect to an inner product that depends on the solution, and we establish their energy stability. This class includes as a special case high order, unconditionally stable schemes obtained via convexity splitting. The new schemes are demonstrated on a variety of gradient flows, including partial differential equations that are gradient flow with respect to the Wasserstein (mass transport) distance.}
}

@book{vazquez2007porous,
  title={The porous medium equation: mathematical theory},
  author={V{\'a}zquez, Juan Luis},
  year={2007},
  publisher={Oxford University Press}
}

@incollection{ambrosio2007gradient,
  title={Gradient flows of probability measures},
  author={Ambrosio, Luigi and Savar{\'e}, Giuseppe},
  booktitle={Handbook of differential equations: evolutionary equations},
  volume={3},
  pages={1--136},
  year={2007},
  publisher={Elsevier}
}

@article{matthes2019variational,
  title={A variational formulation of the BDF2 method for metric gradient flows},
  author={Matthes, Daniel and Plazotta, Simon},
  journal={ESAIM: Mathematical Modelling and Numerical Analysis},
  volume={53},
  number={1},
  pages={145--172},
  year={2019},
  publisher={EDP Sciences}
}

@phdthesis{ashworth2020structure,
  title={Structure-preserving variational schemes for fourth order nonlinear partial differential equations with a Wasserstein gradient flow structure},
  author={Ashworth, Blake},
  year={2020},
  school={University of Sussex}
}

@article{santambrogio2017euclidean,
  title={$\{$Euclidean, metric, and Wasserstein$\}$ gradient flows: an overview},
  author={Santambrogio, Filippo},
  journal={Bulletin of Mathematical Sciences},
  volume={7},
  pages={87--154},
  year={2017},
  publisher={Springer}
}

@book{cardaliaguet2019master,
  title={The master equation and the convergence problem in mean field games:(ams-201)},
  author={Cardaliaguet, Pierre and Delarue, Fran{\c{c}}ois and Lasry, Jean-Michel and Lions, Pierre-Louis},
  year={2019},
  publisher={Princeton University Press}
}

@article{carrillo2022primal,
  title={Primal dual methods for Wasserstein gradient flows},
  author={Carrillo, Jos{\'e} A and Craig, Katy and Wang, Li and Wei, Chaozhen},
  journal={Foundations of Computational Mathematics},
  pages={1--55},
  year={2022},
  publisher={Springer}
}

@article{gangbo2022global,
  title={Global Well-Posedness of Master Equations for Deterministic Displacement Convex Potential Mean Field Games},
  author={Gangbo, Wilfrid and M{\'e}sz{\'a}ros, Alp{\'a}r R},
  journal={Communications on Pure and Applied Mathematics},
  volume={75},
  number={12},
  pages={2685--2801},
  year={2022},
  publisher={Wiley Online Library}
}

@article{gangbo2019differentiability,
  title={On differentiability in the Wasserstein space and well-posedness for Hamilton--Jacobi equations},
  author={Gangbo, Wilfrid and Tudorascu, Adrian},
  journal={Journal de Math{\'e}matiques Pures et Appliqu{\'e}es},
  volume={125},
  pages={119--174},
  year={2019},
  publisher={Elsevier}
}

@article{jordan1998variational,
  title={The variational formulation of the Fokker--Planck equation},
  author={Jordan, Richard and Kinderlehrer, David and Otto, Felix},
  journal={SIAM journal on mathematical analysis},
  volume={29},
  number={1},
  pages={1--17},
  year={1998},
  publisher={SIAM}
}

@book{villani2021topics,
  title={Topics in optimal transportation},
  author={Villani, C{\'e}dric},
  volume={58},
  year={2021},
  publisher={American Mathematical Soc.}
}

@article{santambrogio2015optimal,
  title={Optimal transport for applied mathematicians},
  author={Santambrogio, Filippo},
  journal={Birk{\"a}user, NY},
  volume={55},
  number={58-63},
  pages={94},
  year={2015},
  publisher={Springer}
}

@article{gallouet2024geodesic,
  title={From geodesic extrapolation to a variational BDF2 scheme for Wasserstein gradient flows},
  author={Gallou{\"e}t, Thomas and Natale, Andrea and Todeschi, Gabriele},
  journal={Mathematics of Computation},
  year={2024}
}

@book{ambrosio2021lectures,
  title={Lectures on optimal transport},
  author={Ambrosio, Luigi and Bru{\'e}, Elia and Semola, Daniele and others},
  volume={130},
  year={2021},
  publisher={Springer}
}

@book{carmona2018probabilistic,
  title={Probabilistic theory of mean field games with applications I-II},
  author={Carmona, Ren{\'e} and Delarue, Fran{\c{c}}ois and others},
  year={2018},
  publisher={Springer}
}

@article{carrillo2021lagrangian,
  title={Lagrangian schemes for Wasserstein gradient flows},
  author={Carrillo, Jose A and Matthes, Daniel and Wolfram, Marie-Therese},
  journal={Handbook of Numerical Analysis},
  volume={22},
  pages={271--311},
  year={2021},
  publisher={Elsevier}
}

@article{evans2005diffeomorphisms,
  title={Diffeomorphisms and nonlinear heat flows},
  author={Evans, Lawrence C and Savin, Ovidiu and Gangbo, Wilfrid},
  journal={SIAM journal on mathematical analysis},
  volume={37},
  number={3},
  pages={737--751},
  year={2005},
  publisher={SIAM}
}

@article{ambrosio2006stability,
  title={Stability of flows associated to gradient vector fields and convergence of iterated transport maps},
  author={Ambrosio, Luigi and Lisini, Stefano and Savar{\'e}, Giuseppe},
  journal={manuscripta mathematica},
  volume={121},
  number={1},
  pages={1--50},
  year={2006},
  publisher={Springer}
}

@article{junge2017fully,
  title={A fully discrete variational scheme for solving nonlinear Fokker--Planck equations in multiple space dimensions},
  author={Junge, Oliver and Matthes, Daniel and Osberger, Horst},
  journal={SIAM Journal on Numerical Analysis},
  volume={55},
  number={1},
  pages={419--443},
  year={2017},
  publisher={SIAM}
}

@article{legendre2017second,
  title={Second-order in time schemes for gradient flows in Wasserstein and geodesic metric spaces},
  author={Legendre, Guillaume and Turinici, Gabriel},
  journal={Comptes Rendus Mathematique},
  volume={355},
  number={3},
  pages={345--353},
  year={2017},
  publisher={Elsevier}
}

@phdthesis{chu2025optimal,
  title={Optimal Transport in Stochastic and Deterministic Settings with Applications to PDEs},
  author={Chu, Raymond},
  year={2025},
  school={University of California, Los Angeles}
}

@article{cances2024discretizing,
  title={Discretizing the Fokker-Planck equation with second-order accuracy: a dissipation driven approach},
  author={Canc{\`e}s, Cl{\'e}ment and Monsaingeon, L{\'e}onard and Natale, Andrea},
  journal={arXiv preprint arXiv:2410.03367},
  year={2024}
}

@article{parker2024some,
  title={Some convexity criteria for differentiable functions on the 2-Wasserstein space},
  author={Parker, Guy},
  journal={Bulletin of the London Mathematical Society},
  volume={56},
  number={5},
  pages={1839--1858},
  year={2024},
  publisher={Wiley Online Library}
}

@article{chow2019partial,
  title={A partial Laplacian as an infinitesimal generator on the Wasserstein space},
  author={Chow, Yat Tin and Gangbo, Wilfrid},
  journal={Journal of Differential Equations},
  volume={267},
  number={10},
  pages={6065--6117},
  year={2019},
  publisher={Elsevier}
}

\end{document}